\documentclass[10pt,centertags]{amsart}
\usepackage{amsfonts}
\usepackage{}
\usepackage{amsmath,amstext,amsthm,a4,amssymb,amscd}
\usepackage[mathscr]{eucal}
\usepackage{mathrsfs}
\usepackage{epsf}
\usepackage{tikz}

\usetikzlibrary{matrix,arrows,decorations.pathmorphing}
\textwidth 12.5cm
\textheight 19.5cm
\topmargin 0.5cm
\oddsidemargin 0.1cm
\evensidemargin 0.1cm
\parskip 0.0cm

\numberwithin{equation}{section}

\newcommand{\field}[1]{\mathbb{#1}}
\newcommand{\Z}{\field{Z}}
\newcommand{\R}{\field{R}}
\newcommand{\C}{\field{C}}
\newcommand{\N}{\field{N}}

 \def\cC{\mathscr{C}}

 \def\cR{\mathscr{R}}
 
 \def\cE{\mathscr{E}}
\def\mC{\mathcal{C}}
\def\mA{\mathcal{A}}
\def\mB{\mathcal{B}}
\def\mD{\mathcal{D}}
\def\mE{\mathcal{E}}

\def\mG{\mathcal{G}}

\def\mR{\mathcal{R}}

\def\mU{\mathcal{U}}

\newcommand\mS{\mathcal{S}}

\def\Re{{\rm Re}}
\def\Im{{\rm Im}}

\def\la{\langle}
\def\ra{\rangle}

\DeclareMathOperator{\End}{End}

\DeclareMathOperator{\tr}{Tr}

\DeclareMathOperator{\ind}{ind}

\DeclareMathOperator{\ch}{ch}

\newtheorem{thm}{Theorem}[section]
\newtheorem{lemma}[thm]{Lemma}
\newtheorem{prop}[thm]{Proposition}

\theoremstyle{definition}
\newtheorem{rem}[thm]{Remark}
\theoremstyle{definition}
\newtheorem{defn}[thm]{Definition}
\newtheorem{assump}[thm]{Assumption}
\newcommand{\be}{\begin{eqnarray}}
\newcommand{\ee}{\end{eqnarray}}

\newcommand{\var}{\varepsilon}
\numberwithin{equation}{section}
\numberwithin{thm}{section}

\newcommand{\comment}[1]{}

\begin{document}

\title{Functoriality of equivariant eta forms}

\author{Bo Liu}

\address{Institut f\"ur Mathematik, Humboldt-Universit\"at zu Berlin,
Rudower Chaussee 25£¬  Johann von Neumann - Haus
12489 Berlin }
\email{boliumath@gmail.com}

\begin{abstract}
In this paper, we define the equivariant eta form of Bismut-Cheeger
for a compact Lie group
and establish a formula about the functoriality of equivariant eta forms  with respect to the composition of two submersions,
which is motivated by constructing the geometric model of equivariant differential K-theory.
\end{abstract}
\maketitle

{\bf Keywords:} Equivariant eta form; index theory and fixed point theory; Chern-Simons form.

{\bf 2010 Mathematics Subject Classification: } 58J20, 19K56, 58J28, 58J35.

\tableofcontents

\setcounter{section}{-1}

\section{Introduction} \label{s00}

 In order to find a well-defined index for a first order elliptic differential operator
over a compact manifold with nonempty boundary, Atiyah-Patodi-Singer \cite{MR0397797} introduced a boundary condition which is
particularly significant for applications. In this situation, an invariant of a first order self-adjoint
operator called the eta invariant, $\eta$, enters into the index formula. Formally, the eta invariant is equal to the number
of positive eigenvalues of the self-adjoint operator minus the number of negative eigenvalues.

Extending the work of Bismut-Freed \cite{MR861886}, which is a rigorous proof of Witten's holonomy theorem \cite{MR804460},
Bismut and Cheeger \cite{MR966608} studied the adiabatic limit for a fibration of closed Spin manifolds and found that
under the invertible assumption of the Dirac family along the fibers, the
adiabatic limit of the eta invariant of a Dirac operator on the total space is expressible in terms of a canonically
constructed differential form, $\tilde{\eta}$, on the base space.
Later, Dai \cite{MR1088332} extended this result to the case when the kernel of the Dirac family forms a vector bundle
over the base manifold.

This eta form of Bismut-Cheeger, $\tilde{\eta}$, is the higher degree version of the eta invariant $\eta$, i.e.,
it is exactly the
boundary correction term in the family index theorem for manifolds with boundary \cite{MR1042214,MR1052337,MR1472895}.
When the base space is a point, the eta form of Bismut-Cheeger is just the eta invariant of Atiyah-Patodi-Singer.
On the other hand, by \cite{MR2273508,MR966608,MR1088332},
when the dimension of the fibers are even,
the eta form serves as
a canonically constructed transgression  between the Chern character of the family index
and Bismut's explicit local index representative \cite{Bismut1985} of it.
We can also see it later by taking $g=1$ in (\ref{i18}).

Recently,
in the study of differential $K$-theory, the Bismut-Cheeger eta form naturally appears in the geometric model constructed by
Bunke and Schick \cite{Bunke2009} as a key ingredient. Moreover, the results in \cite{Bunke2009} are highly relied on the properties
of the eta form. In particular, the well-defined property of the push-forward map is based on
a formula about the functoriality of eta forms proved by Bunke and Ma \cite{MR2072502}, which is a family version of
\cite{MR966608}.
In \cite{Bunkea}, Bunke and Schick extend their geometric model to the orbifold case. It can also be regarded
as a geometric model for the equivariant differential $K$-theory for a finite group. Thus the equivariant
eta form appears naturally here and this motivates us to understand systematically the equivariant eta form.

In this paper, we define first the equivariant eta form
 when the fibration admits a fiberwise compact Lie group action
and establish a formula about the functoriality of equivariant eta forms which extends \cite[Theorem 5.11]{MR2072502} and \cite{MR966608}
to our case.
Note that Bunke-Ma in \cite{MR2072502} worked for the eta form associated to flat vector bundles,
and many analytic arguments are only sketched. Here we work on the equivariant situation,
thus we need to combine the equivariant local index technique to the different functional analysis technique in
analytic localization developed by Bismut and his collaborators
\cite{MR1305280,MR1316553,Bismut1997,MR1188532,MR2097553,MR1800127,MR1942300}.
We take this opportunity to give also the details of the analytic arguments missed in Bunke-Ma \cite{MR2072502}.

Let $\pi:W\rightarrow S$ be a smooth submersion of smooth manifolds
with closed oriented fiber $Z$, with $\dim Z=n$. Let $TZ=TW/S$ be the relative tangent bundle
to the fibers $Z$ with Riemannian metric $g^{TZ}$ and $T^HW$ be a horizontal subbundle of $TW$, such that $TW=T^HW\oplus TZ$.
Let $\nabla^{TZ}$ be the Euclidean connection on $TZ$ defined in (\ref{e01014}).
We assume that $TZ$ has a $\mathrm{Spin}^c$ structure. Let $L_Z$ be the complex line bundle associated to the
$\mathrm{Spin}^c$ structure of $TZ$ with a Hermitian metric $h^{L_Z}$ and a Hermitian connection $\nabla^{L_Z}$
(see \cite[Appendix D]{MR1031992}).

Let $G$ be a compact Lie group which acts fiberwisely on $W$ and as identity on $S$. We assume that the action
of $G$ preserves the $\mathrm{Spin}^c$ structure of $TZ$ and all metrics and connections are $G$-invariant.
Let $(E, h^E)$ be a $G$-equivariant Hermitian vector bundle over $W$ with a $G$-invariant Hermitian connection $\nabla^E$.
Let $D^Z$ be the fiberwise Dirac operator defined in (\ref{e01029}) and $B_t$ be the Bismut superconnection defined
in (\ref{e01041}).
For $\alpha\in \Omega^i(S)$, the differential form on $S$ with degree $i$, set
\begin{align}
\psi_S(\alpha)=\left\{
  \begin{array}{ll}
    \left(\frac{1}{2\pi\sqrt{-1}}\right)^{\frac{i}{2}}\cdot \alpha, & \hbox{if $i$ is even;} \\
    \frac{1}{\sqrt{\pi}}\left(\frac{1}{2\pi\sqrt{-1}}\right)^{\frac{i-1}{2}}\cdot \alpha, & \hbox{if $i$ is odd.}
  \end{array}
\right.
\end{align}
We define now the equivariant eta form (cf. (\ref{e01104}) and Definition \ref{e01083}).
\begin{defn}
Assume that $\dim \ker D^Z$ is locally constant on $S$.
For any $g\in G$, the equivariant eta form of Bismut-Cheeger is defined by
\begin{multline}\label{i17}
\tilde{\eta}_g(T^HW, g^{TZ}, h^{L_Z}, h^{E}, \nabla^{L_Z}, \nabla^{E})
\\
:=
\left\{
  \begin{aligned}
   &\int_0^{\infty} \frac{1}{2\sqrt{-1}\sqrt{\pi}}\psi_S \tr_s\left[g\frac{\partial B_t}{\partial t}\exp(-B_t^2)\right] dt
   \in \Omega^{\mathrm{odd}}(S),
   & \hbox{if $n$ is even;} \\
   & \int_0^{\infty}\frac{1}{\sqrt{\pi}}\psi_S \tr^{\mathrm{even}}\left[g\frac{\partial B_t}{\partial t}\exp(-B_t^2)\right] dt
   \in \Omega^{\mathrm{even}}(S),
   & \hbox{if $n$ is odd.}
  \end{aligned}
\right.
\end{multline}
\end{defn}
The regularities of the integral in the right hand side of (\ref{i17}) are proved in Section \ref{s0104}.
Let $W^g$ be the fixed point set of $g$ on $W$. Then $W^g$ is a submanifold of $W$ and the restriction of $\pi$ on $W^g$ gives a fibration
$\pi:W^g\rightarrow S$ with fiber $Z^g$.
From Proposition \ref{e01072}, the fiber $Z^g$ is naturally oriented.
Furthermore, it verifies the following transgression.
\begin{multline}\label{i18}
d^S\tilde{\eta}_g(T^HW,g^{TZ}, h^{L_Z}, h^{E}, \nabla^{L_Z}, \nabla^E)
\\
=\left\{
  \begin{aligned}
    &\int_{Z^g}\widehat{\mathrm{A}}_g(TZ,\nabla^{TZ})\wedge
\ch_g(L_Z^{1/2}, \nabla^{L_Z^{1/2}})\wedge \ch_g(E, \nabla^E)\\
  &\quad\quad\quad\quad\quad\quad\quad\quad\quad\quad-\ch_g(\ker D^Z, \nabla^{\ker D^Z}), & \hbox{if $n$ is even;} \\
    &\int_{Z^g}\widehat{\mathrm{A}}_g(TZ,\nabla^{TZ})\wedge
\ch_g(L_Z^{1/2}, \nabla^{L_Z^{1/2}})\wedge \ch_g(E, \nabla^E),& \hbox{if $n$ is odd.}
  \end{aligned}
\right.
\end{multline}
For the definition of characteristic forms in (\ref{i18}), see (\ref{e01051}), (\ref{e01132}) and (\ref{e01138}).

By (\ref{i17}), the equivariant eta form depends on the geometric data ($T^HW, g^{TZ},$ $h^{L_Z},$ $h^E,$$\nabla^{L_Z}, \nabla^E$).
When the geometric data vary, we have the anomaly formula for the equivariant eta forms.

\begin{thm}\label{i01}
Assume that there exists a smooth path connecting $(T^HW,g^{TZ},h^{L_Z}, h^E,$ $\nabla^{L_Z}, \nabla^E)$
and $(T^{'H}W,g^{'TZ},h^{'L_Z}, h^{'E},\nabla^{'L_Z}, \nabla^{'E})$ such that the dimension of the kernel of the Dirac family is
locally constant
(see Assumption \ref{e01146}).

i) When $n$ is odd, modulo exact forms on $S$, we have
\begin{multline}
\tilde{\eta}_g(T^{'H}W,g^{'TZ},h^{'L_Z}, h^{'E},\nabla^{'L_Z}, \nabla^{'E})-\tilde{\eta}_g(T^HW,g^{TZ},h^{L_Z}, h^E,\nabla^{L_Z}, \nabla^E)
\\
=\int_{Z^g}\widetilde{\widehat{\mathrm{A}}}_g(TZ, \nabla^{TZ}, \nabla^{'TZ})\wedge \ch_g(L_Z^{1/2}, \nabla^{L_Z^{1/2}})
\wedge \ch_g(E, \nabla^{E})
\\
+\int_{Z^g}\widehat{\mathrm{A}}_g(TZ, \nabla^{'TZ})\wedge \widetilde{\ch}_g(L_Z^{1/2}, \nabla^{L_Z^{1/2}},\nabla^{'L_Z^{1/2}})
\wedge \ch_g(E, \nabla^{E})
\\
+\int_{Z^g}\widehat{\mathrm{A}}_g(TZ, \nabla^{'TZ})\wedge \ch_g(L_Z^{1/2}, \nabla^{'L_Z^{1/2}})
\wedge \widetilde{\ch}_g(E, \nabla^{E},\nabla^{'E}).
\end{multline}

ii) When $n$ is even,
modulo exact forms on $S$, we have
\begin{multline}
\tilde{\eta}_g(T^{'H}W,g^{'TZ},h^{'L_Z}, h^{'E},\nabla^{'L_Z}, \nabla^{'E})-\tilde{\eta}_g(T^HW,g^{TZ},h^{L_Z}, h^E,\nabla^{L_Z}, \nabla^E)
\\
=\int_{Z^g}\widetilde{\widehat{\mathrm{A}}}_g(TZ, \nabla^{TZ}, \nabla^{'TZ})\wedge \ch_g(L_Z^{1/2}, \nabla^{L_Z^{1/2}})
\wedge \ch_g(E, \nabla^{E})
\\
+\int_{Z^g}\widehat{\mathrm{A}}_g(TZ, \nabla^{'TZ})\wedge \widetilde{\ch}_g(L_Z^{1/2}, \nabla^{L_Z^{1/2}},\nabla^{'L_Z^{1/2}})
\wedge \ch_g(E, \nabla^{E})
\\
+\int_{Z^g}\widehat{\mathrm{A}}_g(TZ, \nabla^{'TZ})\wedge \ch_g(L_Z^{1/2}, \nabla^{'L_Z^{1/2}})
\wedge \widetilde{\ch}_g(E, \nabla^{E},\nabla^{'E})
\\
-\widetilde{\ch}_g(\ker D^Z, \nabla^{\ker D^Z},\nabla^{'\ker D^Z}).
\end{multline}
\end{thm}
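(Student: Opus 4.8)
The plan is to reduce the anomaly formula to the transgression formula (\ref{i18}) over an enlarged base, so that the only genuinely analytic ingredient is (\ref{i18}) itself, whose proof already contains the estimates of Section \ref{s0104}. Write $I=[0,1]$ with coordinate $u$, let $p\colon S\times I\to S$ be the projection and $i_0,i_1\colon S\hookrightarrow S\times I$ the inclusions at $u=0,1$. By Assumption \ref{e01146} there is a smooth path of geometric data $(T^H_uW,g^{TZ}_u,h^{L_Z}_u,h^E_u,\nabla^{L_Z}_u,\nabla^E_u)$, $u\in I$, joining the two given ones along which $\dim\ker D^Z$ is locally constant; after reparametrising in $u$ I would arrange the path to be constant near $u=0$ and near $u=1$. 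Together with the obvious $I$-direction horizontal distribution, these data define $G$-invariant geometric data on $\pi_I\colon W\times I\to S\times I$ with fibre $Z$, for which $\dim\ker D^Z$ is locally constant on $S\times I$; let $\widetilde\eta_g$ denote the resulting equivariant eta form (\ref{i17}) on $S\times I$, which is well defined by Section \ref{s0104}.

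First I would verify that $i_j^*\widetilde\eta_g$ equals the equivariant eta form of $\pi\colon W\to S$ for the data indexed by $j$, for $j=0,1$. Near $u=j$ the Bismut superconnection of $\pi_I$, hence $\partial_t B_t$ and $\exp(-B_t^2)$, differs from that of $\pi$ with the $j$-th data only by terms containing $du$, which $i_j^*$ kills; since $i_j^*$ is an algebra morphism commuting with the fibrewise functional calculus, it passes through the supertrace and the $t$-integral in (\ref{i17}). Next, applying (\ref{i18}) to $\pi_I$, whose fixed-point fibration of $g$ is $W^g\times I\to S\times I$ with fibre $Z^g$, gives for $n$ even
\begin{multline*}
d^{S\times I}\widetilde\eta_g=\int_{Z^g\times I}\widehat{\mathrm{A}}_g(TZ,\widetilde\nabla^{TZ})\wedge\ch_g(L_Z^{1/2},\widetilde\nabla^{L_Z^{1/2}})\wedge\ch_g(E,\widetilde\nabla^E)\\-\ch_g(\ker D^Z,\widetilde\nabla^{\ker D^Z}),
\end{multline*}
and the same without the last term for $n$ odd, where the tildes denote the connections built from the path over $W\times I$ (resp.\ over $S\times I$ for $\ker D^Z$).

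Now I would integrate these identities over the fibre $I$ of $p$. Decomposing $d^{S\times I}=d^S+du\wedge\partial_u$ and applying Stokes' theorem for the fibration-with-boundary $p$ yields $\int_I d^{S\times I}\widetilde\eta_g=\pm\bigl(i_1^*\widetilde\eta_g-i_0^*\widetilde\eta_g\bigr)-d^S\!\int_I\widetilde\eta_g$, so modulo exact forms on $S$ the left side equals $\pm$ the difference of the two equivariant eta forms of $\pi$. For the right side I would invoke the multiplicativity of the Chern--Simons forms: fibre integration over $I$ of a $d^{S\times I}$-closed product of the above equivariant characteristic forms equals, modulo exact forms on $S$, the telescoping sum in which exactly one factor is replaced by its transgression form and the remaining factors are restricted to an endpoint of $I$, namely $\int_I\widehat{\mathrm{A}}_g(TZ,\widetilde\nabla^{TZ})=\widetilde{\widehat{\mathrm{A}}}_g(TZ,\nabla^{TZ},\nabla^{'TZ})$, $\int_I\ch_g(L_Z^{1/2},\widetilde\nabla^{L_Z^{1/2}})=\widetilde{\ch}_g(L_Z^{1/2},\nabla^{L_Z^{1/2}},\nabla^{'L_Z^{1/2}})$, and likewise for $E$ and for $\ker D^Z$, the endpoint assignments being forced by the ordering of the factors. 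This produces exactly the three $\int_{Z^g}$ terms of the statement and, in the even case, the extra term $-\widetilde{\ch}_g(\ker D^Z,\nabla^{\ker D^Z},\nabla^{'\ker D^Z})$; matching the signs coming from Stokes, from $\psi_S$ in (\ref{i17}), and from the conventions of (\ref{e01051}), (\ref{e01132}), (\ref{e01138}) completes the proof.

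I expect the main obstacle to be not the above skeleton but the uniformity needed to make it rigorous: showing that $i_j^*\widetilde\eta_g$ is genuinely the eta form of $\pi$ requires control of the fibrewise heat operator and its large-$t$ behaviour uniformly in $u$ --- precisely the analytic-localization estimates of Section \ref{s0104} re-entering --- and one must license the interchange of fibre integration over $I$ with $d^{S\times I}$ and with the $\int_0^\infty dt$ defining $\widetilde\eta_g$. Granting (\ref{i18}) for $\pi_I$ together with these interchanges, what remains is the bookkeeping of Chern--Simons forms and signs.
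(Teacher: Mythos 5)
Your proposal is essentially the paper's own argument, repackaged: forming the eta form over $S\times I$, applying the transgression formula~(\ref{i18}) there, and integrating over the fibre $I$ is precisely what the paper does through the component decomposition~(\ref{e01112}) of $\psi_S\widetilde{\tr}[g\exp(-\widehat{\widetilde{B}}\,^2)]$, the closedness relation~(\ref{e01113}), the double integral~(\ref{e01114}), and the endpoint evaluations~(\ref{e01143})--(\ref{e01144}). The fibre-integration/Stokes step and the telescoping of the Chern--Simons forms are only notational reorganizations of those same displays, so the approaches coincide.
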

For the definitions of the Chern-Simons forms $\widetilde{\widehat{\mathrm{A}}}_g(TZ, \nabla^{TZ}, \nabla^{'TZ})$,
$\widetilde{\ch}_g(L_Z^{1/2}$, $\nabla^{L_Z^{1/2}},$ $\nabla^{'L_Z^{1/2}})$ and
$\widetilde{\ch}_g(\ker D^Z, \nabla^{\ker D^Z},\nabla^{'\ker D^Z})$ used here, see (\ref{e01127}).

For the reminder of this introduction, we shall consider the composition of two submersions.

Let $W$, $V$, $S$ be smooth manifolds. Let $\pi_1: W\rightarrow V$, $\pi_2: V\rightarrow S$ be smooth submersions with closed oriented
fiber $X$, $Y$. Then $\pi_3=\pi_2\circ \pi_1: W\rightarrow S$ is a smooth submersion with closed oriented fiber $Z$.
We have the diagram of fibrations:

\begin{center}\label{i3}
\begin{tikzpicture}[>=angle 90]
\matrix(a)[matrix of math nodes,
row sep=2em, column sep=2.5em,
text height=1.5ex, text depth=0.25ex]
{X&Z&W\\
&Y&V&S.\\};
\path[->](a-1-1) edge (a-1-2);
\path[->](a-1-2) edge node[left]{$\pi_1$} (a-2-2);
\path[->](a-1-2) edge (a-1-3);
\path[->](a-2-2) edge (a-2-3);
\path[->](a-1-3) edge node[left]{$\footnotesize{\pi_1}$} (a-2-3);
\path[->](a-2-3) edge node[above]{$\footnotesize{\pi_2\ \ }$} (a-2-4);
\path[->](a-1-3) edge node[above]{$\footnotesize{\ \pi_3}$} (a-2-4);
\end{tikzpicture}
\end{center}

Let $TX$, $TY$, $TZ$ be the relative tangent bundles.
We assume that $TX$ and $TY$ have the $\mathrm{Spin}^c$ structures with complex line bundles $L_X$ and $L_Y$ respectively.
Then $TZ$ have a $\mathrm{Spin}^c$ structure with a complex line bundle $L_Z$.
We take the geometric data ($T_1^HW, g^{TX}, h^{L_X}, \nabla^{L_X}$),
($T_2^HV, g^{TY}, h^{L_Y}, \nabla^{L_Y}$) and
($T_3^HW$, $g^{TZ}$, $h^{L_Z}$, $\nabla^{L_Z}$) with respect to
submersions  $\pi_1$, $\pi_2$ and $\pi_3$ respectively.
Let $\,^0\nabla^{TZ}$, $\,^0\nabla^{L_Z}$ be the connections on $TZ$, $L_Z$ defined in (\ref{e02114}), (\ref{e02005}).

Let $G$ be a compact Lie group which acts on $W$ such that for any $g\in G$,
$g\cdot\pi_1=\pi_1\cdot g$ and $\pi_3\cdot g=\pi_3$.
We assume that the action of $G$ preserves the $\mathrm{Spin}^c$ structures of $TX$, $TY$, $TZ$
and all metrics and connections are $G$-invariant.
Let $(E, h^E)$ be an equivariant Hermitian vector bundle over $W$ with equivariant Hermitian connection $\nabla^E$.
For any $g\in G$, let $T_1^H(W|_{V^g})=T_1^HW|_{V^g}\cap T(W|_{V^g})$ be the horizontal subbundle of $T(W|_{V^g})$.

The purpose of this paper is to establish the following result, which we state as Theorem \ref{e02084}.
\begin{thm}\label{i20}
If Assumption \ref{e02116} and \ref{e02119} hold, for any $g\in G$,
we have the following identity in $\Omega^*(S)/d^S\Omega^*(S)$,
\begin{multline}\label{i21}
\tilde{\eta}_{g}(T_3^HW,g^{TZ},h^{L_Z}, h^E,\nabla^{L_Z},\nabla^E)=\tilde{\eta}_g(T_2^HV,g^{TY},h^{L_Y},
h^{\ker D^X},\nabla^{L_Y},\nabla^{\ker D^X})
\\
+\int_{Y^g}{\rm \widehat{A}}_g(TY,\nabla^{TY})\wedge\ch_g(L_Y^{1/2}, \nabla^{L_Y^{1/2}})\wedge
\tilde{\eta}_g(T_1^H(W|_{V^g}),g^{TX},h^{L_X}, h^E, \nabla^{L_X}, \nabla^E)
\\
-\int_{Z^g}{\rm \widetilde{{\widehat{A}}}}_g(TZ,\nabla^{TZ},\,^0\nabla^{TZ})\wedge\ch_g(L_Z^{1/2}, \nabla^{L_Z^{1/2}})\wedge \ch_g(E,\nabla^E)
\\
-\int_{Z^g}{\rm {\widehat{A}}}_g(TZ,\,^0\nabla^{TZ})\wedge\widetilde{\ch}_g(L_Z^{1/2}, \nabla^{L_Z^{1/2}}, \,^0\nabla^{L_Z^{1/2}})\wedge \ch_g(E,\nabla^E).
\end{multline}
\end{thm}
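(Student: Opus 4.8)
The proof is the $G$-equivariant version of the adiabatic-limit argument of Bismut--Cheeger \cite{MR966608}, Dai \cite{MR1088332} and Bunke--Ma \cite{MR2072502}: one blows up the $Y$-directions of the fibre $Z$ and combines the equivariant local index technique near the fixed-point set $W^g$ with Bismut's analytic localization. First, using the horizontal structures identify $TZ\cong TX\oplus\pi_1^*TY$, and for $\varepsilon\in(0,1]$ put $g^{TZ}_\varepsilon=g^{TX}\oplus\varepsilon^{-2}g^{TY}$; let $\tilde{\eta}_{g,\varepsilon}$ be the equivariant eta form (\ref{i17}) of $\pi_3$ computed with the datum obtained from that of the theorem by replacing $g^{TZ}$ with $g^{TZ}_\varepsilon$ and $\nabla^{L_Z}$ with the adapted connection $\,^0\nabla^{L_Z}$ of (\ref{e02005}), keeping $(h^E,\nabla^E)$. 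The Euclidean connection $\nabla^{TZ}_\varepsilon$ of (\ref{e01014}) attached to $g^{TZ}_\varepsilon$ tends to $\,^0\nabla^{TZ}$ of (\ref{e02114}) as $\varepsilon\to0$. Applying the anomaly formula of Theorem \ref{i01} along a path from the datum of the theorem to this $\varepsilon$-datum and letting $\varepsilon\to0$, the resulting Chern--Simons fibre integrals become precisely the last two terms of (\ref{i21}); the $\widetilde{\ch}_g(\ker D^Z)$-term that Theorem \ref{i01} contributes when $n$ is even is absorbed, via Assumptions \ref{e02116} and \ref{e02119}, into the twisted eta form on the right of (\ref{i21}). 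It therefore remains to prove, in $\Omega^*(S)/d^S\Omega^*(S)$,
\begin{multline*}
\lim_{\varepsilon\to0}\tilde{\eta}_{g,\varepsilon}=\tilde{\eta}_g(T_2^HV,g^{TY},h^{L_Y},h^{\ker D^X},\nabla^{L_Y},\nabla^{\ker D^X})
\\
+\int_{Y^g}\widehat{\mathrm{A}}_g(TY,\nabla^{TY})\wedge\ch_g(L_Y^{1/2},\nabla^{L_Y^{1/2}})\wedge\tilde{\eta}_g(T_1^H(W|_{V^g}),g^{TX},h^{L_X},h^E,\nabla^{L_X},\nabla^E).
\end{multline*}

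\textbf{The rescaled superconnection.} Let $B_{t,\varepsilon}$ be the Bismut superconnection (\ref{e01041}) of $\pi_3$ associated with the $\varepsilon$-datum. After the substitution $t\mapsto t/\varepsilon^2$ and conjugation by the natural rescaling operators in the Clifford variables, $B_{t,\varepsilon}^2$ acquires an expansion whose leading term as $\varepsilon\to0$ is $t\,(D^X)^2$ plus the square of the Bismut superconnection of $\pi_2$ twisted by $(\ker D^X,\nabla^{\ker D^X})$. This is the infinitesimal form of Dai's splitting of the $Z$-spectrum into an ``$X$-large'' part, governed by $D^X$, and a ``$Y$-small'' part, governed by the Dirac operator along $Y$ twisted by $\ker D^X$.

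\textbf{Uniform estimates.} The analytic heart is a package of bounds on $\psi_S\tr_s\left[g\,\frac{\partial B_{t,\varepsilon}}{\partial t}\exp(-B_{t,\varepsilon}^2)\right]$ (and its $\tr^{\mathrm{even}}$ analogue when $n$ is odd) that are uniform in $\varepsilon$: (a) as $t\to0$, uniform estimates from the Getzler--Bismut rescaling performed in a tubular neighbourhood of $W^g$, the rescaled metrics $g^{TZ}_\varepsilon$ staying uniformly controlled in the directions transverse to $W^g$; (b) as $t\to\infty$, uniform exponential decay from a uniform-in-$\varepsilon$ spectral gap of the rescaled fibrewise operators, which follows from Assumptions \ref{e02116} and \ref{e02119} together with Dai's estimates on the small eigenvalues of $D^Z$ in the adiabatic limit; (c) for $t$ in a fixed compact subinterval of $(0,\infty)$, convergence of the integrand as $\varepsilon\to0$ to the corresponding integrand of the right-hand side of the identity above. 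Running Bismut's analytic localization and the equivariant Getzler rescaling simultaneously and uniformly -- which is what allows (a)--(c) to coexist -- is the step I expect to be the main obstacle; it is also precisely where the analytic arguments in Bunke--Ma are only sketched.

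\textbf{Passage to the limit.} Granting these estimates, dominated convergence in $t$ together with the identification in the previous step yields the required identity: the ``$Y$-small'' part of the integrand converges, after integrating in $t$, to the eta form of $\pi_2$ twisted by $(\ker D^X,\nabla^{\ker D^X})$, while the ``$X$-large'' part converges, after integrating in $t$ and then along the fibre $X^g$, to $\tilde{\eta}_g(T_1^H(W|_{V^g}),g^{TX},h^{L_X},h^E,\nabla^{L_X},\nabla^E)$, which is in turn integrated over $Y^g$ against $\widehat{\mathrm{A}}_g(TY,\nabla^{TY})\wedge\ch_g(L_Y^{1/2},\nabla^{L_Y^{1/2}})$. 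Here one uses that $W^g=(W|_{V^g})^g$ fibres over $V^g$ with fibre $X^g$ and that $V^g$ fibres over $S$ with fibre $Y^g$, so that $Z^g$ fibres over $Y^g$ with fibre $X^g$ and $\int_{Z^g}=\int_{Y^g}\int_{X^g}$, and one invokes the transgression (\ref{i18}) for $\pi_1$ to match the $\widehat{\mathrm{A}}_g\wedge\ch_g$ factors emerging from the $X$-integral. Combining this with the reduction in the first step gives (\ref{i21}).
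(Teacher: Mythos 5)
Your overall plan --- reduce to a compatible product geometry via the anomaly formula, then compute the adiabatic limit of $\tilde{\eta}_{g,\varepsilon}$ directly --- is a genuinely different route from the paper's. The paper introduces a two-parameter fundamental $1$-form $\beta_g = du\wedge\beta_g^u + dT\wedge\beta_g^T$ on $\R_{+,T}\times\R_{+,u}$, extracted from the closed form $\psi_S\widetilde{\tr}[g\exp(-\widehat{B}^2)]$, and applies Stokes' theorem on a rectangular contour; after the ordered limits $A\to\infty$, $T_0\to\infty$, $\varepsilon\to0$ in Section~\ref{s0303}, the four sides produce the four terms of~(\ref{i21}). Your one-parameter scheme is closer to Dai's original adiabatic-limit argument, and the bookkeeping is consistent: the Chern--Simons limits in your reduction step match the last two terms and the eta forms you aim to extract match the first two.

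The genuine gap is in how you propose to compute $\lim_{\varepsilon\to0}\tilde{\eta}_{g,\varepsilon}$. Items (a)--(c) as stated --- uniform integrability near $t=0$ and $t=\infty$ plus pointwise convergence of the integrand on compact $t$-intervals --- would by dominated convergence give \emph{only} $\lim_{\varepsilon\to0}\tilde{\eta}_{g,\varepsilon}=\tilde{\eta}_g(T_2^HV,g^{TY},h^{L_Y},h^{\ker D^X},\nabla^{L_Y},\nabla^{\ker D^X})$, since for each fixed $t>0$ the $\varepsilon\to0$ heat operator localizes on $\ker D^X$. They cannot produce the term $\int_{Y^g}\widehat{\mathrm{A}}_g(TY,\nabla^{TY})\wedge\ch_g(L_Y^{1/2},\nabla^{L_Y^{1/2}})\wedge\tilde{\eta}_g(T_1^H(W|_{V^g}),\ldots)$, which is a boundary-layer contribution concentrated in the regime $t\sim\varepsilon^2$, where the integrand does \emph{not} converge pointwise. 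Extracting it requires rescaling $t=\varepsilon^2 s$, separating $t\lesssim\varepsilon^2$ from $t\gtrsim\varepsilon^2$, and running the equivariant Getzler rescaling normal to $V^g$ simultaneously with the fibre contraction; this is exactly what the paper's Theorem~\ref{e03022}~ii)--iv) accomplishes, and it is the hard analytic content, not a byproduct of dominated convergence. Your ``passage to the limit'' paragraph hints at the correct splitting into ``$Y$-small'' and ``$X$-large'' pieces, but the uniform-estimate hypotheses you state are incompatible with the presence of the second term. A related imprecision: after $t\mapsto t/\varepsilon^2$ the coefficient of $(D^X)^2$ in $B_{t,\varepsilon}^2$ is $t\varepsilon^{-2}$, not $t$, and no Clifford rescaling removes the $\varepsilon^{-2}$ --- it is precisely the mechanism that forces localization onto $\ker D^X$. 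Since you yourself flag the joint $(t,\varepsilon)$ asymptotics as the main obstacle, the proposal at present is an outline of the correct formula rather than a proof.
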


Note that if $\ker D^Z$ is not locally constant, we can also construct an equivariant eta form when $\ind (D^Z)=0\in K_G^*(S)$
using the spectral section technique \cite{MR1472895}.
The functoriality of equivariant eta forms in this case is almost the same as Theorem \ref{i20}. We will
construct the equivariant differential $K$-theory and the push-forward map by equivariant eta forms with equivariant spectral section
in a comparative paper \cite{liu}
as applications of the results in this paper.

This paper is organized as follows.

In Section 1, we define the equivariant eta form
and prove the anomaly formula Theorem \ref{i01}.
In Section 2, we state our main result Theorem \ref{i20}.
In Section 3, we use some intermediate results, whose proofs are delayed to Section 4-8, to prove Theorem \ref{i20}.
Section 4-8 are devoted to the proofs of the intermediate results stated in Section 3.

To simplify the notations, we use the Einstein summation convention in this paper.

In the whole paper, we use the superconnection formalism of Quillen \cite{MR790678}. If $A$ is a $\Z_2$-graded algebra, and if
$a,b\in A$, then we will note $[a,b]$ as the supercommutator of $a$, $b$. If $B$ is another $\Z_2$-graded algebra,
we will note $A\widehat{\otimes}B$ as the $\Z_2$-graded tensor product.
If $A$, $B$ are not $\Z_2$-graded, sometimes, we also denote $A\widehat{\otimes}B$ by considering the whole algebra
as the even part.

For a trace class operator $P$ acting on a space $E$, if $E=E_+\oplus E_-$ is a $\Z_2$-graded space,
we denote by
\begin{align}\label{i22}
\tr_s[P]=\tr|_{E_+}[P]-\tr|_{E_-}[P].
\end{align}
If $\tr[P]$ takes value in differential forms, we denote by $\tr^{\mathrm{odd/even}}[P]$ the part of $\tr[P]$
which takes value in odd or even forms. We denote by
\begin{align}\label{i16}
\widetilde{\tr}[P]=
\left\{
  \begin{array}{ll}
    \tr_s[P], & \hbox{if $E$ is $\Z_2$-graded;} \\
    \tr^{\mathrm{odd}}[P], & \hbox{if $E$ is not $\Z_2$-graded.}
  \end{array}
\right.
\end{align}

For a vector bundle $\pi: W\rightarrow S$, we will often use the integration of the differential forms along the fiber $Z$ in this paper.
Since the fibers may be odd dimensional,
we must make precise our sign conventions.
If $\alpha$ is a differential form on $W$ which in local coordinates is given by
\begin{align}\label{e01135}
\alpha=dy^{p_1}\wedge \cdots\wedge dy^{p_q}\wedge \beta(x)dx^1\wedge\cdots\wedge dx^n,
\end{align}
we set
\begin{align}\label{e01136}
\int_Z\alpha=dy^{p_1}\wedge \cdots\wedge dy^{p_q} \int_Z\beta(x)dx^1\wedge\cdots\wedge dx^n.
\end{align}

\section{Equivariant eta form}\label{s01}

The purpose of this section is to  define the equivariant eta form and prove the anomaly formula.
In Section 1.1, we recall elementary results on Clifford algebras of arbitrary dimension.
In Section 1.2, we describe the geometry of fibration and introduce the Bismut superconnection and Bismut's
Lichnerowicz formula (cf. \cite{MR2273508, Bismut1985}). In Section 1.3, we explain the equivariant family local
index theorem. In Section 1.4, we define the
equivariant eta form when the dimension of the kernel of Dirac operators is locally constant.
In Section 1.5, we prove the anomaly formula.
In this section, we follow mainly from \cite{MR966608}.

\subsection{Clifford algebras}\label{s0101}

Let $C(V^n)$ denote the complex Clifford algebra of the real inner product space, $V^n$. Related to an orthonormal basis, $\{e_i\}$,
$C(V^n)$ is defined by the relations
\begin{align}\label{e01001}
e_ie_j+e_je_i=-2\delta_{ij}.
\end{align}
To avoid ambiguity, we denote by $c(e_i)$ the element of $C(V^n)$ corresponding to $e_i$.
We consider the group $\mathrm{Spin}_n^c$ as a multiplicative subgroup of the group of units of $C(V^n)$.
For the definition and the properties of the group $\mathrm{Spin}_n^c$, see \cite[Appendix D]{MR1031992}.

As a vector space,
\begin{align}\label{e01152}
C(V^n)\simeq \Lambda (V^n).
\end{align}
The Clifford multiplication on $\Lambda (V^n)$
is exterior multiplication minus interior multiplication. The elements
$c(e_I)=c(e_{i_1})\cdots c(e_{i_j})$, $I=\{i_1, \cdots, i_j\}\subset \{1, \cdots, n\}$, $i_1<\cdots<i_j$,
form a basis for $C(V^n)$. Put $|I|=j$. The subspace $C_0(V^n)$,  $C_1(V^n)$ spanned by those $c(e_I)$ with
$|I|$ even (resp. odd) give $C(V^n)$ the structure of a $\Z_2$-graded algebra.

For $n=2k$, even, up to isomorphism, $C(V^n)$ has a unique irreducible module, $\mS_n$, which has dimension $2^k$
and is $\Z_2$-graded. In fact, $C(V^{2k})\simeq \End(\mS_{2k})$. If $V$ is oriented, the element
\begin{align}\label{e01004}
\tau=
    (\sqrt{-1})^kc(e_1)\cdots c(e_{2k})
\end{align}
is independent of the choice $\{e_i\}$ and satisfies
\begin{align}\label{e01006}
\tau^2=1.
\end{align}
Set $\mS_{\pm,n}=\{s\in \mS_n : \tau s=\pm s\}$.
We write $\tr_s[\,\cdot\,]$ for the supertrace of $C(V^{2k})$ on $\mS_n$ defined as (\ref{i22}).

If $n=2k-1$ is odd, $C(V^n)$ has two inequivalent irreducible modules, each of dimension $2^{k-1}$. For arbitrary $n$,
\begin{align}\label{e01007}
c(e_j)\rightarrow c(e_j)c(e_{n+1})
\end{align}
defines an isomorphism, $C(V^n)\simeq C_0(V^n\oplus \R)$. Thus, for $n$ odd, we can regard $\mS_{\pm,n+1}$ for $V^n\oplus \R$
as (inequivalent) modules over $C(V^n)$.
However, they are equivalent when restricted to $\mathrm{Spin}_n^c$.
For $V^{2k-1}$ oriented, the notation $\tr[\,\cdot\,]$ refers to the representation
$\mS_{+,2k}$.

By \cite[Lemma 1.22]{MR1042214}, if $n=2k$ is even, then
\begin{align}\label{e01008}
\tr_s[c(e_I)]=
\left\{
  \begin{array}{ll}
    (-\sqrt{-1})^{k}2^k, & \hbox{if $I=\{1,\cdots, 2k\}$;} \\
    0, & \hbox{if $I\neq\{1,\cdots, 2k\}$.}
  \end{array}
\right.
\end{align}
If $n=2k-1$ is odd and $|I|\geq 1$,
\begin{align}\label{e01090}
\tr[c(e_I)]=
\left\{
  \begin{array}{ll}
    (-\sqrt{-1})^{k}2^{k-1}, & \hbox{if $I=\{1,\cdots, 2k-1\}$;} \\
    0, & \hbox{if $I\neq\{1,\cdots, 2k-1\}$.}
  \end{array}
\right.
\end{align}

By (\ref{e01008}) and (\ref{e01090}), for $n$ odd, the trace $\tr$ behaves on the odd elements of $C(V^n)$
in exactly the same way as the supertrace $\tr_s$ on the even elements of $C(V^n)$ for $n$ even,
i.e. we must saturate all the elements $c(e_1), \cdots, c(e_n)$ to get a non-zero trace or supertrace.
It will be of utmost importance in the computations of the local index in Section 6. We set
\begin{align}\label{e01086}
\widetilde{c}_{V^n}=
\left\{
  \begin{array}{ll}
    \tr_s[c(e_1)\cdots c(e_n)], & \hbox{if $n$ is even;} \\
    \tr[c(e_1)\cdots c(e_n)], & \hbox{if $n$ is odd.}
  \end{array}
\right.
\end{align}

Let $W^m$ be another real inner product space with orthonormal basis $\{f_p\}$. Then as Clifford algebras,
\begin{align}\label{e01087}
C(V^n\oplus W^m)\simeq C(V^n)\widehat{\otimes}C(W^m).
\end{align}
By (\ref{e01008}), (\ref{e01090}) and (\ref{e01086}), we have
\begin{align}\label{e01154}
\widetilde{c}_{V^n\oplus W^m}=\left\{
  \begin{array}{ll}
    2\sqrt{-1}\,\widetilde{c}_{V^n}\cdot \widetilde{c}_{W^m}, & \hbox{if $n$, $m$ are odd;} \\
    \widetilde{c}_{V^n}\cdot \widetilde{c}_{W^m}, & \hbox{if others.}
  \end{array}
\right.
\end{align}

Finally, we note the effect of scaling the inner product $\la\cdot, \cdot\ra$ on $V$.
Fix any inner product, $\la\cdot, \cdot\ra$ and let $C_t(V)$ be the Clifford algebra associated to $t^{-1}\la\cdot, \cdot\ra$.
Then the map $t^{1/2}V\rightarrow V$ provides a natural isomorphism $C_t(V)\simeq C(V)$.
It also provides a natural isomorphism between the orthonormal frames $\{t^{1/2}e_i\}$ for $t^{-1}\la\cdot, \cdot\ra$
and $\{e_i\}$ for $\la\cdot, \cdot\ra$. Thus, the spinor $\mS$ for $\la\cdot, \cdot\ra$ is also an irreducible module for
$C_t(V)$ via the above isomorphism.
In the sequel, if $Z$ is a Riemannian $\mathrm{Spin}^c$ manifold, we will always assume that the space of spinors has been chosen
independent of the scaling parameter of the metric.

\subsection{Bismut superconnection and Lichnerowicz formula}\label{s0102}

Let $\pi:W\rightarrow S$ be a smooth submersion of smooth manifolds with closed oriented fiber $Z$, with $\dim Z=n$.
Let $TZ=TW/S$ be the relative tangent bundle to the fibers $Z$.

Let $T^HW$ be a horizontal subbundle of $TW$ such that
\begin{align}\label{e01010}
TW=T^HW\oplus TZ.
\end{align}
The splitting (\ref{e01010}) gives an identification
\begin{align}\label{e01011}
T^HW\cong \pi^*TS.
\end{align}
Let $P^{TZ}$ be the projection
\begin{align}\label{e01012}
P^{TZ}:TW=T^HW\oplus TZ \rightarrow TZ.
\end{align}

Let $g^{TZ}$, $g^{TS}$ be Riemannian metrics on $TZ$, $TS$. We equip $TW=T^HW\oplus TZ$ with the Riemannian metric
\begin{align}\label{e01013}
g^{TW}=\pi^*g^{TS}\oplus g^{TZ}.
\end{align}

Let $\nabla^{TW}$, $\nabla^{TS}$ be the Levi-Civita connections on $(W, g^{TW})$, $(S, g^{TS})$. Set
\begin{align}\label{e01014}
\nabla^{TZ}=P^{TZ}\nabla^{TW}P^{TZ}.
\end{align}
Then $\nabla^{TZ}$ is a Euclidean connection on $TZ$.
Let $\,^0\nabla^{TW}$ be the connection on $TW=T^HW\oplus TZ$ defined by
\begin{align}\label{e01015}
\,^0\nabla^{TW}=\pi^*\nabla^{TS}\oplus\nabla^{TZ}.
\end{align}
Then $\,^0\nabla^{TW}$ preserves the metric $g^{TW}$ in (\ref{e01013}).
Set
\begin{align}\label{e01017}
S=\nabla^{TW}-\,^0\nabla^{TW}.
\end{align}
Then $S$ is a $1$-form on $W$ with values in antisymmetric elements of $\End(TW)$.
Let $T$ be the torsion of $\,^0\nabla^{TW}$.
By \cite[Theorem 1.9]{Bismut1985}, we know that
$\nabla^{TZ}$, the torsion tensor $T$ and the $(3,0)$ tensor $\la S(\cdot)\cdot,\cdot\ra$ only depend on $(T^HW,g^{TZ})$,
where $\la \cdot, \cdot \ra=g^{TW}(\cdot, \cdot)$.

Let $C(TZ)$ be the Clifford algebra bundle of $(TZ, g^{TZ})$, whose fiber at $x\in W$ is the Clifford algebra
$C(T_xZ)$ of the Euclidean space $(T_xZ, g^{T_xZ})$.
We make the assumption that $TZ$ has a Spin$^c$ structure. Then there exists a complex line bundle $L_Z$
over $W$ such that $\omega_2(TZ)= c_1(L_Z) \mod (2)$.
Let $\mS(TZ,L_Z)$ be the fundamental
complex spinor bundle for $(TZ,L_Z)$, which
has a smooth action of $C(TZ)$ (cf. \cite[Appendix D.9]{MR1031992}).
Locally, the spinor $\mS(TZ,L_Z)$ may be written
as
\begin{align}\label{e01071}
\mS(TZ,L_Z) = \mS(TZ)\otimes L_Z^{1/2},
\end{align}
where $\mS(TZ)$ is the fundamental spinor bundle for the (possibly non-existent) spin
structure on $TZ$, and $L_Z^{1/2}$ is the (possibly non-existent) square root of $L_Z$.
Let $h^{L_Z}$ be the Hermitian metric on $L_Z$ and $\nabla^{L_Z}$ be the Hermitian connection on $(L_Z, h^{L_Z})$.
Let $h^{\mS_Z}$ be the Hermitian metric on $\mS(TZ,L_Z)$ induced by $g^{TZ}$ and $h^{L_Z}$
and $\nabla^{\mS_Z}$ be the connection on $\mS(TZ,L_Z)$ induced by $\nabla^{TZ}$ and $\nabla^{L_Z}$.
Then $\nabla^{\mS_Z}$ is a Hermitian connection on ($\mS(TZ,L_Z), h^{\mS_Z}$).
Moreover, it is a Clifford connection associated to $\nabla^{TZ}$, i.e., for any $U\in TW$, $V\in \cC^{\infty}(W,TZ)$,
\begin{align}\label{e01024}
\left[\nabla_U^{\mS_Z}, c(V)\right]=c\left(\nabla^{TZ}_UV\right).
\end{align}
If $n=\dim Z$ is even, the spinor $\mS(TZ, L_Z)$ is $\Z_2$-graded and the action of $TZ$ exchanges the $\Z_2$-grading.
Let $(E, h^E)$ be a Hermitian vector bundle over $W$, and $\nabla^E$ a Hermitian connection on $(E, h^E)$. Set
\begin{align}
\nabla^{\mS_Z\otimes E}=\nabla^{\mS_Z}\otimes 1+1\otimes \nabla^E.
\end{align}
Then $\nabla^{\mS_Z\otimes E}$ is a Hermitian connection on $(\mS(TZ,L_Z)\otimes E, h^{\mS_Z}\otimes h^E)$.

Let $\{e_i\}$, $\{f_p\}$ be local orthonormal frames of $TZ$, $TS$
and $\{e^i\}$, $\{f^p\}$ be the dual. Let $D^{Z}$ be the fiberwise Dirac operator
\begin{align}\label{e01029}
D^Z=c(e_i)\nabla_{e_i}^{\mS_Z\otimes E}.
\end{align}

For $b\in S$, let $\cE_{Z,b}$ be the set of smooth sections over $Z_b$ of $\mS(TZ, L_Z)\otimes E$. As in \cite{Bismut1985},
we will regard $\cE_Z$ as
 an infinite dimensional fiber bundle over $S$.

Let $dv_Z$ be the Riemannian volume element in the fiber $Z$. For any $b\in S$, $s_1, s_2\in \cE_{Z,b}$,
we can define the scalar product
\begin{align}\label{e01025}
\la s_1, s_2 \ra_{0}=\int_{Z_b}\la s_1(x), s_2(x) \ra dv_Z.
\end{align}
This scalar product could be naturally extended on $\Lambda(T^*S)\widehat{\otimes}\cE_Z$. We still denote it by $\la\cdot,\cdot\ra_0$.

If $U\in TS$, let $U^H\in T^HW$ be its horizontal lift in $T^HW$ so that $\pi_*U^H=U$.
For any $U\in TS$, $s\in \cC^{\infty}(S,\cE_Z)=\cC^{\infty}(W,\mS(TZ, L_Z)\otimes E)$,
we set
\begin{align}\label{e01026}
\nabla_{U}^{\cE_Z}s=\nabla_{U^H}^{\mS_Z\otimes E}s.
\end{align}
Then $\nabla^{\cE_Z}$ is a connection on $\cE_Z$, but need not preserve the scalar product $\la\cdot,\cdot\ra_0$ in (\ref{e01025}).
By \cite[Proposition 1.4]{MR853982}, for $U\in TS$, the connection
\begin{align}\label{e01028}
\nabla_U^{\cE_Z,u}:=\nabla_U^{\cE_Z}-\frac{1}{2}\la S(e_i)e_i, U^H \ra
\end{align}
preserves the scalar product $\la\cdot, \cdot\ra_{0}$.

If $U_1,U_2\in \cC^{\infty}(S,TS)$, by \cite[(1.30)]{Bismut1985}, we have
\begin{align}\label{e01131}
T(U_1,U_2)=-P^{TZ}[U_1^H,U_2^H].
\end{align}
We denote by
\begin{align}\label{e01030}
c(T)=\frac{1}{2}\,c\left(T(f_p^H, f_q^H)\right)f^p\wedge f^q\wedge.
\end{align}
By \cite[(3.18)]{Bismut1985}, the Bismut superconnection
\begin{align}\label{e01031}
B:\cC^{\infty}(S,\Lambda(T^*S)\widehat{\otimes}\cE_Z)\rightarrow\cC^{\infty}(S, \Lambda(T^*S)\widehat{\otimes}\cE_Z)
\end{align}
is defined by
\begin{align}\label{e01032}
B=D^Z+\nabla^{\cE_Z,u}-\frac{1}{4}c(T).
\end{align}
In fact, the Bismut superconnection only depends on the quadruple $(T^HW, g^{TZ}, \nabla^{L_Z}, \nabla^E)$.

In the sequel, if $A(U)$ is any $0$-order operator depending linearly on $U\in TW$, we define the operator
\begin{align}\label{e01036}
\left(\nabla_{e_i}^{\mS_Z\otimes E}+A(e_i)\right)^2
\end{align}
as follows: if $\{e_i(x)\}_{i=1}^n$ is any (locally defined) smooth orthonormal frame of $TZ$,
then
\begin{multline}\label{e01037}
\left(\nabla_{e_i}^{\mS_Z\otimes E}+A(e_i)\right)^2
\\
:=\sum_{i=1}^n\left(\nabla_{e_i(x)}^{\mS_Z\otimes E}+A(e_i(x))\right)^2
-\nabla^{\mS_Z\otimes E}_{\sum_{i=1}^n\nabla_{e_i}^{TZ}e_i}-A\left(\sum_{i=1}^n\nabla_{e_i}^{TZ}e_i\right).
\end{multline}

Let $R^{TZ}$, $R^{L_Z}$, $R^E$ and $R^{\mS_Z\otimes E}$ be the curvatures of $\nabla^{TZ}$, $\nabla^{L_Z}$, $\nabla^E$
and $\nabla^{\mS_Z\otimes E}$ respectively.
By (\ref{e01071}), we have
\begin{align}\label{e01038}
R^{\mS_Z\otimes E}=\frac{1}{4}\la R^{TZ}e_i, e_j\ra c(e_i)c(e_j)+\frac{1}{2} R^{L_Z}+R^E.
\end{align}

For $t>0$, we denote $\delta_t$ the operator  on $\Lambda^i (T^*S)\widehat{\otimes} \cE_Z$ by multiplying differential forms
by $t^{-i/2}$. Set
\begin{align}\label{e01041}
B_t:=\sqrt{t}\,\delta_t\circ B\circ \delta_t^{-1}.
\end{align}
Then from (\ref{e01032}) and (\ref{e01041}), we get
\begin{align}\label{e01042}
B_t=\sqrt{t}D^X+\nabla^{\cE_Z,u}-\frac{1}{4\sqrt{t}}c(T).
\end{align}
Let $K^Z$ be the scalar curvature of the fibers $(TZ, g^{TZ})$. We have the Bismut's Lichnerowicz formula
(see \cite[Theorem 10.17]{MR2273508}, \cite[Theorem 3.5]{Bismut1985}),
\begin{multline}\label{e01043}
B_t^2=-\left(\sqrt{t}\nabla_{e_i}^{\mS_Z\otimes E}+\frac{1}{2}\la S(e_i)e_j, f_p^H\ra c(e_j)f^p\wedge
+\frac{1}{4\sqrt{t}}\la S(e_i)f_p^H, f_q^H\ra f^p\wedge f^q\wedge\right)^2
\\
+\frac{t}{4}K^Z+\frac{t}{2}\left(\frac{1}{2}R^{L_Z}+R^E\right)(e_i,e_j)c(e_i)c(e_j)
+\sqrt{t}\left(\frac{1}{2}R^{L_Z}+R^E\right)(e_i,f_p^H)c(e_i)f^p\wedge
\\
+\frac{1}{2}\left(\frac{1}{2}R^{L_Z}+R^E\right)(f_p^H,f_q^H)f^p\wedge f^q\wedge.
\end{multline}
In particular, $B_t^2$ is a $2$-order elliptic differential operator along the fiber $Z$.
Let $\exp(-B_t^2)$ be the family of heat operators associated to the fiberwise elliptic operator $B_t^2$ in (\ref{e01043}).
From \cite[Theorem 9.50]{MR2273508}, we know that $\exp(-B_t^2)$ is a smooth family of smoothing operators.

\subsection{Compact Lie group action and equivariant family local index theorem}\label{s0103}

Let $G$ be a compact Lie group which acts on $W$ such that for any $g\in G$, $\pi\circ g=\pi$.
So it acts trivially on $S$.
We assume that the action of $G$ preserves the splitting (\ref{e01010}), the $\mathrm{Spin}^c$ structure of $TZ$
and $g^{TZ}$, $h^{L_Z}$, $\nabla^{L_Z}$ are $G$-invariant.
We assume that $E$ is a $G$-equivariant complex vector bundle and $h^E$, $\nabla^E$ are $G$-invariant.
So the action of $G$ commutes with the Bismut superconnection $B$ in (\ref{e01032}).

Take $g\in G$ and set
\begin{align}\label{e01045}
W^g=\{x\in W: gx=x\}.
\end{align}
Then $W^g$ is a submanifold of $W$ and $\pi:W^g\rightarrow S$ is a fiber bundle with closed fiber $Z^g$.
Let $N$ denote the normal bundle of $W^g$ in $W$, then $N=TZ/TZ^g$.
Since $g$ preserves the $\mathrm{Spin}^c$ structure, it preserves the orientation of $TZ$.
So the normal bundle $N$ is even dimensional.
We denote the differential of $g$ by $dg$
which gives a bundle isometry $dg: N\rightarrow N$. Since $g$ lies in a compact abelian Lie group,
we know that there is an orthonormal decomposition of smooth vector bundles on $W^g$
\begin{align}\label{e01046}
N=N(\pi)\oplus\oplus_{0<\theta<\pi}N(\theta),
\end{align}
where $dg|_{N(\pi)}=-\mathrm{id}$ and for each $\theta$, $0<\theta<\pi$, $N(\theta)$ is a complex vector bundle
on which $dg$ acts by multiplication by $e^{\sqrt{-1}\theta}$, and $\dim N(\pi)$ is even.
By the following proposition, $Z^g$ and $N$ are all naturally oriented. This proposition is a modification of \cite[Theorem 6.14]{MR2273508}.

\begin{prop}\label{e01072}
Let $Z$ be a closed oriented manifold and $G$ be a compact Lie group. If $TZ$ has a $G$-equivariant $\mathrm{Spin}^c$ structure,
then for each $g\in G$,
$Z^g$ is naturally oriented.
\end{prop}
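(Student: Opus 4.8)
The plan is to reduce the orientability of $Z^g$ to a fiberwise, pointwise statement about the spinor representation and then assemble these local orientations into a coherent global one. First I would fix $g \in G$ and a point $x \in Z^g$, and decompose the tangent space $T_x Z = T_x Z^g \oplus N_x$, where $N_x$ carries the eigenspace decomposition $N_x = N(\pi)_x \oplus \bigoplus_{0<\theta<\pi} N(\theta)_x$ as in \eqref{e01046}. The summands $N(\theta)$ for $0<\theta<\pi$ are complex vector bundles, hence canonically oriented, and $N(\pi)$ is even-dimensional; the key observation (this is where the $\mathrm{Spin}^c$ hypothesis enters) is that the action of $g$ on the spinor bundle $\mS(TZ, L_Z)$, restricted over $W^g$, determines via the isomorphism $\mS(TZ,L_Z)|_{W^g} \cong \mS(TZ^g, L_{Z^g}) \widehat\otimes \mS(N, \cdot)$ a well-defined square root of the complex line bundle $\det N(\pi)$ — equivalently, an orientation of $N(\pi)$. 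Concretely, the lift of $g$ to the $\mathrm{Spin}^c$ principal bundle picks out, among the two components of the stabilizer, a preferred one, and this is exactly the data needed to orient $N(\pi)$.

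The mechanism is the following. Over $W^g$ the restricted $\mathrm{Spin}^c$ structure together with the chosen lift of $g$ gives, at each point, an element of $\mathrm{Spin}^c_n$ lying over $dg_x \in SO(n)$. Since $dg_x$ fixes $T_x Z^g$ pointwise and rotates $N_x$, this element lies in the subgroup $\mathrm{Spin}^c$-covering $SO(N_x) \subset SO(T_x Z)$; writing it in terms of the Clifford generators $c(e_i)$ for an oriented orthonormal frame $\{e_i\}$ of $N_x$ adapted to \eqref{e01046}, one sees it is $\pm$ a product of the form $\prod_\theta (\cos(\theta/2) + \sin(\theta/2)\, c(e_{2j-1})c(e_{2j}))$ times the volume element of $N(\pi)$. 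The sign ambiguity of this volume element is precisely fixed by requiring the chosen lift, and this choice of sign \emph{is} an orientation of $N(\pi)$. Then $Z^g$ is oriented by declaring that the orientation of $TZ^g$ followed by the chosen orientation of $N$ (which combines the complex orientations on the $N(\theta)$ with the just-constructed orientation on $N(\pi)$) agrees with the given orientation of $TZ$.

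Next I would check that this construction is independent of all choices: the two possible lifts of $g$ differ by the central $-1 \in \mathrm{Spin}_n$, which flips the sign of the volume element of $N(\pi)$, but one fixes the lift canonically (e.g. by the continuity/compactness argument that $g$ lies in a connected compact abelian subgroup, so the lift is determined by path-continuation from the identity, following \cite[Theorem 6.14]{MR2273508}); independence of the adapted frame $\{e_i\}$ follows because any two such frames differ by an element of $SO(N(\pi)) \times \prod U(N(\theta))$, which acts trivially on the relevant $\pm$ choice. Finally smoothness/global coherence of the resulting orientation of $N(\pi)$ — hence of $Z^g$ — follows because the lift of $g$ and the decomposition \eqref{e01046} vary smoothly over $W^g$, so the selected component of the volume element does too.

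The main obstacle is the bookkeeping in the identification of the lift of $g$ with an explicit Clifford-algebra element over $SO(N_x)$ and verifying that the sign it selects transforms correctly — i.e., genuinely descends to an orientation of the \emph{bundle} $N(\pi)$ rather than depending pointwise on auxiliary data. This is essentially the content of the cited modification of \cite[Theorem 6.14]{MR2273508}: there the argument is given in the $\mathrm{Spin}$ case, and the adaptation to $\mathrm{Spin}^c$ requires only tracking the extra $L_Z^{1/2}$ factor in \eqref{e01071}, which contributes no new sign since $L_Z^{1/2}$ is $dg$-equivariant and its determinant line is untouched by the rotation on $N(\pi)$. Once this pointwise statement is in hand, globalization is routine.
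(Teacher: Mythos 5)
Your proposal follows essentially the same approach as the paper's proof: the $G$-equivariant $\mathrm{Spin}^c$ structure gives a canonical action of $g$ on the spinor bundle over $W^g$, and writing this action on Clifford generators normal to $Z^g$ produces a non-vanishing element of $\Lambda^{\dim N}(N)$ (the paper packages this as $\alpha^{-1}[\sigma(g)]_{\dim N}$, you package it as a choice of sign for the volume element of $N(\pi)$ combined with the complex orientations on the $N(\theta)$), which orients $N$ and hence $Z^g$. One small imprecision worth flagging: the discussion of ``fixing the lift canonically by continuity/compactness'' is unnecessary in the present $\mathrm{Spin}^c$ setting, since the $G$-equivariance hypothesis already hands you a canonical lift of $g$ acting on $\mS(TZ,L_Z)$ --- the only $\pm$ ambiguity lives in factoring that canonical element as a phase $\alpha$ times a $\mathrm{Spin}$ element, and that ambiguity cancels; relatedly, two adapted frames differ a priori by an element of $O(N(\pi))\times\prod U(N(\theta))$, not $SO(N(\pi))\times\prod U(N(\theta))$, since $N(\pi)$ carries no orientation yet, but the sign you extract and the frame orientation on $N(\pi)$ flip together, so the conclusion stands.
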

\begin{proof}
We fix a connected component of $Z^g$ and assume that the dimension of the normal bundle $N$ of this connected component is $2k$.
By (\ref{e01046}), on $N$, the matrix of $g$ has diagonal blocks
\begin{align}\label{e01150}
\left(
  \begin{array}{cc}
    \cos(\theta_j) & -\sin(\theta_j) \\
    \sin(\theta_j) & \cos(\theta_j) \\
  \end{array}
\right),\quad j=1,2,\cdots,k,\quad 0<\theta_j\leq \pi.
\end{align}
By the definition of the $\mathrm{Spin}^c$ group, the action of $g$ on the spinor is given by
\begin{align}\label{e01151}
g=\alpha\cdot\prod_{j=1}^{k}(\cos(\theta_j/2)+\sin(\theta_j/2)c(e_{2j-1})c(e_{2j})),
\end{align}
where $\alpha\in \C$, $|\alpha|=1$.
Let $\sigma : C(N)\rightarrow \Lambda(N)$ be the isomorphism in (\ref{e01152}).
For $\beta\in \Lambda (N)$, let $[\beta]_{2k}$ denote the degree $2k$ part of $\beta$.
Since $\alpha$ and $\theta_j$ are locally constant on $Z^g$, the term
\begin{align}
\alpha^{-1}[\sigma(g)]_{2k}=\left(\prod_{j=1}^{k}\sin(\theta_j/2)\right)e^{1}\wedge \cdots\wedge e^{2k}
\end{align}
gives a non-zero section of $\Lambda^{2k}(N)$. Then it gives a canonical orientation of $N$.
The canonical orientation of $Z^g$ can be obtained by the orientations of $Z$ and $N$.

The proof of Proposition \ref{e01072} is complete.
\end{proof}

Since $g^{TZ}$ is $G$-invariant, the connection $\nabla^{TZ}$ preserves the decomposition of smooth vector bundles on $W^g$
\begin{align}\label{e01047}
TZ|_{W^g}=TZ^g\oplus\oplus_{0<\theta\leq \pi}N(\theta).
\end{align}
Let $\nabla^{TZ^g}$, $\nabla^N$ and $\nabla^{N(\theta)}$ be the corresponding induced connections on $TZ^g$, $N$ and $N(\theta)$,
and let $R^{TZ^g}$, $R^N$ and $R^{N(\theta)}$ be the corresponding curvatures. Here we consider $N(\theta)$ as a real vector bundle.
We have the decompositions on $W^g$:
\begin{align}\label{e01048}
\nabla^{TZ}|_{W^g}=\nabla^{TZ^g}\oplus\nabla^N, \quad \nabla^N=\oplus_{0<\theta\leq \pi}\nabla^{N(\theta)},
\end{align}
and
\begin{align}\label{e01049}
R^{TZ}|_{W^g}=R^{TZ^g}\oplus R^N, \quad R^N=\oplus_{0<\theta\leq \pi}R^{N(\theta)}.
\end{align}

For $0<\theta\leq \pi$, we write
\begin{align}\label{e01050}
\widehat{\mathrm{A}}_{\theta}\left(N(\theta), \nabla^{N(\theta)}\right)=
\left((\sqrt{-1})^{\frac{1}{2}\dim_{\R} N(\theta)}\mathrm{det}^{\frac{1}{2}}\left(1-g
\exp\left(\frac{\sqrt{-1}}{2\pi}R^{N(\theta)}\right)\right)\right)^{-1}.
\end{align}
Set
\begin{align}\label{e01051}
\begin{split}
&\widehat{\mathrm{A}}\left(TZ^g, \nabla^{TZ^g}\right)=
\mathrm{det}^{\frac{1}{2}}\left(\frac{\frac{\sqrt{-1}}{4\pi}R^{TZ^g}}{\sinh \left(\frac{\sqrt{-1}}{4\pi}R^{TZ^g}\right)}\right),
\\
&\widehat{\mathrm{A}}_g(TZ,\nabla^{TZ})=\widehat{\mathrm{A}}\left(TZ^g, \nabla^{TZ^g}\right)\cdot
\prod_{0<\theta\leq \pi}\widehat{\mathrm{A}}_{\theta}\left(N(\theta), \nabla^{N(\theta)}\right)\in \Omega^{4*}(W^g,\C).
\end{split}
\end{align}
Note that for any Euclidean connection
$\nabla$ on $(TZ, g^{TZ})$, we can also define the characteristic form $\widehat{\mathrm{A}}_g(TZ,\nabla)$ as in (\ref{e01051}).
Let $\widehat{\mathrm{A}}_g(TZ)\in H^{4*}(W^g, \C)$ denote the cohomology class of $\widehat{\mathrm{A}}_g(TZ,\nabla)$.
If $E$ is $\Z_2$-graded, we assume that the $G$-action and $\nabla^E$ preserve the $\Z_2$-grading.
Set
\begin{align}\label{e01132}
\ch_g(E, \nabla^E)=
\left\{
  \begin{aligned}
    &\tr\left[g\exp\left(\frac{\sqrt{-1}}{2\pi}R^E|_{W^g}\right)\right], & \hbox{if $E$ is not $\Z_2$-graded;} \\
    &\tr_s\left[g\exp\left(\frac{\sqrt{-1}}{2\pi}R^E|_{W^g}\right)\right], & \hbox{if $E$ is $\Z_2$-graded.}
  \end{aligned}
\right.
\end{align}
Let $\ch_g(E)\in H^{2*}(W^g,\C)$ denote the cohomology class of $\ch_g(E, \nabla^E)$.
By Chern-Weil theory \cite{MR1864735}, the classes $\widehat{\mathrm{A}}_g(TZ)$ and $\ch_g(E)$ are independent of $\nabla$
and $\nabla^{E}$. Furthermore, if $S$ is compact, the equivariant Chern character in (\ref{e01132}) descends to a ring homomorphism
\begin{align}\label{e01133}
\ch_g: K_G^0(W^g)\rightarrow H^{2*}(W^g, \C),
\end{align}
where $K_G^0(W^g)$ is the equivalent $K^0$ group of $W^g$.

Assume that $n$ is even. If $S$ is compact, the index bundle $\ind (D^Z)$ is an element of $K_G^0(S)$.
Under the equivariant Chern character map (\ref{e01133}), for any $g\in G$, we have
\begin{align}\label{e01054}
\ch_g(\ind(D^Z))\in H^{2*}(S,\C).
\end{align}
Since the fiber is even-dimensional, the spinor $\mS(TZ,L_Z)$ is $\Z_2$-graded, i.e., $\mS(TZ,L_Z)=\mS_+(TZ,L_Z)\oplus \mS_-(TZ,L_Z)$.
Note that if $\dim \ker D^Z$ is locally constant,
\begin{align}\label{e01201}
\ind(D^Z)=\ker D^Z_+-\ker D^Z_-\in K_G^0(S),
\end{align}
where $D^Z_{\pm}$ is the restriction of $D^Z$ on $\mS_{\pm}(TZ, L_Z)\otimes E$.

Let $\cE_{Z,\pm}$ be the set of smooth sections of $\mS_{\pm}(TZ,L_Z)\otimes E$ over $W$. Then $\cE_Z=\cE_{Z,+}\oplus\cE_{Z,-}$ is a $\Z_2$-graded
infinite dimensional vector bundle over $S$
and $\Lambda(T^*S)\widehat{\otimes}\End(\cE_Z)$
is also $\Z_2$-graded. We extend $\tr$, $\tr_s$ to the trace class element $A\in \Lambda(T^*S)\widehat{\otimes}\End(\cE_Z)$,
which take values in $\Lambda(T^*S)$. We use the convention that if $\omega\in \Lambda(T^*S)$,
\begin{align}\label{e01056}
\tr[\omega A]=\omega\tr[A],\quad \tr_s[\omega A]=\omega\tr_s[A].
\end{align}

Let $i:S\rightarrow S^1\times S$ be a $G$-equivariant inclusion map. It is well known that if the $G$-action on $S^1$ is trivial,
\begin{align}\label{d124}
K_G^1(S)\simeq \ker\left(i^*:K_G^0(S^1\times S)\rightarrow K_G^0(S)\right).
\end{align}
By (\ref{d124}), for $x\in K_G^1(S)$, we can regard $x$ as an element $x'$ in $K_G^0(S^1\times S)$. The odd equivariant Chern character map
\begin{align}\label{e01134}
\ch_g: K_G^1(S)\rightarrow H^{\mathrm{odd}}(S, \C)
\end{align}
is defined by 
\begin{align}\label{e01066}
\ch_g(x)=
\left[\int_{S^1}\ch_g(x')\right]\in H^{\mathrm{odd}}(S,\C).
\end{align}
Here we use the sign convention (\ref{e01136}) in this integration.

If $n$ is odd,  the fibrewise Dirac operator $D^Z$ is a family of equivariant self-adjoint Fredholm operators.
Set
\begin{align}\label{e01101}
D_{\theta}^Z=\left\{
  \begin{array}{ll}
    I \cos \theta +\sqrt{-1}D^Z \sin \theta, & \hbox{if $0\leq \theta \leq \pi$;} \\
    (\cos\theta+\sqrt{-1}\sin\theta)I, & \hbox{if $\pi\leq \theta \leq 2\pi$}
  \end{array}
\right.
\end{align}
(see \cite[(3.3)]{MR0397799}).
If $S$ is compact,
then $\ind(\{D_{\theta}^Z\})\in K_G^0(S^1\times S)$. Since the restriction of $D_{\theta}^Z$ to $\{0\}\times S$ is trivial, so
it can be regarded as an element of $K_G^1(S)$. From \cite{MR0397799} and \cite{MR0234452}, the definition of the index of
$D^Z$ is
\begin{align}\label{e01065}
\ind(D^Z):=\ind(\{D_{\theta}^{Z}\})\in K_G^1(S).
\end{align}



When the fiber is odd dimensional, the spinor $\mS(TZ,L_Z)$ is not $\Z_2$-graded.
For a trace class element $A\in \Lambda(T^*S)\otimes\End(\cE_Z)$,
we also use the convention as in (\ref{e01056}) that if $\omega\in \Lambda(T^*S)$,
\begin{align}\label{e01137}
\tr[\omega A]=\omega\tr[A].
\end{align}
It is compatible with the sign convention in (\ref{e01136}).

For $\alpha\in \Omega^i(S)$, set
\begin{align}\label{e01059}
\psi_S(\alpha)=\left\{
  \begin{array}{ll}
    \left(\frac{1}{2\pi\sqrt{-1}}\right)^{\frac{i}{2}}\cdot \alpha, & \hbox{if $i$ is even;} \\
    \frac{1}{\sqrt{\pi}}\left(\frac{1}{2\pi\sqrt{-1}}\right)^{\frac{i-1}{2}}\cdot \alpha, & \hbox{if $i$ is odd.}
  \end{array}
\right.
\end{align}
Comparing with (\ref{e01132}), for the locally defined line bundle $L_Z^{1/2}$, we write
\begin{align}\label{e01138}
\ch_g(L_Z^{1/2}, \nabla^{L_Z^{1/2}}):=g\cdot\exp\left(\frac{\sqrt{-1}}{4\pi}R^{L_Z}|_{W^g}\right)\in \Omega^{2*}(W^g, \C)
\end{align}
and $\ch_g(L_Z^{1/2})\in H^{2*}(W^g, \C)$ as the corresponding cohomology class.
Denote by $\pi_*:H^*(W^g, \C)\rightarrow H^*(S, \C)$ the integration along the fiber $Z^g$ with the sign convention (\ref{e01136}).
Recall that the trace operator $\widetilde{\tr}$ is defined in (\ref{i16}).
We give the equivariant family local index theorem as follows.

\begin{thm}\label{e01061}
For any $t>0$ and $g\in G$, the differential form $\psi_S\widetilde{\tr}[g\exp(-B_t^2)]\in \Omega^{*}(S)$ is closed and its cohomology class
is independent of $t$. As $t\rightarrow 0$,
\begin{align}\label{e01062}
\lim_{t\rightarrow 0}\psi_S\widetilde{\tr}[g\exp(-B_t^2)]=\int_{Z^g}\widehat{\mathrm{A}}_g(TZ,\nabla^{TZ})\wedge
\ch_g(L_Z^{1/2}, \nabla^{L_Z^{1/2}})\wedge
\ch_g(E, \nabla^{E}).
\end{align}
If $S$ is compact,  the differential form $\psi_S\widetilde{\tr}[g\exp(-B_t^2)]$
represents $\ch_g(\ind(D^Z))$ in (\ref{e01054}) or (\ref{e01066}).
In $H^*(S,\C)$,
\begin{align}\label{e01063}
\ch_g(\ind(D^Z))=\pi_*\left\{\widehat{\mathrm{A}}_g(TZ)\ch_g(L_Z^{1/2})\ch_g(E)\right\}.
\end{align}
\end{thm}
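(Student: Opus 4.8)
The plan is to prove Theorem~\ref{e01061} by the standard three-step strategy of the heat equation approach to the local index theorem, adapted to the equivariant family setting. \emph{First}, I would establish that $\psi_S\widetilde{\tr}[g\exp(-B_t^2)]$ is closed and has $t$-independent cohomology class. Since $G$ commutes with the Bismut superconnection $B$ (hence with each $B_t$), the operator $g\exp(-B_t^2)$ differs from the non-equivariant case only by the insertion of $g$, which is parallel and commutes with everything in sight. One uses the transgression identity: with $\widetilde{\tr}$ in place of $\tr_s$ (resp.\ $\tr^{\mathrm{odd}}$), the Bianchi-type formula $d^S\widetilde{\tr}[g\exp(-B_t^2)] = 0$ follows from the supertrace property $\widetilde{\tr}[[B_t,A]] = d^S\widetilde{\tr}[A]$ together with $[B_t, \exp(-B_t^2)] = 0$, and
\begin{align*}
\frac{\partial}{\partial t}\widetilde{\tr}[g\exp(-B_t^2)] = -\widetilde{\tr}\left[g\frac{\partial B_t^2}{\partial t}\exp(-B_t^2)\right] = -d^S\widetilde{\tr}\left[g\frac{\partial B_t}{\partial t}\exp(-B_t^2)\right],
\end{align*}
using $\partial_t B_t^2 = [B_t, \partial_t B_t]$. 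The factor $\psi_S$ is a degreewise constant normalization, so it does not affect closedness or the cohomological statement. The odd-dimensional case requires the bookkeeping from Section~1.1 (equations (\ref{e01086}), (\ref{e01090})): $\tr^{\mathrm{odd}}$ on $C(V^n)$ for $n$ odd behaves like $\tr_s$ on the even part for $n$ even, which is exactly what makes the supertrace identities go through with $\widetilde{\tr}$.

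\emph{Second}, I would compute the $t\to 0$ limit (\ref{e01062}) by localization near the fixed-point set $W^g$. This is the heart of the matter: one applies the equivariant Getzler rescaling to Bismut's Lichnerowicz formula (\ref{e01043}). Because $g$ acts nontrivially on the normal bundle $N$, the contributions localize to $Z^g$, and the normal directions produce, via (\ref{e01151}) and the trace computations (\ref{e01008}), (\ref{e01090}), precisely the factors $\widehat{\mathrm{A}}_\theta(N(\theta),\nabla^{N(\theta)})$ appearing in (\ref{e01050})--(\ref{e01051}); the tangential directions $TZ^g$ give the ordinary $\widehat{\mathrm{A}}$-genus, the line bundle $L_Z$ gives $\ch_g(L_Z^{1/2},\nabla^{L_Z^{1/2}})$ via the $\tfrac12 R^{L_Z}$ term in (\ref{e01038}), and $E$ gives $\ch_g(E,\nabla^E)$. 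In the odd case one uses the isomorphism (\ref{e01007}), $C(V^n)\simeq C_0(V^n\oplus\R)$, to reduce to the even computation, picking up the sign/normalization convention (\ref{e01136}) for the fiber integral over the odd-dimensional $Z^g$, which is compatible with $\psi_S$ by (\ref{e01137}). I expect the main obstacle here to be the uniform control of the rescaled heat kernel: one needs finite propagation speed estimates together with Duhamel/Volterra expansions (as in the analytic localization of Bismut and collaborators cited in the introduction) to show that only a neighborhood of $W^g$ contributes and that the rescaled limit exists and equals the stated characteristic form; the presence of the Lie group (rather than a finite group) and of odd-dimensional fibers are the features requiring care beyond the classical arguments, though the actual pointwise algebra is routine once the framework is set up.

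\emph{Third}, I would identify the cohomology class with $\ch_g(\ind(D^Z))$ when $S$ is compact. For $n$ even this is the equivariant family index theorem: the McKean--Singer-type argument shows that as $t\to\infty$, $\tr_s[g\exp(-B_t^2)]$ converges to the Chern character form of the index bundle computed from the harmonic projection (using that $\dim\ker D^Z$ need not be constant but the index bundle is still well-defined in $K_G^0(S)$, and the superconnection heat form represents $\ch_g(\ind D^Z)$ by the Bismut--Quillen argument). For $n$ odd, one uses the suspension construction (\ref{e01101}): the family $\{D_\theta^Z\}$ over $S^1\times S$ is even-dimensional in the relevant sense, its index lies in $K_G^0(S^1\times S)$, and reduces to $\ind(D^Z)\in K_G^1(S)$ via (\ref{d124}); applying the even case to this family and integrating over $S^1$ as in (\ref{e01066}) yields the odd statement. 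Combining Step 1 (the form is closed with $t$-independent class), Step 2 (the class equals the right-hand integral of (\ref{e01062}) by taking $t\to 0$), and Step 3 (the class equals $\ch_g(\ind D^Z)$ by taking $t\to\infty$) gives the cohomological identity (\ref{e01063}), completing the proof.
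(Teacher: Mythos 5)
Your proposal is correct, and it is essentially a faithful expansion of the paper's two-line proof, which simply cites Liu--Ma \cite{MR1756105} for the even-dimensional case and Bismut--Freed \cite{MR861886} for the reduction of the odd case. The one genuine (if minor) divergence is in how the odd-dimensional $t\to 0$ limit is handled: you invoke the Clifford-algebra isomorphism $C(V^n)\simeq C_0(V^n\oplus\R)$ of (\ref{e01007}) together with the trace comparison (\ref{e01008})--(\ref{e01090}) to reduce the local computation on $Z^g$ directly to the even case, whereas Bismut--Freed's Theorem 2.10, which the paper cites, instead passes to the even-dimensional family over $S^1\times S$ (the suspension $\{D_\theta^Z\}$) and obtains the odd-dimensional result by integrating the even-dimensional Chern character form over the $S^1$-factor. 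The two routes are standard and give the same answer; your Clifford route is arguably more local and elementary, while the suspension route of Bismut--Freed has the advantage of simultaneously handling the $K$-theoretic identification in Step 3 with the same machinery, which is why the paper cites it for the whole odd case and why your Step 3 then separately reintroduces the suspension anyway. Your proposal is sound; the remaining analytic work (uniform heat-kernel bounds justifying localization near $W^g$, and the $t\to\infty$ McKean--Singer limit without assuming $\dim\ker D^Z$ locally constant) is exactly what is established in those references, as you correctly anticipate.
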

\begin{proof}
If $n$ is even, the proof is the same as that of \cite[Theorem 1.1]{MR1756105}. If $n$ is odd,
the proof follows from \cite[Theorem 2.10]{MR861886} and the even case.
\end{proof}

\subsection{Equivariant eta form}\label{s0104}

In this subsection, we define the equivariant eta form when $\dim \ker D^Z$ is locally constant.
We will proceed as the proof of \cite[Theorem 10.32]{MR2273508} as follows.

Let $\widehat{S}=\R_+\times S$ and $\mathrm{pr}:\widehat{S}\rightarrow S$ be the projection.
We consider the bundle $\widehat{\pi}:\widehat{W}:=\R_+\times W\rightarrow\widehat{S}$ together with the canonical projection
$\mathrm{Pr}:\widehat{W}\rightarrow W$. Set $T^H\widehat{W}=T(\R_+)\oplus\mathrm{Pr}^*(T^HW)$.
Then $T^H\widehat{W}$ is a horizontal subbundle of $T\widehat{W}$ as in (\ref{e01010}).
We fix the vertical metric $\widehat{g}^{TZ}$ which restricts to $t^{-1}g^{TZ}$
over $\{t\}\times W$. Let $\widehat{C}(TZ)$ be the Clifford algebra bundle associated to $\widehat{g}^{TZ}$.
Then $\widehat{\mS}(TZ, \mathrm{Pr}^*L_{Z}):=\mathrm{Pr}^*\mS(TZ,L_Z)$ is the spinor of $\widehat{C}(TZ)$ by the assumption in
the end of Section \ref{s0101}.
Let $h^{\widehat{L}_Z}=\mathrm{Pr}^*h^{L_Z}$ and $\nabla^{\widehat{L}_Z}=\mathrm{Pr}^*\nabla^{L_Z}$.
Let $\widehat{E}=\mathrm{Pr}^*E$, $h^{\widehat{E}}=\mathrm{Pr}^*h^{E}$ and $\nabla^{\widehat{E}}=\mathrm{Pr}^*\nabla^{E}$.
We naturally extend the $G$-actions to this case such that the $G$-action is identity on $\R_+\times S$.
We will mark the objects associated to ($T^H\widehat{W}, \widehat{g}^{TZ}, h^{\widehat{L}_Z},
h^{\widehat{E}}, \nabla^{\widehat{L}_Z}, \nabla^{\widehat{E}}$) by $\ \widehat{}\ $.

For $t\in \R_+$, the fiberwise Dirac operator $D^{\widehat{Z}}$ on $\{t\}\times Z$ is $t^{1/2}D^Z$.
By (\ref{e01028}), $\nabla^{\widehat{\cE_Z},u}=\nabla^{\cE_Z,u}-\frac{n}{4t}dt$.
Since $B_t$ in (\ref{e01042}) is just the Bismut superconnection associated to ($T^HW, t^{-1}g^{TZ}, \nabla^{L_Z}, \nabla^E$),
from (\ref{e01032}) and (\ref{e01042}), the Bismut superconnection associated to ($T^H\widehat{W}, \widehat{g}^{TZ}, \nabla^{\widehat{L}_Z},
\nabla^{\widehat{E}}$)
is
\begin{align}\label{e01102}
\widehat{B}|_{(t,b)}=B_t+dt\wedge\frac{\partial}{\partial t}-\frac{n}{4t}dt,
\end{align}
for $(t,b)\in \widehat{S}$.
Note that the extended $G$-action commutes with the Bismut superconnection $\widehat{B}$.

If $\alpha\in \Lambda(T^*(\R_+\times S))$, we can expand $\alpha$ in the form
\begin{align}\label{e01003}
\alpha=dt\wedge \alpha_0+\alpha_1,\quad \alpha_0, \alpha_1\in \Lambda (T^*S).
\end{align}
Set
\begin{align}\label{e01005}
[\alpha]^{dt}=\alpha_0.
\end{align}

For any $g\in G$, set
\begin{align}\label{e01103}
\psi_S\widetilde{\tr}[g\exp(-\widehat{B}^2)]=dt\wedge \gamma(t)+r(t).
\end{align}
Then from Duhamel's principle, (\ref{e01059}) and (\ref{e01102}), we have
\begin{align}\label{e01104}
\gamma(t)=\left\{\psi_S\widetilde{\tr}[g\exp(-\widehat{B}^2)]\right\}^{dt}=
\left\{
  \begin{aligned}
    -\frac{1}{2\sqrt{-1}\sqrt{\pi}}\psi_S \tr_s\left[g\frac{\partial B_t}{\partial t}\exp(-B_t^2)\right],\quad &
    \hbox{if $n$ is even;} \\
    -\frac{1}{\sqrt{\pi}}\psi_S \tr^{\mathrm{even}}\left[g\frac{\partial B_t}{\partial t}\exp(-B_t^2)\right],\quad \quad
    & \hbox{if $n$ is odd}
  \end{aligned}
\right.
\end{align}
and
\begin{align}\label{e01115}
r(t)=\psi_S\widetilde{\tr}[g\exp(-B_t^2)].
\end{align}

For $u\in (0,+\infty)$, set $\widehat{B}_u=\sqrt{u}\delta_u\widehat{B}\delta_{u}^{-1}$. Similarly as in (\ref{e01103}),
we decompose
\begin{align}\label{e01116}
\psi_S\widetilde{\tr}[g\exp(-\widehat{B}_u^2)]=dt\wedge \gamma(u,t)+r(u,t).
\end{align}
Take $t=1$. Then
\begin{align}\label{e01117}
\left.\frac{\partial B_{ut}}{\partial t}\right|_{t=1}=u\frac{\partial B_{u}}{\partial u}.
\end{align}
So from (\ref{e01104}), (\ref{e01115}) and (\ref{e01117}), we have
\begin{align}\label{e01118}
\gamma(u,1)=u\gamma(u), \quad r(u,1)=r(u).
\end{align}

From the asymptotic expansion of the heat kernel, when $u\rightarrow 0$, there exist $a_i(t)\in \Lambda( T^*(\R_+\times S))$,
$i\in \N$, such that
\begin{align}\label{e01119}
\psi_S\widetilde{\tr}[g\exp(-\widehat{B}_u^2)]\sim \sum_{i=0}^{+\infty}a_i(t)u^{i/2}.
\end{align}
By Theorem \ref{e01061}, $r(0,t)$ exists and $a_0(t)=r(0,t)$.
Take $t=1$ in (\ref{e01119}). By Theorem \ref{e01061}, (\ref{e01115}) and (\ref{e01118}),
we have
\begin{align}\label{e01140}
r(0)=\int_{Z^g}\widehat{\mathrm{A}}_g(TZ,\nabla^{TZ})\wedge
\ch_g(L_Z^{1/2}, \nabla^{L_Z^{1/2}})\wedge
\ch_g(E, \nabla^{E}).
\end{align}
From (\ref{e01116}) and (\ref{e01118}), we have
\begin{align}\label{e01120}
dt\wedge u\gamma(u)+r(u)-r(0)\sim \sum_{i=1}^{+\infty}a_i(1)u^{i/2},
\end{align}
that is, when $u\rightarrow 0$,
\begin{align}\label{e01123}
\gamma(u)=O(u^{-1/2}).
\end{align}

Assume that $\dim \ker D^Z$ is locally constant, then $\ker D^Z$ forms a vector bundle over $S$.
Let $P^{\ker D^{Z}}:\cE_Z\rightarrow \ker D^Z$ be the orthogonal projection
with respect to the scalar product in (\ref{e01025}).
Let
\begin{align}\label{e01141}
\nabla^{\ker D^Z}=P^{\ker D^{Z}}\nabla^{\cE,u}P^{\ker D^{Z}}
\end{align}
be a connection on the vector bundle $\ker D^Z$.
For $b\in S$, $t\in (0, +\infty)$, $\ker (t^{1/2}D^Z_b)=\ker D^Z_b$. So $\ker D^{\widehat{Z}}$
forms a vector bundle over $\R_+\times S$.
As in (\ref{e01141}), we can define the connection $\nabla^{\ker D^{\widehat{Z}}}$
on the vector bundle $\ker D^{\widehat{Z}}$.
If $n$ is even, $\ker D^Z$ and $\ker D^{\widehat{Z}}$ are $\Z_2$-graded.
Since the curvature of $\nabla^{\widehat{\cE},u}$ is trivial along $\R_+$, the equivariant Chern character
$\ch_g(\ker D^{\widehat{Z}}, \nabla^{\ker D^{\widehat{Z}}})$ does not involve $dt$.

From \cite[Theorem 9.19]{MR2273508}, which is also valid in odd dimensional fiber case, we know that when $u\rightarrow +\infty$,
\begin{align}\label{e01121}
\psi_S\widetilde{\tr}[g\exp(-\widehat{B}_u^2)]=
\left\{
  \begin{aligned}
    &\ch_g(\ker D^{\widehat{Z}}, \nabla^{\ker D^{\widehat{Z}}})
    +O(u^{-1/2}), & \hbox{if $n$ is even;} \\
    &O(u^{-1/2}),\quad\quad\quad\quad & \hbox{if $n$ is odd,}
  \end{aligned}
\right.
\end{align}
and
\begin{align}\label{e01139}
r(\infty):=\lim_{u\rightarrow \infty}r(u,1)=
\left\{
  \begin{aligned}
    &\ch_g(\ker D^{Z}, \nabla^{\ker D^Z}), & \hbox{if $n$ is even;} \\
    &0,& \hbox{if $n$ is odd.}
  \end{aligned}
\right.
\end{align}
Take $t=1$ in (\ref{e01121}).
From (\ref{e01116}), (\ref{e01118}) and (\ref{e01139}) we have
\begin{align}\label{e01122}
dt\wedge u\gamma(u)+r(u)-r(\infty)=O(u^{-1/2}).
\end{align}
By (\ref{e01115}), (\ref{e01121}) and (\ref{e01122}), when $u\rightarrow +\infty$,
\begin{align}\label{e01124}
\gamma(u)=O(u^{-3/2}).
\end{align}

\begin{defn}\label{e01083}
Assume that $\dim\ker D^Z$ is locally constant on $S$.
For any $g\in G$, the equivariant eta form of Bismut-Cheeger
$\tilde{\eta}_g(T^HW, g^{TZ}, h^{L_Z}, h^E, \nabla^{L_Z},\nabla^E)\in\Omega^*(S)$ is defined  by
\begin{align}\label{e01084}
\tilde{\eta}_g(T^HW, g^{TZ}, h^{L_Z}, h^E, \nabla^{L}, \nabla^E):=-\int_0^\infty \gamma(t)dt.
\end{align}
Note that by (\ref{e01123}) and (\ref{e01124}), the integral on the right hand side of (\ref{e01084}) is convergent.
\end{defn}

When $g=1$, $TZ$ is Spin, this equivariant eta form is just the
usual eta form of Bismut-Cheeger defined in \cite{MR966608} and \cite{MR1088332}.
Note that the equivariant eta form here was also defined in \cite{Wang} when $TZ$ is Spin and $n$ is odd.

From \cite{Bismut1985}, we know that $\widetilde{\tr}[g\exp(-B^2)]$ is a closed differential form. So
\begin{align}\label{e01002}
\left(dt\wedge\frac{\partial}{\partial t}+d^{S}\right)\psi_S\widetilde{\tr}[g\exp(-\widehat{B}^2)]=0, \quad
d^S\psi_S\widetilde{\tr}[g\exp(-B_t^2)]=0.
\end{align}
By (\ref{e01115}), (\ref{e01103}) and (\ref{e01002}), we have
\begin{align}\label{e01125}
d^S\gamma(t)=\frac{\partial r(t)}{\partial t}.
\end{align}
Then from (\ref{e01115}), (\ref{e01140}), (\ref{e01125}) and Definition \ref{e01083}, we have
\begin{multline}\label{e01085}
d^S\tilde{\eta}_g(T^HW,g^{TZ}, h^{L_Z},h^E, \nabla^{L_Z}, \nabla^E)=-\int_0^{+\infty}\frac{\partial r(t)}{\partial t}dt=r(0)-r(\infty)
\\
=\left\{
  \begin{aligned}
    &\int_{Z^g}\widehat{\mathrm{A}}_g(TZ,\nabla^{TZ})\wedge
\ch_g(L_Z^{1/2}, \nabla^{L_Z^{1/2}})\wedge
\ch_g(E, \nabla^{E})\\
  &\quad\quad\quad\quad\quad\quad\quad\quad\quad\quad-\ch_g(\ker D^Z, \nabla^{\ker D^Z}), & \hbox{if $n$ is even;} \\
    &\int_{Z^g}\widehat{\mathrm{A}}_g(TZ,\nabla^{TZ})\wedge
\ch_g(L_Z^{1/2}, \nabla^{L_Z^{1/2}})\wedge
\ch_g(E, \nabla^{E}),& \hbox{if $n$ is odd.}
  \end{aligned}
\right.
\end{multline}

\begin{rem}\label{e01067}
If we fix the vertical metric $\widehat{g}^{TZ}$ which restricts to $t^{-2}g^{TZ}$ over $\{t\}\times W$ in the beginning of this subsection,
as in (\ref{e01102}), we have
\begin{align}\label{e01068}
\widehat{B}'|_{(t,b)}=B_{t^2}+dt\wedge\frac{\partial}{\partial t}-\frac{n}{2t}dt,
\end{align}
and
\begin{align}\label{e01069}
\begin{split}
\gamma'(t)&=\left\{\psi_S\widetilde{\tr}[g\exp(-\widehat{B}'^2)]\right\}^{dt}
\\
&=
\left\{
  \begin{aligned}
    &-\frac{1}{2\sqrt{-1}\sqrt{\pi}}\psi_S \tr_s\left[g\frac{\partial B_{t^2}}{\partial t}\exp(-B_{t^2}^2)\right], & \hbox{$n$ is even;} \\
    &-\frac{1}{\sqrt{\pi}}\psi_S \tr^{\mathrm{even}}\left[g\frac{\partial B_{t^2}}{\partial t}
    \exp(-B_{t^2}^2)\right], & \hbox{$n$ is odd.}
  \end{aligned}
\right.
\end{split}
\end{align}
After changing the variable, we still have
\begin{align}\label{e01070}
\tilde{\eta}_g(T^HW, g^{TZ}, h^{L}, h^E, \nabla^{L}, \nabla^E):=-\int_0^\infty \gamma'(t)dt.
\end{align}
\end{rem}

\begin{rem}\label{e01202}
The $\mathrm{Spin}^c$ condition used here is just to get an explicit local index representative in Theorem \ref{e01061}.
In fact, Definition \ref{e01083} can be extended to equivariant Clifford module case.
\end{rem}

\subsection{Anomaly formula}\label{s0105}

From the construction in Section \ref{s0104}, the equivariant eta form only depends on the sextuple
($T^HW, g^{TZ}, h^{L_Z}, h^E, \nabla^{L_Z}, \nabla^E$).
We now describe how $\tilde{\eta}_g(T^HW,g^{TZ},$ $h^{L_Z}, h^E, \nabla^{L_Z}, \nabla^E)$ depends on its arguments. Let
($T^HW, g^{TZ}, h^{L_Z}, h^E,\nabla^{L_Z}$, $\nabla^E$) and ($T^{'H}W, g^{'TZ}, h^{'L_Z}, h^{'E}, \nabla^{'L_Z}, \nabla^{'E}$)
be two sextuples of geometric data.
We will mark the objects associated to the second sextuple by $'$.

First, a horizontal subbundle on $W$ is simply a splitting of the exact sequence
\begin{align}\label{e01145}
0\rightarrow TZ\rightarrow TW \rightarrow \pi^*TS \rightarrow 0.
\end{align}
As the space of the splitting map is affine and $G$ is compact, it follows that any pair of equivariant horizontal subbundles
can be connected by a smooth path of equivariant horizontal distributions. Let $s\in [0,1]$ parametrize a smooth path
$\{T_s^HW\}_{s\in [0,1]}$ such that $T_0^HW=T^HW$ and $T_1^HW=T^{'H}W$. Similarly, let $g_s^{TZ}$, $h_s^{L_Z}$ and $h_s^E$
be the $G$-invariant metrics on $TZ$, $L_Z$ and $E$, depending smoothly on $s\in [0,1]$, which coincide with $g^{TZ}$,
$h^{L_Z}$ and $h^E$ at $s=0$ and with $g^{'TZ}$,
$h^{'L_Z}$ and $h^{'E}$ at $s=1$.
Let $\nabla$ and $\nabla'$ be equivariant Euclidean connections on $(TZ, g^{TZ})$ and $(TZ, g^{'TZ})$.
By the same reason, we can choose $G$-invariant connections $\nabla_s$, $\nabla_s^{L_Z}$ and $\nabla_s^E$
on $TZ$, $L_Z$ and $E$ preserving $g_s^{TZ}$, $h_s^{L_Z}$ and $h_s^E$
such that $\nabla_0=\nabla$, $\nabla_1=\nabla^{'}$, $\nabla_0^{L_Z}=\nabla^{L_Z}$, $\nabla_1^{L_Z}=\nabla^{'L_Z}$,
$\nabla_0^{E}=\nabla^{E}$, $\nabla_1^{E}=\nabla^{'E}$.


Let $\widetilde{S}=[0, 1]\times S$, $\widetilde{W}:=[0,1]\times W$. From the construction above, we can get
a family of equivariant geometric data $(T^H\widetilde{W}, g^{T\widetilde{Z}}, \widetilde{\nabla},
h^{\widetilde{E}}, \nabla^{\widetilde{E}}, h^{\widetilde{L}_Z}, \nabla^{\widetilde{L}_Z})$
with respect to $\widetilde{\pi}:\widetilde{W}\rightarrow\widetilde{S}$.
Let $D^{\widetilde{Z}}$ be the fiberwise Dirac operator associated to
($T^H\widetilde{W}, g^{T\widetilde{Z}}, \nabla^{\widetilde{L}_Z}, \nabla^{\widetilde{E}}$).

\begin{assump}\label{e01146}
We assume that there exists such a smooth path such that $\ker D^{\widetilde{Z}}$ is locally constant.
\end{assump}

Under Assumption \ref{e01146}, from (\ref{e01141}), we can define the connection
$\nabla^{\ker D^{\widetilde{Z}}}$
on $\ker D^{\widetilde{Z}}$.
From \cite[Theorem B.5.4]{MR2339952}, modulo exact forms, the Chern-Simons forms
\begin{align}\label{e01127}
\begin{split}
&\widetilde{\widehat{\mathrm{A}}}_g(TZ,\nabla,\nabla^{'}):=-\int_0^1[\widehat{\mathrm{A}}_g(TZ,\widetilde{\nabla})]^{ds}ds,
\\
&\widetilde{\ch}_g(L_Z^{1/2}, \nabla^{L_Z^{1/2}}, \nabla^{'L_Z^{1/2}}):=-\int_0^1[\ch_g(\widetilde{L}_Z^{1/2},
\nabla^{\widetilde{L}_Z^{1/2}})]^{ds}ds,
\\
&\widetilde{\ch}_g(E, \nabla^{E}, \nabla^{'E}):=-\int_0^1[\ch_g(\widetilde{E},
\nabla^{\widetilde{E}})]^{ds}ds,
\\
&\widetilde{\ch}_g(\ker D^Z, \nabla^{\ker D^Z}, \nabla^{'\ker D^Z})
:=-\int_0^1[\ch_g(\ker D^{\widetilde{Z}}, \nabla^{\ker
D^{\widetilde{Z}}})]^{ds}ds
\end{split}
\end{align}
do not depend on the
choices of the objects with $\ \widetilde{}\ $. Moreover,
\begin{align}\label{e01110}
\begin{split}
&d\widetilde{\widehat{\mathrm{A}}}_g(TZ,\nabla, \nabla^{'})=\widehat{\mathrm{A}}_g(TZ,\nabla^{'})
-\widehat{\mathrm{A}}_g(TZ,\nabla),
\\
&d\widetilde{\ch}_g(L_Z^{1/2}, \nabla^{L_Z^{1/2}}, \nabla^{'L_Z^{1/2}})=
\ch_g(L_Z^{1/2},
\nabla^{'L_Z^{1/2}})-\ch_g(L_Z^{1/2},
\nabla^{L_Z^{1/2}}),
\\
&d\widetilde{\ch}_g(E, \nabla^{E}, \nabla^{'E})=
\ch_g(E,
\nabla^{'E})-\ch_g(E,
\nabla^{E}),
\\
&d\widetilde{\ch}_g(\ker D^Z, \nabla^{\ker D^Z}, \nabla^{'\ker D^Z})
=\ch_g(\ker D^Z,\nabla^{'\ker D^Z})-\ch_g(\ker D^Z,\nabla^{\ker D^Z}).
\end{split}
\end{align}

Now we can obtain the anomaly formula for the equivariant eta forms.
\begin{thm}\label{e01105}
Assume that Assumption \ref{e01146} holds.

i) When $n$ is odd, modulo exact forms on $S$, we have
\begin{multline}\label{e01106}
\tilde{\eta}_g(T^{'H}W,g^{'TZ},h^{'L_Z}, h^{'E},\nabla^{'L_Z}, \nabla^{'E})-\tilde{\eta}_g(T^HW,g^{TZ},h^{L_Z}, h^E,\nabla^{L_Z}, \nabla^E)
\\
=\int_{Z^g}\widetilde{\widehat{\mathrm{A}}}_g(TZ, \nabla^{TZ}, \nabla^{'TZ})\wedge \ch_g(L_Z^{1/2}, \nabla^{L_Z^{1/2}})
\wedge \ch_g(E, \nabla^{E})
\\
+\int_{Z^g}\widehat{\mathrm{A}}_g(TZ, \nabla^{'TZ})\wedge \widetilde{\ch}_g(L_Z^{1/2}, \nabla^{L_Z^{1/2}},\nabla^{'L_Z^{1/2}})\wedge \ch_g(E, \nabla^{E})
\\
+\int_{Z^g}\widehat{\mathrm{A}}_g(TZ, \nabla^{'TZ})\wedge \ch_g(L_Z^{1/2}, \nabla^{'L_Z^{1/2}})
\wedge \widetilde{\ch}_g(E, \nabla^{E},\nabla^{'E}).
\end{multline}

ii) When $n$ is even,
modulo exact forms on $S$, we have
\begin{multline}\label{e01142}
\tilde{\eta}_g(T^{'H}W,g^{'TZ},h^{'L_Z}, h^{'E},\nabla^{'L_Z}, \nabla^{'E})-\tilde{\eta}_g(T^HW,g^{TZ},h^{L_Z}, h^E,\nabla^{L_Z}, \nabla^E)
\\
=\int_{Z^g}\widetilde{\widehat{\mathrm{A}}}_g(TZ, \nabla^{TZ}, \nabla^{'TZ})\wedge \ch_g(L_Z^{1/2}, \nabla^{L_Z^{1/2}})
\wedge \ch_g(E, \nabla^{E})
\\
+\int_{Z^g}\widehat{\mathrm{A}}_g(TZ, \nabla^{'TZ})\wedge \widetilde{\ch}_g(L_Z^{1/2},
\nabla^{L_Z^{1/2}},\nabla^{'L_Z^{1/2}})\wedge \ch_g(E, \nabla^{E})
\\
+\int_{Z^g}\widehat{\mathrm{A}}_g(TZ, \nabla^{'TZ})\wedge \ch_g(L_Z^{1/2}, \nabla^{'L_Z^{1/2}})
\wedge \widetilde{\ch}_g(E, \nabla^{E},\nabla^{'E})
\\
-\widetilde{\ch}_g(\ker D^Z, \nabla^{\ker D^Z},\nabla^{'\ker D^Z}).
\end{multline}
\end{thm}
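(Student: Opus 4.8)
The plan is to realize the difference of the two equivariant eta forms as the restriction of a single equivariant eta form living over the enlarged base $\widetilde{S}=[0,1]\times S$, and then to apply the transgression identity (\ref{e01085}) over $\widetilde{S}$. This reduces the anomaly formula to the Chern--Simons product rule for characteristic forms, in the spirit of \cite{MR966608} and \cite[Theorem 5.11]{MR2072502}.

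\emph{Step 1: the eta form over $\widetilde{S}$.} Using Assumption \ref{e01146}, I would fix a smooth path of $G$-equivariant geometric data $(T^H\widetilde{W},g^{T\widetilde{Z}},h^{\widetilde{L}_Z},h^{\widetilde{E}},\nabla^{\widetilde{L}_Z},\nabla^{\widetilde{E}})$ on $\widetilde{\pi}\colon\widetilde{W}=[0,1]\times W\to\widetilde{S}$ interpolating between the two sextuples, along which $\dim\ker D^{\widetilde{Z}}$ is locally constant (if the boundary of $[0,1]$ is bothersome, one enlarges the interval to $(-\varepsilon,1+\varepsilon)$). The estimates (\ref{e01123}) and (\ref{e01124}) underlying Definition \ref{e01083} are local on the base and hold uniformly over the compact parameter interval, so Definition \ref{e01083} yields a well-defined form $\widetilde{\eta}:=\tilde{\eta}_g(T^H\widetilde{W},g^{T\widetilde{Z}},h^{\widetilde{L}_Z},h^{\widetilde{E}},\nabla^{\widetilde{L}_Z},\nabla^{\widetilde{E}})\in\Omega^*(\widetilde{S})$. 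Since the construction of $\gamma(t)$ in (\ref{e01104}) and the integral (\ref{e01084}) commute with pullback along the inclusions $\{s\}\times S\hookrightarrow\widetilde{S}$ -- the $ds$-component dropping out under pullback -- the restriction of $\widetilde{\eta}$ to $\{0\}\times S$ and to $\{1\}\times S$ equals the two eta forms on the left-hand sides of (\ref{e01106}) and (\ref{e01142}).

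\emph{Step 2: transgression and identification.} Write $d^{\widetilde{S}}=d^S+ds\wedge\partial_s$ and decompose $\widetilde{\eta}=ds\wedge\mu+\nu$ with $\mu,\nu$ $s$-dependent forms on $S$, so $\widetilde{\eta}|_{s=i}=\nu|_{s=i}$. Applying (\ref{e01085}) over $\widetilde{S}$ (together with (\ref{e01061}), (\ref{e01115}) and (\ref{e01140})),
\[
d^{\widetilde{S}}\widetilde{\eta}=\int_{Z^g}\widehat{\mathrm{A}}_g(TZ,\nabla^{T\widetilde{Z}})\wedge\ch_g(\widetilde{L}_Z^{1/2},\nabla^{\widetilde{L}_Z^{1/2}})\wedge\ch_g(\widetilde{E},\nabla^{\widetilde{E}})-\ch_g(\ker D^{\widetilde{Z}},\nabla^{\ker D^{\widetilde{Z}}})
\]
in the even case, the last term absent when $n$ is odd. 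Taking the $ds$-component and integrating over $s\in[0,1]$, the left-hand side gives $\nu|_{s=1}-\nu|_{s=0}$, the desired difference of eta forms, while the contribution of $d^S\mu$ integrates to $d^S\!\int_0^1\mu\,ds$, exact on $S$ (here the uniformity in $s$ ensures $\mu$ is smooth and that $\int_0^1 ds$ commutes with $d^S$). On the right, by the last line of (\ref{e01127}) the $\ker D^{\widetilde{Z}}$ term contributes exactly $\widetilde{\ch}_g(\ker D^Z,\nabla^{\ker D^Z},\nabla^{'\ker D^Z})$ with the sign appearing in (\ref{e01142}); for the index-density term one moves the operation $-\int_0^1[\,\cdot\,]^{ds}ds$ through the fiber integration $\int_{Z^g}$ -- legitimate up to the sign fixed by the convention (\ref{e01136}), as $Z^g$ may be odd-dimensional -- and invokes the Chern--Simons product rule for a triple product of closed characteristic forms: modulo exact forms,
\[
-\int_0^1[\alpha\wedge\beta\wedge\gamma]^{ds}ds=\widetilde{\alpha}\wedge\beta|_{s=0}\wedge\gamma|_{s=0}+\alpha|_{s=1}\wedge\widetilde{\beta}\wedge\gamma|_{s=0}+\alpha|_{s=1}\wedge\beta|_{s=1}\wedge\widetilde{\gamma},
\]
which is \cite[Theorem B.5.4]{MR2339952} combined with (\ref{e01127}) and (\ref{e01110}). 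Taking $\alpha=\widehat{\mathrm{A}}_g(TZ,\nabla^{T\widetilde{Z}})$, $\beta=\ch_g(\widetilde{L}_Z^{1/2},\nabla^{\widetilde{L}_Z^{1/2}})$, $\gamma=\ch_g(\widetilde{E},\nabla^{\widetilde{E}})$ and integrating over $Z^g$ produces precisely the three fiber integrals over $Z^g$ in (\ref{e01106}) and (\ref{e01142}).

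The main obstacle is not conceptual -- the two engines, the transgression formula (\ref{e01085}) (valid over any base, in particular $\widetilde{S}$) and the Chern--Simons product rule, are already at hand -- but bookkeeping. One has to verify carefully the uniformity in $s$ of the heat-kernel asymptotics so that $\mu$ is genuinely a smooth form on $S$ and the interchanges of $\int_0^1 ds$, $\int_0^\infty dt$ and $d^S$ are licit, and, more delicately, one must keep track of the signs coming from the ordering of $ds$ relative to the forms on $S$ and from the fiber-integration convention (\ref{e01136}) on the possibly odd-dimensional fixed-point fibers $Z^g$, just as in \cite{MR2072502} and \cite{MR966608}.
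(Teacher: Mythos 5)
Your argument is correct and follows the same route as the paper: both realize the two sextuples as endpoints of a geometric family over $\widetilde{S}=[0,1]\times S$, decompose the doubly extended Chern character form $\psi_S\widetilde{\tr}[g\exp(-\widehat{\widetilde{B}}^2)]$ in $dt$ and $ds$, invoke the local index limit at $t\to 0$ together with the $\ker D^{\widetilde{Z}}$ limit at $t\to\infty$, and identify the $s$-integrals with Chern--Simons forms via (\ref{e01127}). Your Step 2 -- applying the transgression (\ref{e01085}) over $\widetilde{S}$ and extracting the $ds$-component -- is precisely the paper's identities (\ref{e01113})--(\ref{e01114}) in a slightly repackaged form, so the two proofs coincide up to bookkeeping.
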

\begin{proof}
Let $\widetilde{B}$ be the Bismut superconnection associated to ($T^H\widetilde{W},
\widetilde{g}^{TZ}, h^{\widetilde{L_Z}}, \nabla^{\widetilde{L_Z}},
\nabla^{\widetilde{E}}$).
From (\ref{e01102}),
\begin{align}\label{e01111}
\widehat{\widetilde{B}}=\widetilde{B}_t+dt\wedge \frac{\partial}{\partial t}-\frac{n}{4t}dt
\end{align}
is the Bismut superconnection associated to the fibration $(0,+\infty)\times[0,1]\times W\rightarrow (0,+\infty)\times[0,1]\times S$.
We decompose
\begin{align}\label{e01112}
\psi_S\widetilde{\tr}[g\exp(-\widehat{\widetilde{B}}\,^2)]=dt\wedge\gamma+ds\wedge r_1+dt\wedge ds\wedge r_2+r_3,
\end{align}
where $\gamma, r_1, r_2, r_3$ do not contain $dt$ neither $ds$ and by (\ref{e01115}),
\begin{align}\label{e01128}
r_1(t,s)=\left.\left\{\psi_S\widetilde{\tr}[g\exp(-\widetilde{B}_t^2)]\right\}^{ds}\right|_{(t,s)}.
\end{align}
From (\ref{e01112}) and Definition \ref{e01083}, we have
\begin{align}\label{e01203}
\tilde{\eta}_g(T_s^HW, g_s^{TZ}, h_s^{L_Z}, h_s^E, \nabla_s^{L}, \nabla_s^E):=-\int_0^\infty \gamma(t,s)dt.
\end{align}

Since $(dt\wedge \frac{\partial}{\partial t}+ds\wedge \frac{\partial}{\partial s}+d^{S})
\psi_S\widetilde{\tr}[g\exp(-\widehat{\widetilde{B}}\,^2)]=0$,
we have
 \begin{align}\label{e01113}
\frac{\partial \gamma}{\partial s}=\frac{\partial r_1}{\partial t}+d^S r_2.
\end{align}
From (\ref{e01203}), we have
\begin{align}\label{e01114}
\begin{split}
&\tilde{\eta}_g(T^{'H}W,g^{'TZ},h^{'L_Z}, h^{'E},\nabla^{'L_Z}, \nabla^{'E})-\tilde{\eta}_g(T^HW,g^{TZ},h^{L_Z}, h^E,\nabla^{L_Z}, \nabla^E)
\\
=&\int_{0}^{+\infty}(\gamma(t,1)-\gamma(t,0))dt=\int_0^{+\infty}\int_0^1 \frac{\partial}{\partial s}\gamma(t,s)ds dt=\int_0^1\int_0^{+\infty} \frac{\partial}{\partial s}\gamma(t,s)dt ds
\\
=&\int_0^1\int_0^{+\infty} \frac{\partial}{\partial t}r_1(t,s)dt ds+d^S\int_0^1\int_0^{+\infty} r_2(t,s)dt ds
\\
=&-\int_0^1 (r_1(0,s)-r_1(\infty,s))ds+d^S\int_0^1\int_0^{+\infty} r_2(t,s)dt ds.
\end{split}
\end{align}
The commutative property of the integrals
 in the above formula is granted by the uniformness of (\ref{e01123}) and (\ref{e01124}) for $s\in [0,1]$.

Let $\nabla^{T\widetilde{Z}}$ be the Euclidean connection associated to $(T^H\widetilde{W}, g^{T\widetilde{Z}})$ as in (\ref{e01014}).
By (\ref{e01140}), (\ref{e01139}) and (\ref{e01128}), we have
\begin{align}\label{e01143}
r_1(0,s)=\left.\left\{\int_{Z^g}\widehat{A}_g(TZ,\nabla^{T\widetilde{Z}})\wedge\ch_g(\widetilde{L}_Z^{1/2},
\nabla^{\widetilde{L}_Z^{1/2}})\wedge\ch_g(\widetilde{E},
\nabla^{\widetilde{E}})\right\}^{ds}\right|_{\{s\}\times S}
\end{align}
and
\begin{align}\label{e01144}
r_1(\infty,s)=
\left\{
  \begin{aligned}
    &\{\ch_g(\ker D^{\widetilde{Z}}, \nabla^{\ker D^{\widetilde{Z}}})\}^{ds}|_{\{s\}\times S},
    & \hbox{if $n$ is even;} \\
    &0,& \hbox{if $n$ is odd.}
  \end{aligned}
\right.
\end{align}
Then Theorem \ref{e01105} follows from  (\ref{e01127}), (\ref{e01114}), (\ref{e01143}) and (\ref{e01144}).

The proof of Theorem \ref{e01105} is complete.
\end{proof}

\section{Functoriality of equivariant eta forms}\label{s02}

In this section, we state our main result. 

\subsection{Functoriality of equivariant eta forms}

Let $W$, $V$, $S$ be smooth manifolds. Let $\pi_1:W\rightarrow V$, $\pi_2:V\rightarrow S$ be smooth fibrations with closed
oriented fibers $X$, $Y$, with $\dim X=n-m$, $\dim Y=m$. Then $\pi_3=\pi_2\circ \pi_1: W\rightarrow S$ is a smooth
fibration with closed oriented fiber $Z$ with $\dim Z=n$.
Then we have the diagram of smooth fibrations:

\begin{center}\label{e02001}
\begin{tikzpicture}[>=angle 90]
\matrix(a)[matrix of math nodes,
row sep=2em, column sep=2.5em,
text height=1.5ex, text depth=0.25ex]
{X&Z&W\\
&Y&V&S.\\};
\path[->](a-1-1) edge (a-1-2);
\path[->](a-1-2) edge node[left]{\footnotesize{}} (a-2-2);
\path[->](a-1-2) edge (a-1-3);
\path[->](a-2-2) edge (a-2-3);
\path[->](a-1-3) edge node[left]{\footnotesize{$\pi_1$}} (a-2-3);
\path[->](a-2-3) edge node[above]{\footnotesize{$\pi_2$}} (a-2-4);
\path[->](a-1-3) edge node[above]{\footnotesize{$\pi_3$}} (a-2-4);
\end{tikzpicture}
\end{center}

Let $TX$, $TY$, $TZ$ be the relative tangent bundles.
We assume that $TX$ and $TY$ have the $\mathrm{Spin}^c$ structures with complex line bundles $L_X$ and $L_Y$ respectively.
Let
\begin{align}\label{e02004}
L_Z=\pi_1^*(L_Y)\otimes L_X.
\end{align}
Then $TZ$ have a $\mathrm{Spin}^c$ structure with complex line bundle $L_Z$.
Recall the notations in Section 1, we take quadruples ($T_1^HW, g^{TX}, h^{L_X}, \nabla^{L_X}$),
($T_2^HV, g^{TY}, h^{L_Y}, \nabla^{L_Y}$) and
($T_3^HW$, $g^{TZ}$, $h^{L_Z}$, $\nabla^{L_Z}$) with respect to
fibrations  $\pi_1$, $\pi_2$ and $\pi_3$ respectively.
Then we can define connections $\nabla^{TX}$, $\nabla^{TY}$, $\nabla^{TZ}$,
fundamental spinors  $\mS(TX, L_X)$, $\mS(TY, L_Y)$, $\mS(TZ, L_Z)$,
metrics $h^{\mS_X}$, $h^{\mS_Y}$, $h^{\mS_Z}$ and connections
$\nabla^{\mS_X}$, $\nabla^{\mS_Y}$, $\nabla^{\mS_Z}$ as in Section \ref{s0102}.
If $U\in TS$, $U'\in TV$, let $U_1^{'H}\in T_1^HW$, $U_2^{H}\in T_2^HV$, $U_3^{H}\in T_3^HW$
be the horizontal lifts of $U'$, $U$, $U$, so that $\pi_{1,*}(U_1^{'H})=U'$, $\pi_{2,*}(U_2^{H})=U$, $\pi_{3,*}(U_3^{H})=U$.

Set $T^HZ:=T_1^HW\cap TZ$. Then we have the splitting of smooth vector bundles over $W$,
\begin{align}\label{e02003}
TZ=T^HZ\oplus TX,
\end{align}
and
\begin{align}\label{e02002}
T^HZ\cong \pi_1^*TY.
\end{align}
Let $\,^0\nabla^{TZ}$ be the connection on $TZ=T^HZ\oplus TX$ defined by
\begin{align}\label{e02114}
\,^0\nabla^{TZ}=\pi^*\nabla^{TY}\oplus\nabla^{TX}
\end{align}
as in (\ref{e01015}). Set
\begin{align}\label{e02005}
\,^0\nabla^{L_Z}=\pi_1^*\nabla^{L_Y}\otimes 1+1 \otimes \nabla^{L_X}.
\end{align}

Let $(E, \nabla^E)$ be a Hermitian vector bundle with Hermitian connection $\nabla^{E}$.
For $v\in V$, let $\cE_{X,v}$ be the set of
smooth sections over $X_v$ of $\mS(TX, L_X)\otimes E$. We still regard $\cE_X$ as
 an infinite dimensional fiber bundle over $V$.
For any $v\in V$, $s_1, s_2\in \cE_{X,v}$,
as in (\ref{e01025}), we define the scalar product
\begin{align}\label{e02006}
\la s_1, s_2 \ra_{\cE_{X,v}}=\int_{X_v}\la s_1(x), s_2(x) \ra_X\, dv_X,
\end{align}
where $\la\cdot,\cdot\ra_X=h^{\mS_X\otimes E}(\cdot,\cdot)$.
Let $\{e_i\}$ be a local orthonormal frame of $(TX, g^{TX})$.
As in (\ref{e01026}) and (\ref{e01028}), for $U\in TV$, we
set
\begin{align}\label{e02109}
\nabla^{\cE_X,u}_{U}:=\nabla^{\mS_X\otimes E}_{U_1^H}-\frac{1}{2}\la S_1(e_i)e_i,U_1^H\ra.
\end{align}
Then $\nabla^{\cE_X,u}$ preserves the scalar product $\la\cdot,\cdot\ra_{\cE_X}$.

Let $D^X$ and $D^Z$ be the fiberwise Dirac operators associated to ($T_1^HW$, $ g^{TX}$, $\nabla^{L_X}$, $h^E$, $\nabla^E$) and ($T_3^HW$, $ g^{TZ}$, $\nabla^{L_Z}$, $h^E$, $\nabla^E$).
We assume that $\ker D^X$ is locally constant.
Then $\ker D^X$ forms a vector bundle over $V$.
Let $P^{\ker D^X}: \cE_{X}\rightarrow \ker D^X$ be the
orthonomal projection with respect to the scalar product (\ref{e02006}).
Let $h^{\ker D^X}$ be the $L^2$ metric induced by $h^{\mS_X\otimes E}$ and
\begin{align}\label{e02030}
\nabla^{\ker D^X}:=P^{\ker D^X}\nabla^{\cE_X,u}P^{\ker D^X}.
\end{align}
Then $\nabla^{\ker D^X}$ preserves the metric $h^{\ker D^X}$.
Let $D^Y$ be the Dirac operator associated to ($T_2^HV, g^{TY}, \nabla^{\mS_Y\otimes \ker D^X}$).

\begin{assump}\label{e02116}
We assume that the geometric data ($T_1^HW, g^{TX}, h^{L_X}, \nabla^{L_X}$, $h^E$, $\nabla^E$) and
($T_2^HV$, $g^{TY}$, $h^{L_Y}$, $\nabla^{L_Y}$) satisfy the conditions that $\ker D^X$ is locally constant and $\ker D^Y=0$.
\end{assump}

Let $G$ be a compact Lie group which acts on $W$ such that for any $g\in G$,
$g\cdot\pi_1=\pi_1\cdot g$ and $\pi_3\cdot g=\pi_3$. Then we know that
$G$ acts as identity on $S$. We assume that the action of $G$ preserves the $\mathrm{Spin}^c$ structures of $TX$, $TY$, $TZ$
and the quadruples ($T_1^HW, g^{TX}, h^{L_X}, \nabla^{L_X}$),
($T_2^HV, g^{TY}, h^{L_Y}, \nabla^{L_Y}$),
($T_3^HW$, $g^{TZ}$, $h^{L_Z}$, $\nabla^{L_Z}$) and ($E, h^E, \nabla^E$) are $G$-invariant.

On the other hand,
we take another equivariant horizontal subbundle $T_3^{'H}W\subset TW$, which is complement of $TZ$, such that
\begin{align}\label{e02110}
T_3^{'H}W \subset T_1^HW.
\end{align}
Let $g^{'TZ}$ be another metric on $TZ$ such that
\begin{align}\label{e02111}
g^{'TZ}=\pi_1^*g^{TY}\oplus g^{TX}.
\end{align}
Let $\nabla^{'TZ}$ be the connection associated to  $(T_3^{'H}W, g^{'TZ})$ as in (\ref{e01014}).

Let $\mS'(TZ, L_Z)$ be the fundamental spinor associated to $(g^{'TZ}, L_Z)$.
Then
\begin{align}\label{e02112}
\mS'(TZ, L_Z)\simeq\pi_1^*\mS(TY, L_Y)\otimes \mS(TX, L_X).
\end{align}
Set
\begin{align}\label{e02113}
h^{'L_Z}:=\pi_1^*h^{L_Y}\otimes h^{L_X}.
\end{align}

Let
\begin{align}\label{e02117}
g_T^{'TZ}=\pi_1^*g^{TY}\oplus T^{-2}g^{TX}.
\end{align}
We denote the Clifford algebra bundle of $TZ$ with respect to $g_T^{'TZ}$
 by $C_T(TZ)$. Let $\{f_{p}\}$ be a local orthonormal frame of $(TY, g^{TY})$.
Then $\{Te_i\}\cup\{f_{p,1}^H\}$ is a local orthonormal frame of $(TZ, g_T^{'TZ})$.
We define a Clifford algebra isomorphism
\begin{align}\label{e02027}
\mG_T:C_T(TZ)\rightarrow C(TZ)
\end{align}
by
\begin{align}\label{e02028}
\begin{split}
\mG_T(c(f_{p,1}^H))=c(f_{p,1}^H),\quad \mG_T(c_T(Te_i))=c(e_i).
\end{split}
\end{align}
Under this isomorphism, we can also consider $\mS'(TZ, L_Z)$ in (\ref{e02112}) as a spinor associated to ($TZ, g_T^{'TZ}$).
Let $D_T^Z$ be the fiberwise Dirac operator associated to ($T_3^{'H}W, g_T^{'TZ}, \,^0\nabla^{L_Z}$, $h^E$, $\nabla^E$).

Comparing with \cite[Theorem 1.5]{MR1088332}, we can get the following lemma.
\begin{lemma}\label{e02118}
If Assumption \ref{e02116} holds, there exists $T_0\geq 1$, such that when $T\geq T_0$, $\ker D_T^Z=0$.
\end{lemma}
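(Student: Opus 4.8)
The plan is to use the adiabatic-limit technique: rescale the metric on the fiber $X$ by $T^{-2}$ and study the fiberwise Dirac operator $D_T^Z$ on the total space as $T\to\infty$. The key observation is that under the isomorphism $\mG_T$ of (\ref{e02027})–(\ref{e02028}) and the identification (\ref{e02112}), the operator $D_T^Z$ decomposes, up to lower-order terms in $T$, as a sum $T D^X + D^{H}$, where $D^X$ acts along the fiber $X$ (with a $T$-dependent scaling that makes its nonzero spectrum blow up like $T$) and $D^{H}$ is essentially the horizontal Dirac operator $D^Y$ twisted by $\ker D^X$. This is exactly the setup of Bismut–Cheeger adiabatic limits as developed in \cite{MR1088332}, and Lemma \ref{e02118} is the vanishing statement for the kernel in the large-$T$ regime.

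First I would set up the precise decomposition of $(D_T^Z)^2$ in the spirit of Bismut's Lichnerowicz formula (\ref{e01043}), applied to the fibration $\pi_3$ with the metric $g_T^{'TZ}$ of (\ref{e02117}) and the connection $\,^0\nabla^{L_Z}$. Because $T_3^{'H}W\subset T_1^HW$ and $g^{'TZ}=\pi_1^*g^{TY}\oplus g^{TX}$, the horizontal distribution is adapted to the two-step fibration, so $(D_T^Z)^2$ has a leading term $T^2 (D^X)^2$ acting fiberwise on $X$, a cross term of order $T$, and an order-one term which, restricted to $\ker D^X$, is comparable to $(D^Y)^2$. Next, I would invoke the standard spectral-gap argument: since $\ker D^X$ is locally constant (Assumption \ref{e02116}), the nonzero spectrum of $D^X$ is bounded below by some $c>0$ uniformly over the compact manifold $V$ (or over compact subsets; on the relevant part one works locally and patches), so on the orthogonal complement of $\mS(TY,L_Y)\,\widehat\otimes\,\ker D^X$ the operator $(D_T^Z)^2$ is bounded below by $c^2 T^2 - C T$ for constants $C$, hence is positive for $T$ large. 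On the piece $\mS(TY,L_Y)\,\widehat\otimes\,\ker D^X$ itself, the relevant operator converges as $T\to\infty$ to $(D^Y)^2$, which is invertible by the hypothesis $\ker D^Y=0$ in Assumption \ref{e02116}; a perturbation estimate then gives invertibility for $T\geq T_0$. Combining the two regions, $\ker D_T^Z=0$ for $T\geq T_0$, which is the claim.

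**The hard part will be** making the "block decomposition of $(D_T^Z)^2$ into a $T^2$-dominant part on the orthogonal complement of $\ker D^X$ plus a part converging to $(D^Y)^2$" rigorous uniformly in the base variables — i.e., the matrix-valued perturbation theory near the adiabatic limit. One must control the cross terms (the ones linear in $T$, coming from $\la S(\cdot)\cdot,\cdot\ra$-type contributions and from the difference between $\nabla^{\mS_Z}$ and $\pi_1^*\nabla^{\mS_Y}\otimes\nabla^{\mS_X}$) and show they do not destroy the spectral gap; this is precisely the technical core of \cite[Theorem 1.5]{MR1088332}, to which the lemma explicitly appeals. Since the excerpt says "comparing with \cite[Theorem 1.5]{MR1088332}", I would carry out the estimate by adapting that argument: write $D_T^Z = \mG_T^{-1}\!\big(T D^X + \text{(horizontal part)} + O(1/T)\big)\mG_T$ schematically, project onto $\ker D^X$ and its complement using $P^{\ker D^X}$ of (\ref{e02030}), bound the off-diagonal blocks, and apply the standard estimate that a self-adjoint operator $\begin{pmatrix} A & B \\ B^* & D\end{pmatrix}$ with $A\geq \lambda$, $\|B\|\leq \mu$, $D$ invertible with bounded inverse, is invertible once $\lambda$ is large compared to $\mu^2$ and $\|D^{-1}\|$. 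Feeding in $\lambda\sim c^2T^2$, $\mu\sim CT$ yields the threshold $T_0$.

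**In summary**, the proof is: (i) expand $(D_T^Z)^2$ via the fibered Lichnerowicz formula for $g_T^{'TZ}$ and extract the $T$-powers; (ii) on $(\mS(TY,L_Y)\,\widehat\otimes\,\ker D^X)^\perp$ obtain a lower bound $\gtrsim T^2$ for $T$ large from the spectral gap of $D^X$; (iii) on $\mS(TY,L_Y)\,\widehat\otimes\,\ker D^X$ use the $T\to\infty$ convergence to $(D^Y)^2>0$ together with a perturbation argument; (iv) combine off-diagonal bounds with the two-sided estimates to conclude $D_T^Z$ is invertible, i.e. $\ker D_T^Z=0$, for all $T\geq T_0$. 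I expect step (ii)–(iii) together with the uniform control of the cross terms to be the only genuinely delicate point; everything else is bookkeeping with the definitions of Section \ref{s02}.
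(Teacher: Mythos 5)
Your overall strategy — adiabatic limit, block decomposition with respect to the projection $P^{\ker D^X}$, spectral gap on the orthogonal complement, convergence to $D^Y$ on the kernel — is exactly the strategy of the paper (the second proof, given in Section~\ref{s0403}). However, the quantitative estimate you write down does not close, and this is not a cosmetic defect: it is precisely the point at which the paper has to work.

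You propose to apply the block-matrix criterion ``$A\ge\lambda$, $\|B\|\le\mu$, $D$ invertible $\Rightarrow$ invertible once $\lambda\gg\mu^2\|D^{-1}\|$'' with $\lambda\sim c^2T^2$ and $\mu\sim CT$, i.e.\ you are looking at the blocks of $(D_T^Z)^2$. But then $\mu^2/\lambda\sim C^2/c^2=O(1)$, so the Schur complement correction $B^*A^{-1}B$ added to the $\ker D^X$-block is $O(1)$ \emph{uniformly in $T$} — it does not become small, and the condition $\lambda\gg\mu^2\|D^{-1}\|$ never improves as $T\to\infty$. Indeed this $O(1)$ term cannot be discarded: the naive diagonal block $P(D_T^Z)^2P$ tends to $PD^{H,2}P=(D^Y)^2+PD^HP^\perp D^HP$, not to $(D^Y)^2$, and it is exactly the Schur correction $-FH^{-1}G$ (with $F=PQP^\perp$, $G=P^\perp QP$, $H=P^\perp D^{X,2}P^\perp$, $Q=[D^X,\mC]$) that cancels the unwanted piece and produces $(D^Y)^2$. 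This identity is Proposition~\ref{e04023}, equation (\ref{e04305}): $\mB_2=E-FH^{-1}G$. Without it, ``$D$ invertible with bounded inverse'' for the naive block plus a size estimate on $B^*A^{-1}B$ is not enough — you have to know the \emph{limit} of the Schur complement, not merely a bound on the perturbation.

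Two ways to repair the sketch. (a) Work with $D_T^Z$ rather than its square: then $P^\perp D_T^ZP^\perp = TD^X|_\perp+O(1)$ has inverse $O(1/T)$, the off-diagonal blocks $PD_T^ZP^\perp$, $P^\perp D_T^ZP$ are $O(1)$ because $PD^X=0$, so the Schur correction to $PD_T^ZP=D^Y+O(1/T)$ is $O(1/T)$, and invertibility for large $T$ follows from $\ker D^Y=0$ by a genuine small-perturbation argument. (Your own schematic formula $D_T^Z=\mG_T^{-1}(TD^X+\text{horizontal}+O(1/T))\mG_T$ already sets this up; the mismatch is that the numbers $\lambda\sim T^2$, $\mu\sim T$ you then feed in belong to the square.) (b) Compute the Schur complement of $(D_T^Z)^2$ exactly and show it converges to $(D^Y)^2$, then deduce spectral convergence. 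This is what the paper does: Lemma~\ref{e04004} identifies $\mathrm{Sp}(\mB_T)=\mathrm{Sp}(D_T^{Z,2})$ and $\mathrm{Sp}(\mB_2)=\mathrm{Sp}(D^{Y,2})$; Proposition~\ref{e04023} gives $\mB_2=E-FH^{-1}G$; Lemmas~\ref{e04081} and~\ref{e04094} give the resolvent estimate $\|(\lambda-\mB_T)^{-1}-P(\lambda-\mB_2)^{-1}P\|\le C T^{-1}(1+|\lambda|)^k$; and then, since $\mathrm{Sp}(\mB_2)=\mathrm{Sp}(D^{Y,2})\subset[2c_1,\infty)$ by $\ker D^Y=0$, one concludes $\mathrm{Sp}(D_T^{Z,2})\subset[c_1,\infty)$ for $T$ large, which is (\ref{e04040}) and hence Lemma~\ref{e02118}. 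Note also that throughout the paper these estimates are carried out not in flat $L^2$ but in the $T$-weighted Sobolev norms $|\cdot|_{T,\pm1}$ of Definition~\ref{e04019}; this is what makes the ``cross terms linear in $T$'' (which you correctly flag as the technical core) actually controllable.
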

We will give another proof of this lemma in Section \ref{s0403}.

Now we state an analogue of Assumption \ref{e01146} as follows.
\begin{assump}\label{e02119}
We assume that there exist
an equivariant horizontal subbundle $T_3^{'H}W\subset TW$ satisfying (\ref{e02110}) and
 a smooth path constructed as the argument before Assumption \ref{e01146},
 connecting the quadruples ($T_3^HW$, $g^{TZ}$, $h^{L_Z}$, $\nabla^{L_Z}$)
and ($T_3^{'H}W$, $g_{T_0}^{'TZ}$, $h^{'L_Z}$, $\,^0\nabla^{L_Z}$),
such that $\ker (D^{\widetilde{Z}})=0$.
\end{assump}

For any $g\in G$, let $T_1^H(W|_{V^g})=T_1^HW|_{V^g}\cap T(W|_{V^g})$ be the equivariant horizontal subbundle of $T(W|_{V^g})$.
We state our main result as follows.

\begin{thm}\label{e02084}
If Assumption \ref{e02116} and \ref{e02119} hold, for any $g\in G$,
we have the following identity in $\Omega^*(S)/d^S\Omega^*(S)$,
\begin{multline}\label{e02089}
\tilde{\eta}_{g}(T_3^HW,g^{TZ},h^{L_Z},\nabla^{L_Z}, h^E, \nabla^E)=\tilde{\eta}_g(T_2^HV,g^{TY},h^{L_Y},
h^{\ker D^X},\nabla^{L_Y},\nabla^{\ker D^X})
\\
+\int_{Y^g}{\rm \widehat{A}}_g(TY,\nabla^{TY})\wedge\ch_g(L_Y^{1/2}, \nabla^{L_Y^{1/2}})\wedge
\tilde{\eta}_g(T_1^H(W|_{V^g}),g^{TX},h^{L_X}, \nabla^{L_X}, h^E, \nabla^E)
\\
-\int_{Z^g}{\rm \widetilde{{\widehat{A}}}}_g(TZ,\nabla^{TZ},\,^0\nabla^{TZ})\wedge\ch_g(L_Z^{1/2}, \nabla^{L_Z^{1/2}})\wedge \ch_g(E,\nabla^E)
\\
-\int_{Z^g}{\rm {\widehat{A}}}_g(TZ,\,^0\nabla^{TZ})\wedge\widetilde{\ch}_g(L_Z^{1/2}, \nabla^{L_Z^{1/2}}, \,^0\nabla^{L_Z^{1/2}})\wedge \ch_g(E,\nabla^E).
\end{multline}
\end{thm}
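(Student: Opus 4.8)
The plan is to deduce~(\ref{e02089}) from the anomaly formula (Theorem~\ref{e01105}) together with an adiabatic limit computation whose analytic input consists of the intermediate results proved in Sections~4--8. First I would use Assumption~\ref{e02119}: along the path it provides one has $\ker D^{\widetilde{Z}}=0$, so Theorem~\ref{e01105} applies with no $\widetilde{\ch}_g(\ker)$ term and yields, modulo $d^S\Omega^*(S)$,
\begin{multline*}
\tilde{\eta}_g(T_3^HW,g^{TZ},h^{L_Z},h^E,\nabla^{L_Z},\nabla^E)
=\tilde{\eta}_g(T_3^{'H}W,g_{T_0}^{'TZ},h^{'L_Z},h^E,\,^0\nabla^{L_Z},\nabla^E)\\
-\int_{Z^g}\widetilde{\widehat{\mathrm{A}}}_g(TZ,\nabla^{TZ},\,^0\nabla^{TZ})\wedge\ch_g(L_Z^{1/2},\nabla^{L_Z^{1/2}})\wedge\ch_g(E,\nabla^E)\\
-\int_{Z^g}\widehat{\mathrm{A}}_g(TZ,\,^0\nabla^{TZ})\wedge\widetilde{\ch}_g(L_Z^{1/2},\nabla^{L_Z^{1/2}},\,^0\nabla^{L_Z^{1/2}})\wedge\ch_g(E,\nabla^E),
\end{multline*}
so the last two lines of~(\ref{e02089}) are accounted for and it remains to identify the eta form $\tilde{\eta}_g(T_3^{'H}W,g_{T_0}^{'TZ},h^{'L_Z},h^E,\,^0\nabla^{L_Z},\nabla^E)$, modulo exact forms, with the first two lines of its right hand side. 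Since $\,^0\nabla^{TZ}$, $\,^0\nabla^{L_Z}$, $\nabla^E$ do not depend on $T$ and $\ker D_T^Z=0$ for every $T\ge T_0$ by Lemma~\ref{e02118}, a second use of Theorem~\ref{e01105} shows this eta form is independent of $T\ge T_0$ modulo exact forms, so we may pass to the adiabatic limit $T\to\infty$.

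For $T\ge T_0$ let $\mB_{T,t}$ be the rescaled Bismut superconnection attached to $(T_3^{'H}W,g_T^{'TZ},\,^0\nabla^{L_Z},\nabla^E)$; through the Clifford isomorphism $\mG_T$ of~(\ref{e02027}) it interpolates, as $(T,t)$ vary, between the fibrewise Dirac operator of $\pi_1$ and that of $\pi_2$. Writing $\tilde{\eta}_g(g_T^{'TZ})=-\int_0^\infty\gamma_T(t)\,dt$ (abbreviating the geometric data) as in Definition~\ref{e01083}, I would form the Bismut superconnection $\widehat{\mB}$ of the fibration over $(0,\infty)_t\times[T_0,\infty)_T\times S$ and exploit the closedness of $\psi_S\widetilde{\tr}[g\exp(-\widehat{\mB}^2)]$, exactly as in~(\ref{e01002})--(\ref{e01125}), to get a transgression identity writing $\partial_T\gamma_T(t)$ as $\partial_t$ of a further form plus $d^S$ of a third. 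Integrating over $t\in(0,\infty)$ and $T\in[T_0,\infty)$ and evaluating the boundary contributions gives, modulo exact forms: the $t\to0$ end reproduces the equivariant local index density of $\pi_3$ and is annihilated by the $T$-integration; the $t\to\infty$ end vanishes because $\ker D^X$ is locally constant and $\ker D^Y=0$ (Assumption~\ref{e02116})---this is why no $\widetilde{\ch}_g(\ker)$ survives; the boundary $T=T_0$ contributes $\tilde{\eta}_g(g_{T_0}^{'TZ})$; and the $T\to\infty$ boundary, after splitting the $t$-integral and rescaling $t$ with $T$ so as to separate the $Y$- and $X$-scales, produces $\tilde{\eta}_g(T_2^HV,g^{TY},h^{L_Y},h^{\ker D^X},\nabla^{L_Y},\nabla^{\ker D^X})$, coming from the $\pi_2$-Bismut superconnection with coefficients in $\ker D^X$, together with $\int_{Y^g}\widehat{\mathrm{A}}_g(TY,\nabla^{TY})\wedge\ch_g(L_Y^{1/2},\nabla^{L_Y^{1/2}})\wedge\tilde{\eta}_g(T_1^H(W|_{V^g}),g^{TX},h^{L_X},h^E,\nabla^{L_X},\nabla^E)$, coming from the fibrewise $\pi_1$-eta form, localized along the $X$-fibres, paired with the equivariant $\widehat{\mathrm{A}}\,\ch$-density of the $Y$-directions and integrated over $Y^g$. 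Collecting the four boundary terms gives the desired identification, and with the first step this is~(\ref{e02089}); everything is uniform in the parity of $n$ once phrased through $\widetilde{\tr}$ as in~(\ref{i16}).

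The hard part is entirely analytic and is what Sections~4--8 are devoted to: heat-kernel estimates for $\mB_{T,t}^2$ uniform in the two parameters (so that Fubini applies and the boundary limits $t\to0$, $t\to\infty$, $T\to\infty$ exist), the identification of the rescaled limit operators near the fixed-point set $W^g$ (combining Bismut's analytic localization with equivariant localization on the tower $Z^g\to Y^g$ with fibre $X^g$, using $\ker D^Y=0$), and control of the crossover region where $t$ is comparable to $T^{-2}$. In Section~3 one only needs to quote these intermediate statements and carry out the bookkeeping above.
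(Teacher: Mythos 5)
Your high-level strategy is the same as the paper's: apply the anomaly formula (Theorem~\ref{e01105}) to replace the original data by the adiabatically rescaled data $(T_3^{'H}W,g_{T_0}^{'TZ},h^{'L_Z},\,^0\nabla^{L_Z})$, then set up a two-parameter transgression in $(T,t)$ and read off (\ref{e02089}) from the four boundary contributions; this is exactly the contour argument built around the fundamental one-form $\beta_g$ in Sections~3.1--3.3. However, your treatment of the boundary contributions contains a genuine gap.

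The crucial misstep is the claim that ``a second use of Theorem~\ref{e01105} shows this eta form is independent of $T\ge T_0$ modulo exact forms.'' It is not: while $\,^0\nabla^{TZ}$ and $\,^0\nabla^{L_Z}$ are $T$-independent, the connection $\nabla_T^{TZ}$ entering the Bismut superconnection and hence the anomaly formula is the Levi-Civita projection for $(T_3^{'H}W,g_T^{'TZ})$, which depends on $T$ via (\ref{e04310}). Applying Theorem~\ref{e01105} between parameters $T_1<T_2$ produces the nontrivial Chern--Simons correction $\int_{Z^g}\widetilde{\widehat{\mathrm{A}}}_g(TZ,\nabla_{T_1}^{TZ},\nabla_{T_2}^{TZ})\wedge\cdots$; Proposition~\ref{e03047} shows precisely that $-\int_{T_0}^{\infty}\gamma_{\mA}(T)\,dT=\widetilde{\widehat{\mathrm{A}}}_g(TZ,\nabla_{T_0}^{TZ},\,^0\nabla^{TZ})$ modulo exact, which is generically nonzero. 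Relatedly, your first application of the anomaly formula to go from the original data to the $T_0$ data should produce $\widetilde{\widehat{\mathrm{A}}}_g(TZ,\nabla^{TZ},\nabla_{T_0}^{TZ})$, not $\widetilde{\widehat{\mathrm{A}}}_g(TZ,\nabla^{TZ},\,^0\nabla^{TZ})$; the difference is exactly the Chern--Simons piece $\widetilde{\widehat{\mathrm{A}}}_g(TZ,\nabla_{T_0}^{TZ},\,^0\nabla^{TZ})$ that your second step loses.

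This omission resurfaces in your assertion that ``the $t\to0$ end reproduces the equivariant local index density of $\pi_3$ and is annihilated by the $T$-integration.'' It is not annihilated. In the paper, the $u=\varepsilon\to0$ boundary ($I_4$) is the hardest piece and, after the rescaling $T\mapsto T\varepsilon^{-1}$ in (\ref{e03033})--(\ref{e03037}), it splits into two parts that survive the limit: (a) the $\int_{Y^g}\widehat{\mathrm{A}}_g(TY)\wedge\ch_g(L_Y^{1/2})\wedge\tilde{\eta}_g(\pi_1)$ term, controlled by Theorem~\ref{e03022}\,ii), iv), and (b) the $\int_{Z^g}\gamma_{\mA}(T)\wedge\cdots$ contribution, controlled by Theorem~\ref{e03022}\,iii) and resummed via Proposition~\ref{e03047} into the missing Chern--Simons piece. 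You attribute (a) to the $T\to\infty$ boundary and drop (b) entirely; these two inaccuracies happen to compensate each other in your final bookkeeping, but that is coincidence, not argument. To be correct, you must (i) keep the Chern--Simons term between $\nabla^{TZ}$ and $\nabla_{T_0}^{TZ}$ in the first reduction, and (ii) carefully evaluate the $t\to0$ (resp.\ $T\to\infty$) boundaries including the crossover regime $t\sim T^{-2}$, which is exactly what Sections~6--8 of the paper supply.
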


\subsection{Simplifying assumptions}\label{s0202}

By anomaly formula Theorem \ref{e01105}, we only need to prove Theorem \ref{e02084}
when ($T_3^HW$, $g^{TZ}$, $h^{L_Z}$, $\nabla^{L_Z}$)=$(T_3^{H'}W, g_{T_0}^{'TZ}, h^{'L_Z}, \nabla^{'L_Z})$.
Therefore, in the following sections, we assume that
\begin{align}
\begin{split}
T_3^HW\subset T_1^HW,&\quad g^{TZ}=g^{TX}\oplus \pi_1^*g^{TY},\quad h^{L_Z}=\pi_1^*h^{L_Y}\otimes h^{L_X},
\\
\nabla^{L_Z}&=\pi_1^*\nabla^{L_Y}\otimes 1+1\otimes \nabla^{L_X}.
\end{split}
\end{align}
Let
\begin{align}\label{e02021}
g_T^{TZ}=\pi_1^*g^{TY}\oplus\frac{1}{T^2}g^{TX}
\end{align}
and $D_T^Z$ be the fiberwise Dirac operator associated to ($T_3^{H}W, g_T^{TZ}, \nabla^{L_Z}$, $h^E$, $\nabla^E$).
We assume that $\ker D^X$ is locally constant, $\ker D^Y=0$ and for any $T\geq 1$, $\ker D_T^Z=0$.

\section{Proof of Theorem \ref{e02084}}\label{s03}

In this section, we use the assumptions and the notations in Section \ref{s0202}.

This Section is organized as follows. In Section 3.1, we introduce a $1$-form on $\R_+\times\R_+$.
In Section 3.2, we state some intermediate results which we need for the proof of Theorem \ref{e02084},
whose proofs are delayed to Section 4-8. In Section 3.3, we prove Theorem \ref{e02084}.
For the convenience to compare the results in this paper with those in \cite{MR2072502},
the intermediate results and the proof of Theorem \ref{e02084} in this section are formulated almost
the same as in \cite[Theorem 5.11]{MR2072502}. We left the main difficulties in the proofs of intermediate
results later.

\subsection{A fundamental $1$-form}\label{s0301}

Let  $\nabla_T^{TZ}$ be the connection associated to $(T_3^HW, g_T^{TZ})$ as in (\ref{e01014}).
Let $S_{1,T}$ be the tensor associated to ($T_1^HW, T^{-2}g^{TX}$) as in (\ref{e01017}).
Comparing with \cite[(3.10)]{Bismut1985} and \cite[Theorem 5.1]{MR1942300}, we have
\begin{align}\label{e04310}
\nabla^{TZ}_T=\,^0\nabla^{TZ}+P^{TZ}S_{1,T}P^{TZ}=\,^0\nabla^{TZ}+P^{TX}S_1P^{T^HZ}+\frac{1}{T^2}P^{T^HZ}S_1P^{TZ}.
\end{align}

Let $\nabla^{\mS_Z,T}$ be the connection on $\mS(TZ, L_Z)$ induced by $\nabla^{TZ}_T$ and $\nabla^{L_Z}$.
Set
\begin{align}\label{e03012}
\,^0\nabla^{\mS_Z}:=\pi_1^*\nabla^{\mS_Y}\otimes 1+ 1\otimes \nabla^{\mS_X}.
\end{align}
Then by (\ref{e04310}),
\begin{align}\label{e02029}
\begin{split}
\nabla^{\mS_Z,T}=\, ^0\nabla^{\mS_Z}+\frac{1}{2T}\la S_{1}(\cdot) e_i,f_{p,1}^H\ra c(e_i)c(f_{p,1}^H)
+\frac{1}{4T^2}\la S_{1}(\cdot)f_{p,1}^H,f_{q,1}^H\ra c(f_{p,1}^H)c(f_{q,1}^H).
\end{split}
\end{align}

As the construction in Section \ref{s0104}, We consider the space $\widehat{S}:=\R_{+,T}\times\R_{+,u}\times S$.
Let $\mathrm{pr}_S:\widehat{S}\rightarrow S$ denote the projection and consider the fibration $\widehat{\pi}_3:
\widehat{W}:=\R_{+,T}\times\R_{+,u}\times W\rightarrow \widehat{S}$.
Let $\mathrm{Pr}_W:\widehat{W}\rightarrow W$ be the canonical projection.
Set $T^H\widehat{W}=T(\R_{+}\times\R_{+})\oplus\mathrm{Pr}_W^*(T_1^HW)$.
Then $T^H\widehat{W}$ is a horizontal subbundle of $T\widehat{W}$ as in (\ref{e01010}).
We define the metric $\widehat{g}^{TZ}$ such that it restricts to $u^{-2}g_T^{TZ}$ over $(T,u)\times W$.
Let $h^{\widehat{L}_Z}=\mathrm{Pr}_W^*h^{L_Z}$,  $\nabla^{\widehat{L}_Z}=\mathrm{Pr}_W^*\nabla^{L_Z}$, $h^{\widehat{E}}=\mathrm{Pr}_W^*h^{E}$ and $\nabla^{\widehat{E}}=\mathrm{Pr}_W^*\nabla^{E}$.
We naturally extend the $G$-actions to this case such that the $G$-action is identity on $\widehat{S}$.

We denote by $B_{3,u^2,T}$ the Bismut superconnection associated to $(T_3^HW$, $u^{-2}g_T^{TZ}$, $h^{L_Z}$, $\nabla^{L_Z}$, $h^E$, $\nabla^E$).
We know that the $G$-action commutes with this Bismut superconnection.

Let $\widehat{B}$ be the Bismut superconnection for the fibration $\widehat{W}\rightarrow \widehat{S}$,
by the arguments above (\ref{e01068}), we can get
\begin{align}\label{e03039}
\widehat{B}_{(T,u,b)}=B_{3,u^2,T}+dT\wedge\frac{\partial}{\partial T}+du\wedge\frac{\partial}{\partial u}-\frac{n}{2u}du-\frac{n-m}{2T}dT.
\end{align}

\begin{defn}\label{e03001}
We define $\beta_g=du\wedge \beta_g^u+dT\wedge \beta_g^T$ to be the part of $\psi_S\widetilde{\tr}[g\exp(-\widehat{B}^2)]$
of degree one with respect to the coordinates $(T,u)$, with functions $\beta_g^u$, $\beta_g^T: \mathbb{R}_{+,T}\times\mathbb{R}_{+,u}
\rightarrow \Omega^*(S)$.
\end{defn}
From (\ref{e01104}) and (\ref{e03039}), we have
\begin{align}\label{e03040}
\begin{split}
&\beta_g^u(T,u)=
\left\{
  \begin{aligned}
    &-\frac{1}{2\sqrt{-1}\sqrt{\pi}}\psi_S \tr_s\left[g\frac{\partial B_{3,u^2,T}}{\partial u}\exp(-B_{3,u^2,T}^2)\right],
    & \hbox{if $n$ is even;} \\
    &-\frac{1}{\sqrt{\pi}}\psi_S \tr^{\mathrm{even}}\left[g\frac{\partial B_{3,u^2,T}}{\partial u}\exp(-B_{3,u^2,T}^2)\right],
    & \hbox{if $n$ is odd,}
  \end{aligned}
\right.
\\
&\beta_g^T(T,u)=
\left\{
  \begin{aligned}
    &-\frac{1}{2\sqrt{-1}\sqrt{\pi}}\psi_S \tr_s\left[g\frac{\partial B_{3,u^2,T}}{\partial T}\exp(-B_{3,u^2,T}^2)\right],
    & \hbox{if $n$ is even;} \\
    &-\frac{1}{\sqrt{\pi}}\psi_S \tr^{\mathrm{even}}\left[g\frac{\partial B_{3,u^2,T}}{\partial T}\exp(-B_{3,u^2,T}^2)\right],
    & \hbox{if $n$ is odd.}
  \end{aligned}
\right.
\end{split}
\end{align}
By Definition \ref{e01083} and Remark \ref{e01067}, we know that
\begin{align}\label{e03201}
\widetilde{\eta}_g(T^H_1W, g_T^{TZ}, h^{L_Z}, \nabla^{L_Z}, h^E, \nabla^E)=-\int_0^{+\infty}\beta_g^u(T,u)du.
\end{align}

\begin{prop}\label{e03003}
There exists a smooth family $\alpha_g:\mathbb{R}_{+,T}\times\mathbb{R}_{+,u}\rightarrow\Omega^*(S)$ such that
\begin{align}\label{e03004}
\left(du\wedge\frac{\partial}{\partial u}+dT\wedge\frac{\partial}{\partial T}\right)\beta_g=dT\wedge du\wedge d^S\alpha_g.
\end{align}
\end{prop}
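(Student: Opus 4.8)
The plan is to exhibit $\alpha_g$ as a component of $\psi_S\widetilde{\tr}[g\exp(-\widehat B^2)]$ and use the closedness of that form, exactly as closedness was used to derive (\ref{e01125}) and (\ref{e01113}) in the previous sections. Recall that $\widehat B$ is the Bismut superconnection of the fibration $\widehat W\to\widehat S$ over the three-parameter base $\widehat S=\R_{+,T}\times\R_{+,u}\times S$. Since $\widetilde{\tr}[g\exp(-\widehat B^2)]$ is a closed differential form on $\widehat S$ (this is the statement quoted from \cite{Bismut1985}, used already in (\ref{e01002})), applying $\psi_S$ — which commutes with $d^S$ and with $dT\wedge$, $du\wedge$ — we get
\begin{align}\label{e03004a}
\left(dT\wedge\frac{\partial}{\partial T}+du\wedge\frac{\partial}{\partial u}+d^S\right)\psi_S\widetilde{\tr}[g\exp(-\widehat B^2)]=0.
\end{align}

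First I would decompose $\psi_S\widetilde{\tr}[g\exp(-\widehat B^2)]$ by bidegree in the $(T,u)$-variables: write it as $r_g+\beta_g+dT\wedge du\wedge\alpha_g$, where $r_g$ contains no $dT$ or $du$, where $\beta_g=du\wedge\beta_g^u+dT\wedge\beta_g^T$ is the degree-one part (this is Definition \ref{e03001}), and where $\alpha_g:\R_{+,T}\times\R_{+,u}\to\Omega^*(S)$ is the coefficient of $dT\wedge du$. All of $r_g$, $\beta_g^u$, $\beta_g^T$, $\alpha_g$ are smooth families of forms on $S$; smoothness of $\alpha_g$ follows from the smoothness of the heat kernel family $\exp(-\widehat B^2)$ in all parameters, which is standard (as cited via \cite[Theorem 9.50]{MR2273508}). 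Now substitute this decomposition into (\ref{e03004a}) and collect terms of $(T,u)$-bidegree two. The bidegree-two part receives contributions from $dT\wedge\frac{\partial}{\partial T}$ and $du\wedge\frac{\partial}{\partial u}$ applied to $\beta_g$, and from $d^S$ applied to $dT\wedge du\wedge\alpha_g$. Matching these gives precisely
\begin{align}
\left(du\wedge\frac{\partial}{\partial u}+dT\wedge\frac{\partial}{\partial T}\right)\beta_g
=-dT\wedge du\wedge d^S\alpha_g,
\end{align}
up to a sign bookkeeping in $dT\wedge du\wedge d^S\alpha_g$ versus $d^S(dT\wedge du\wedge\alpha_g)$; choosing the orientation convention so that $d^S(dT\wedge du\wedge\alpha_g)=dT\wedge du\wedge d^S\alpha_g$ (and absorbing the sign into the definition of $\alpha_g$ if necessary) yields exactly (\ref{e03004}).

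The only genuinely substantive point — and the step I expect to be the main obstacle — is justifying that each piece of the decomposition is a well-defined smooth family and that the formal bidegree bookkeeping in the superconnection heat-form is legitimate, in particular that $\frac{\partial}{\partial T}$ and $\frac{\partial}{\partial u}$ commute past $\psi_S$ and $\widetilde{\tr}$ and act on $\exp(-\widehat B^2)$ through Duhamel's formula in the expected way (compare the derivation of (\ref{e01104}) via Duhamel's principle). Once that is in place the result is a purely formal consequence of $d^2=0$ for the total heat form, just as (\ref{e01125}) and (\ref{e01113}) were; so the proof amounts to setting up the bidegree decomposition carefully and reading off the bidegree-two component of the closedness identity.
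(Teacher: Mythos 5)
Your proposal is correct and follows the same route as the paper: define $\alpha_g$ as the $du\wedge dT$-coefficient of $\psi_S\widetilde{\tr}[g\exp(-\widehat B^2)]$, apply the total differential $dT\wedge\partial_T+du\wedge\partial_u+d^S$ to the bidegree decomposition, and read off the $(T,u)$-bidegree-two component. The paper additionally invokes closedness of the degree-zero piece $\psi_S\widetilde{\tr}[g\exp(-B_{3,u^2,T}^2)]$ under $d^S$, but as you correctly observe this term cannot contribute in bidegree two anyway, so your slightly leaner bookkeeping gives the same conclusion.
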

\begin{proof}

We denote by $\alpha_g$ the coefficient of $du\wedge dT$ component of $\psi_S\widetilde{\tr}[g\exp(-\widehat{B}^2)]$.
Then
\begin{align}\label{e03007}
\psi_S\widetilde{\tr}[g\exp(-\widehat{B}^2)]=\psi_S\widetilde{\tr}[g\exp(-B_{3,u^2,T}^2)]+\beta_g+du\wedge dT\wedge \alpha_g.
\end{align}
Since $\psi_S\widetilde{\tr}[g\exp(-\widehat{B}^2)]$ and $\psi_S\widetilde{\tr}[g\exp(-B_{3,u^2,T}^2)]$ are closed forms,
we have
\begin{multline}\label{e03002}
\left(du\wedge\frac{\partial}{\partial u}+dT\wedge\frac{\partial}{\partial T}\right)
\psi_S\widetilde{\tr}[g\exp(-B_{3,u^2,T}^2)]-dT\wedge du\wedge d^S\alpha_g+d^S\beta_g
\\
+
\left(du\wedge\frac{\partial}{\partial u}+dT\wedge\frac{\partial}{\partial T}\right)\beta_g=0.
\end{multline}
Then Proposition \ref{e03003} follows from comparing the coefficient of $dT\wedge du$ in (\ref{e03002}).
\end{proof}

Take $\var, A, T_0$, $0<\var\leq 1\leq A<\infty$, $1\leq T_0<\infty$. Let $\Gamma=\Gamma_{\var,A,T_0}$ be the oriented contour in
$\mathbb{R}_{+,T}\times\mathbb{R}_{+,u}$.

\

\begin{center}\label{e03010}
\begin{tikzpicture}
\draw[->][ -triangle 45] (-0.25,0) -- (5.5,0);
\draw[->][ -triangle 45] (0,-0.25) -- (0,3.5);
\draw[->][ -triangle 45] (1,0.5) -- (2.5,0.5);
\draw (2.5,0.5) -- (4,0.5);
\draw[->][ -triangle 45] (1,3) -- (1,1.5);
\draw (1,1.5) -- (1,0.5);
\draw[->][ -triangle 45] (4,0.5) -- (4,2);
\draw (4,2) -- (4,3);
\draw[->][ -triangle 45] (4,3) -- (2.5,3);
\draw (2.5,3) -- (1,3);
\draw[dashed] (0,0.5) -- (1,0.5);
\draw[dashed] (0,3) -- (1,3);
\draw[dashed] (1,0) -- (1,0.5);
\draw[dashed] (4,0) -- (4,0.5);
\foreach \x in {0}
\draw (\x cm,1pt) -- (\x cm,1pt) node[anchor=north east] {$\x$};
\draw
(2.5,1.75)  node {$\mU$}(2.5,1.75);
\draw
(0,3.5)  node[anchor=west] {$u$}(0,3.5);
\draw
(5.5,0)  node[anchor=west] {$T$}(5.5,0);
\draw
(0,0.5)  node[anchor=east] {$\varepsilon$}(0,0.5);
\draw
(0,3)  node[anchor=east] {$A$}(0,3);
\draw
(1,0)  node[anchor=north] {$1$}(1,0);
\draw
(4,0)  node[anchor=north] {$T_0$}(4,0);
\draw
(4,1.75)  node[anchor=west] {\small{$\Gamma_1$}}(4,1.75);
\draw
(2.5,0.5)  node[anchor=north] {\small{$\Gamma_4$}}(2.5,0.5);
\draw
(2.5,3)  node[anchor=south] {\small{$\Gamma_2$}}(2.5,3);
\draw
(1,1.75)  node[anchor=east] {\small{$\Gamma_3$}}(1,1.75);
\draw
(4,3)  node[anchor=south west] {\small{$\Gamma$}}(4,3);
\end{tikzpicture}
\end{center}
\

The contour $\Gamma$ is made of four oriented pieces $\Gamma_1,\cdots,\Gamma_4$ indicated in the above picture.
For $1\leq k\leq 4$,
set $I_k^0=\int_{\Gamma_k}\beta_g$. Then by Stocks' formula and Proposition \ref{e03003},
\begin{align}\label{e03011}
\sum_{k=1}^4I_k^0=\int_{\partial \mU}\beta_g=\int_{\mU}\left(du\wedge\frac{\partial}{\partial u}
+dT\wedge\frac{\partial}{\partial T}\right)\beta_g=d^S\left(\int_\mU \alpha_g dT\wedge du\right).
\end{align}

\subsection{Intermediate results}\label{s0302}

Now we state without proof some intermediate results, which will play an essential role in the proof of Theorem \ref{e02084}.
The proofs of these results are delayed to Section 4-8.

In the sequence, we will assume for simplicity that $S$ is compact. If $S$ is non-compact,
the various constants $C>0$ depend explicitly on the compact subset of $S$ on which the given estimate is valid.

As the arguments in the beginning of Section \ref{s0104},
let $\mathrm{Pr}_V:\widehat{V}=\R_+\times V\rightarrow V$ be the projection.
For the fibration $\widehat{V}\rightarrow \widehat{S}=\R_+\times S$,
let
($T^H_2\widehat{V}, \widehat{g}^{TY}, h^{\widehat{L}_Y}, \nabla^{\widehat{L}_Y}$)
be the quadruple such that
$T_2^H\widehat{V}=T(\R_+)\oplus \mathrm{Pr}_V^*(T^H_2V)$, $\widehat{g}^{TY}_{(t,v)}=t^{-2}g^{TY}_{v}$ for $t\in \R_+$, $v\in V$,
$\widehat{L}_Y=\mathrm{Pr}_V^*L_Y$,
$h^{\widehat{L}_Y}=\mathrm{Pr}_V^*h^{L_Y}$
and $\nabla^{\widehat{L}_Y}=\mathrm{Pr}_V^*\nabla^{L_Y}$.
Let $h^{\ker D^{\widehat{X}}}$ and $\nabla^{\ker D^{\widehat{X}}}$
be the induced metric and connection on the vector bundle $\ker D^{\widehat{X}}$.
Let $h^{\widehat{\mS}_Y}$ and $\nabla^{\widehat{\mS}_Y}$ be the induced metric and connection on $\mathrm{Pr}_V^*\mS(TY,L_Y)$.
We naturally extend the $G$-action to this case such that the $G$-action is identity on $\R_+\times S$.

Let $B_2$, $\widehat{B}_2$ and $B_{2,u^2}$ be the Bismut
superconnections  associated to ($T^H_2V, g^{TY}, h^{L_Y},$ $h^{\ker D^X},$
$\nabla^{L_Y}, \nabla^{\ker D^X}$),
($T^H_2\widehat{V}, \widehat{g}^{TY},h^{\widehat{L}_Y}$, $h^{\ker D^{\widehat{X}}}, \nabla^{\widehat{L}_Y},\nabla^{\ker D^{\widehat{X}}}$)
and ($T^H_2V, u^{-2}g^{TY}, h^{L_Y},$ $h^{\ker D^X}, \nabla^{L_Y}, \nabla^{\ker D^X})$ respectively.
For any $g\in G$, let us decompose
\begin{align}\label{e03041}
\psi_S\widetilde{\tr}[g\exp(-\widehat{B}_2^2)]=dt\wedge \gamma_2(t)+r_2(t),
\end{align}
where $\gamma_2(t), r_2(t)\in \Omega^*(S)$.
By Definition \ref{e01083}  and Remark \ref{e01067},
\begin{align}\label{e03042}
\int_0^{+\infty}\gamma_2(t)dt=-\widetilde{\eta}_g(T_2^HV, g^{TY},
h^{L_Y}, h^{\ker D^X}, \nabla^{L_Y}, \nabla^{\ker D^X}).
\end{align}

\begin{thm}\label{e03009}
i) For any $u>0$, we have
\begin{align}\label{e03015}
\lim_{T\rightarrow \infty}\beta_g^u(T,u)=\gamma_2(u).
\end{align}

ii) For $0<u_1<u_2$ fixed, there exists $C>0$ such that, for $u\in [u_1,u_2]$, $T\geq 1$, we have
\begin{align}\label{e03016}
|\beta_g^u(T,u)|\leq C.
\end{align}

iii) We have the following identity:
\begin{align}\label{e03017}
\lim_{T\rightarrow +\infty} \int_{1}^{\infty}\beta_g^u(T,u)du=\int_{1}^{\infty}\gamma_2(u)du.
\end{align}
\end{thm}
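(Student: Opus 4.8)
The plan is to analyze the adiabatic limit $T\to\infty$ of the superconnection $B_{3,u^2,T}$, which governs the fibration $\pi_3$ with the fiber metric scaled so that $TX$ becomes small. The key geometric input is (\ref{e04310}): as $T\to\infty$, $\nabla^{TZ}_T$ converges to $\,^0\nabla^{TZ}$, and more importantly the fiberwise Dirac operator $D_T^Z$ (suitably conjugated by $\mathbb{G}_T$) behaves, to leading order, like $T\cdot D^X$ plus the Dirac operator $D^Y$ along $Y$ coupled to $\ker D^X$. First I would write out, using Bismut's Lichnerowicz formula (\ref{e01043}) applied to the metric $u^{-2}g_T^{TZ}$ together with the splitting (\ref{e02029}) of $\nabla^{\mathcal{S}_Z,T}$, an explicit expansion of $B_{3,u^2,T}$ in powers of $T$. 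Since $\ker D^X$ is locally constant (Assumption \ref{e02116}) and $\ker D^Y=0$, the operator $T D^X$ becomes large on $(\ker D^X)^\perp$, so in the limit the heat operator $\exp(-B_{3,u^2,T}^2)$ localizes onto the finite-dimensional bundle $\ker D^X$, where it reduces to $\exp(-B_{2,u^2}^2)$. This is exactly the Bismut--Cheeger adiabatic limit mechanism, now carried out equivariantly; the presence of $g$ only affects the fixed-point localization and commutes with all the scaling, so $\widetilde{\tr}[g\,\cdot\,]$ passes through the limit.

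For part (i), once the operator-level convergence $\frac{\partial B_{3,u^2,T}}{\partial u}\exp(-B_{3,u^2,T}^2)\to \frac{\partial B_{2,u^2}}{\partial u}\exp(-B_{2,u^2}^2)$ is established in the appropriate trace norm (uniformly for $u$ in compact subsets of $(0,\infty)$), comparing (\ref{e03040}) with (\ref{e01104}) applied to $B_2$, and recalling the decomposition (\ref{e03041}), gives $\lim_{T\to\infty}\beta_g^u(T,u)=\gamma_2(u)$ directly. Part (ii) is a uniform heat-kernel estimate: using the Lichnerowicz formula, the large $T D^X$ term contributes a positive operator that only improves the Gaussian bounds, so on any compact $u$-interval $[u_1,u_2]$ and all $T\ge 1$ the relevant supertrace is bounded by a constant; this is the kind of uniform estimate that the analytic localization machinery of Bismut--Lebeau and the references \cite{MR1942300,MR2097553} is designed to produce. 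Part (iii) then follows from (i), (ii) and a dominated-convergence argument on the interval $[1,\infty)$: the bound from (ii) handles compact $u$-intervals, and for $u$ large one invokes the uniform (in $T\ge1$) estimate $\beta_g^u(T,u)=O(u^{-3/2})$, which is the analogue of (\ref{e01124}) and holds uniformly in $T$ because $\ker D_T^Z=0$ for all $T\ge1$ under the simplifying assumptions of Section \ref{s0202}.

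The main obstacle is the uniformity in $T$ of the estimates needed for parts (ii) and (iii) — in particular the large-$u$ decay $\beta_g^u(T,u)=O(u^{-3/2})$ uniformly for $T\ge1$, and the convergence in (i) uniformly for $u$ in compact intervals. These require a careful Getzler-type rescaling combined with the finite-propagation-speed/analytic-localization estimates, handling simultaneously the two parameters $T$ (adiabatic) and $u$ (superconnection rescaling). This is precisely where the ``main difficulties'' alluded to at the end of Section \ref{s03} lie, and I expect the detailed proof to be deferred to the later sections (4--8), with the argument here reduced to assembling those estimates as above. I would structure the write-up so that Theorem \ref{e03009} is stated now and its proof reduced to two lemmas — one giving the operator convergence in (i), one giving the uniform bounds for (ii)--(iii) — whose proofs are placed among the intermediate results of Sections 4--8.
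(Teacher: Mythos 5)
Your high-level picture is right: as $T\to\infty$ the operator $T D^X$ gets large on $(\ker D^X)^\perp$, and the heat kernel localizes onto $\ker D^X$ where $B_{3,u^2,T}$ reduces to $B_{2,u^2}$; part (i) is then this operator convergence read off in the $du$-coefficient, and part (iii) is dominated convergence. This matches the paper's strategy. But two points in your sketch deserve correction.

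First, the key step for (iii) is not where you put it. You claim the large-$u$ decay $\beta_g^u(T,u)=O(u^{-3/2})$ is ``uniform in $T\ge1$ because $\ker D_T^Z=0$ for all $T\ge1$.'' That implication does not hold on its own: $\ker D_T^Z=0$ for each fixed $T$ gives a spectral gap for each $T$, but that gap could a priori shrink to zero as $T\to\infty$, destroying uniformity. What the paper actually proves is a \emph{uniform} spectral gap $\mathrm{Sp}(D_T^{Z,2})=\mathrm{Sp}(\mB_T)\subset[c_1,\infty)$ for $T$ large (display (\ref{e04040})), and this is not an input but a consequence of the central technical device: the block matrix decomposition of $\mB_T$ with respect to $\mathbb{E}_1^0\oplus\mathbb{E}_1^{0,\perp}$ (where $\mathbb{E}_1\cong\pi_2^*\Lambda(T^*S)\widehat{\otimes}\mS(TY,L_Y)\otimes\ker D^X$), the algebraic identity $\mB_2=E-FH^{-1}G$ from Proposition \ref{e04023}, and the resolvent comparison Lemma \ref{e04094} showing $\|(\lambda-\mB_T)^{-1}-P(\lambda-\mB_2)^{-1}P\|\le C T^{-1}(1+|\lambda|)^k$. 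It is this resolvent estimate, combined with $\mathrm{Sp}(\mB_2)=\mathrm{Sp}(D^{Y,2})$ from Lemma \ref{e04004} and $\ker D^Y=0$, that yields the uniform gap. Consequently the actual large-$u$ decay is exponential, $\exp(-C'u^2)$ (Theorem \ref{e04223}), not $O(u^{-3/2})$; both suffice for dominated convergence, but the mechanism you cite would not deliver the uniformity you need, so this is a genuine gap in the proposed argument.

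Second, a smaller mismatch of tools: you invoke ``Getzler-type rescaling combined with finite-propagation-speed/analytic-localization'' as the engine here. Finite propagation speed does enter (to pass from operator-norm to kernel estimates, as in (\ref{e04145})--(\ref{e04148})), but Getzler rescaling is not used for Theorem \ref{e03009}; it appears later for the local-index computation in Theorem \ref{e03022}\,(ii). The machinery needed for the present theorem is the Bismut--Berthomieu/Ma resolvent calculus with $T$-dependent Sobolev norms $|\cdot|_{T,\pm1}$ (Lemmas \ref{e04030}, \ref{e04055}, \ref{e04304}) together with the matrix decomposition just described. Part (ii) is then the uniform-in-$T$ kernel estimate of Proposition \ref{e04102}; your heuristic that the $T^2D^{X,2}$ term ``only improves the bounds'' is in the right spirit but glosses over the off-diagonal term $T[D^X,D_T^H]$, which is not sign-definite and must be absorbed as in (\ref{e04037}).
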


\begin{thm}\label{e03018}
We have the following identity:
\begin{align}\label{e03100}
\lim_{u\rightarrow +\infty} \int_{1}^{\infty}\beta_g^T(T,u)dT=0.
\end{align}
\end{thm}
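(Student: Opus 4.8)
The plan is to reduce Theorem~\ref{e03018} to a pointwise estimate on $\beta_g^T(T,u)$ which is integrable in $T$ over $[1,+\infty)$ and tends to $0$ as $u\to+\infty$; dominated convergence (or a direct bound) then finishes the proof. By (\ref{e03040}), $\beta_g^T(T,u)$ is a fixed multiple of $\psi_S\tr_s[g\,\partial_T B_{3,u^2,T}\,\exp(-B_{3,u^2,T}^2)]$ (with $\tr^{\mathrm{even}}$ replacing $\tr_s$ when $n$ is odd), and by the scaling relation (\ref{e01041})--(\ref{e01042}) one has $B_{3,u^2,T}=u\,D_T^Z+(\text{terms of order zero along the fibre, bounded uniformly in }(T,u)\text{ by }(\ref{e04310})$--$(\ref{e02029}))$, where $D_T^Z$ is the fibrewise Dirac operator of $(T_3^HW,g_T^{TZ},\nabla^{L_Z},h^E,\nabla^E)$; hence $\partial_T B_{3,u^2,T}=u\,\partial_T D_T^Z$ plus a uniformly bounded remainder. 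I would split the $T$--integral at a value $T_0\ge1$ furnished by Lemma~\ref{e02118} and Section~\ref{s0202}, so that $\ker D^X$ is locally constant, $\ker D^Y=0$, $\ker D_T^Z=0$ for every $T\ge1$, and the invertibility of $D_{T_0}^Z$ persists as $T\to+\infty$.

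On the compact range $T\in[1,T_0]$: continuity of $D_T^Z$ in $T$ together with $\ker D_T^Z=0$ gives $\lambda_0>0$ with $(D_T^Z)^2\ge\lambda_0$ for all such $T$; inserting this into the Bismut--Lichnerowicz formula (\ref{e01043}) (whose lower-order-in-$u$ terms remain bounded by (\ref{e04310})--(\ref{e02029})) yields $B_{3,u^2,T}^2\ge c\,u^2-C$ uniformly for $T\in[1,T_0]$, $u\ge1$. Writing $\exp(-B^2)=\exp(-B^2/2)\exp(-B^2/2)$, absorbing the fibrewise first--order operator $\partial_T B_{3,u^2,T}$ into one factor by elliptic estimates and bounding the trace norm of the other, one obtains $|\beta_g^T(T,u)|\le C(1+u)^N e^{-cu^2}$ uniformly on $[1,T_0]$, which is trivially integrable there and tends to $0$ as $u\to+\infty$.

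On the range $T\ge T_0$ one runs the double adiabatic limit. Via the Clifford rescaling $\mG_T$ of (\ref{e02027}), the splitting $TZ=T^HZ\oplus TX$ of (\ref{e02003}) and the rescaled Bismut--Lichnerowicz formula, one shows that $B_{3,u^2,T}^2\ge c\,u^2T^2-C$ on the image of $1-P^{\ker D^X}$ (from invertibility of $D^X$ there, $P^{\ker D^X}$ acting fibrewise along $X$), and $B_{3,u^2,T}^2\ge c\,u^2-C$ on the image of $P^{\ker D^X}$, the latter because there $D_T^Z$ converges, modulo $O(1/T)$, to the Dirac operator $D^Y$ coupled to $\ker D^X$, invertible since $\ker D^Y=0$ (Assumption~\ref{e02116}). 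At the same time, after $\mG_T$, $u\,\partial_T D_T^Z$ equals $u\,D^X$ (acting fibrewise along $X$) up to $O(u/T^2)$; since $D^X$ is block diagonal for $\mathrm{im}\,P^{\ker D^X}\oplus\mathrm{im}(1-P^{\ker D^X})$ and vanishes on $\mathrm{im}\,P^{\ker D^X}$, its contribution to the supertrace is supported, modulo $O(u/T^2)$, on $\mathrm{im}(1-P^{\ker D^X})$, where $\exp(-B^2)$ is $O(e^{-cu^2T^2})$. Collecting terms, $|\beta_g^T(T,u)|\le C(1+u)^N\bigl(e^{-cu^2T^2}+T^{-2}e^{-cu^2}\bigr)$ for $T\ge T_0$, $u\ge1$. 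Since $\int_1^{+\infty}\bigl(e^{-cu^2T^2}+T^{-2}e^{-cu^2}\bigr)\,dT\le C'e^{-cu^2}$, adding the two ranges gives $\bigl|\int_1^{+\infty}\beta_g^T(T,u)\,dT\bigr|\le C''(1+u)^N e^{-cu^2}\to0$ as $u\to+\infty$, which is the claim. (Equivalently one may invoke dominated convergence, the dominating function coming from the bounds at $u=1$ and the pointwise-in-$T$ limit being $0$ by the large--time behaviour of superconnection heat forms, cf.\ (\ref{e01121}), since $\ker D_T^Z=0$.)

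The hard part will be the uniformity of these estimates as $T\to+\infty$: one must show that the corrections to the Bismut--Lichnerowicz formula and to $\partial_T B_{3,u^2,T}$ created by the degenerating metric $u^{-2}g_T^{TZ}$ either live on $\mathrm{im}(1-P^{\ker D^X})$, where the leading positive term is of the larger size $u^2T^2$ (compare \cite[Theorem~1.5]{MR1088332}), or are genuinely $O(1/T)$ on $\mathrm{im}\,P^{\ker D^X}$, where the surviving dynamics is the invertible $B_2$--type superconnection. Making this decoupling $D_T^Z\to D^Y$ (coupled to $\ker D^X$) precise, with uniform remainder estimates, is the analytic core that is deferred to Sections~4--8 and where the equivariant local index technique together with the analytic--localization estimates of Bismut and his collaborators come in.
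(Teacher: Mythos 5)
Your strategy — reduce the theorem to a pointwise bound on $\beta_g^T(T,u)$ that is integrable over $T\in[1,\infty)$ and carries a factor $e^{-cu^2}$, using the block decomposition with respect to $P^{\ker D^X}$ — is in the same spirit as the paper's, and your identification of the analytic core (uniform-in-$T$ convergence $D_T^Z\to D^Y\otimes\ker D^X$ with controlled remainders) is exactly what Section~\ref{s04} supplies. But the paper's Section~\ref{s05} takes a structurally different and cleaner route that you do not anticipate: it first observes that replacing $\mB_T$ by $\mB_T'=B_{3,T}^2+dT\wedge\partial_T B_{3,T}$ in Section~\ref{s04} yields, verbatim, the analogue of Theorem~\ref{e04223}, namely (\ref{e04149}) with $\mB_T'$ and $B_2$. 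Since $\exp(-u^2 B_2)$ has no $dT$-component, comparing $dT$-coefficients already gives $|\beta_g^T(T,u)|\leq C T^{-\delta}e^{-C'u^2}$; the crucial extra factor $T^{-1}$ that makes this integrable is then produced by the substitution $T\mapsto sT$, extracting the $ds$-coefficient and noting $\{\cdot\}^{d(sT)}=T^{-1}\{\cdot\}^{ds}$ at $s=1$, which is the computation in (\ref{e05050})--(\ref{e05043}). Your approach — bounding $\widetilde{\tr}[g\,\partial_T B\exp(-B^2)]$ directly using $\partial_T B\approx uD^X+O(uT^{-2})$ and $D^X=P^\perp D^X P^\perp$ — is also workable and would in principle give a stronger $T^{-2}$ decay, but the justification would still have to pass through the resolvent/Schur-complement machinery of Section~\ref{s0402}--\ref{s0403}, since $B_{3,u^2,T}^2$ does not preserve the splitting $\mathbb{E}_1^0\oplus\mathbb{E}_1^{0,\perp}$.

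Two of your intermediate estimates are stated too optimistically, though the final conclusion survives. First, the lower bounds ``$B^2\geq cu^2T^2-C$ on $\mathrm{im}(1-P^{\ker D^X})$'' and ``$\geq cu^2-C$ on $\mathrm{im}\,P^{\ker D^X}$'' are not literally correct because $B^2$ couples the two blocks via $F_T,G_T$ which are of size $O(T)$; the rigorous statement is the Schur-complement estimate $\mE_T(\lambda)^{-1}\to(\lambda-\mB_2)^{-1}$ of Lemma~\ref{e04081}. Second, and more importantly, your claim that ``$\exp(-B^2)$ is $O(e^{-cu^2T^2})$'' on $\mathrm{im}(1-P^{\ker D^X})$ is false: because of the $O(T)$ off-diagonal coupling through the slowly damped block $\mathrm{im}\,P^{\ker D^X}$, the block $P^\perp\exp(-u^2\mB_T)P^\perp$ decays only like $T^{-2}e^{-cu^2}$ (this is precisely (\ref{e04247}) combined with (\ref{e05009})). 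Your final bound $C(1+u)^N\left(e^{-cu^2T^2}+T^{-2}e^{-cu^2}\right)$ happens to contain the correct term $T^{-2}e^{-cu^2}$ as well, so the integration in $T$ and the limit $u\to\infty$ still go through; but the route by which you arrive at it contains this gap, and a careful write-up would have to replace the naive positivity argument on $\mathrm{im}(1-P^{\ker D^X})$ with the matrix resolvent estimate (\ref{e04247}).
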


Let $\mathrm{Pr}_W|_{V^g}:\widehat{W}|_{V^g}=\R_+\times W|_{V^g}\rightarrow W|_{V^g}$ be the projection.
For the fibration $\widehat{W}|_{V^g}\rightarrow \widehat{V}^g=\R_+\times V^g$,
let
($T^H_1(\widehat{W}|_{V^g}), \widehat{g}^{TX}, h^{\widehat{L}_X}, \nabla^{\widehat{L}_X}, h^{\widehat{E}}, \nabla^{\widehat{E}}$)
be the quadruple such that
$T^H_1(\widehat{W}|_{V^g})=T(\R_+)\oplus (\mathrm{Pr}_W|_{V^g})^*T^H_1(W|_{V^g})$,
$\widehat{g}^{TX}_{(t,w)}=t^{-2}g^{TX}_{w}$ for $t\in \R_+$, $w\in W|_{V^g}$,
$\widehat{L}_X=(\mathrm{Pr}_W|_{V^g})^*L_X$, $\widehat{E}=(\mathrm{Pr}_W|_{V^g})^*E$,
$h^{\widehat{L}_X}=(\mathrm{Pr}_W|_{V^g})^*h^{L_X}$, $h^{\widehat{E}}=(\mathrm{Pr}_W|_{V^g})^*h^{E}$,
$\nabla^{\widehat{L}_X}=(\mathrm{Pr}_W|_{V^g})^*\nabla^{L_X}$ and $\nabla^{\widehat{E}}=(\mathrm{Pr}_W|_{V^g})^*\nabla^{E}$.
We naturally extend the $G$-actions to this case such that $g$ acts trivially on $\widehat{V}^g$.

Let $\widehat{B}_1$ be the Bismut
superconnection  associated to
($T^H_1(\widehat{W}|_{V^g}), \widehat{g}^{TX}, \nabla^{\widehat{L}_X}, h^{\widehat{E}}, \nabla^{\widehat{E}}$).
For any $g\in G$, let us decompose
\begin{align}\label{e03043}
\psi_{V^g}\widetilde{\tr}[g\exp(-\widehat{B}_1^2)]=dt\wedge \gamma_1(t)+r_1(t),
\end{align}
where $\gamma_1(t), r_1(t)\in \Omega^*(S)$.
By Definition \ref{e01083} and Remark \ref{e01067},
\begin{align}\label{e03044}
\int_0^{+\infty}\gamma_1(t)dt=-\widetilde{\eta}_g(T_1^H(W|_{V^g}), g^{TX},
h^{L_X}, \nabla^{L_X}).
\end{align}

By (\ref{e01051}), $\widehat{\mathrm{A}}_g(TZ,\nabla^{TZ})$ only depends on $g\in G$ and $R^{TZ}$.
So we can denote it by $\widehat{\mathrm{A}}_g(R^{TZ})$. Let $R_T^{TZ}$ be the curvature of $\nabla_T^{TZ}$. Set
\begin{align}\label{e03005}
\gamma_{\mA}(T)=-\left.\frac{\partial}{\partial b}\right|_{b=0}\widehat{\mathrm{A}}_g\left(R_T^{TZ}+b\frac{\partial \nabla_T^{TZ}}
{\partial T}\right).
\end{align}
By a standard argument in Chern-Weil theory (see \cite[Appendix B]{MR2339952} and \cite{MR1864735}), we know that
\begin{align}\label{e03008}
\frac{\partial}{\partial T}\widetilde{\widehat{\mathrm{A}}}_g(TZ, \nabla^{TZ}, \nabla^{TZ}_T)=-\gamma_{\mA}(T).
\end{align}

\begin{prop}\label{e03047}
When $T\rightarrow +\infty$, we have $\gamma_{\mA}(T)=O(T^{-2})$. Moreover, modulo exact forms on $W^g$, we have
\begin{align}\label{e03048}
\widetilde{\widehat{\mathrm{A}}}_g(TZ,\nabla^{TZ}, \,^0\nabla^{TZ})=-\int_1^{+\infty}\gamma_{\mA}(T)dT.
\end{align}
\end{prop}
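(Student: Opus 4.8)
The plan is to integrate the transgression identity (\ref{e03008}) over $T\in[1,+\infty)$. For this we first need the decay $\gamma_{\mA}(T)=O(T^{-2})$. By (\ref{e03005}), $\gamma_{\mA}(T)$ is the $b$-linear term of the $\mathrm{Ad}$-invariant power series $\widehat{\mathrm{A}}_g$ evaluated at $R_T^{TZ}+b\,\partial_T\nabla_T^{TZ}$, so it depends linearly on $\partial_T\nabla_T^{TZ}$, with coefficients that are universal polynomials --- of bounded degree, since $W^g$ is finite dimensional --- in the curvature $R_T^{TZ}$ of $\nabla_T^{TZ}$. By the explicit formula (\ref{e04310}),
\begin{align*}
\nabla_T^{TZ}=\,^0\nabla^{TZ}+P^{TX}S_1P^{T^HZ}+\frac{1}{T^2}P^{T^HZ}S_1P^{TZ},
\end{align*}
we have $\partial_T\nabla_T^{TZ}=O(T^{-2})$, while $R_T^{TZ}$ equals the curvature of $\,^0\nabla^{TZ}+P^{TX}S_1P^{T^HZ}$ up to an $O(T^{-2})$ correction, hence stays uniformly bounded on compact subsets of $W^g$ as $T\to+\infty$. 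Combining these two estimates gives $\gamma_{\mA}(T)=O(T^{-2})$, so in particular $\int_1^{+\infty}\gamma_{\mA}(T)\,dT$ converges.

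Integrating (\ref{e03008}) over $[1,T_1]$ then gives, modulo exact forms on $W^g$,
\begin{align*}
\widetilde{\widehat{\mathrm{A}}}_g(TZ,\nabla^{TZ},\nabla_{T_1}^{TZ})-\widetilde{\widehat{\mathrm{A}}}_g(TZ,\nabla^{TZ},\nabla_1^{TZ})=-\int_1^{T_1}\gamma_{\mA}(T)\,dT.
\end{align*}
Under the simplifying assumptions of Section \ref{s0202} one has $g_1^{TZ}=g^{TZ}$ by (\ref{e02021}), hence $\nabla_1^{TZ}=\nabla^{TZ}$, and the second term on the left vanishes modulo exact forms (take the constant path in the definition (\ref{e01127})). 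Letting $T_1\to+\infty$, the right-hand side converges by the estimate just proved, and --- since a Chern--Simons form depends continuously on its endpoint connection --- the left-hand side converges to $\widetilde{\widehat{\mathrm{A}}}_g(TZ,\nabla^{TZ},\nabla_{\infty}^{TZ})$, where $\nabla_{\infty}^{TZ}:=\lim_{T\to+\infty}\nabla_T^{TZ}$.

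The remaining, and main, point is to identify $\widetilde{\widehat{\mathrm{A}}}_g(TZ,\nabla^{TZ},\nabla_{\infty}^{TZ})$ with $\widetilde{\widehat{\mathrm{A}}}_g(TZ,\nabla^{TZ},\,^0\nabla^{TZ})$ modulo exact forms; by the cocycle property of transgression forms this amounts to $\widetilde{\widehat{\mathrm{A}}}_g(TZ,\nabla_{\infty}^{TZ},\,^0\nabla^{TZ})\equiv 0$. This is a matter of normalization: after passing to the $g_T^{TZ}$-orthonormal frame $\{Te_i\}\cup\{f_{p,1}^H\}$ and the Clifford isomorphism $\mG_T$ of (\ref{e02027})--(\ref{e02028}), formula (\ref{e02029}) exhibits the induced spinor connection as $\,^0\nabla^{\mS_Z}+O(T^{-1})$; since the $\mG_T$-identification leaves characteristic and Chern--Simons forms unchanged, this says that $\nabla_T^{TZ}$ converges, in the sense relevant for such forms, to $\,^0\nabla^{TZ}$, which gives the identification and hence (\ref{e03048}). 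The real work thus lies in (i) making the $O(T^{-2})$ bound uniform and justifying the interchange of $\lim_{T\to+\infty}$ with the Chern--Simons construction, and (ii) reconciling the two presentations (\ref{e04310}) and (\ref{e02029}) of $\nabla_T^{TZ}$ so that $\nabla_{\infty}^{TZ}$ is correctly matched with $\,^0\nabla^{TZ}$ modulo exact forms.
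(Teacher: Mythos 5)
Your first paragraph is essentially fine: from (\ref{e04310}) one has $\partial_T\nabla_T^{TZ}=-\tfrac{2}{T^3}P^{T^HZ}S_1P^{TZ}=O(T^{-3})$ (even better than the $O(T^{-2})$ you state), and $R_T^{TZ}=O(1)$, so $\gamma_\mA(T)=O(T^{-2})$ and the integral converges. The gap is in the last step. Integrating (\ref{e03008}) and letting $T_1\to\infty$ lands you on $\widetilde{\widehat{\mathrm{A}}}_g(TZ,\nabla^{TZ},\nabla_\infty^{TZ})$ with $\nabla_\infty^{TZ}=\,^0\nabla^{TZ}+P^{TX}S_1P^{T^HZ}\neq\,^0\nabla^{TZ}$, so you must still prove $\widetilde{\widehat{\mathrm{A}}}_g(TZ,\nabla_\infty^{TZ},\,^0\nabla^{TZ})\equiv 0$. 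You assert this ``as a matter of normalization'' by invoking the Clifford isomorphism $\mG_T$ of (\ref{e02027})--(\ref{e02028}) and the spinor connection formula (\ref{e02029}), but those statements concern $\nabla^{\mS_Z,T}$ on $\mS(TZ,L_Z)$, not the Euclidean connection $\nabla_T^{TZ}$ on $TZ$ whose curvature actually enters $\widehat{\mathrm{A}}_g$, and they do not by themselves produce the vanishing of a Chern--Simons class.

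The argument the paper makes in Section \ref{s0405} conjugates \emph{before} integrating, which makes everything land at once and is the precise form of the ``frame rescaling'' you gesture at. Let $N_X$ be the number operator on $TZ=T^HZ\oplus TX$ ($N_X=1$ on $TX$, $N_X=0$ on $T^HZ$); then $T^{N_X}$ is exactly the change from the $g^{TZ}$-orthonormal frame $\{e_i,f_{p,1}^H\}$ to the $g_T^{TZ}$-orthonormal frame $\{Te_i,f_{p,1}^H\}$. Set $'\nabla_T^{TZ}=T^{-N_X}\nabla_T^{TZ}T^{N_X}$. Since $T^{N_X}$ is $W$-constant and $\widehat{\mathrm{A}}_g$ is $\mathrm{Ad}$-invariant, $\gamma_\mA(T)$ is unchanged if computed from $'\nabla_T^{TZ}$ in place of $\nabla_T^{TZ}$. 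Using $[N_X,\,^0\nabla^{TZ}]=0$ and $[N_X,P^{TX}S_1P^{T^HZ}]=P^{TX}S_1P^{T^HZ}$, (\ref{e04310}) turns into (\ref{e04156}):
\begin{align*}
'\nabla_T^{TZ}=\,^0\nabla^{TZ}+\tfrac{1}{T}\bigl(P^{TX}S_1P^{T^HZ}+P^{T^HZ}S_1P^{TX}\bigr)+\tfrac{1}{T^2}P^{T^HZ}S_1P^{T^HZ},
\end{align*}
from which $\partial_T{'\nabla_T^{TZ}}=O(T^{-2})$, $'R_T^{TZ}=O(1)$ and, crucially, $'\nabla_\infty^{TZ}=\,^0\nabla^{TZ}$ on the nose, so the integrated transgression is directly $\widetilde{\widehat{\mathrm{A}}}_g(TZ,\nabla^{TZ},\,^0\nabla^{TZ})$ and no separate identification step is needed. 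If you keep your two-step structure, the correct proof of $\widetilde{\widehat{\mathrm{A}}}_g(TZ,\nabla_\infty^{TZ},\,^0\nabla^{TZ})\equiv 0$ is again this conjugation: $T^{-N_X}\nabla_\infty^{TZ}T^{N_X}=\,^0\nabla^{TZ}+T^{-1}P^{TX}S_1P^{T^HZ}$ for all $T$, so the class equals $\widetilde{\widehat{\mathrm{A}}}_g(TZ,\,^0\nabla^{TZ},\,^0\nabla^{TZ}+T^{-1}P^{TX}S_1P^{T^HZ})$ and vanishes by letting $T\to\infty$ --- not the route through (\ref{e02029}).
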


\begin{thm}\label{e03022}
i) For any $u>0$, there exist $C>0$ and $\delta>0$ such that, for $T\geq 1$, we have
\begin{align}\label{e03023}
|\beta_g^T(T,u)|\leq\frac{C}{T^{1+\delta}}.
\end{align}

ii) For any $T>0$, we have
\begin{align}\label{e03024}
\lim_{\var\rightarrow 0}\var^{-1}\beta_g^T(T\var^{-1},\var)=
\int_{Y^g}{\rm
\widehat{A}}_g(TY,\nabla^{TY})\wedge\ch_g(L_Y^{1/2}, \nabla^{L_Y^{1/2}})\wedge \gamma_1(T).
\end{align}

iii) There exists $C>0$ such that for $\var\in (0,1]$, $\var \leq T\leq 1$,
\begin{align}\label{e03025}
\var^{-1}\left|\beta_g^T(T\var^{-1},\var)
+ \int_{Z^g}\gamma_{\mA}(T\var^{-1})\wedge\ch_g(L_Z^{1/2}, \nabla^{L_Z^{1/2}})\wedge \ch_g(E,\nabla^E)\right|
\leq C.
\end{align}

iv) There exist $\delta\in (0,1]$,  $C>0$ such that, for $\var\in (0,1]$, $T\geq 1$,
\begin{align}\label{e03026}
\var^{-1}|\beta_g^T(T\var^{-1},\var)|\leq \frac{C}{T^{1+\delta}}.
\end{align}
\end{thm}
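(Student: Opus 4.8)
\emph{Proof strategy for Theorem \ref{e03022}.} The common engine behind all four assertions is the Lichnerowicz-type formula (\ref{e01043}) applied to $B_{3,u^2,T}$, together with the explicit description (\ref{e04310})--(\ref{e02029}) of the rescaled Clifford connection $\nabla^{\mS_Z,T}$ in terms of $\,^0\nabla^{\mS_Z}$ and the tensor $S_1$, and the Clifford isomorphism $\mG_T$ of (\ref{e02027}). Since the vertical metric $u^{-2}g_T^{TZ}=u^{-2}\pi_1^*g^{TY}\oplus u^{-2}T^{-2}g^{TX}$ is rescaled along $TX$ by the extra factor $T^{-2}$, two rescalings must be combined: a Getzler-type rescaling of the $TY$-Clifford and base variables, controlled by $u$, and a Bismut--Cheeger adiabatic rescaling along $TX$, controlled by $T$. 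I would set up the analytic localization technique of Bismut and his collaborators \cite{MR1800127,MR1942300}, splitting $\cE_X$ orthogonally as $\ker D^X\oplus(\ker D^X)^{\perp}$: by Assumption \ref{e02116} the operator $D^X$ is invertible on the second summand, so the term $u^2T^2(D^X)^2$ occurring in $B_{3,u^2,T}^2$ forces Gaussian decay $e^{-cu^2T^2}$ of all contributions from $(\ker D^X)^{\perp}$, and the analysis reduces to the finite-dimensional bundle $\ker D^X$ over $V$. Throughout, equivariant localization concentrates the supertrace near $W^g$ (resp. $V^g$, $Y^g$), so the Clifford variables of the corresponding normal bundles must saturate to produce a nonzero trace; each application of $\tfrac{\partial}{\partial T}$ to $B_{3,u^2,T}$ replaces one such slot by a term of lower filtration, and this mismatch generates the gains $T^{-1-\delta}$ in (i), (iii), (iv).

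For (i) and (iv), the large-$T$ estimates, I would compute $\tfrac{\partial}{\partial T}B_{3,u^2,T}$ from (\ref{e04310})--(\ref{e02029}): after conjugation by $\mG_T$ it decomposes as $u$ times a first-order operator along $X$ plus terms of order $O(T^{-1})$ and $O(T^{-2})$. On $\ker D^X$ the first-order part vanishes after the projection $P^{\ker D^X}$, on $(\ker D^X)^{\perp}$ it is absorbed by the factor $e^{-cu^2T^2}$, and the surviving $O(T^{-1})$ term cannot by itself saturate the normal Clifford variables of $W^g$, so pairing it with the next order of the heat-kernel expansion costs a further $T^{-\delta}$. This gives (i); part (iv) is the same estimate, run after the substitution $u=\var$, $T\mapsto T\var^{-1}$, where one additionally needs the rescaled heat kernel controlled uniformly in $\var\in(0,1]$, which follows from the a priori estimates of \cite{MR1800127,MR1942300}.

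For (ii), I would substitute $u=\var$ and $T=T\var^{-1}$: the vertical metric becomes $\var^{-2}\pi_1^*g^{TY}\oplus T^{-2}g^{TX}$, so the $X$-fibres are frozen at the finite scale $T^{-2}$ while the $Y$-direction undergoes the adiabatic rescaling $\var\to 0$. After multiplication by $\var^{-1}$, the $\var\to 0$ behaviour of $\beta_g^T(T\var^{-1},\var)$ is governed by the equivariant family local index theorem (Theorem \ref{e01061}) for the submersion $\pi_2$, whose fibrewise heat supertrace produces $\widehat{\mathrm{A}}_g(TY,\nabla^{TY})\wedge\ch_g(L_Y^{1/2},\nabla^{L_Y^{1/2}})$ after integration over $Y^g$, while the $X$-part, still at scale $T$, contributes exactly $\gamma_1(T)$. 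Since $\ker D^Y=0$ by Assumption \ref{e02116}, there is no kernel correction term, and one obtains the right-hand side of (\ref{e03024}).

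Finally, (iii) is the place where the Getzler and Bismut--Cheeger rescalings must be reconciled uniformly, and I expect it to be the \emph{main obstacle}. In the range $\var\le T\le 1$ one has $u=\var\to 0$ (the local-index regime) while $T\var^{-1}$ may be as large as $\var^{-1}$, so a small-time expansion in $\var$ has to be performed with all estimates uniform in $T\var^{-1}$. The leading term of that expansion is the local index density of $B_{3,u^2,T}^2$ differentiated in $T$; by the local index theorem and the definitions (\ref{e03005}), (\ref{e03008}) this density equals $-\gamma_{\mA}(T\var^{-1})\wedge\ch_g(L_Z^{1/2},\nabla^{L_Z^{1/2}})\wedge\ch_g(E,\nabla^E)$ after integration over $Z^g$, precisely the term subtracted in (\ref{e03025}), and the remainder is then $O(1)$ uniformly. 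Making this uniformity rigorous requires the mixed heat-kernel estimates, simultaneously in the small parameter $\var$ and the adiabatic parameter $T\var^{-1}$, that are established in the later sections, and this coupled analysis is the technical heart of the whole argument.
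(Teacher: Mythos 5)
The overall decomposition you propose — adiabatic analysis along $X$ via the orthogonal splitting $\cE_X=\ker D^X\oplus(\ker D^X)^{\perp}$, Gaussian decay on the complement from invertibility of $D^X$ there, a Getzler-type rescaling concentrating near the fixed point sets, and a Bismut--Cheeger rescaling controlled by $T$ — is indeed the paper's framework, and you identify correctly the role of the family local index theorem for $\pi_2$ in (ii), the Chern--Simons term $\gamma_{\mA}$ in (iii), and the assumptions $\ker D^X$ locally constant, $\ker D^Y=0$.

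However, there are two concrete gaps. First, for the gain $T^{-1-\delta}$ in (i) and (iv), your Clifford-saturation heuristic (``the $O(T^{-1})$ term cannot by itself saturate the normal Clifford variables, so pairing with the next order costs a further $T^{-\delta}$'') is not what the paper does and does not obviously produce a rigorous estimate. The paper's mechanism is much cleaner: one first proves an $O(T^{-\delta})$ bound on the full form $\psi_S\widetilde{\tr}[g\exp(-u^2\mB_T')]-\psi_S\widetilde{\tr}[g\exp(-u^2 B_2)]$ (via resolvent and Sobolev estimates as in Section~4, applied with $\mB_T'$ in place of $\mB_T$); then, since the limit term has no $dT$-component, the identity $\beta_g^T(T,u)=T^{-1}\{\cdot\}^{ds}|_{s=1}$ obtained by replacing $T$ by $sT$ and reading off the $ds$-coefficient yields the extra factor $T^{-1}$. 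This substitution trick is the decisive step, and your proposal omits it. Second, for (iii), your sketch simply defers to ``the mixed heat-kernel estimates''; the actual engine in the paper is a uniform small-time asymptotic expansion $\psi_S\widetilde{\tr}[g\exp(-\mB_{u/T',T'}')]\sim\sum_{j\geq -n'}a_{T',j}u^j$ whose coefficients $a_{T',j}$ are continuous in $T'\in[1,+\infty]$, together with the observation (forced by the family local index theorem applied to the $dT'$-component) that $[a_{T',j}]^{dT'}=0$ for $j<-1$ and $[a_{T',0}]^{dT'}$ equals precisely the $\gamma_{\mA}$ integral being subtracted in (\ref{e03025}); only this vanishing lets the bound survive after multiplying by $\var^{-1}$. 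Finally, the finite-propagation-speed localization used throughout Sections~6--8 to reduce from $Z$ to a neighbourhood of $\pi_1^{-1}(V^g)$, the Clifford rescaling $R_{\var}$, and the appearance of Mehler's formula in the proof of (ii) are all essential devices you did not name; without finite propagation speed the equivariant localization near $W^g$ does not come for free.
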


\subsection{Proof of Theorem \ref{e02084}}\label{s0303}

We now finish the proof of Theorem \ref{e02084} under the simplifying assumptions in Section \ref{s0202}.
By (\ref{e03011}), we know that
\begin{multline}\label{e03027}
\int_{\var}^{A}\beta_g^u(T_0,u)du-\int_{1}^{T_0}\beta_g^T(T,A)dT-\int_{\var}^{A}\beta_g^u(1,u)du+
\int_{1}^{T_0}\beta_g^T(T,\var)dT
\\
=I_1+I_2+I_3+I_4
\end{multline}
is an exact form.
We take the limits $A\rightarrow\infty$, $T\rightarrow\infty$ and then $\var\rightarrow 0$
in the indicated order. Let $I_j^k$, $j=1,2,3,4$, $k=1,2,3$ denote the value of the part $I_j$ after
the $k$th limit. By \cite[\S 22, Theorem 17]{MR0346830}, $d\Omega(S)$ is closed under uniformly
convergence on compact sets of $S$. Thus,
\begin{align}\label{e03006}
\sum_{j=1}^4I_j^3\equiv0\ \mathrm{mod}\ d\Omega^*(S).
\end{align}

From (\ref{e03201}), we obtain that
\begin{align}\label{e03029}
I_3^3=\tilde{\eta}_{g}(T_3^HW,,g^{TZ},h^{L_Z},\nabla^{L_Z}, h^E, \nabla^E).
\end{align}
Furthermore, by Theorem \ref{e03018}, we get
\begin{align}\label{e03030}
I_2^2=I_2^3=0.
\end{align}
From (\ref{e03042}) and Theorem \ref{e03009}, we conclude that
\begin{align}\label{e03031}
I_1^3
=-\tilde{\eta}_g(T_2^HV,g^{TY},h^{L_Y}, h^{\ker D^X}, \nabla^{L_Y}, \nabla^{\ker D^X}).
\end{align}

Finally, using Theorem \ref{e03022}, we get
\begin{multline}\label{e03032}
I_4^3=-\int_{Y^g}\mathrm{\widehat{A}}_g(TY,\nabla^{TY})\wedge\ch_g(L_Y^{1/2},\nabla^{L_Y^{1/2}})\wedge \tilde{\eta}_g(
T_1^H(W|_{V^g}),g^{TX},h^{L_X},\nabla^{L_X}, h^E, \nabla^E)
\\
+\int_{Z^g}\widetilde{\mathrm{\widehat{A}}}_{g}(TZ,\nabla^{TZ},\,^0\nabla^{TZ})\wedge\ch_g(L_Z^{1/2},\nabla^{L_Z^{1/2}})\wedge \ch_g(E,\nabla^E)
\end{multline}
as follows: We write
\begin{align}\label{e03033}
\begin{split}
\int_1^{+\infty}\beta_g^T(T,\var)dT
=\int_{\var}^{+\infty}\var^{-1}\beta_g^T(T\var^{-1},\var)dT.
\end{split}
\end{align}
Convergence of the integrals above is granted by (\ref{e03023}).
Using (\ref{e03024}), (\ref{e03026}) and Proposition \ref{e03047}, we get
\begin{align}\label{e03034}
\lim_{\var\rightarrow 0}\int_1^{+\infty}\var^{-1}\beta_g^T(T\var^{-1},\var)dT
=\int_{Y^g}\mathrm{\widehat{A}}_g(TY,\nabla^{TY})\wedge\ch_g(L_Y^{1/2},\nabla^{L_Y^{1/2}})\wedge
\int_1^{+\infty}\gamma_1(T)dT
\end{align}
and
\begin{multline}\label{e03035}
\lim_{\var\rightarrow 0}\int_{\var}^1\var^{-1}\left[\beta_g^T(T\var^{-1},\var)dT
+\int_{Z^g}\gamma_{\mA}(T\var^{-1})\wedge\ch_g(L_Z^{1/2},\nabla^{L_Z^{1/2}})\wedge\ch_g(E,\nabla^E)\right]dT
\\
=\int_{Y^g}\mathrm{\widehat{A}}_g(TY,\nabla^{TY})\wedge\ch_g(L_Y^{1/2},\nabla^{L_Y^{1/2}})\wedge
\int_0^{1}\gamma_1(T)dT.
\end{multline}
The remaining part of the integral yields by (\ref{e03025})
\begin{multline}\label{e03037}
\int_{\var }^1\var^{-1}\int_{Z^g}\gamma_{\mA}(T\var^{-1})\wedge\ch_g(L_Z^{1/2},\nabla^{L_Z^{1/2}})\wedge\ch_g(E,\nabla^E)dT
\\
=\int_{Z^g}\int_1^{+\infty}\gamma_{\mA}(T)\wedge\ch_g(L_Z^{1/2},\nabla^{L_Z^{1/2}})\wedge\ch_g(E,\nabla^E)dT
\\
=-\int_{Z^g}\widetilde{\mathrm{\widehat{A}}}_g(TZ,\nabla^{TZ},\,^0\nabla^{TZ})\wedge\ch_g(L_Z^{1/2},\nabla^{L_Z^{1/2}})\wedge\ch_g(E,\nabla^E).
\end{multline}
These four equations for $I_k^3$, $k=1,2,3,4$, imply Theorem \ref{e02084}.

\section{Proof of Theorem \ref{e03009}}\label{s04}

In this section, we use the assumptions and the notations of Section \ref{s0202} except that $D_T^Z$ is invertible for any $T\geq 1$.

This Section is organized as follows. In Section 4.1, we make some estimates of the fibrewise Dirac operator $D_T^Z$.
In Section 4.2, we write the operator $\mB_T$ in a matrix form. In Section 4.3, we state two intermediate results,
from which Theorem \ref{e03009} follows easily. We prove one of them in Section 4.3 and leave the proof of the other one to Section 4.4.
As a by-product of the first intermediate result in Section 4.3, we get a new proof of Lemma \ref{e02118}.
In Section 4.5, we prove a technical result Proposition \ref{e03047}.

\subsection{Estimates of $D_{T}^{Z,2}$}\label{s0401}

\begin{defn}
For $v\in V$, $b\in S$, let $\mathbb{E}_v$, $\mathbb{E}_{0,b}$ (resp. $\mathbb{E}_{1,b}$) be the vector spaces of the smooth
sections of $\pi_3^*\Lambda (T^*S)\widehat{\otimes}\mS(TZ,L_Z)\otimes E$ on $X_v$, $Z_b$ (resp. $\pi_2^*\Lambda (T^*S)
\widehat{\otimes}\mS(TY,L_Y)\otimes\ker D^X$ on $Y_b$).
For $\mu\in \R$, let $\mathbb{E}_{v}^{\mu}$, $\mathbb{E}_{0,b}^{\mu}$, $\mathbb{E}_{1,b}^{\mu}$
 be the Sobolev spaces of the order $\mu$
of sections of $\pi_3^*\Lambda (T^*S)\widehat{\otimes}\mS(TZ,L_Z)\otimes E$, $\pi_3^*\Lambda (T^*S)\widehat{\otimes}\mS(TZ,L_Z)\otimes E$,
$\pi_2^*\Lambda (T^*S)
\widehat{\otimes}\mS(TY,L_Y)\otimes\ker D^X$
 on $X_v$, $Z_b$, $Y_b$
with Sobolev norms $\|\cdot\|_{X,\mu}$, $\|\cdot\|_{\mu}$, $\|\cdot\|_{Y,\mu}$.
\end{defn}

For $v\in V$, in this section, we simply denote by $P_b$ the projection from $\mathbb{E}_{0,b}^0$ to $\mathbb{E}_{1,b}^0$ 
and let $P^{\bot}=1-P$.
Let $\mathbb{E}_1^{0,\bot}$ be the orthogonal bundle to $\mathbb{E}_1^0$ in $\mathbb{E}_0^0$.
Let $\mathbb{E}_1^{\mu,\bot}=\mathbb{E}_1^{0,\bot}\cap \mathbb{E}_0^{\mu}$.

Let $\{e_i\}$, $\{f_p\}$, $\{g_{\alpha}\}$ be the local orthonormal frames of $TX$, $TY$, $TS$ respectively
and $\{e^i\}$, $\{f^p\}$, $\{g^{\alpha}\}$ be their dual.
Recall that $\nabla^{\cE_X,u}$ is the connection in (\ref{e02109}). Set
\begin{align}\label{e04154}
\nabla^{\mS_Y\otimes \cE_X,u}=\nabla^{\mS_Y}\otimes 1+1\otimes \nabla^{\cE_X,u}.
\end{align}
Let
\begin{align}\label{e04152}
D^H=c(f_{p,1}^H)\nabla^{\mS_Y\otimes \cE_X,u}_{f_{p,1}^H}.
\end{align}
By (\ref{e02030}), we have
\begin{align}\label{e04155}
PD^HP=D^Y.
\end{align}
Let $S_2$ and $S_3$ be the tensor associated to ($T_2^HV, g^{TY}$) and ($T_3^HW, g^{TZ}$)
as in (\ref{e01017}). Let $T_1$ $T_2$, $T_3$, be the torsion tensors defined before (\ref{e01131})
associated to $(T_1^HW, g^{TX})$, $(T_2^HV, g^{TY})$, $(T_3^HW, g^{TZ})$. By (\ref{e01131}), we have
\begin{align}\label{e04070}
\la T_3(g_{\alpha,3}^H, g_{\beta,3}^H),f_{p,1}^H\ra=\la T_2(g_{\alpha,2}^H, g_{\beta,2}^H),f_{p}\ra.
\end{align}
From (\ref{e01017}), (\ref{e02029}) and (\ref{e04152}) (see also \cite[Theorem 10.19]{MR2273508}),
the Dirac operator $D^Z$ associated to $(T_3^HW, g^{TZ}, \nabla^{L_Z}, \nabla^E)$
can be written by
\begin{align}\label{e04009}
D^Z=D^X+D^H-\frac{1}{8}\la T_1(f_{p,1}^H, f_{q,1}^H),e_i\ra c(e_i)c(f_{p,1}^H)c(f_{q,1}^H).
\end{align}
If we replace the metric $g^{TZ}$ by $g_T^{TZ}$, by (\ref{e01131}), we have
\begin{align}\label{e04010}
D_T^Z=TD^X+D^H+\frac{1}{8T}\la [f_{p,1}^H, f_{q,1}^H],e_i\ra c(e_i)c(f_{p,1}^H)c(f_{q,1}^H).
\end{align}

\begin{defn}\label{e04019}
For $s,s'\in \mathbb{E}_0$, $T\geq 1$, we set
\begin{align}\label{e04020}
|s|_{T,0}^2:=\|s\|_0^2,
\end{align}
\begin{align}\label{e04221}
|s|_{T,1}^2:=\|Ps\|_{0}^2+T^2\|P^\bot s\|_0^2
+\sum_p\|\,^0\nabla^{\mS_Z\otimes E}_{f_{p,1}^H}s\|_0^2+T^2\sum_i\|\,^0\nabla^{\mS_Z\otimes E}_{e_i}P^\bot s\|_0^2,
\end{align}
where $\,^0\nabla^{\mS_Z\otimes E}=\,^0\nabla^{\mS_Z}\otimes 1+1\otimes \nabla^{ E}$.
Set
\begin{align}\label{e04022}
|s|_{T,-1}=\sup_{0\neq s'\in \mathbb{E}_0^1}\frac{|\langle s,s'\rangle_{0}|}{|s'|_{T,1}}.
\end{align}
\end{defn}

Then (\ref{e04221}) and (\ref{e04022}) define Sobolev norms on $\mathbb{E}_0^1$ and $\mathbb{E}_0^{-1}$.
Since $\,^0\nabla^{\mS_Z\otimes E}_{e_i}P$ is an operator along the fiber $X$ with smooth kernel,
we know that $|\cdot|_{T,1}$ (resp. $|\cdot|_{T,-1}$) is equivalent to $\|\cdot\|_1$ (resp. $\|\cdot\|_{-1}$)
on $\mathbb{E}_0^{1}$ (resp. $\mathbb{E}_0^{-1}$).

\begin{lemma}\label{e04030}
There exist $C_1, C_2, C_3>0,\, T_0\geq 1$, such that for
any $T\geq T_0$, $s,s'\in  \mathbb{E}_0$,
\begin{align}\label{e04031}
\begin{split}
\langle D_T^{Z,2}s,s\ra_{0}&\geq C_1|s|_{T,1}^2-C_2|s|_{T,0}^2,
\\
|\langle D_T^{Z,2}s,s'\ra_{0}|&\leq C_3|s|_{T,1}|s'|_{T,1}.
\end{split}
\end{align}
\end{lemma}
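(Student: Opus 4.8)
The approach is the analytic localization technique of Bismut and his collaborators (see e.g.\ \cite{MR1942300}): one studies $D_T^Z$ through the $L^2$-orthogonal splitting $s=s_0+s_1$ with $s_0=Ps$, $s_1=P^\bot s\in\mathbb{E}_1^{0,\bot}$. First I would record the basic fibrewise estimates, uniform in $v\in V$ and over compact subsets of $S$. Assumption \ref{e02116} says $\ker D^X$ is locally constant, so $D^{X,2}$ has a uniform spectral gap $\mathrm{Spec}(D^{X,2})\cap(0,\lambda_0^2)=\emptyset$ with $\lambda_0>0$; combined with the elliptic estimate for the fibrewise Laplace-type operator $D^{X,2}$ along $X$ and a short interpolation this yields, for $s_1\in\mathbb{E}_1^{0,\bot}$,
\begin{align*}
\|D^X s_1\|_0^2\ \ge\ c_1\Big(\|s_1\|_0^2+\sum_i\|\nabla^{\mS_Z\otimes E}_{e_i}s_1\|_0^2\Big).
\end{align*}
In the same way $\ker D^Y=0$ gives $\|D^Y s_0\|_0^2\ge c_1\big(\|s_0\|_0^2+\sum_p\|\nabla^{\mS_Z\otimes E}_{f_{p,1}^H}s_0\|_0^2\big)$ for $s_0\in\mathbb{E}_1$, and I would also use the elliptic estimate for the fibrewise operator $c(f_{p,1}^H)\nabla^{\mS_Y\otimes\cE_X,u}_{f_{p,1}^H}$ in the $T^HZ$-directions, which controls $\sum_p\|\nabla_{f_{p,1}^H}(\cdot)\|_0^2$ up to an error in the $X$-directions (harmless, since it will be absorbed by the factor $T^2$ above).

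Next I would set up the algebraic structure. By (\ref{e04010}), $D_T^Z=TD^X+D^H+\tfrac1T R_0$ where $R_0=\tfrac18\langle[f_{p,1}^H,f_{q,1}^H],e_i\rangle c(e_i)c(f_{p,1}^H)c(f_{q,1}^H)$ is fibrewise zeroth order with $\sup_{T\ge1}\|R_0\|<\infty$. Since $c(f_{p,1}^H)$ acts only on the $\mS(TY)$-factor it commutes with $P$ and $P^\bot$; and since $P$ is fibrewise smoothing along $X$, the operators $\nabla^{\mS_Z\otimes E}_{e_i}P$, $\nabla^{\mS_Z\otimes E}_{f_{p,1}^H}P$ and $[\nabla^{\mS_Z\otimes E}_{\cdot},P]$ are bounded uniformly in $T$. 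Using $D^X P=0$, $PD^H P=D^Y$ (see (\ref{e04155})) and these facts, a direct computation gives
\begin{align*}
PD_T^Z s&=D^Y s_0+R_1 s,\\
P^\bot D_T^Z s&=TD^X s_1+D^{H\bot}s_1+R_2 s,
\end{align*}
with $D^{H\bot}:=c(f_{p,1}^H)P^\bot\nabla^{\mS_Y\otimes\cE_X,u}_{f_{p,1}^H}P^\bot$ and $R_1,R_2$ bounded uniformly in $T\ge1$. The geometric point needed to tame the mixed second-order term is that $[e_i,f_{p,1}^H]\in\cC^\infty(W,TX)$ and $[f_{p,1}^H,f_{q,1}^H]\in\cC^\infty(W,TZ)$ (no base component, as $e_i$ and $f_{p,1}^H$ are $\pi_3$-vertical); hence the anticommutator $\{D^X,D^{H\bot}\}$ is first order \emph{in the $X$-directions only}, so that for every $\delta>0$ one has $|\langle\{D^X,D^{H\bot}\}s_1,s_1\rangle_0|\le\delta\sum_i\|\nabla_{e_i}s_1\|_0^2+C_\delta\|s_1\|_0^2$; similarly the lower order part of $D^{H\bot,2}$ involves only $X$- and $T^HZ$-derivatives.

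For the first inequality of (\ref{e04031}), write $\langle D_T^{Z,2}s,s\rangle_0=\|D_T^Z s\|_0^2=\|PD_T^Z s\|_0^2+\|P^\bot D_T^Z s\|_0^2$. Expanding $\|TD^X s_1+D^{H\bot}s_1\|_0^2=T^2\|D^X s_1\|_0^2+2T\,\Re\langle D^X s_1,D^{H\bot}s_1\rangle_0+\|D^{H\bot}s_1\|_0^2$, the middle term is $\le\delta T\sum_i\|\nabla_{e_i}s_1\|_0^2+C_\delta T\|s_1\|_0^2$, which for $T\ge T_0$ (with $\delta$ fixed) is dominated by $c_1T^2(\|s_1\|_0^2+\sum_i\|\nabla_{e_i}s_1\|_0^2)$ coming from $T^2\|D^X s_1\|_0^2$, while $\|D^{H\bot}s_1\|_0^2$ contributes $\gtrsim\sum_p\|\nabla_{f_{p,1}^H}s_1\|_0^2$ up to an $X$-error already absorbed; with $\|a+b\|^2\ge\tfrac12\|a\|^2-\|b\|^2$ and $\|R_2 s\|_0\le C\|s\|_0$ this gives $\|P^\bot D_T^Z s\|_0^2\ge c_2\big(T^2\|s_1\|_0^2+T^2\sum_i\|\nabla_{e_i}s_1\|_0^2+\sum_p\|\nabla_{f_{p,1}^H}s_1\|_0^2\big)-C\|s\|_0^2$. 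Similarly $\|PD_T^Z s\|_0^2\ge\tfrac12\|D^Y s_0\|_0^2-C\|s\|_0^2\ge c_2\big(\|s_0\|_0^2+\sum_p\|\nabla_{f_{p,1}^H}s_0\|_0^2\big)-C\|s\|_0^2$. Adding, using $\sum_p\|\nabla_{f_{p,1}^H}s\|_0^2\le 2\sum_p\|\nabla_{f_{p,1}^H}s_0\|_0^2+2\sum_p\|\nabla_{f_{p,1}^H}s_1\|_0^2$ and replacing $\nabla$ by $\,^0\nabla$ at the cost of bounded terms, one obtains $\langle D_T^{Z,2}s,s\rangle_0\ge C_1|s|_{T,1}^2-C_2|s|_{T,0}^2$. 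The second inequality is easier: the two displayed identities together with the basic estimates give $\|D_T^Z s\|_0\le C|s|_{T,1}$ (note $T\|D^X s_1\|_0\le CT\|s_1\|_{X,1}\le C|s|_{T,1}$ by the definition (\ref{e04221})), whence $|\langle D_T^{Z,2}s,s'\rangle_0|=|\langle D_T^Z s,D_T^Z s'\rangle_0|\le C_3|s|_{T,1}|s'|_{T,1}$.

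The main obstacle I anticipate is not a single estimate but the uniform control of $T$-powers in all the cross terms: one must check that $R_0,R_1,R_2$, the corrections hidden in $D^{H\bot}$ and in $\{D^X,D^{H\bot}\}$, and the difference $\nabla-\,^0\nabla$ are all bounded uniformly in $T\ge1$, and that every term forcing a derivative in a direction not measured by $|\cdot|_{T,1}$ (essentially a derivative of $s_0$) is either genuinely zeroth order or is dominated by the large quantity $T^2\|D^X s_1\|_0^2$; this bookkeeping is precisely where the analytic localization formalism of \cite{MR1942300} is used.
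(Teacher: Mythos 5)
Your argument is correct and follows the same analytic-localization strategy as the paper's proof: both split $s=Ps+P^{\perp}s$, exploit the uniform spectral gap of $D^X$ on $(\ker D^X)^{\perp}$, the fibrewise elliptic estimate along $X$, and the key geometric fact that $T_1(e_i,f_{p,1}^H)\in TX$, so $[D^X,D^H]$ is first order in the $X$-directions only; the only organizational difference is that the paper works with the Lichnerowicz-type expansion (\ref{e04032}) of the quadratic form and estimates the three blocks $\langle D_T^{Z,2}Ps,Ps\rangle_0$, $\langle D_T^{Z,2}P^{\perp}s,P^{\perp}s\rangle_0$, $\langle D_T^{Z,2}P^{\perp}s,Ps\rangle_0$ separately, while you instead write $\|D_T^Zs\|_0^2=\|PD_T^Zs\|_0^2+\|P^{\perp}D_T^Zs\|_0^2$ and use the block form of $D_T^Z$ itself; these are equivalent bookkeepings and give the same result. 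One small remark: you invoke $\ker D^Y=0$ to produce the full lower bound $\|D^Ys_0\|_0^2\geq c(\|s_0\|_0^2+\sum_p\|\nabla_{f_{p,1}^H}s_0\|_0^2)$, but this is not needed here, since $\|Ps\|_0^2\leq|s|_{T,0}^2$ and the conclusion already tolerates a $-C_2|s|_{T,0}^2$ error; the paper's estimate (\ref{e04208}) relies only on the uniform ellipticity (\ref{e04038}), so the lemma does not depend on the invertibility of $D^Y$.
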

\begin{proof}
The proof of Lemma \ref{e04030} is almost the same as that of \cite[theorem 5.19]{MR1305280}.
For the completeness of this paper, we state the proof here.

Easy to check that $D_T^Z$ is a fiberwisely self-adjoint operator associated to $\la\cdot,\cdot\ra_{0}$ in (\ref{e01025}).
Set
\begin{align}\label{e04205}
D_T^H=D^H+\frac{1}{8T}\la [f_{p,1}^H, f_{q,1}^H],e_i\ra c(e_i)c(f_{p,1}^H)c(f_{q,1}^H).
\end{align}
Then by (\ref{e04010}),
\begin{align}\label{e04032}
D_T^{Z,2}=T^2D^{X,2}+D_T^{H,2}+T[D^{X},D_T^{H}].
\end{align}
The family of operators ($D^X, D_T^H$) is uniformly elliptic. So there exists $C_1', C_2'>0$, such that
for $T\in [1, +\infty]$, $s\in\mathbb{E}_0$,
\begin{align}\label{e04038}
\begin{split}
\|D^X s\|_0^2+\|D_T^H s\|_0^2\geq C_1'\|s\|_1^2-C_2'\|s\|_0^2.
\end{split}
\end{align}
Since $\ker D^X$ is a vector bundle, there exists $C_3'>0$,
\begin{align}\label{e04034}
\|D^X P^\bot s\|_0^2\geq C_3'\| P^\bot s\|_0^2.
\end{align}
Using (\ref{e04038}) and (\ref{e04034}), we get for $T\in [1, +\infty)$,
\begin{align}\label{e04039}
\begin{split}
T^2\|D^XP^\bot s\|_0^2+\|D_T^HP^\bot s\|_0^2
\geq
C_1'\|P^\bot s\|_1^2&+\frac{T^2-1}{2}\|D^XP^\bot s\|_0^2
\\
&+\left(\frac{C_3'(T^2-1)}{2}-C_2'\right)\|P^\bot s\|_0^2.
\end{split}
\end{align}
By elliptic estimate associated to the norm $\|\cdot\|_{X,\mu}$ and (\ref{e04034}),
there exists $C_4'>0$, such that
\begin{align}\label{e04002}
\|D^XP^{\bot}s\|_0^2\geq C_4'\sum_i\|\,^0\nabla^{\mS_Z\otimes E}_{e_i}P^{\bot}s\|_0^2.
\end{align}
Let $\,^0R$ be the curvature of $\,^0\nabla^{\mS_Z\otimes E}-\frac{1}{2}\la S_1(e_i)e_i, \cdot\ra$.
Then from a easy computation given by \cite[Theorem 2.5]{Bismut1985}, we have
\begin{align}\label{e04072}
[D^X, D^H]=c(e_i)c(f_{p,1}^H)\left(\,^0R(e_i,f_{p,1}^H)-\,^0\nabla^{\mS_Z\otimes E}_{T_1(e_i, f_{p,1}^H)}\right).
\end{align}
Since $T_1(e_i, f_{p,1}^H)\in TX$,
$[D^X,D^H]$ is a fiberwise first order elliptic operator along the fibers $X$.
By (\ref{e04205}), (\ref{e04034}), (\ref{e04002}) and (\ref{e04072}), there exists $C_5', C_6'>0$, such that for $T\geq 1$, $s\in \mathbb{E}_0$,
\begin{align}\label{e04037}
|\langle T[D^X,D_T^H] s, s\ra_{0}|\leq T|\langle[D^X,D^H]P^\bot s,P^\bot s\ra_{0}| +C_5'\|P^{\bot}s\|_0^2
\leq C_6'T\|D^XP^\bot s\|_0^2.
\end{align}
From (\ref{e04221}), (\ref{e04032}), (\ref{e04039}), (\ref{e04002}) and (\ref{e04037}), there exist $C_1'',C_2''>0$, $T_0\geq 1$ such that
for any $T\geq T_0$, $s\in\mathbb{E}_0$
\begin{align}\label{e04207}
\begin{split}
\la D_T^{Z,2} P^\bot s, P^\bot s\ra_0\geq C_1''|P^\bot s|_{T,1}^2+C_1'\|P^\bot s\|_1^2-C_2''\|s\|_{0}^2.
\end{split}
\end{align}

From (\ref{e04032}) and (\ref{e04038}), we have
\begin{align}\label{e04208}
\begin{split}
\la D_T^{Z,2} Ps, Ps\ra_0\geq C_1'\|Ps\|_1^2-C_2'\|s\|_0^2.
\end{split}
\end{align}

Since
\begin{align}\label{e04044}
\begin{split}
&\langle D_T^{H,2}P^\bot s,Ps\ra_{0}=\langle P^\bot s,D_T^{H,2}Ps\ra_{0}
\\
=&2\langle P^\bot s,[D_T^H,P]D_T^{H}s\ra_{0}+\langle P^\bot s,[D_T^{H},[D_T^H,P]]s\ra_{0}
\end{split}
\end{align}
and $[D_T^H,P]$, $[D_T^{H},[D_T^H,P]]$ are operators with smooth kernels along the fiber $X$, there exists $C_3''>0$, such that
\begin{align}\label{e04045}
\begin{split}
|\langle D_T^{H,2}P^\bot s,Ps\ra_{0}|\leq C_3''\|P^\bot s\|_1\|Ps\|_{0}.
\end{split}
\end{align}
As in (\ref{e04037}), there exists $C_4''>0$, such that
\begin{align}\label{e04046}
\begin{split}
|\langle T[D^X,D_T^H]P^\bot s,Ps\ra_{0}|\leq C_4''|P^\bot s|_{T,1}\|Ps\|_0.
\end{split}
\end{align}
So by (\ref{e04032}),
\begin{align}\label{e04047}
|\langle D_T^{Z,2}P^\bot s,Ps\rangle_0|\leq (C_3''+C_4'')|s|_{T,1}|s|_{T,0}.
\end{align}

Since $[^0\nabla^{\mS_Z\otimes E}, P]$ and $[^0\nabla^{\mS_Z\otimes E}, P^{\bot}]$ are bounded operators, there exists $C>0$, such that
\begin{align}\label{e04043}
\begin{split}
\|P^\bot s\|_1+\|Ps\|_1\geq \sum_p\|\,^0\nabla^{\mS_Z\otimes E}_{f_{p,1}^H}s\|_0^2+\sum_i\|^0\nabla^{\mS_Z\otimes E}_{e_i}P^\bot s\|_0^2-C\|s\|_0^2.
\end{split}
\end{align}
So from (\ref{e04207}), (\ref{e04208}), (\ref{e04047}) and (\ref{e04043}), we get the first inequality of (\ref{e04031}).
The second inequality follows directly from (\ref{e04032}) and (\ref{e04037}).

The proof of Lemma \ref{e04030} is complete.
\end{proof}

\begin{center}\label{e04051}
\begin{tikzpicture}[>=stealth]
\draw[->][ -triangle 45] (-2,0) -- (3.5,0);
\draw[->][ -triangle 45] (0,-2) -- (0,2);
\draw[->][ -triangle 45] (3,1) -- (2,1);
\draw (2,1) -- (-1,1);
\draw (-1,1) -- (-1,-1);
\draw[->][ -triangle 45] (-1,-1) -- (2,-1);
\draw (2,-1) -- (3,-1);
\foreach \x in {0}
\draw (0,0) node[anchor=north east] {$0$};
\draw
(0,2)  node[anchor=west] {$$}(0,2);
\draw
(3.5,0)  node[anchor=west] {$$}(3.5,0);
\draw
(0,1)  node[anchor=south east] {$1$}(0,1);
\draw
(0,-1)  node[anchor=north east] {$-1$}(0,-1);
\draw
(-1,0)  node[anchor=north east] {$-1$}(-1,0);
\draw
(2,1.5)  node[anchor=west] {\small{$\Delta$}}(2,1.5);
\end{tikzpicture}
\end{center}

Let $\Delta$ be the oriented contour in the above picture.

If $A\in \mathscr{L}(\mathbb{E}_0^0)$ (resp. $\mathscr{L}(\mathbb{E}_0^{-1},\mathbb{E}_0^{1})$), we note $\|A\|$
(resp. $|A|_{T}^{-1,1}$) the norm of $A$ with respect to the norm $\|\cdot\|_{0}$ (resp. the norms $|\cdot|_{T,-1}$ and $|\cdot|_{T,1}$).
Comparing with \cite[Theorem 11.27]{MR1188532}, we have the following lemma.
\begin{lemma}\label{e04055}
There exist $T_0\geq 1, C>0$, such that for $T\geq T_0$, $\lambda\in \Delta$,
the resolvent $(\lambda-D_T^{Z,2})^{-1}$ exists, extends to a continuous linear operator from
$\mathbb{E}_0^{-1}$ into $\mathbb{E}_0^1$, and moreover
\begin{align}\label{e04057}
\begin{split}
&\|(\lambda-D_T^{Z,2})^{-1}\|\leq C,
\\
&|(\lambda-D_T^{Z,2})^{-1}|_{T}^{-1.1}\leq C(1+|\lambda|)^2.
\end{split}
\end{align}
\end{lemma}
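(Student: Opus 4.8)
The plan is to adapt the resolvent estimate of Bismut--Lebeau \cite[Theorem 11.27]{MR1188532}, the only substantial new ingredient being the $T$-uniform Gårding inequality already established in Lemma \ref{e04030}.

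I would first dispose of the operator norm bound. As noted in the proof of Lemma \ref{e04030}, $D_T^Z$ is fiberwise self-adjoint for $\langle\cdot,\cdot\rangle_0$; moreover it acts as the identity on the factor $\pi_3^*\Lambda(T^*S)$ of $\mathbb{E}_0$, so $D_T^{Z,2}$ is a non-negative self-adjoint operator on $\mathbb{E}_0^0$ whose spectrum is contained in $[0,+\infty)$. Inspecting the contour $\Delta$, one sees that it lies entirely on $\{\Re\lambda=-1\}\cup\{|\Im\lambda|=1\}$, so $\operatorname{dist}(\lambda,[0,+\infty))\geq 1$ for every $\lambda\in\Delta$. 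Hence $(\lambda-D_T^{Z,2})^{-1}$ exists as a bounded operator on $\mathbb{E}_0^0$ and $\|(\lambda-D_T^{Z,2})^{-1}\|\leq\operatorname{dist}(\lambda,[0,+\infty))^{-1}\leq 1$, uniformly in $T\geq 1$ and $\lambda\in\Delta$, which is the first bound in (\ref{e04057}).

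Next I would prove the $|\cdot|_{T,-1}\to|\cdot|_{T,1}$ bound. Fix $\lambda\in\Delta$, $T\geq T_0$, $s\in\mathbb{E}_0$, and put $u=(\lambda-D_T^{Z,2})^{-1}s$; by elliptic regularity $u\in\mathbb{E}_0^1$. From $D_T^{Z,2}u=\lambda u-s$ and the self-adjointness of $D_T^{Z,2}$, pairing with $u$ yields the real identities
\begin{align*}
\langle D_T^{Z,2}u,u\rangle_0=\Re(\lambda)\,|u|_{T,0}^2-\Re\langle s,u\rangle_0,\qquad \Im(\lambda)\,|u|_{T,0}^2=\Im\langle s,u\rangle_0 .
\end{align*}
On the horizontal parts of $\Delta$ one has $|\Im\lambda|=1$, so the second identity and the definition (\ref{e04022}) give $|u|_{T,0}^2\leq|s|_{T,-1}\,|u|_{T,1}$; on the vertical part $\Re\lambda=-1$, so the first identity together with $\langle D_T^{Z,2}u,u\rangle_0\geq0$ gives $|u|_{T,0}^2\leq-\Re\langle s,u\rangle_0\leq|s|_{T,-1}\,|u|_{T,1}$ as well. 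In either case the first inequality of Lemma \ref{e04030} yields
\begin{align*}
C_1|u|_{T,1}^2\leq\langle D_T^{Z,2}u,u\rangle_0+C_2|u|_{T,0}^2\leq(|\lambda|+C_2+1)\,|s|_{T,-1}\,|u|_{T,1},
\end{align*}
whence $|u|_{T,1}\leq C_1^{-1}(|\lambda|+C_2+1)\,|s|_{T,-1}$. Since $\mathbb{E}_0$ is dense in $\mathbb{E}_0^{-1}$ and the norms $|\cdot|_{T,\pm1}$ are equivalent to $\|\cdot\|_{\pm1}$, the resolvent extends to a continuous operator $\mathbb{E}_0^{-1}\to\mathbb{E}_0^1$ with $|(\lambda-D_T^{Z,2})^{-1}|_T^{-1,1}\leq C(1+|\lambda|)\leq C(1+|\lambda|)^2$, which is the second bound in (\ref{e04057}).

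I do not anticipate a genuine obstacle: the hard work — semiboundedness of $D_T^{Z,2}$ with constants uniform in $T$ — is exactly Lemma \ref{e04030}. The two points that must be handled with care are that the positivity $\langle D_T^{Z,2}u,u\rangle_0\geq0$, used to control $|u|_{T,0}$ on the vertical segment of $\Delta$ (where one cannot exploit the imaginary part of $\lambda$), genuinely relies on self-adjointness of $D_T^Z$ and not merely on Lemma \ref{e04030}; and that uniformity as $|\lambda|\to\infty$ along the two horizontal tails of $\Delta$ is precisely what forces the splitting into real and imaginary parts with $|\Im\lambda|=1$ there. The form-valued nature of $\mathbb{E}_0$ causes no difficulty, since $D_T^{Z,2}$ acts trivially on $\pi_3^*\Lambda(T^*S)$ and one may argue one form-degree at a time.
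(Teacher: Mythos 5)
Your proof is correct, and it follows a genuinely different route from the paper's, although both rest on the same key input, the uniform G\aa rding inequality of Lemma \ref{e04030}. The paper first establishes $|(\lambda_0-D_T^{Z,2})^{-1}|_T^{-1,1}\leq C_1^{-1}$ at a single far-negative real reference point $\lambda_0\leq -2C_2$, then spreads this to all $\lambda\in\Delta$ by two applications of the resolvent identity (first to get a $\mathbb{E}_0^{-1}\to\mathbb{E}_0^0$ bound, then a second to upgrade to $\mathbb{E}_0^{-1}\to\mathbb{E}_0^1$), each costing a factor $|\lambda-\lambda_0|$, which is how the power $(1+|\lambda|)^2$ appears. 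You instead work pointwise on $\Delta$: writing $u=(\lambda-D_T^{Z,2})^{-1}s$, you split the identity $\langle D_T^{Z,2}u,u\rangle_0=\lambda|u|_{T,0}^2-\langle s,u\rangle_0$ into real and imaginary parts, exploit the specific geometry of $\Delta$ ($|\Im\lambda|=1$ on the horizontal tails, $\Re\lambda=-1$ on the vertical segment) together with the positivity $D_T^{Z,2}\geq 0$ to control $|u|_{T,0}^2$ by $|s|_{T,-1}|u|_{T,1}$, and then feed this into the G\aa rding inequality applied directly to $u$. This avoids the resolvent-identity bookkeeping, is somewhat more transparent, and actually yields the sharper bound $C(1+|\lambda|)$ (which of course implies the stated $C(1+|\lambda|)^2$). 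Your attention to the vertical segment, where one cannot rely on $\Im\lambda$, is exactly the point one must not overlook, and it is where the positivity of $D_T^{Z,2}$ is genuinely needed; the paper's estimate (\ref{e04059}) tacitly uses the same fact. The paper's reference-point-plus-resolvent-identity method is the more robust pattern (it carries over when the operator under the square is not manifestly non-negative, as for $\mB_T$ later in the section), whereas your direct argument is tailored to the self-adjoint non-negative case and benefits from it.
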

\begin{proof}
Since $D_T^Z$ is fiberwisely self-adjoint, for $\lambda\in \C\backslash \R^+$, $(\lambda-D_T^{Z,2})^{-1}$ exists.

For $\lambda=a\pm i\in\C$, $s\in  \mathbb{E}_0^2$,
\begin{align}\label{e04059}
\begin{split}
|\la(D_T^{Z,2}-\lambda)s,s\ra_{0}|&\geq \|s\|_0^2.
\end{split}
\end{align}
So there exists $C>0$, such that for any $\lambda\in \Delta$,
\begin{align}\label{e04060}
\|(\lambda-D_T^{Z,2})^{-1}s\|_0\leq C \|s\|_0.
\end{align}
So we get the first inequality of (\ref{e04057}).

Take $C_2$ the constant in Lemma \ref{e04030}.
For $\lambda_0\in\R$, $\lambda_0\leq -2C_2$,
by (\ref{e04031}), we have
\begin{align}\label{e04063}
|\la (\lambda_0-D_T^{Z,2})s,s\ra_{0}|\geq C_1|s|_{T,1}^2.
\end{align}
Then by (\ref{e04022}) and (\ref{e04063}),
\begin{align}\label{e04064}
|(\lambda_0-D_T^{Z,2})s|_{T,-1}=\sup_{0\neq s'\in \mathbb{E}_0^1}\frac{|\langle (\lambda_0-D_T^{Z,2})s,s'\rangle_{0}|}{|s'|_{T,1}}
\geq C_1|s|_{T,1}.
\end{align}
For $\lambda\in \Delta$,
\begin{align}\label{e04065}
(\lambda-D_T^{Z,2})^{-1}=(\lambda_0-D_T^{Z,2})^{-1}+(\lambda-\lambda_0)(\lambda-D_T^{Z,2})^{-1}(\lambda_0-D_T^{Z,2})^{-1}.
\end{align}
From (\ref{e04060}), (\ref{e04064}) and (\ref{e04065}), we deduce that $(\lambda-D_T^{Z,2})^{-1}$ extends to a
linear map from $ \mathbb{E}_0^{-1}$ into $ \mathbb{E}_0^0$ and
\begin{align}\label{e04066}
\begin{split}
|(\lambda-D_T^{Z,2})^{-1}s|_{T,0}&\leq
|(\lambda_0-D_T^{Z,2})^{-1}s|_{T,0}+|\lambda_0-\lambda||(\lambda-D_T^{Z,2})^{-1}(\lambda_0-D_T^{Z,2})^{-1}s|_{T,0}
\\
&\leq C_1^{-1}|s|_{T,-1}+C|\lambda_0-\lambda||(\lambda_0-D_T^{Z,2})^{-1}s|_{T,0}
\\
&\leq (C_1^{-1}+CC_1^{-1}|\lambda_0-\lambda|)|s|_{T,-1}.
\end{split}
\end{align}
On the other hand,
\begin{align}\label{e04067}
(\lambda-D_T^{Z,2})^{-1}=(\lambda_0-D_T^{Z,2})^{-1}+(\lambda-\lambda_0)(\lambda_0-D_T^{Z,2})^{-1}(\lambda-D_T^{Z,2})^{-1}.
\end{align}
So from (\ref{e04064}), (\ref{e04066}) and (\ref{e04067}), we deduce that $(\lambda-D_T^{Z,2})^{-1}$ extends to a
linear map from $ \mathbb{E}_0^{-1}$ into $ \mathbb{E}_0^1$ and
\begin{align}\label{e04068}
\begin{split}
|(\lambda-D_T^{Z,2})^{-1}s|_{T,1}&\leq
|(\lambda_0-D_T^{Z,2})^{-1}s|_{T,1}+|\lambda_0-\lambda||(\lambda_0-D_T^{Z,2})^{-1}(\lambda-D_T^{Z,2})^{-1}s|_{T,1}
\\
&\leq C_1^{-1}|s|_{T,-1}+C_1^{-1}|\lambda_0-\lambda||(\lambda-D_T^{Z,2})^{-1}s|_{T,0}
\\
&\leq (C_1^{-1}+C_1^{-1}|\lambda_0-\lambda|(C_1^{-1}+CC_1^{-1}|\lambda_0-\lambda|))|s|_{T,-1}.
\end{split}
\end{align}
Then we get the second inequality of (\ref{e04057}).

The proof of Lemma \ref{e04055} is complete.
\end{proof}

\subsection{The matrix structure}\label{s0402}

In the sequence, if $\alpha_T$ $(T\in [1,+\infty])$ is a family of tensors (resp. differential operators),
we write that as $T\rightarrow +\infty$,
\begin{align}\label{e04104}
\alpha_T=\alpha_{\infty}+O\left(\frac{1}{T^k}\right),
\end{align}
if for any $p\in \N$, there exists $C>0$, such that for $T\geq 1$, the sup of the norms of the coefficients of
$\alpha_T-\alpha_{\infty}$ and their derivatives of order $\leq p$ is dominated by $C/T^k$.

Recall that $\cE_Z$ is the infinite dimensional fiber bundle over $S$, whose fibers are
the set of smooth sections over $Z$ of $\mS(TZ, L_Z)$. Comparing with (\ref{e01028}), for $U\in TS$,
we define the connections on $\cE_Z$
\begin{align}\label{e04005}
\begin{split}
&\,^0\nabla^{\cE_Z,u}_U=\,^0\nabla_{U_3^H}^{\mS_Z\otimes E}-\frac{1}{2}\la S_3(e_i)e_i,U_3^H\ra -\frac{1}{2}\la S_3(f_{p,1}^H,f_{p,1}^H), U_3^H\ra,
\\
&\nabla^{\cE_Z,T,u}_U=\nabla_{U_3^H}^{\mS_Z,T}\otimes 1+1\otimes \nabla^E-\frac{1}{2}\la S_3(e_i)e_i,U_3^H\ra -\frac{1}{2}\la S_3(f_{p,1}^H,f_{p,1}^H), U_3^H\ra.
\end{split}
\end{align}
By (\ref{e02029}) and (\ref{e04005}), we have
\begin{align}\label{e04200}
\nabla^{\cE_Z, T, u}_U=\,^0\nabla^{\cE_Z,u}_U+\frac{1}{2T}\la S_1(U_3^H)e_i, f_{p,1}^H \ra c(e_i)c(f_{p,1}^H).
\end{align}

Recall that $B_{3,u^2,T}$ is the Bismut superconnection associated to $(T_3^HW$, $u^{-2}g_T^{TZ}$, $h^{L_Z}$, $\nabla^{L_Z}, h^E, \nabla^E)$.
Denote by $B_{3,T}=B_{3,1,T}$. From (\ref{e01032}), (\ref{e01042}), (\ref{e02027}), (\ref{e04070}), (\ref{e04010}),
(\ref{e04005}) and (\ref{e04200}), we can calculate $B_{3,T}$ and $B_{3,u^2,T}$ exactly.
\begin{prop}\label{e04006}
For $T>0$ and $u>0$,
\begin{multline}\label{e04007}
B_{3,T}
=TD^X+\,^0\nabla^{\cE_Z,u}+D^H-\frac{c(T_2)}{4}-\frac{1}{8T}\la T_1(f_{p,1}^H, f_{q,1}^H),e_i\ra c(e_i)c(f_{p,1}^H)c(f_{q,1}^H)
\\
+\frac{1}{2T}\la S_1(g_{\alpha}^H)e_i, f_{p,1}^H \ra c(e_i)c(f_{p,1}^H)g^{\alpha}\wedge
-\frac{1}{8T}\la T_3(g_{\alpha,3}^H, g_{\beta,3}^H),e_i\ra c(e_i)g^{\alpha}\wedge g^{\beta}\wedge,
\end{multline}
and
\begin{multline}\label{e04008}
B_{3,u^2,T}=uTD^X+uD^H-\frac{u}{8T}\la T_1(f_{p,1}^H, f_{q,1}^H),e_i\ra c(e_i)c(f_{p,1}^H)c(f_{q,1}^H)
\\
+\,^0\nabla^{\cE_Z,u}+\frac{1}{2T}\la S_1(g_{\alpha}^H)e_i, f_{p,1}^H \ra c(e_i)c(f_{p,1}^H)g^{\alpha}\wedge
\\
-\frac{c(T_2)}{4u}-\frac{1}{8uT}\la T_3(g_{\alpha,3}^H, g_{\beta,3}^H),e_i\ra c(e_i)g^{\alpha}\wedge g^{\beta}\wedge.
\end{multline}
\end{prop}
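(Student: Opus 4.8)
\emph{Proof strategy.} The plan is to read off both formulas from the general shape of the Bismut superconnection recorded in (\ref{e01032}) and (\ref{e01042}), specialized to the fibration $\pi_3:W\to S$ carrying the anisotropic vertical metric $g_T^{TZ}=\pi_1^*g^{TY}\oplus T^{-2}g^{TX}$ of (\ref{e02021}) and then rescaled by $u^{-2}$. Since $B_{3,u^2,T}$ is by definition the Bismut superconnection of $(T_3^HW,u^{-2}g_T^{TZ},h^{L_Z},\nabla^{L_Z},h^E,\nabla^E)$ and $u^{-2}g_T^{TZ}=(u^2)^{-1}g_T^{TZ}$, running the computation that produces (\ref{e01042}) out of (\ref{e01032}), with $g_T^{TZ}$ in the role of the reference vertical metric and $t=u^2$, gives
\begin{align*}
B_{3,u^2,T}=u\,D_T^Z+\nabla^{\cE_Z,T,u}-\frac{1}{4u}\,c_T(T_3),
\end{align*}
where $D_T^Z$ and $\nabla^{\cE_Z,T,u}$ are the fiberwise Dirac operator and adapted connection attached to $(T_3^HW,g_T^{TZ})$, and $c_T(T_3)$ denotes the expression (\ref{e01030}) formed with the Clifford multiplication of $g_T^{TZ}$; then $B_{3,T}=B_{3,1,T}$ is the case $u=1$. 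Here one uses the convention of Section \ref{s0101} that the spinor bundle is chosen independently of the scale of the metric, so that after rescaling only the explicit powers of $u$ survive and the adapted connection is unaffected. It then remains to expand the three summands.

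For the Dirac term, (\ref{e04010}) together with $\la[f_{p,1}^H,f_{q,1}^H],e_i\ra=-\la T_1(f_{p,1}^H,f_{q,1}^H),e_i\ra$ from (\ref{e01131}) gives $u\,D_T^Z=uTD^X+uD^H-\frac{u}{8T}\la T_1(f_{p,1}^H,f_{q,1}^H),e_i\ra c(e_i)c(f_{p,1}^H)c(f_{q,1}^H)$, the first three terms of (\ref{e04008}). For the connection term, (\ref{e04005}) and the comparison (\ref{e02029}) of $\nabla^{\mS_Z,T}$ with the product connection $\,^0\nabla^{\mS_Z}$ give the identity (\ref{e04200}), and wedging with $g^\alpha$ turns it into $\nabla^{\cE_Z,T,u}=\,^0\nabla^{\cE_Z,u}+\frac{1}{2T}\la S_1(g_\alpha^H)e_i,f_{p,1}^H\ra c(e_i)c(f_{p,1}^H)g^\alpha\wedge$, the next two terms of (\ref{e04008}); the would-be $T^{-2}$ piece of (\ref{e02029}) is absent because $S_1(U^H)$ is antisymmetric and kills pairs of $\pi_1$-horizontal vectors when $U^H$ is itself $\pi_1$-horizontal, which holds here since $T_3^HW\subset T_1^HW$. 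Finally, we write $c_T(T_3)=\frac12 c_T(T_3(g_{\alpha,3}^H,g_{\beta,3}^H))g^\alpha\wedge g^\beta\wedge$ and decompose $T_3(g_{\alpha,3}^H,g_{\beta,3}^H)$ along the splitting $TZ=T^HZ\oplus TX$ of (\ref{e02003}); its $T^HZ$-part corresponds, under $T^HZ\cong\pi_1^*TY$ and (\ref{e02112}), to $T_2(g_{\alpha,2}^H,g_{\beta,2}^H)$ by (\ref{e04070}), and $g_T^{TZ}$ is unscaled on $T^HZ$, so this part contributes $c(T_2)$, while its $TX$-part is $\la T_3(g_{\alpha,3}^H,g_{\beta,3}^H),e_i\ra e_i$ and, since $\{Te_i\}$ is $g_T^{TZ}$-orthonormal with $\mG_T(c_T(Te_i))=c(e_i)$ by (\ref{e02027})--(\ref{e02028}), one has $c_T(e_i)=\frac1T c(e_i)$, so this part contributes $\frac{1}{2T}\la T_3(g_{\alpha,3}^H,g_{\beta,3}^H),e_i\ra c(e_i)g^\alpha\wedge g^\beta\wedge$. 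Hence $-\frac{1}{4u}c_T(T_3)=-\frac{c(T_2)}{4u}-\frac{1}{8uT}\la T_3(g_{\alpha,3}^H,g_{\beta,3}^H),e_i\ra c(e_i)g^\alpha\wedge g^\beta\wedge$, the last two terms of (\ref{e04008}). Adding the three expansions gives (\ref{e04008}), and setting $u=1$ gives (\ref{e04007}).

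The content is entirely algebraic, so the only genuine difficulty is the bookkeeping of the two independent scaling parameters: one must keep track of the fact that Clifford multiplication along the $TX$-directions acquires a factor $T^{-1}$ through $\mG_T$ while the overall $u$-rescaling contributes the factors $u^{-1}$, and one must cleanly isolate the single $O(T^{-1})$ term by which the adapted connection of the anisotropic metric $g_T^{TZ}$ departs from the product connection $\,^0\nabla^{\cE_Z,u}$ --- both of which are already packaged in (\ref{e04010}) and (\ref{e04200}), so that no new analytic input is needed.
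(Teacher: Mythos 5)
Your proof is correct and follows the same route the paper indicates: write $B_{3,u^2,T}=uD_T^Z+\nabla^{\cE_Z,T,u}-\frac{1}{4u}c_T(T_3)$ via (\ref{e01032}) and (\ref{e01042}) for the anisotropic metric $g_T^{TZ}$, then substitute (\ref{e04010}), (\ref{e04200}), and the $\mG_T$/(\ref{e04070}) decomposition of $c_T(T_3)$. The one place where you go beyond what the paper makes explicit — noting that $\la S_1(U_3^H)f_{p,1}^H,f_{q,1}^H\ra=0$ because $U_3^H,f_{p,1}^H,f_{q,1}^H$ are all $\pi_1$-horizontal (using $T_3^HW\subset T_1^HW$), which is what kills the $T^{-2}$ term of (\ref{e02029}) — is a correct and welcome clarification of the step the paper's (\ref{e04200}) leaves implicit.
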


Let $\cE_Y$ be the infinite dimensional fiber bundle over $S$, whose fibers are the set of
smooth sections over $Y$ of $\mS(TY, L_Y)\otimes \ker D^X$. By (\ref{e01028}), for $U\in TS$,
we define the connections on $\cE_Y$
\begin{align}\label{e04151}
\nabla^{\cE_Y,u}_U=\nabla_{U_2^H}^{\mS_Y\otimes \ker D^X}-\frac{1}{2}\la S_2(f_p)f_p, U_2^H\ra.
\end{align}
From \cite[Theorem 5.2]{MR1942300}, we have
\begin{align}\label{e04071}
\la S_3(f_{p,1}^H, f_{q,1}^H),U_{3}^H\ra=\la S_2(f_p)f_p, U_2^H\ra.
\end{align}
So by (\ref{e02109}), (\ref{e02030}), (\ref{e04005}), (\ref{e04151}) and (\ref{e04071}), we have
\begin{align}\label{e04203}
\nabla^{\cE_Y, u}=P\,^0\nabla^{\cE_Z,u}P.
\end{align}
Recall that $B_2$ is the Bismut superconnection associated to
($T_2^HV$, $g^{TY}$,$ h^{L_Y}$, $h^{\ker D^X},$$ \nabla^{L_Y}$, $\nabla^{\ker D^X}$)
and $B_{2,u^2}=u^2\delta_{u^2}B_2\delta_{u^2}^{-1}$.
Then by (\ref{e01032}),
\begin{align}\label{e04003}
B_2=D^Y+\nabla^{\cE_Y,u}-c(T_2)/4.
\end{align}

\begin{lemma}\label{e04013}
For any $T\in [1,+\infty]$, the operator $PB_{3,T}P$ is a superconnection on $\mathbb{E}_1$. When $T\rightarrow +\infty$,
\begin{align}\label{e04014}
PB_{3,T}P=B_2+O\left(\frac{1}{T}\right).
\end{align}
\end{lemma}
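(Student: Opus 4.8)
The plan is to read off $PB_{3,T}P$ directly from the explicit formula for $B_{3,T}$ in Proposition \ref{e04006}, summand by summand. Recall that the projection $P = P^{\ker D^X}$ acts fiberwise along $X$, is an even, $\Omega^*(S)$-linear endomorphism of $\mathbb{E}_0$ with a smooth fiberwise kernel depending smoothly on the base point, and $\mathbb{E}_1 \subset \mathbb{E}_0$ is the sub-bundle onto which it projects. I would first record the elementary fact that $P(\cdot)P$ sends superconnections to superconnections: if $\mathbb{A}$ is a superconnection on $\mathbb{E}_0$, then for a section $\alpha$ of $\mathbb{E}_1$ and $\omega \in \Omega^*(S)$ one has $P\alpha = \alpha$, while $P$ commutes with multiplication by $\omega$ and by $d\omega$ (these act only on the $\Lambda(T^*S)$ factor); hence $P\mathbb{A}P(\omega\alpha) = P\mathbb{A}(\omega\alpha) = (d\omega)\alpha + (-1)^{|\omega|}\omega\,P\mathbb{A}P\alpha$, which is the Leibniz rule. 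Since $B_{3,T}$ is a superconnection on $\mathbb{E}_0$ for each $T$ by construction, this already gives the first assertion of the lemma.

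For the asymptotics I would apply $P(\cdot)P$ to each summand of $B_{3,T}$ in (\ref{e04007}). The leading term $TD^X$ contributes $T\,PD^XP = 0$ \emph{exactly}, because $D^X$ annihilates $\ker D^X$, so $D^XP = 0$; this cancellation—identically zero, not merely $O(1)$—is what makes the $T\to+\infty$ limit finite and is the one point I would check carefully. The term $\,^0\nabla^{\cE_Z,u}$ contributes $\nabla^{\cE_Y,u}$ by (\ref{e04203}); the term $D^H$ contributes $D^Y$ by (\ref{e04155}); and the Clifford term $-\tfrac14 c(T_2)$ acts only on the $\mS(TY,L_Y)$ and $\Lambda(T^*S)$ factors, so it commutes with $P$ and contributes $-\tfrac14 c(T_2)$. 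Each of the remaining three summands carries an explicit factor $1/T$ in front of an operator whose coefficients and base derivatives are bounded uniformly in $T \geq 1$; pre- and post-composing with the smoothing operator $P$ preserves these bounds, so these terms contribute $O(1/T)$ in the sense of (\ref{e04104}). Summing and using $B_2 = D^Y + \nabla^{\cE_Y,u} - \tfrac14 c(T_2)$ from (\ref{e04003}) yields $PB_{3,T}P = B_2 + O(1/T)$, the endpoint $T = +\infty$ giving $B_2$ itself.

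I do not expect a genuine obstacle here; this lemma is essentially bookkeeping on top of Proposition \ref{e04006}. The only delicate points are (i) the exact vanishing $PD^XP = 0$, which hinges on $\ker D^X$ being precisely the fiberwise kernel so that the factor $T$ in front is harmless, and (ii) making the $O(1/T)$ estimate for the three error summands precise in the sense of (\ref{e04104}), i.e. verifying that composition with $P$ and differentiation along $S$ do not destroy the decay—routine since $P$ has a smooth fiberwise kernel, smooth in the base.
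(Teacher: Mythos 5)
Your proof is correct and takes essentially the same route as the paper's. The paper bundles the terms $\,^0\nabla^{\cE_Z,u}+D^H-\tfrac14 c(T_2)$ into a single operator $\mC$, observes $PB_{3,T}P = P\mC P + O(1/T)$ from \eqref{e04007} (using exactly your key observation $PD^XP=0$), and then identifies $P\mC P = B_2$ via \eqref{e04155}, \eqref{e04203}, \eqref{e04003}; you have simply unpacked this summand by summand and additionally spelled out the Leibniz-rule verification for the superconnection claim, which the paper leaves implicit.
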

\begin{proof}
Set
\begin{align}\label{e04020}
\mC=\,^0\nabla^{\cE_Z,u}+D^H-\frac{c(T_2)}{4}.
\end{align}
By (\ref{e04007}), we have
\begin{align}\label{e04201}
PB_{3,T}P=P\mC P+O\left(\frac{1}{T}\right).
\end{align}
From (\ref{e04155}), (\ref{e04203}) and (\ref{e04003}), we get
\begin{align}\label{e04204}
P\mC P=B_2.
\end{align}

So Lemma \ref{e04013} follows from (\ref{e04201}) and (\ref{e04204}).
\end{proof}

Set
\begin{align}\label{e04017}
\mB_T=B_{3,T}^2+u^{-2} du\wedge\, \delta_{u^2}^{-1}\frac{\partial B_{3,u^2,T}}{\partial u}\delta_{u^2}.
\end{align}
Then $\mB_T$ is a differential operator along the fiber $Z$ with values in $\Lambda(T^*(\R_+\times S))$.
Set
\begin{align}\label{e04301}
\mB_{u,T}=u^2\delta_{u^2}\mB_T\delta_{u^2}^{-1}=B_{3,u^2,T}^2+du\wedge \frac{\partial B_{3,u^2,T}}{\partial u}.
\end{align}
Then by (\ref{e03040}), we have
\begin{align}\label{e04012}
\beta_g^u=\left\{\psi_S\widetilde{\tr}[g\exp(-\mB_{u,T})]\right\}^{du}=
\left\{\psi_S\delta_{u^2}\widetilde{\tr}[g\exp(-u^2\mB_{T})]\right\}^{du}.
\end{align}
From Proposition \ref{e04006},
\begin{align}\label{e04015}
\delta_{u^2}^{-1}\frac{\partial B_{3,u^2,T}}{\partial u}\delta_{u^2}=TD^X+D^H+\frac{c(T_2)}{4}+O\left(\frac{1}{T}\right).
\end{align}

Set
\begin{align}\label{e04302}
\mB_2=B_2^2+u^{-2}du\wedge\delta_{u^2}^{-1}\frac{\partial B_{2,u^2}}{\partial u}\delta_{u^2}.
\end{align}
By (\ref{e03041}), we have
\begin{align}\label{e04026}
\gamma_2(u)=\left\{\psi_S\delta_{u^2}\widetilde{\tr}[g\exp(-u^2\mB_{2})]\right\}^{du}.
\end{align}

From (\ref{e04003}), (\ref{e04015}) and Lemma \ref{e04013}, we have
\begin{align}\label{e04303}
P\mB_{T}P=\mB_2+O\left(\frac{1}{T}\right).
\end{align}
Put
\begin{align}\label{e04018}
\begin{split}
&E_T=P\mB_TP,\ \ \ \ F_T=P\mB_TP^\bot,\ \
\\
&G_T=P^\bot \mB_TP,\ \ H_T=P^\bot \mB_TP^\bot.
\end{split}
\end{align}
Then we write $\mB_T$ in matrix form with respect to the splitting
$\mathbb{E}_0=\mathbb{E}_1^0\oplus \mathbb{E}_1^{0,\bot}$,
\begin{align}
\mB_T=\left(
  \begin{array}{cc}
    E_T & F_T \\
    G_T & H_T \\
  \end{array}
\right).
\end{align}

Similarly as in \cite[Theorem 5.5]{MR1942300}, we have
\begin{prop}\label{e04023}
There exist operators $E,F,G,H$ 
such that, as $T\rightarrow +\infty$,
\begin{align}\label{e04024}
\begin{split}
&E_T=E+O(1/T),\quad \quad F_{T}=TF+O(1),\\
&G_{T}=TG+O(1),\ \ \quad \quad H_{T}=T^2H+O(T).
\end{split}
\end{align}
Let
\begin{align}\label{e04025}
Q=[D^X,\mC].
\end{align}
Then $Q(\mathbb{E}_1^0)\subset \mathbb{E}_1^{0,\bot}$, and $Q$ is a smooth family of first order elliptic operators
acting along the fibers $X$.
Moreover,
\begin{align}\label{e04027}
\begin{split}
E&=P(\mC^2+u^{-2}du\wedge(D^Y-c(T_2)/4))P,\quad F=PQP^{\bot},
\\
G&=P^{\bot}QP,\quad \quad \quad \quad \quad \quad \quad \quad \quad H= P^\bot D^{X,2}P^\bot,
\end{split}
\end{align}
and
\begin{align}\label{e04305}
\mB_{2}=E-FH^{-1}G.
\end{align}
\end{prop}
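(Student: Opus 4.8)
The plan is to substitute the explicit expressions for $B_{3,T}$ and $B_{3,u^2,T}$ from Proposition~\ref{e04006} into the definition~(\ref{e04017}) of $\mB_T$, expand, group by powers of $T$, and read off the four blocks; then to identify $\mB_2$ with the Schur complement $E-FH^{-1}G$ by an elementary manipulation resting on the single fact $D^XP=PD^X=0$ (equivalently $D^XP^\bot=P^\bot D^X=D^X$), which holds because $\mathbb{E}_1^0=\ker D^X$ fibrewise and $D^X$ is fibrewise self-adjoint.

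By (\ref{e04007}) and (\ref{e04015}) we may write $B_{3,T}=TD^X+\mC+\tfrac1TR^{(1)}_T$ and $\delta_{u^2}^{-1}\partial_u B_{3,u^2,T}\,\delta_{u^2}=TD^X+D^H+\tfrac14c(T_2)+\tfrac1TR^{(2)}_T$, where $\mC=\,^0\nabla^{\cE_Z,u}+D^H-\tfrac14c(T_2)$ and $R^{(1)}_T,R^{(2)}_T$ are families of fibrewise differential operators whose coefficients, together with all their derivatives, are bounded uniformly for $T\ge1$ in the sense of (\ref{e04104}). Since $D^X$ and $\mC$ are both odd, squaring $B_{3,T}$ and adding the $du$-piece gives, after collecting powers of $T$,
\[
\mB_T=T^2D^{X,2}+T\bigl([D^X,\mC]+u^{-2}du\wedge D^X\bigr)+\mathcal S_T,
\]
where $\mathcal S_T=\mC^2+u^{-2}du\wedge\bigl(D^H+\tfrac14c(T_2)\bigr)+\bigl(D^XR^{(1)}_T+R^{(1)}_TD^X\bigr)+O(1/T)$ is bounded uniformly for $T\ge1$.

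Applying $P$ and $P^\bot$ and using $D^XP=PD^X=0$: in $E_T=P\mB_TP$ the $T^2$- and $T$-parts vanish and $P(D^XR^{(1)}_T+R^{(1)}_TD^X)P=0$, so $E_T=E+O(1/T)$ with $E=P\bigl(\mC^2+u^{-2}du\wedge(D^Y-\tfrac14c(T_2))\bigr)P$ after using $PD^HP=D^Y$ from~(\ref{e04155}); in $F_T=P\mB_TP^\bot$ and $G_T=P^\bot\mB_TP$ the $T^2$-parts vanish (e.g. $PD^{X,2}P^\bot=(PD^X)(D^XP^\bot)=0$) and the $T$-parts reduce to $PQP^\bot$ and $P^\bot QP$ with $Q=[D^X,\mC]$, so $F_T=TF+O(1)$ and $G_T=TG+O(1)$; and $H_T=P^\bot\mB_TP^\bot=T^2H+O(T)$ with $H=P^\bot D^{X,2}P^\bot$. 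This gives (\ref{e04024})--(\ref{e04027}), the remainder estimates being exactly the termwise application of (\ref{e04104}) to Proposition~\ref{e04006}. That $Q$ maps $\mathbb{E}_1^0$ into $\mathbb{E}_1^{0,\bot}$ is immediate from $PQP=P(D^X\mC+\mC D^X)P=0$; that $Q$ is a fibrewise first order elliptic operator along $X$ follows by writing $Q=[D^X,D^H]+[D^X,\,^0\nabla^{\cE_Z,u}]-\tfrac14[D^X,c(T_2)]$ and invoking (\ref{e04072}) --- in particular $T_1(e_i,f_{p,1}^H)\in TX$ --- together with the structure of the connections in (\ref{e04005}), just as in \cite[Theorem~5.5]{MR1942300}.

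For the identity (\ref{e04305}): by Lemma~\ref{e04030}, $D^{X,2}$ is bounded below on $\mathbb{E}_1^{0,\bot}$, so $H^{-1}=(P^\bot D^{X,2}P^\bot)^{-1}$ is well defined, commutes with $D^X$ on $\mathbb{E}_1^{0,\bot}$, and an eigenvalue computation gives $D^XH^{-1}D^X=P^\bot$. From $D^XP=0$ and $P^\bot D^X=D^X$ one computes $PQP^\bot=P\mC D^X$ and $P^\bot QP=D^X\mC P$, whence $FH^{-1}G=P\mC\,(D^XH^{-1}D^X)\,\mC P=P\mC P^\bot\mC P$; since $P\mC^2P=P\mC P\mC P+P\mC P^\bot\mC P$, the $du$-free part of $E-FH^{-1}G$ equals $(P\mC P)^2=B_2^2$ by~(\ref{e04204}), and because $F,G,H$ carry no $du$ the $du$-part of $E-FH^{-1}G$ is that of $E$, which matches $u^{-2}du\wedge\delta_{u^2}^{-1}\partial_u B_{2,u^2}\,\delta_{u^2}$ by the same computation applied to $B_2=D^Y+\nabla^{\cE_Y,u}-\tfrac14c(T_2)$; hence $\mB_2=E-FH^{-1}G$. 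The main point to be careful about here is bookkeeping rather than a new idea: one must verify that every term produced by squaring $B_{3,T}$ and by the $du$-piece satisfies the sharp estimate claimed in (\ref{e04024}) in the sense of (\ref{e04104}), and that the commutators entering $Q$ involve only $X$-directional derivatives so that $Q$ is genuinely a fibrewise operator along $X$; both are routine given Proposition~\ref{e04006}, (\ref{e04072}) and (\ref{e04005}), and follow the pattern of \cite[Theorem~5.5]{MR1942300}.
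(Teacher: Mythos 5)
Your proof is correct and takes essentially the same approach as the paper: expand $\mB_T$ in powers of $T$ using Proposition~\ref{e04006}, read off the four blocks with respect to $P$ and $P^\bot$, and verify $\mB_2 = E - FH^{-1}G$ by the identity $D^XH^{-1}D^X = P^\bot$. You spell out the Schur-complement algebra (which the paper compresses into~(\ref{e04028})) in more detail, but the structure and key observations ($D^XP = PD^X = 0$, $P\mC P = B_2$, the commutators~(\ref{e04072})--(\ref{e04073})) are identical.
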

\begin{proof}
By (\ref{e04007}) and (\ref{e04020}), we have
\begin{align}\label{e04021}
B_{3,T}=TD^X+\mC+O\left(\frac{1}{T}\right).
\end{align}
From (\ref{e04017}) and (\ref{e04018}),
we get (\ref{e04027}).

Let $\,^0R_Z$ be the curvature of $\,^0\nabla^{\mS_Z\otimes E}-\frac{1}{2}\la S_3(e_i)e_i, \cdot\ra-\frac{1}{2}\la S_3(f_{p,1}^H)f_{p,1}^H, \cdot\ra$.
As in (\ref{e04072}), we have
\begin{align}\label{e04073}
\begin{split}
&[D^X, \,^0\nabla^{\cE_Z,u}]=c(e_i)g^{\alpha,H}_3\wedge\left(\,^0R_Z(e_i,g_{\alpha,3}^H)-\,^0\nabla^{\mS_Z\otimes E}_{T_3(e_i, g_{\alpha,3}^H)}\right),
\\
&(\,^0\nabla^{\cE_Z,u})^2=g_3^{\alpha,H}\wedge g_3^{\beta,H}\wedge\left(\,^0R_Z(g_{\alpha,3}^H,g_{\alpha,3}^H)
-\,^0\nabla^{\mS_Z\otimes E}_{T_3(g_{\alpha,3}^H,g_{\alpha,3}^H)}\right)
\end{split}
\end{align}
and $T_3(e_i, g_{\alpha,3}^H)\in TX$, $T_3(g_{\alpha,3}^H,g_{\alpha,3}^H)\in TZ$.
By (\ref{e04072}), (\ref{e04020}) and (\ref{e04073}), we know that $Q=[D^X,\mC]$ is a smooth family of first order elliptic operators
acting along the fibers $X$ and $Q(\mathbb{E}_1^0)\subset \mathbb{E}_1^{0,\bot}$.

By (\ref{e04003}), (\ref{e04302}) and (\ref{e04027}), we know that
\begin{multline}\label{e04028}
E-FH^{-1}G=P(\mC^2+u^{-2}du\wedge(D^Y-c(T_2)/4))P
\\
-P\mC D^X P^{\bot} (D^{X,2})^{-2}P^{\bot}D^X\mC P
=(P\mC P)^2+u^{-2}du\wedge(D^Y-c(T_2)/4)=\mB_2
\end{multline}

The proof of Proposition \ref{e04023} is complete.
\end{proof}

\subsection{Proof of Theorem \ref{e03009}}\label{s0403}

If $C$ is an operator, let $\mathrm{Sp}(C)$ be the spectrum of $C$.
The following lemma is an analogue of  \cite[Proposition 9.2]{Bismut1997}.

\begin{lemma}\label{e04004}
For any $u>0,\,T\geq 1$,
\begin{align}\label{e04058}
\begin{split}
{\rm Sp}(\mB_2)&=\mathrm{Sp}(D^{Y,2}),
\\
{\rm Sp}(\mB_{u,T})&={\rm Sp}(u^2D_T^{Z,2})={\rm Sp}(u^2\mB_{T}).
\end{split}
\end{align}
\end{lemma}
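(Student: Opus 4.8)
The plan is to deduce all three identities from a single structural fact, which is the abstract content of \cite[Proposition 9.2]{Bismut1997}: if a fiberwise elliptic operator $A$ acting on $\Lambda(T^*B)\widehat{\otimes}\mathcal{H}$ (with $B$ a manifold and $\mathcal{H}$ the smooth sections of a Hermitian bundle along a closed fiber) splits as $A=\mathrm{id}\otimes a+A_+$, where $a$ is a fiberwise operator on $\mathcal{H}$ and $A_+$ strictly raises the exterior degree in $\Lambda(T^*B)$, then $\mathrm{Sp}(A)=\mathrm{Sp}(a)$. First I would prove this: decomposing $\Lambda(T^*B)\widehat{\otimes}\mathcal{H}$ by exterior degree — a \emph{finite} decomposition, of length $\dim B+1$ — the operator $A$ becomes block triangular, the off-diagonal blocks being the components of $A_+$ and every diagonal block equal to $a$; hence for $\lambda\notin\mathrm{Sp}(a)$ one has the explicit two-sided inverse
\[(\lambda-A)^{-1}=\sum_{k=0}^{\dim B}\big((\lambda-\mathrm{id}\otimes a)^{-1}A_+\big)^{k}(\lambda-\mathrm{id}\otimes a)^{-1},\]
a sum which terminates because $(\lambda-\mathrm{id}\otimes a)^{-1}$ preserves the exterior degree while $A_+$ strictly raises it; conversely, the finite block-triangular structure forces invertibility of $\lambda-A$ to imply invertibility of the diagonal block $\lambda-a$, so $\mathrm{Sp}(A)=\mathrm{Sp}(a)$. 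The only care needed is functional-analytic bookkeeping: $a$ will be $D^{Y,2}$ or $u^{2}D_T^{Z,2}$, a nonnegative self-adjoint elliptic operator along a compact fiber with discrete spectrum and compact resolvent, so the relevant Sobolev domains and the meaning of $\mathrm{Sp}(\cdot)$ for the non-self-adjoint operators $\mB_2$, $\mB_{u,T}$ (the set of $\lambda$ for which $\lambda-A$ fails to be invertible between the appropriate Sobolev spaces) are unambiguous.

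Next I would identify the exterior-degree-zero parts. For $\mB_2=B_2^2+u^{-2}du\wedge\delta_{u^2}^{-1}\frac{\partial B_{2,u^2}}{\partial u}\delta_{u^2}$: by (\ref{e04003}) one has $B_2=D^Y+\nabla^{\cE_Y,u}-c(T_2)/4$, in which the connection term carries one-form factors and $c(T_2)$ carries two-form factors on $S$, so the degree-zero component of $B_2$ is exactly $D^Y$, whence the degree-zero component of $B_2^2$ is $D^{Y,2}$; the added $du\wedge$-term strictly raises the $du$-degree. Thus the exterior-degree-zero component of $\mB_2$ in $\Lambda(T^*(\R_+\times S))$ is $D^{Y,2}$, and the structural fact gives $\mathrm{Sp}(\mB_2)=\mathrm{Sp}(D^{Y,2})$. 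For $\mB_{u,T}=B_{3,u^2,T}^2+du\wedge\frac{\partial B_{3,u^2,T}}{\partial u}$: by Proposition \ref{e04006} and (\ref{e04008}) the degree-zero component of $B_{3,u^2,T}$ equals $uTD^X+uD^H-\frac{u}{8T}\langle T_1(f_{p,1}^H,f_{q,1}^H),e_i\rangle c(e_i)c(f_{p,1}^H)c(f_{q,1}^H)$, which by (\ref{e04010}) is precisely $uD_T^Z$; hence the degree-zero component of $\mB_{u,T}$ is $u^{2}D_T^{Z,2}$, so $\mathrm{Sp}(\mB_{u,T})=\mathrm{Sp}(u^{2}D_T^{Z,2})$. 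Finally, (\ref{e04301}) exhibits $\mB_{u,T}=u^{2}\delta_{u^2}\mB_T\delta_{u^2}^{-1}$; since $\delta_{u^2}$ is an invertible operator, conjugation preserves the spectrum, giving $\mathrm{Sp}(\mB_{u,T})=\mathrm{Sp}(u^{2}\mB_T)$ (equivalently, the degree-zero component of $\mB_T=B_{3,T}^2+u^{-2}du\wedge(\cdots)$ is $D_T^{Z,2}$, and the structural fact applies to $u^{2}\mB_T$). This yields all three equalities.

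I do not anticipate a serious obstacle here: once the abstract nilpotency fact is set up, the argument is purely formal, and the one genuinely nontrivial input — that $D^{Y,2}$ and $D_T^{Z,2}$ are honest elliptic operators with discrete spectrum along the compact fibers — is already available, $D^Y$ and $D_T^Z$ being fiberwise self-adjoint. The heavy analytic machinery of this section (the uniform bounds of Lemma \ref{e04030} and the resolvent estimates of Lemma \ref{e04055}) is needed for the later limits $T\to\infty$, not for this spectral identity.
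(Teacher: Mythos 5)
Your proposal is correct and takes essentially the same route as the paper: both exploit that the difference between the superconnection curvature and the squared fiberwise Dirac operator (your $A_+$, the paper's $\mR$) strictly raises exterior degree, so the Neumann-type resolvent expansion terminates after finitely many terms, and both make this rigorous by noting that the fiberwise resolvent gains two Sobolev derivatives while the degree-raising part is a first-order fiberwise operator. The paper handles the converse inclusion by symmetry (``exchange $\mB_2$ and $D^{Y,2}$'') rather than by invoking block-triangularity, and only spells out the first identity, remarking that the second is analogous; your explicit treatment of the degree-zero parts of $B_{3,u^2,T}$ via (\ref{e04008}), (\ref{e04010}), and the conjugation $\mB_{u,T}=u^2\delta_{u^2}\mB_T\delta_{u^2}^{-1}$ is exactly the intended ``same proof.''
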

\begin{proof}
We only prove the first formula. The proof of the second one is the same.

By (\ref{e04003}) and (\ref{e04302}),
set
\begin{multline}\label{e29}
\mR:=\mB_2-D^{Y,2}=\left(\nabla^{\cE_Y,u}-\frac{1}{4}c(T_2)\right)^2+
\left[D^Y,\nabla^{\cE_Y,u}-\frac{1}{4}c(T_2)\right]
\\
+\frac{1}{u^2}du\wedge\left(D^Y-\frac{c(T_2)}{4}\right).
\end{multline}
Take $\lambda\notin {\rm Sp}(D^{Y,2})$. Then
\begin{align}\label{e30}
(\lambda-\mB_2)^{-1}-(\lambda-D^{Y,2})^{-1}=(\lambda-D^{Y,2})^{-1}\mR(\lambda-\mB_2)^{-1}.
\end{align}
Inductively,
\begin{align}\label{e31}
\begin{split}
(\lambda-\mB_2)^{-1}&=(\lambda-D^{Y,2})^{-1}+(\lambda-D^{Y,2})^{-1}\mR(\lambda-D^{Y,2})^{-1}
\\
&+(\lambda-D^{Y,2})^{-1}\mR(\lambda-D^{Y,2})^{-1}\mR(\lambda-D^{Y,2})^{-1}+\cdots.
\end{split}
\end{align}
Since $\mR$ has positive degree in $\Lambda (T^{*}(\R\times S))$, the expansion above has finite terms.

By elliptic estimate, there exist $c_1,c_2>0$,  such that for any $s\in \mathbb{E}_1$,
\begin{align}\label{e32}
\|(\lambda-D^{Y,2})s\|_{Y,0}\geq c_1\|s\|_{Y,2}-c_2\|s\|_{Y,0}.
\end{align}
Then there exists $c>0$ such that
\begin{align}\label{e33}
\|(\lambda-D^{Y,2})^{-1}s\|_{Y,2}\leq \frac{1}{c_1}\|s\|_{Y,0}+\frac{c_2}{c_1}
\|(\lambda-D^{Y,2})^{-1}s\|_{Y,0}\leq c\|s\|_{Y,0}.
\end{align}

From (\ref{e04073}) and (\ref{e29}), there exists $c>0$ such that
\begin{align}\label{e34}
\|\mR s\|_{Y,0}\leq c\|s\|_{Y,1}.
\end{align}
By (\ref{e31}), (\ref{e33}) and (\ref{e34}), there exists $c>0$, such that
\begin{align}\label{e35}
\|(\lambda-\mB_{2})^{-1}s\|_{Y,0}\leq c\|s\|_{Y,0}.
\end{align}
So $\lambda\notin {\rm Sp}(\mB_2)$.

Exchange $\mB_2$ and $D^{Y,2}$, we get  the first formula of (\ref{e04058}).
\end{proof}

By Lemma \ref{e04004}, we have
\begin{align}\label{e04099}
\begin{split}
\exp(-u^2\mB_T)=\frac{1}{2\pi\sqrt{-1}}\int_{\Delta}\frac{\exp(-u^2\lambda)}{\lambda-\mB_T}d\lambda,
\\
\exp(-u^2\mB_2)=\frac{1}{2\pi\sqrt{-1}}\int_{\Delta}\frac{\exp(-u^2\lambda)}{\lambda-\mB_2}d\lambda.
\end{split}
\end{align}

\begin{lemma}\label{e04304}
There exist $T_0\geq 1, C>0, k\in\N$, such that for $T\geq T_0$, $\lambda\in \Delta$,
the resolvent $(\lambda-\mB_T)^{-1}$ exists, extends to a continuous linear operator from
$\mathbb{E}_0^{-1}$ into $\mathbb{E}_0^1$, and moreover
\begin{align}\label{e04195}
\begin{split}
&\|(\lambda-\mB_T)^{-1}\|\leq C(1+|\lambda|)^k,
\\
&|(\lambda-\mB_T)^{-1}|_{T}^{-1.1}\leq C(1+|\lambda|)^k.
\end{split}
\end{align}
\end{lemma}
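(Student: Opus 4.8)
The plan is to transfer the resolvent estimates for $D_T^{Z,2}$ from Lemma \ref{e04055} to the superconnection-type operator $\mB_T$, exploiting the fact that $\mB_T - D_T^{Z,2}$ consists of terms of positive exterior degree in $\Lambda(T^*(\R_+\times S))$. First I would write, using (\ref{e04017}) and Proposition \ref{e04006},
\begin{align}
\mB_T = D_T^{Z,2} + \mathcal{R}_T,
\end{align}
where $\mathcal{R}_T := \mB_T - D_T^{Z,2}$ collects the contributions involving $g^\alpha\wedge$, the $du\wedge$ term, and the $0$-order curvature terms; crucially every monomial in $\mathcal{R}_T$ carries at least one factor from $\Lambda^{\geq 1}(T^*(\R_+\times S))$, so $\mathcal{R}_T$ is nilpotent as an element of the nilpotent ideal $\Lambda^{\geq1}(T^*(\R_+\times S))\widehat{\otimes}\End(\cE_Z)$. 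Since $D_T^{Z,2}$ commutes with this $\Z_2$-grading (it lies in degree $0$), for $\lambda\in\Delta$ and $T\geq T_0$ (the $T_0$ of Lemma \ref{e04055}) one has the finite Neumann-type expansion
\begin{align}\label{e04999}
(\lambda-\mB_T)^{-1} = \sum_{j=0}^{N}(\lambda-D_T^{Z,2})^{-1}\bigl(\mathcal{R}_T(\lambda-D_T^{Z,2})^{-1}\bigr)^{j},
\end{align}
where $N = \dim(\R_+\times S)$, since $\mathcal{R}_T^{N+1}=0$. This already shows the resolvent $(\lambda-\mB_T)^{-1}$ exists.

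Next I would estimate each term of (\ref{e04999}) in the two norms. For the $\|\cdot\|_0$ bound: by Lemma \ref{e04055} we have $\|(\lambda-D_T^{Z,2})^{-1}\|\leq C$ uniformly on $\Delta$, and $\mathcal{R}_T$ is a first-order differential operator along $Z$ with coefficients bounded uniformly in $T\geq 1$ (each extra power of $1/T$ only helps). The difficulty is that $\mathcal{R}_T$ contains the first-order pieces $TD^X$, $D^H$ coming from $\delta_{u^2}^{-1}(\partial B_{3,u^2,T}/\partial u)\delta_{u^2}$ in (\ref{e04015}), which are genuinely unbounded on $\mathbb{E}_0^0$; to control $\mathcal{R}_T(\lambda-D_T^{Z,2})^{-1}$ I would factor through $\mathbb{E}_0^1$, using that $(\lambda-D_T^{Z,2})^{-1}$ maps $\mathbb{E}_0^0\subset\mathbb{E}_0^{-1}$ into $\mathbb{E}_0^1$ with norm $\leq C(1+|\lambda|)^2$ (second inequality of (\ref{e04057}), together with the equivalence of $|\cdot|_{T,\pm1}$ and $\|\cdot\|_{\pm1}$), and that $\mathcal{R}_T:\mathbb{E}_0^1\to\mathbb{E}_0^0$ is bounded. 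Iterating $N$ times yields $\|(\lambda-\mB_T)^{-1}\|\leq C(1+|\lambda|)^{2N}$, so the first inequality of (\ref{e04195}) holds with $k=2N$. For the $|\cdot|_T^{-1,1}$ bound I would argue identically, inserting (\ref{e04999}) and using that each factor $(\lambda-D_T^{Z,2})^{-1}:\mathbb{E}_0^{-1}\to\mathbb{E}_0^1$ has norm $\leq C(1+|\lambda|)^2$ while the intermediate factors $\mathcal{R}_T:\mathbb{E}_0^1\to\mathbb{E}_0^{-1}$ are bounded uniformly in $T$ (a first-order operator maps $\mathbb{E}_0^1$ to $\mathbb{E}_0^0\hookrightarrow\mathbb{E}_0^{-1}$), giving again a polynomial bound in $|\lambda|$; enlarging $k$ if necessary covers both estimates simultaneously.

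The main obstacle is the uniformity in $T$ of the bound on $\mathcal{R}_T$ as an operator $\mathbb{E}_0^1\to\mathbb{E}_0^0$ (and $\mathbb{E}_0^1\to\mathbb{E}_0^{-1}$): the offending term $T D^X$ has a coefficient growing in $T$, so a naive bound by $\|\cdot\|_{1}$ is not $T$-uniform. The resolution is that this term only appears multiplied by $g^\alpha\wedge$ or $du\wedge$ (it comes from the $du$-component of $\mB_T$, not from $D_T^{Z,2}$ itself), and on $\mathbb{E}_1^0\oplus\mathbb{E}_1^{0,\bot}$ the factor $TD^X$ acts as $TD^X P^\bot$ on the $P^\bot$ part, which is controlled by $|D_T^{Z}\cdot|_{T,0}$ via (\ref{e04010}) and the definition (\ref{e04221}) of $|\cdot|_{T,1}$, where the weight $T^2\|\,^0\nabla^{\mS_Z\otimes E}_{e_i}P^\bot s\|_0^2$ precisely absorbs the factor $T$; alternatively, one uses the matrix decomposition of Proposition \ref{e04023}, where $TD^X$ appears only through the off-diagonal blocks $F_T = TF+O(1)$, $G_T=TG+O(1)$, whose behavior is already understood. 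Either way, one gets a bound on $\mathcal{R}_T(\lambda-D_T^{Z,2})^{-1}$ that is uniform in $T\geq T_0$ and polynomial in $|\lambda|$, and the lemma follows.
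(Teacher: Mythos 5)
Your proposal follows essentially the same route as the paper's proof: decompose $\mR_T=\mB_T-D_T^{Z,2}$, observe that $\mR_T$ has positive exterior degree so the Neumann expansion (\ref{e04053}) terminates after finitely many terms, and chain the estimates from Lemma \ref{e04055} with a $T$-uniform first-order bound on $\mR_T$ in the weighted norms $|\cdot|_{T,\pm 1}$. Your explanation of why the weight $T^2\|\,^0\nabla^{\mS_Z\otimes E}_{e_i}P^\bot s\|_0^2$ in $|\cdot|_{T,1}$ absorbs the factor $T$ in $TD^X$ correctly fills in the step that the paper states as (\ref{e04054}) without elaboration.
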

\begin{proof}
Set
\begin{align}\label{e04052}
\mR_T:=\mB_T-D_T^{Z,2}.
\end{align}
By  (\ref{e04072}), (\ref{e04007}), (\ref{e04017}) and (\ref{e04073}),  we know that
$\mR_T$ is a first order fiberwise differential operator along the fiber $Z$.
Moreover, from (\ref{e04221}), for $i=-1,0$, there exists $C_i>0$, such that for any $s\in \mathbb{E}_0^i$,
\begin{align}\label{e04054}
|\mR_Ts|_{T,i}\leq C_i|s|_{T,i+1}.
\end{align}
Take $\lambda\in \Delta$. Then
\begin{align}\label{e04053}
\begin{split}
(\lambda-\mB_T)^{-1}&=(\lambda-D_T^{Z,2})^{-1}+(\lambda-D_T^{Z,2})^{-1}\mR_T(\lambda-D_T^{Z,2})^{-1}
\\
&+(\lambda-D_T^{Z,2})^{-1}\mR_T(\lambda-D_T^{Z,2})^{-1}\mR_T(\lambda-D_T^{Z,2})^{-1}+\cdots.
\end{split}
\end{align}
Since $\mR_T$ has positive degree in $\Lambda (T^*(\R\times S))$, the expansion above has finite terms.

From  (\ref{e04054}), and (\ref{e04053}) and Lemma \ref{e04055}, there exist
$T_0\geq 1, C>0, k\in \N$, such that for $T\geq T_0$, $\lambda\in \Delta$,
the resolvent $(\lambda-\mB_T)^{-1}$ exists, extends to a continuous linear operator from
$\mathbb{E}_0^{-1}$ into $\mathbb{E}_0^1$, and moreover
\begin{align}\label{e04056}
\begin{split}
&\|(\lambda-\mB_T)^{-1}\|\leq C(1+|\lambda|)^k,
\\
&|(\lambda-\mB_T)^{-1}|_{T}^{-1,1}\leq C(1+|\lambda|)^k.
\end{split}
\end{align}

The proof of Lemma \ref{e04304} is complete.
\end{proof}

Similarly, there exist $C>0, k\in \N$, such that for $\lambda\in \Delta$,
the resolvent $(\lambda-\mB_2)^{-1}$ exists, and for any $s\in \mathbb{E}_1^0$, $s'\in \mathbb{E}_1^{-1}$, we have
\begin{align}\label{e04061}
\begin{split}
&\|(\lambda-\mB_2)^{-1}s\|_{Y,0}\leq C(1+|\lambda|)^k\|s\|_{Y,0},
\\
&\|(\lambda-\mB_2)^{-1}s'\|_{Y,1}\leq C(1+|\lambda|)^k\|s'\|_{Y,-1}.
\end{split}
\end{align}

Replacing $\mB_T$ by $H_T$ and $D_T^{Z,2}$ by $P^{\bot}D_T^{Z,2}P^{\bot}$ in the proof of Lemma \ref{e04304}, we can get the following lemma.
\begin{lemma}\label{e04075}
There exist $T_0\geq 1, C>0, k\in \N$, such that for $T\geq T_0, \lambda\in \Delta$,
the resolvent $(\lambda-H_T)^{-1}$ exists, and for any $s\in \mathbb{E}_0^{2,\bot}$, we have
\begin{align}\label{e04062}
\begin{split}
&\|(\lambda-H_T)^{-1}s\|_{0}\leq C(1+|\lambda|)^k\|s\|_{0},
\\
&|(\lambda-H_T)^{-1}s|_{T,1}\leq C(1+|\lambda|)^k|s|_{T,-1}.
\end{split}
\end{align}
\end{lemma}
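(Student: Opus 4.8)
The plan is to follow the proof of Lemma \ref{e04304} verbatim, with $H_T$ in place of $\mB_T$ and $P^\bot D_T^{Z,2}P^\bot$ in place of $D_T^{Z,2}$, the latter regarded as a fiberwisely self-adjoint operator on the orthogonal bundle $\mathbb{E}_1^{0,\bot}$.

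First I would establish the analogue of Lemma \ref{e04055} for the reference operator $P^\bot D_T^{Z,2}P^\bot$. The inequality (\ref{e04207}), obtained inside the proof of Lemma \ref{e04030}, gives for $T\geq T_0$ and $s\in\mathbb{E}_1^{0,\bot}$ that $\langle P^\bot D_T^{Z,2}P^\bot s,s\rangle_0\geq C_1''|s|_{T,1}^2-C_2''\|s\|_0^2$, and the second inequality of (\ref{e04031}) gives the matching bound $|\langle P^\bot D_T^{Z,2}P^\bot s,s'\rangle_0|\leq C|s|_{T,1}|s'|_{T,1}$. These are precisely the two inputs used in the proof of Lemma \ref{e04055}: with a fixed $\lambda_0\in\R$, $\lambda_0\leq -2C_2''$, one deduces $|(\lambda_0-P^\bot D_T^{Z,2}P^\bot)s|_{T,-1}\geq C_1''|s|_{T,1}$, and then the two resolvent identities relating $\lambda\in\Delta$ to $\lambda_0$ as in (\ref{e04065}) and (\ref{e04067}) yield that $(\lambda-P^\bot D_T^{Z,2}P^\bot)^{-1}$ exists on $\mathbb{E}_1^{0,\bot}$, is bounded by $C$ in the $\|\cdot\|_0$ norm, and extends to a continuous map $\mathbb{E}_0^{-1,\bot}\to\mathbb{E}_0^{1,\bot}$ with $|\cdot|_T^{-1,1}$ norm bounded by $C(1+|\lambda|)^2$, all uniformly in $T\geq T_0$.

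Next I would treat $H_T$ as a perturbation of $P^\bot D_T^{Z,2}P^\bot$. With $\mR_T:=\mB_T-D_T^{Z,2}$, the computation already carried out in the proof of Lemma \ref{e04304} (via (\ref{e04072}), (\ref{e04007}), (\ref{e04017}), (\ref{e04073})) shows that $\mR_T$ is a first order fiberwise differential operator along $Z$ of positive degree in $\Lambda(T^*(\R_+\times\R_+\times S))$ and satisfies (\ref{e04054}). Since $P^\bot$ is an orthogonal projection with smooth fiberwise kernel, it is bounded uniformly in $T$ for the norms $\|\cdot\|_0$, $|\cdot|_{T,1}$, $|\cdot|_{T,-1}$, so $H_T-P^\bot D_T^{Z,2}P^\bot=P^\bot\mR_TP^\bot$ inherits the estimates of (\ref{e04054}) on $\mathbb{E}_1^{0,\bot}$ and is nilpotent. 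Expanding the resolvent into the finite series
\[
(\lambda-H_T)^{-1}=\sum_{j\geq 0}(\lambda-P^\bot D_T^{Z,2}P^\bot)^{-1}\bigl(P^\bot\mR_TP^\bot\,(\lambda-P^\bot D_T^{Z,2}P^\bot)^{-1}\bigr)^{j}
\]
and inserting the bounds of the previous paragraph gives (\ref{e04062}) for suitable $C>0$, $k\in\N$.

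The only point requiring genuine care is the first step: checking that the estimates of Lemmas \ref{e04030} and \ref{e04055} survive the insertion of $P^\bot$ on both sides with constants uniform in $T\geq T_0$, and in particular that the term $-C_2''\|s\|_0^2$ can be absorbed once and for all by a fixed negative $\lambda_0$ independent of $T$, so that the polynomial control of the resolvent in the $|\cdot|_{T,\pm1}$ norms is uniform in $T$. Everything else is a transcription of the arguments already given for Lemma \ref{e04304}.
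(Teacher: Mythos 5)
Your proposal is correct and follows exactly the route the paper itself prescribes: the paper gives no proof for this lemma beyond the one-line instruction to replace $\mB_T$ by $H_T$ and $D_T^{Z,2}$ by $P^\bot D_T^{Z,2}P^\bot$ in the proof of Lemma \ref{e04304}, and you have correctly carried out that substitution, identifying (\ref{e04207}) and the second inequality of (\ref{e04031}) as the inputs that make the $P^\bot$-restricted version of Lemma \ref{e04055} go through, and noting that $P^\bot\mR_T P^\bot$ keeps the positive exterior degree that truncates the Neumann series.
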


Choose $s,s'\in \mathbb{E}_0$ such that
$s=(\lambda-\mB_T)^{-1}s'$, $\lambda\in \Delta$. Then by (\ref{e04018}), we have
\begin{align}\label{e04077}
\begin{split}
Ps'&=(\lambda-E_T)Ps-F_TP^{\bot}s,
\\
P^\bot s'&=-G_TPs+(\lambda-H_T)P^{\bot}s.
\end{split}
\end{align}
Let
\begin{align}\label{e04078}
\mE_T(\lambda)=\lambda-E_T-F_T(\lambda-H_T)^{-1}G_T.
\end{align}
Then
\begin{align}\label{e04079}
P(\lambda-\mB_T)^{-1}P=\mE_T(\lambda)^{-1}.
\end{align}
By (\ref{e04079}) and Lemma \ref{e04304},
there exist $T_0\geq 1, C>0, k\in\N$, such that for $T\geq T_0$, $\lambda\in \Delta$, $s\in \mathbb{E}_0$,
\begin{align}\label{e04074}
\begin{split}
&\|\mE_T(\lambda)^{-1}s\|_0\leq C(1+|\lambda|)^k\|s\|_0,
\\
&|\mE_T(\lambda)^{-1}s|_{T,1}\leq C(1+|\lambda|)^k|s|_{T,-1}.
\end{split}
\end{align}

\begin{lemma}\label{e04081}
There exist $C>0$, $T_0\geq 1$, $k\in \N$, such that for $T\geq T_0$, $\lambda\in \Delta$, $s\in \mathbb{E}_0$,
\begin{align}\label{e04082}
\|(\mE_T(\lambda)^{-1}-P(\lambda-\mB_2)^{-1}P)s\|_{0}\leq \frac{C(1+|\lambda|)^{k}}{T}\|s\|_{0}.
\end{align}
\end{lemma}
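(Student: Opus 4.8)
The plan is to compare the two resolvents directly. Set $\mE_\infty(\lambda):=\lambda-\mB_2$, so that by (\ref{e04305}) we have $\mE_\infty(\lambda)=\lambda-E+FH^{-1}G$, and, extending operators from $\mathbb{E}_1^0$ to $\mathbb{E}_0$ by zero, $\mE_\infty(\lambda)^{-1}=P(\lambda-\mB_2)^{-1}P$, which by (\ref{e04061}) is bounded by $C(1+|\lambda|)^k$ uniformly for $\lambda\in\Delta$; combining the two estimates in (\ref{e04061}) with the ellipticity of $\mB_2$ along $Y$ one also gets that $\mE_\infty(\lambda)^{-1}$ maps $\mathbb{E}_1^0$ into $\mathbb{E}_1^2$ with norm $\leq C(1+|\lambda|)^{k+1}$. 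Recall from (\ref{e04079}) that $\mE_T(\lambda)^{-1}=P(\lambda-\mB_T)^{-1}P$, with the uniform bounds (\ref{e04074}). I would then write the resolvent identity
\begin{align*}
\mE_T(\lambda)^{-1}-P(\lambda-\mB_2)^{-1}P=-\mE_T(\lambda)^{-1}\bigl(\mE_T(\lambda)-\mE_\infty(\lambda)\bigr)\mE_\infty(\lambda)^{-1},
\end{align*}
so that everything reduces to showing $\mE_T(\lambda)-\mE_\infty(\lambda)=O\bigl((1+|\lambda|)^{k}/T\bigr)$ as an operator, measured with the weighted norms $|\cdot|_{T,\pm1}$ of Definition \ref{e04019}, uniformly in $T\geq T_0$ and $\lambda\in\Delta$.

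By (\ref{e04078}) and (\ref{e04305}),
\begin{align*}
\mE_T(\lambda)-\mE_\infty(\lambda)=(E-E_T)-\bigl[F_T(\lambda-H_T)^{-1}G_T+FH^{-1}G\bigr].
\end{align*}
The term $E-E_T$ is $O(1/T)$, with no $\lambda$-dependence, directly by the asymptotics of Proposition \ref{e04023}. The real work is the bracket. Here the key device is the elementary identity $(\lambda-H_T)^{-1}=-H_T^{-1}+\lambda(\lambda-H_T)^{-1}H_T^{-1}$ — valid once $T\geq T_0$, since by Proposition \ref{e04023} $H_T=T^2H+O(T)$ with $H=P^\bot D^{X,2}P^\bot$ invertible on $\mathbb{E}_1^{0,\bot}$, whence $H_T$ is invertible, $\|H_T^{-1}\|=O(T^{-2})$ and $H_T^{-1}=T^{-2}H^{-1}+O(T^{-3})$ — iterated twice:
\begin{align*}
(\lambda-H_T)^{-1}=-H_T^{-1}-\lambda H_T^{-2}+\lambda^2(\lambda-H_T)^{-1}H_T^{-2}.
\end{align*}
Substituting this into $F_T(\lambda-H_T)^{-1}G_T$, the leading term $-F_TH_T^{-1}G_T$ cancels $FH^{-1}G$ up to $O(1/T)$ (expand $F_T=TF+O(1)$, $G_T=TG+O(1)$, $H_T^{-1}=T^{-2}H^{-1}+O(T^{-3})$, so $F_TH_T^{-1}G_T=FH^{-1}G+O(1/T)$), while the remaining two terms are genuine error terms: using $\|F_T\|,\|G_T\|=O(T)$, $\|H_T^{-1}\|=O(T^{-2})$ and the uniform bound $\|(\lambda-H_T)^{-1}\|\leq C(1+|\lambda|)^k$ of Lemma \ref{e04075}, one finds $\lambda F_TH_T^{-2}G_T=O((1+|\lambda|)/T)$ and $\lambda^2F_T(\lambda-H_T)^{-1}H_T^{-2}G_T=O((1+|\lambda|)^{k+2}/T)$ — in each case the two stray factors of $T$ coming from $F_T$ and $G_T$ are swallowed by the extra factor $H_T^{-1}$ present in that term. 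Hence $F_T(\lambda-H_T)^{-1}G_T+FH^{-1}G=O((1+|\lambda|)^{k+2}/T)$, and together with the estimate for $E-E_T$ this gives $\mE_T(\lambda)-\mE_\infty(\lambda)=O((1+|\lambda|)^{k+2}/T)$.

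Feeding this back into the resolvent identity and invoking (\ref{e04074}) for $\mE_T(\lambda)^{-1}$, (\ref{e04061}) for $\mE_\infty(\lambda)^{-1}$, and routing the domains through $\mathbb{E}_1^2$ — so that the first–order operators $F_T,G_T,Q$ and the second–order elliptic $H_T$ can legitimately be applied, and at the end the Sobolev norms are converted back to the $L^2$–norm $\|\cdot\|_0$ at the cost of a few more powers of $|\lambda|$, all absorbed into a new exponent $k$ — yields (\ref{e04082}). The whole argument must be carried out with constants independent of $T$, and this is where the weighted norms $|\cdot|_{T,0}$, $|\cdot|_{T,\pm1}$ of Definition \ref{e04019} together with the uniform elliptic estimates of Lemmas \ref{e04030}, \ref{e04055} and \ref{e04075} are indispensable: relative to these norms $F_T$ and $G_T$, although formally of size $T$, act as uniformly bounded operators, the norm $|\cdot|_{T,1}$ carrying the compensating factor $T^2$ on the $P^\bot$–component. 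I expect the genuine difficulty to be precisely this bookkeeping: extracting, via the twice–iterated resolvent expansion, enough extra factors $H_T^{-1}\sim T^{-2}$ from the middle of $F_T(\lambda-H_T)^{-1}G_T$ to beat the two factors of $T$ from $F_T$ and $G_T$, while keeping only polynomial growth in $|\lambda|$, all with $T$–independent constants.
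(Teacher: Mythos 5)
Your approach agrees with the paper's at the strategic level: both start from the second resolvent identity
\begin{align*}
\mE_T(\lambda)^{-1}-P(\lambda-\mB_2)^{-1}P
=P\,\mE_T(\lambda)^{-1}\bigl(\lambda-\mB_2-\mE_T(\lambda)\bigr)(\lambda-\mB_2)^{-1}P,
\end{align*}
both then reduce to showing $\lambda-\mB_2-\mE_T(\lambda)=O((1+|\lambda|)^k/T)$ as an operator between the $T$-weighted Sobolev spaces of Definition~\ref{e04019}, and both close the argument by routing through $|\cdot|_{T,\pm 1}$ and invoking (\ref{e04074}), (\ref{e04061}), Lemma~\ref{e04030} and Lemma~\ref{e04075}. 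Where you genuinely diverge from the paper is in how the middle factor $\lambda-\mB_2-\mE_T(\lambda) = (E_T-E)+FH^{-1}G+F_T(\lambda-H_T)^{-1}G_T$ is decomposed. You pre-expand the resolvent via the twice-iterated identity $(\lambda-H_T)^{-1}=-H_T^{-1}-\lambda H_T^{-2}+\lambda^2(\lambda-H_T)^{-1}H_T^{-2}$, so that the $\lambda$-independent part $-F_TH_T^{-1}G_T$ can be matched against $FH^{-1}G$ and the $\lambda$-dependent pieces are pure error terms carrying extra powers of $H_T^{-1}\sim T^{-2}$. The paper instead keeps $(\lambda-H_T)^{-1}$ intact and telescopes the \emph{outer} factors: writing $FH^{-1}G=TF(T^2H)^{-1}TG$ and splitting off $F_T-TF$, $G_T-TG$, $H_T-T^2H$, it arrives at the five-term decomposition (\ref{e04084}), each term of which is then estimated individually in (\ref{e04230})--(\ref{e04245}). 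Your organization is algebraically tidier and makes the $\lambda$-bookkeeping transparent; the paper's makes the $T$-bookkeeping transparent, since each summand carries an explicit small factor ($F_T-TF=O(1)$, $H_T-T^2H=O(T)$, etc.). The two are morally equivalent: in fact, to make your cancellation $F_TH_T^{-1}G_T=FH^{-1}G+O(1/T)$ rigorous as an operator bound one must perform essentially the same telescoping of $F_T-TF$, $G_T-TG$, $H_T^{-1}-(T^2H)^{-1}$ that the paper displays, so the ``bookkeeping'' you defer is precisely the content of (\ref{e04230})--(\ref{e04245}); it is not shorter, just repackaged. The one place your sketch should be tightened is in the claim that $F_T,G_T$ and $H_T^{-1}$ can be multiplied with their $T$-orders simply added: these are unbounded operators, so the $O(T)$, $O(T^{-2})$ statements must be read in the $|\cdot|_{T,\pm1}$ norms (where $F_T,G_T$ are uniformly bounded precisely because $|\cdot|_{T,1}$ carries the compensating weight $T$ on $P^\bot$-components and on $X$-derivatives of $P^\bot$-components), not as $L^2$-operator norms; once that is understood, the order-counting in your terms C and D does close.
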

\begin{proof}
We know that
\begin{align}\label{e04090}
\mE_T(\lambda)^{-1}-P(\lambda-\mB_2)^{-1}P=P\mE_T(\lambda)^{-1}(\lambda-\mB_2-\mE_T(\lambda))(\lambda-\mB_2)^{-1}P.
\end{align}
By (\ref{e04305}) and (\ref{e04078}),
\begin{multline}\label{e04084}
\lambda-\mB_2-\mE_T(\lambda)
=E_T+F_T(\lambda-H_T)^{-1}G_T-E+FH^{-1}G
\\
=(E_T-E)+(F_T-TF)(\lambda-H_T)^{-1}G_T+\lambda TF(\lambda-H_T)^{-1}(T^{2}H)^{-1}G_T
\\
-TF(\lambda-H_T)^{-1}(H_T-T^2H)(T^{2}H)^{-1}G_T+TF(T^{2}H)^{-1}(G_T-TG).
\end{multline}

By (\ref{e04044}) and (\ref{e04007}), the 2-order term of the differential operator $\mB_T$
is
\begin{align}\label{e04001}
T^2P^{\bot}D^{X,2}P^{\bot}+PD^{H,2}P+P^{\bot}D^{H,2}P^{\bot}
\end{align}
and the coefficient of $T$ in the expansion of $\mB_T$ is a 1-order differential operator along the fiber $X$.

From (\ref{e04001}) and Proposition \ref{e04023}, there exist $C>0$, $T_0\geq 1$, such that for any $s, s'\in \mathbb{E}_0$,
$T\geq T_0$,
\begin{align}\label{e04085}
\begin{split}
|\la(E_T-E)Ps, Ps'\ra_0|
\leq \frac{C}{T}\|Ps\|_0\|Ps'\|_{1}.
\end{split}
\end{align}
So we have
\begin{align}\label{e04230}
|(E_T-E)Ps|_{T,-1}\leq \frac{C}{T}\|Ps\|_0.
\end{align}
Also from (\ref{e04001}) and Proposition \ref{e04023}, there exist $C>0$, $T_0\geq 1$, such that for any $s\in \mathbb{E}_0$,
$T\geq T_0$,
\begin{align}\label{e04231}
\|F_TP^{\bot}s\|_{0}\leq \|TQP^{\bot}s\|_0+C\|Ps\|_1\leq C|P^{\bot}s|_{T,1}.
\end{align}
Similarly, we have
\begin{align}\label{e04232}
|G_TPs|_{T,-1}\leq C\|Ps\|_0.
\end{align}
From (\ref{e04231}), (\ref{e04232}) and Lemma \ref{e04075}, there exist $C>0$, $T_0\geq 1$,
$k\in \N$ such that for any $s\in \mathbb{E}_0$,
$T\geq T_0$,
\begin{align}\label{e04233}
\|F_T(\lambda-H_T)^{-1}G_TPs\|_{0}\leq C(1+|\lambda|)^k\|Ps\|_0.
\end{align}
From Proposition \ref{e04023}, there exists $C>0$, such that
\begin{align}\label{e04234}
\|FH^{-1}GPs\|_{0}\leq C\|Ps\|_0.
\end{align}
By (\ref{e04061}), (\ref{e04074}), (\ref{e04084}), (\ref{e04230}), (\ref{e04233}), (\ref{e04234}) and Lemma \ref{e04075}, we can get
\begin{align}\label{e04235}
\|(\mE_T(\lambda)^{-1}-P(\lambda-\mB_2)^{-1}P)s\|_{0}\leq C(1+|\lambda|)^{k}\|Ps\|_{0}.
\end{align}

Comparing with (\ref{e04231}) and (\ref{e04232}), from (\ref{e04001}) and Proposition \ref{e04023},
there exist $C>0$, $T_0\geq 1$, such that for any $s\in \mathbb{E}_0$,
$T\geq T_0$,
\begin{align}\label{e04236}
\begin{split}
|(F_T-TF)P^{\bot}s|_{T,-1}&\leq C\|P^{\bot}s\|_0,\quad
|TFP^{\bot}s|_{T,-1}\leq C|P^{\bot}s|_{T,1},
\\
&\|(G_T-TG)Ps\|_{-1}\leq C\|Ps\|_0.
\end{split}
\end{align}

From (\ref{e04034}) and (\ref{e04002}), there exists $C>0$, such that for any $s\in \mathbb{E}_0$,
\begin{align}\label{e04237}
\la Hs, s\ra_0\geq \|P^{\bot}s\|_{X,1}^2.
\end{align}
So by Proposition \ref{e04023}, there exists $C>0$, such that
\begin{align}\label{e04238}
|QH^{-1}s|_{T,-1}\geq C\|P^{\bot}s\|_{-1}.
\end{align}

Thus, by (\ref{e04034}), (\ref{e04232}), (\ref{e04236}), (\ref{e04238}) and Lemma \ref{e04075}, we can get
\begin{multline}\label{e04239}
|(F_T-TF)(\lambda-H_T)^{-1}G_TPs|_{T,-1}\leq C\|P^{\bot} (\lambda-H_T)^{-1}G_TPs\|_0
\\
\leq \frac{C}{T}|(\lambda-H_T)^{-1}G_TPs|_{T,1}\leq \frac{C}{T}(1+|\lambda|)^k|G_TPs|_{T,-1}
\leq \frac{C}{T}(1+|\lambda|)^k\|Ps\|_{0},
\end{multline}
\begin{multline}\label{e04240}
|TF(\lambda-H_T)^{-1}(T^2H)^{-1}G_TPs|_{T,-1}\leq C |(\lambda-H_T)^{-1}(T^2H)^{-1}G_TPs|_{T,1}
\\
\leq  C(1+|\lambda|)^k |(T^2H)^{-1}G_TPs|_{T,-1}\leq \frac{C}{T^2}(1+|\lambda|)^k|G_TPs|_{T,-1}
\\
\leq \frac{C}{T^2}(1+|\lambda|)^k\|Ps\|_0
\end{multline}
and
\begin{multline}\label{e04241}
|TF(T^2H)^{-1}(G_T-TG)Ps|_{T,-1}=\frac{1}{T} |QH^{-1}(G_T-TG)Ps|_{T,-1}
\\
\leq  \frac{C}{T} \|(G_T-TG)Ps\|_{-1}\leq \frac{C}{T}\|Ps\|_0.
\end{multline}

So from (\ref{e04061}), (\ref{e04074}), (\ref{e04090}), (\ref{e04084}), (\ref{e04230}), (\ref{e04235}),
(\ref{e04239}), (\ref{e04240}), (\ref{e04241}) and Lemma \ref{e04075},
we have
\begin{multline}\label{e04242}
\|(\mE_T(\lambda)^{-1}TF(\lambda-H_T)^{-1}(H_T-T^2H)(T^{2}H)^{-1}G_T(\lambda-\mB_2)^{-1}Ps\|_0
\\
\leq C(1+|\lambda|)^{k}\|Ps\|_{0}.
\end{multline}

On the other hand, from (\ref{e04001}), we have
\begin{align}\label{e04243}
|(H_T-T^2H)P^{\bot}s|_{T,-1}\geq C\|P^{\bot}s\|_{1}.
\end{align}
So from (\ref{e04074}), (\ref{e04236}), (\ref{e04243}) and Lemma \ref{e04075}, we have
\begin{multline}\label{e04244}
\|(\mE_T(\lambda)^{-1}TF(\lambda-H_T)^{-1}(H_T-T^2H)(T^{2}H)^{-1}G_T(\lambda-\mB_2)^{-1}Ps\|_0
\\
\leq C(1+|\lambda|)^{k}|TF(\lambda-H_T)^{-1}(H_T-T^2H)(T^{2}H)^{-1}G_T(\lambda-\mB_2)^{-1}Ps|_{T,-1}
\\
\leq C(1+|\lambda|)^{k}|(\lambda-H_T)^{-1}(H_T-T^2H)(T^{2}H)^{-1}G_T(\lambda-\mB_2)^{-1}Ps|_{T,1}
\\
\leq C(1+|\lambda|)^{k}|(H_T-T^2H)(T^{2}H)^{-1}G_T(\lambda-\mB_2)^{-1}Ps|_{T,-1}
\\
\leq \frac{C}{T^2}(1+|\lambda|)^{k}\|H^{-1}G_T(\lambda-\mB_2)^{-1}Ps\|_{1}.
\end{multline}
Since $H^{-1}G_T(\lambda-\mB_2)^{-1}=O(T)$, by (\ref{e04242}) and (\ref{e04244}), we have
\begin{multline}\label{e04245}
\|(\mE_T(\lambda)^{-1}TF(\lambda-H_T)^{-1}(H_T-T^2H)(T^{2}H)^{-1}G_T(\lambda-\mB_2)^{-1}Ps\|_0
\\
\leq \frac{C}{T}(1+|\lambda|)^{k}\|Ps\|_{0}.
\end{multline}
Then from (\ref{e04090}), (\ref{e04084}), (\ref{e04230}), (\ref{e04239}), (\ref{e04240}), (\ref{e04241}),
(\ref{e04245}) and Lemma \ref{e04075}, we can obtain the Lemma.
\end{proof}

\begin{lemma}\label{e04094}
There exist $C>0$, $T_0\geq 1$, $k\in \N$, such that for $T\geq T_0$, $\lambda\in \Delta$,
\begin{align}\label{e04095}
\begin{split}
\|(\lambda-\mB_T)^{-1}-P(\lambda-\mB_2)^{-1}P\|\leq \frac{C}{T}(1+|\lambda|)^{k}.
\end{split}
\end{align}
\end{lemma}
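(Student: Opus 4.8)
The plan is to use the matrix structure of $\mB_T$ with respect to the splitting $\mathbb{E}_0=\mathbb{E}_1^0\oplus\mathbb{E}_1^{0,\bot}$ to write $(\lambda-\mB_T)^{-1}$ in block form and estimate each block separately. Solving the system (\ref{e04077}) by the Schur complement relative to $\mE_T(\lambda)$ in (\ref{e04078}) — that is, setting $s=(\lambda-\mB_T)^{-1}s'$, substituting $P^\bot s=(\lambda-H_T)^{-1}(P^\bot s'+G_TPs)$ into the first equation — one recovers (\ref{e04079}) together with
\begin{align*}
&P(\lambda-\mB_T)^{-1}P^\bot=\mE_T(\lambda)^{-1}F_T(\lambda-H_T)^{-1},\\
&P^\bot(\lambda-\mB_T)^{-1}P=(\lambda-H_T)^{-1}G_T\mE_T(\lambda)^{-1},\\
&P^\bot(\lambda-\mB_T)^{-1}P^\bot=(\lambda-H_T)^{-1}+(\lambda-H_T)^{-1}G_T\mE_T(\lambda)^{-1}F_T(\lambda-H_T)^{-1}.
\end{align*}
Since $P(\lambda-\mB_2)^{-1}P$ contributes only to the $PP$-block, the operator $(\lambda-\mB_T)^{-1}-P(\lambda-\mB_2)^{-1}P$ equals $\mE_T(\lambda)^{-1}-P(\lambda-\mB_2)^{-1}P$ on that block, which by Lemma \ref{e04081} is bounded by $C(1+|\lambda|)^k/T$; it remains to show that each of the other three blocks of $(\lambda-\mB_T)^{-1}$ is itself $O(T^{-1})$ up to polynomial factors in $|\lambda|$.

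The key mechanism is the $T^{-2}$ gain of the resolvent of $H_T$ inside $\mathbb{E}_1^{0,\bot}$. From the definitions (\ref{e04221}) and (\ref{e04022}), for $s\in\mathbb{E}_1^{0,\bot}$ one has $\|s\|_0\leq T^{-1}|s|_{T,1}$ and $|s|_{T,-1}\leq T^{-1}\|s\|_0$; combined with the estimate $|(\lambda-H_T)^{-1}s|_{T,1}\leq C(1+|\lambda|)^k|s|_{T,-1}$ of Lemma \ref{e04075} (and the fact that $H_T=P^\bot\mB_TP^\bot$ preserves $\mathbb{E}_1^{0,\bot}$), this gives both $\|(\lambda-H_T)^{-1}s\|_0\leq C(1+|\lambda|)^kT^{-2}\|s\|_0$ and $|(\lambda-H_T)^{-1}s|_{T,1}\leq C(1+|\lambda|)^kT^{-1}\|s\|_0$ on $\mathbb{E}_1^{0,\bot}$. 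Feeding these into the block formulas, using $\|F_TP^\bot s\|_0\leq C|P^\bot s|_{T,1}$ from (\ref{e04231}), $|G_TPs|_{T,-1}\leq C\|Ps\|_0$ from (\ref{e04232}), and the bounds (\ref{e04074}) on $\mE_T(\lambda)^{-1}$, one chains the estimates: for the $PP^\bot$-block, $(\lambda-H_T)^{-1}$ produces $|\cdot|_{T,1}$ with a factor $T^{-1}$, then $F_T$ returns it to $\|\cdot\|_0$ boundedly, then $\mE_T(\lambda)^{-1}$ keeps it in $\|\cdot\|_0$, yielding $\|\mE_T(\lambda)^{-1}F_T(\lambda-H_T)^{-1}\|\leq C(1+|\lambda|)^{k'}T^{-1}$; the $P^\bot P$-block is treated symmetrically; and the $P^\bot P^\bot$-block is even smaller, since $(\lambda-H_T)^{-1}$ alone is $O(T^{-2})$ and the correction $(\lambda-H_T)^{-1}G_T\mE_T(\lambda)^{-1}F_T(\lambda-H_T)^{-1}$ acquires two factors $T^{-1}$ from the two resolvents of $H_T$. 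Summing the four blocks and enlarging $k$ and $C$ gives (\ref{e04095}).

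I expect the only real difficulty to be the bookkeeping: one must constantly track which $T$-weighted space ($\|\cdot\|_0$ or $|\cdot|_{T,\pm1}$, on the $P$- or $P^\bot$-component) each intermediate quantity lives in, and verify that the single extra power of $T$ in $F_T=TF+O(1)$ and $G_T=TG+O(1)$ from Proposition \ref{e04023} is always more than offset by the $T^{-2}$ afforded by $(\lambda-H_T)^{-1}$, leaving a net $T^{-1}$. The polynomial growth in $|\lambda|$ is harmless, merely forcing a larger exponent $k$ in the final bound, and uniformity in $\lambda\in\Delta$ and $T\geq T_0$ follows from the uniformity of the constants in Lemmas \ref{e04075}, \ref{e04081} and in (\ref{e04074}).
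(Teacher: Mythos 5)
Your proposal is correct and follows essentially the same route as the paper's proof: you use the block decomposition of $(\lambda-\mB_T)^{-1}$ along $P$, $P^\bot$ via the Schur complement $\mE_T(\lambda)$, bound the $PP$-block by Lemma \ref{e04081}, and show the remaining three blocks are $O(T^{-1})$ by exploiting the $T$-weighted norms on $\mathbb{E}_1^{0,\bot}$ together with the resolvent bound on $H_T$ from Lemma \ref{e04075} and the operator estimates (\ref{e04231}), (\ref{e04232}), (\ref{e04074}). Your expanded form of the $P^\bot P^\bot$-block is algebraically identical to the paper's factored form $(\lambda-H_T)^{-1}(1+G_TP(\lambda-\mB_T)^{-1}P^\bot)$, and your chain of estimates recovers the paper's (\ref{e04307})--(\ref{e04247}).
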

\begin{proof}
From (\ref{e04079}) and Lemma \ref{e04081}, we have
\begin{align}\label{e04246}
\|P(\lambda-\mB_T)^{-1}P-P(\lambda-\mB_2)^{-1}P\|\leq \frac{C}{T}(1+|\lambda|)^{k}.
\end{align}

By (\ref{e04077}), we find that
\begin{align}\label{e04096}
\begin{split}
&P(\lambda-\mB_T)^{-1}P^{\bot}=\mE_T(\lambda)^{-1}F_T(\lambda-H_T)^{-1},
\\
&P^{\bot}(\lambda-\mB_T)^{-1}P=(\lambda-H_T)^{-1}G_T\mE_T(\lambda)^{-1},
\\
&P^{\bot}(\lambda-\mB_T)^{-1}P^{\bot}=(\lambda-H_T)^{-1}(1+G_TP(\lambda-\mB_T)^{-1}P^{\bot}).
\end{split}
\end{align}

From  (\ref{e04074}), (\ref{e04231}) and Lemma \ref{e04075}, there exists $C>0$, such that for $s\in \mathbb{E}_0$,
\begin{multline}\label{e04307}
\|P(\lambda-\mB_T)^{-1}P^{\bot}s\|_0=\|\mE_T(\lambda)^{-1}F_T(\lambda-H_T)^{-1}P^{\bot}s\|_{0}
\\
\leq C\|F_T(\lambda-H_T)^{-1}P^{\bot}s\|_{0}
\leq C|(\lambda-H_T)^{-1}P^{\bot}s|_{T,1}\leq C(1+|\lambda|)^{k}|P^{\bot}s|_{T,-1}
\\
\leq \frac{C}{T}(1+|\lambda|)^{k}\|s\|_0.
\end{multline}

From (\ref{e04074}), (\ref{e04232}) and Lemma \ref{e04075}, there exists $C>0$, such that for $s\in \mathbb{E}_0$,
\begin{multline}\label{e04308}
\|P^{\bot}(\lambda-\mB_T)^{-1}Ps\|_0=\|(\lambda-H_T)^{-1}G_T\mE_T(\lambda)^{-1}Ps\|_{0}
\\
\leq \frac{1}{T}|(\lambda-H_T)^{-1}G_T\mE_T(\lambda)^{-1}Ps|_{T,1}
\leq \frac{C}{T}(1+|\lambda|)^k|G_T\mE_T(\lambda)^{-1}Ps|_{T,-1}
\\
\leq \frac{C}{T}(1+|\lambda|)^k\|\mE_T(\lambda)^{-1}Ps\|_{1}
\leq \frac{C}{T}(1+|\lambda|)^{2k}\|s\|_0.
\end{multline}

From (\ref{e04307}) and (\ref{e04308}), there exists $C>0$, such that for $s\in \mathbb{E}_0$,
\begin{multline}\label{e04309}
\|(\lambda-H_T)^{-1}G_T\mE_T(\lambda)^{-1}F_T(\lambda-H_T)^{-1}P^{\bot}s\|_{0}
\\
\leq \frac{C}{T}(1+|\lambda|)^{k}\|F_T(\lambda-H_T)^{-1}P^{\bot}s\|_0\leq \frac{C}{T^2}(1+|\lambda|)^{2k}\|s\|_0.
\end{multline}
From Lemma \ref{e04075}, we have
\begin{multline}\label{e04035}
\|(\lambda-H_T)^{-1}s\|_0\leq \frac{1}{T}|(\lambda-H_T)^{-1}s|_{T,1}\leq \frac{C}{T}(1+|\lambda|)^{k}|P^{\bot}s|_{T,-1}
\\
\leq \frac{C}{T^2}(1+|\lambda|)^{k}\|s\|_0.
\end{multline}
By (\ref{e04309}) and (\ref{e04035}), we get
\begin{align}\label{e04247}
\|P^{\bot}(\lambda-\mB_T)^{-1}P^{\bot}\|\leq\frac{C}{T^2}(1+|\lambda|)^k.
\end{align}

The proof of Lemma \ref{e04094} is complete.
\end{proof}

We assume that $\ker D^Y=0$. There exists $c_1>0$, such that $\mathrm{Sp}(\mB_2)=\mathrm{Sp}(D^{Y,2})\subset [2c_1, +\infty)$.
By Lemma \ref{e04004} and Proposition \ref{e04094}, we know that when $T$ is sufficiently large,
\begin{align}\label{e04040}
\mathrm{Sp}(D_T^{Z,2})=\mathrm{Sp}(\mB_T)\subset [c_1, +\infty).
\end{align}
Note that in this section, we need not assume that $\ker D_T^Z=0$.
Therefore, we get another proof of Lemma \ref{e02118}.

\begin{center}\label{e05010}
\begin{tikzpicture}[>=stealth]
\draw[->][ -triangle 45] (-2,0) -- (5.5,0);
\draw[->][ -triangle 45] (0,-2) -- (0,2);
\draw[->][ -triangle 45] (5,1) -- (4,1);
\draw (4,1) -- (2,1);
\draw[->][ -triangle 45] (2,1) --(2,0.5);
\draw (2,0.5) --(2,-1);
\draw[->][ -triangle 45] (2,-1) -- (4,-1);
\draw(4,-1) -- (5,-1);
\draw
(2,0)  node[anchor=north east] {$c_1$}(2,0);
\draw
(4,1.35)  node[anchor=west] {\small{$\Delta'$}}(4,1.75);
\draw
(0,2)  node[anchor=west] {$$}(0,2);
\draw
(5.5,0)  node[anchor=west] {$$}(5.5,0);
\draw
(0,0)  node[anchor=north east] {$O$}(0,0);
\end{tikzpicture}
\end{center}

Let $\Delta'$ be the oriented contour in the above picture.
Then all the estimates
in this Section  hold for any $\lambda\in \Delta'$.
From (\ref{e04040}), there exists $T_0\geq 1$, for $u>0$, $T\geq T_0$,
\begin{align}\label{e05009}
\exp(-u^2\mB_T)=
\frac{1}{2\pi\sqrt{-1}}\int_{\Delta'}\frac{e^{-u^2\lambda}}{\lambda-\mB_T}d\lambda.
\end{align}

From (\ref{e04099}) and Lemma \ref{e04094},  we get the following theorem.
\begin{thm}\label{e05040}
For $u_0>0$ fixed,
there exist $C, C'>0$ and $T_0\geq 1$ such that for $T\geq T_0$, $u\geq u_0$,
\begin{align}\label{e04101}
\|\exp(-u^2\mB_T)-P\exp(-u^2\mB_2)P\|\leq \frac{C}{T}\exp(-C'u^2).
\end{align}
\end{thm}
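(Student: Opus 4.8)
The plan is to argue from the contour-integral representations of the heat semigroups. By (\ref{e05009}), for $T\ge T_0$ and $u>0$ one has $\exp(-u^2\mB_T)=\frac{1}{2\pi\sqrt{-1}}\int_{\Delta'}e^{-u^2\lambda}(\lambda-\mB_T)^{-1}\,d\lambda$. Since, by (\ref{e04040}), $\mathrm{Sp}(\mB_2)=\mathrm{Sp}(D^{Y,2})\subset[2c_1,+\infty)$ lies inside the region enclosed by $\Delta'$, and since the resolvent $(\lambda-\mB_2)^{-1}$ obeys the polynomial bounds (\ref{e04061}) on $\Delta'$ (which is admissible, as noted before (\ref{e05009})), the same Cauchy formula gives $P\exp(-u^2\mB_2)P=\frac{1}{2\pi\sqrt{-1}}\int_{\Delta'}e^{-u^2\lambda}\,P(\lambda-\mB_2)^{-1}P\,d\lambda$, where $P(\lambda-\mB_2)^{-1}P$ is viewed as an operator on $\mathbb{E}_0$ that equals $(\lambda-\mB_2)^{-1}$ on $\mathbb{E}_1$ and $0$ on $\mathbb{E}_1^{\bot}$. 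Subtracting,
\[
\exp(-u^2\mB_T)-P\exp(-u^2\mB_2)P=\frac{1}{2\pi\sqrt{-1}}\int_{\Delta'}e^{-u^2\lambda}\bigl[(\lambda-\mB_T)^{-1}-P(\lambda-\mB_2)^{-1}P\bigr]\,d\lambda .
\]

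Next I would insert the key estimate of Lemma \ref{e04094}, namely $\|(\lambda-\mB_T)^{-1}-P(\lambda-\mB_2)^{-1}P\|\le \frac{C}{T}(1+|\lambda|)^k$ on $\Delta'$, and bound the integral. The geometric input is that $\Delta'$ lies in the half-plane $\{\Re\lambda\ge c_1\}$ with $c_1>0$: on the compact vertical segment $\lambda=c_1+iy$, $|y|\le 1$, one has $|e^{-u^2\lambda}|\le e^{-c_1u^2}$ and $(1+|\lambda|)^k\le(2+c_1)^k$; on the two horizontal rays $\lambda=x\pm i$ with $x\ge c_1$ one splits $e^{-u^2x}=e^{-u^2x/2}e^{-u^2x/2}\le e^{-c_1u^2/2}\,e^{-u_0^2x/2}$, using $u\ge u_0$, so that $\int_{c_1}^{\infty}e^{-u^2x}(2+x)^k\,dx\le e^{-c_1u^2/2}\int_{c_1}^{\infty}e^{-u_0^2x/2}(2+x)^k\,dx$, the last integral being a finite constant depending only on $k$ and $u_0$. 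Collecting the three pieces yields $\|\exp(-u^2\mB_T)-P\exp(-u^2\mB_2)P\|\le \frac{C}{T}e^{-C'u^2}$ with $C'=c_1/2$ and a new constant $C$, which is the assertion of Theorem \ref{e05040}.

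I do not expect a genuine obstacle here: granted Lemma \ref{e04094} and the spectral localization (\ref{e04040}), the argument is a routine resolvent-contour estimate, the factor $e^{-u^2\Re\lambda}$ easily absorbing the polynomial growth $(1+|\lambda|)^k$ of the resolvent difference. The one point deserving a line of care is to justify that the Cauchy representation of $P\exp(-u^2\mB_2)P$ is a legitimate identity of operators on the full space $\mathbb{E}_0$; this follows because $P$ commutes with $(\lambda-\mB_2)^{-1}$ on $\mathbb{E}_1$ and $\mB_2$ is only defined on $\mathbb{E}_1$, so that the contour integral reproduces $\exp(-u^2\mB_2)$ on $\mathbb{E}_1$ and $0$ on $\mathbb{E}_1^{\bot}$, together with the polynomial resolvent bounds (\ref{e04061}) guaranteeing convergence along the non-compact horizontal pieces of $\Delta'$.
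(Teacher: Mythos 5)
Your proposal is correct and follows essentially the same route as the paper: the paper's (very terse) proof is precisely the combination of the $\Delta'$-contour representation (\ref{e05009}) (valid because the spectra of $\mB_T$ and $\mB_2$ are bounded below by $c_1>0$ once $\ker D^Y=0$) with the resolvent-difference bound of Lemma \ref{e04094}, and then absorbing the polynomial factor $(1+|\lambda|)^k$ into $e^{-u^2\Re\lambda}$ along $\Delta'$ to extract both the $1/T$ and $e^{-C'u^2}$ factors. Your care about the Cauchy representation of $P\exp(-u^2\mB_2)P$ on $\mathbb E_0$ (trivial on $\mathbb E_1^\bot$, the genuine heat operator on $\mathbb E_1$) and the splitting $e^{-u^2x}\le e^{-c_1u^2/2}e^{-u_0^2x/2}$ are exactly the details the paper leaves implicit.
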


Let $\exp(-u^2\mB_T)(z,z')$, $P\exp(-u^2\mB_2)P(z,z')$
$(z,z'\in Z_b, b\in S)$ be the smooth kernels of the operators
$\exp(-u^2\mB_T)$, $P\exp(-u^2\mB_2)P$ calculated with respect to $dv_Z(z')$.

By using the proof of \cite[Theorems 5.22]{MR1765553} and the fact that $\ker D^Y=0$, we have

\begin{prop}\label{e04102}
(i) For $u_0>0$ fixed, for $m\in \N$, $ b\in S$, there exist $C, C'>0$, $T_0\geq 1$, such that for $z,z'\in Z_b$, $u\geq u_0$,
$T\geq T_0$,
\begin{align}\label{e04103}
\sup_{|\alpha|,|\alpha'|\leq m}\left|\frac{\partial^{|\alpha|+|\alpha'|}}{\partial z^{\alpha}\partial {z}^{'\alpha'}}
\exp(-u^2\mB_T)(z,z')\right|\leq C\exp(-C'u^2).
\end{align}

(ii) For $u_0>0$ fixed, for $m\in \N$, $ b\in S$, there exist $C, C'>0$, $T_0\geq 1$, such that for $z,z'\in Z_b$, $u\geq u_0$,
$T\geq T_0$,
\begin{align}\label{e04142}
\sup_{|\alpha|,|\alpha'|\leq m}\left|\frac{\partial^{|\alpha|+|\alpha'|}}{\partial z^{\alpha}\partial {z}^{'\alpha'}}
P\exp(-u^2\mB_2)P(z,z')\right|\leq C\exp(-C'u^2).
\end{align}
\end{prop}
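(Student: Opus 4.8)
The plan is to deduce both estimates from the Cauchy integral representation (\ref{e05009}) over the contour $\Delta'$, combined with resolvent bounds valid at \emph{all} Sobolev orders uniformly in $T$, and then to convert these operator bounds into pointwise kernel bounds by Sobolev embedding along the compact fibers $Z_b$ (resp. $Y_b$), uniformly in $b\in S$ (recall $S$ is assumed compact). The first step is to upgrade Lemmas \ref{e04055}, \ref{e04304} (and, for part (ii), the estimates (\ref{e04061})) from the three Sobolev orders $\mu=-1,0,1$ to every integer $\mu$. For this I would introduce $T$-weighted Sobolev norms $|\cdot|_{T,\mu}$ on $\mathbb{E}_0^\mu$ extending (\ref{e04221})--(\ref{e04022}), putting one power of $T$ on each vertical derivative $\,^0\nabla_{e_i}^{\mS_Z\otimes E}$ that hits $P^\bot s$ and no weight on the horizontal derivatives $\,^0\nabla_{f_{p,1}^H}^{\mS_Z\otimes E}$. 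By Proposition \ref{e04006} and the matrix structure of Proposition \ref{e04023}, $\mB_T$ is then bounded from $(\mathbb{E}_0^{\mu+1},|\cdot|_{T,\mu+1})$ to $(\mathbb{E}_0^{\mu-1},|\cdot|_{T,\mu-1})$ uniformly in $T\geq 1$, and a G\aa rding inequality of the type (\ref{e04031}) holds at every order. Commuting $(\lambda-\mB_T)^{-1}$ with the covariant derivatives, and feeding in the low-order estimates of Lemmas \ref{e04030}, \ref{e04055}, \ref{e04304}, gives by induction on $\mu$: there exist $T_0\geq 1$, $C_\mu>0$, $k_\mu\in\N$ such that for $T\geq T_0$ and $\lambda\in\Delta'$, $(\lambda-\mB_T)^{-1}$ maps $(\mathbb{E}_0^{\mu-1},|\cdot|_{T,\mu-1})$ into $(\mathbb{E}_0^{\mu+1},|\cdot|_{T,\mu+1})$ with norm $\leq C_\mu(1+|\lambda|)^{k_\mu}$.

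For part (i), fix $m$ and choose $q=q(m,n)$ with $2q>m+n/2$. Using the identity $\exp(-u^2\mB_T)=\frac{(q-1)!}{2\pi\sqrt{-1}\,u^{2(q-1)}}\int_{\Delta'}e^{-u^2\lambda}(\lambda-\mB_T)^{-q}\,d\lambda$, valid for $T\geq T_0$ by (\ref{e04040}) and (\ref{e05009}), I would estimate the integrand: iterating the resolvent bound and using $|s|_{T,0}=\|s\|_0$ together with $\|s\|_{2q}\leq C|s|_{T,2q}$ uniformly in $T\geq 1$ (cf. the remark after (\ref{e04022})), one gets that $(\lambda-\mB_T)^{-q}$ maps $(\mathbb{E}_0^0,\|\cdot\|_0)$ into $(\mathbb{E}_0^{2q},\|\cdot\|_{2q})$ with norm $\leq C(1+|\lambda|)^{k}$, uniformly in $T\geq T_0$; on $\Delta'$ we have $\Re\lambda\geq c_1$, so for $u\geq u_0$, $|e^{-u^2\lambda}|=e^{-u^2\Re\lambda}\leq e^{-c_1u^2/2}\,e^{-(u_0^2/2)\Re\lambda}$, the second factor absorbing the polynomial growth of the resolvent so the $\lambda$-integral converges, and $u^{-2(q-1)}$ is bounded. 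Hence $\exp(-u^2\mB_T)\colon L^2\to H^{2q}$ with operator norm $\leq Ce^{-C'u^2}$ uniformly in $T\geq T_0$, $u\geq u_0$. Writing $\exp(-u^2\mB_T)=\exp(-\tfrac{u^2}{2}\mB_T)\exp(-\tfrac{u^2}{2}\mB_T)$ and applying the same reasoning to the formal adjoint (whose leading part $D_T^{Z,2}$ is fiberwise self-adjoint, so the resolvent estimates apply verbatim), one sees that the Schwartz kernel lies in $H^{2q}_z\,\widehat\otimes\,H^{2q}_{z'}$ with norm $\leq Ce^{-C'u^2}$; Sobolev embedding on the compact fibers $Z_b$ then yields (\ref{e04103}).

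For part (ii), the same scheme applies to $\mB_2$ acting along $Y_b$: here $\ker D^Y=0$ gives, via Lemma \ref{e04004}, $\mathrm{Sp}(\mB_2)=\mathrm{Sp}(D^{Y,2})\subset[2c_1,+\infty)$, and (\ref{e04061}) together with its higher-order analogues --- obtained exactly as above but now without any $T$, i.e.\ directly from elliptic regularity for $D^{Y,2}$ --- produce kernel bounds $\leq Ce^{-C'u^2}$ for $\exp(-u^2\mB_2)$ with all derivatives on $Y_b\times Y_b$. Since $P=P^{\ker D^X}$ is the fiberwise orthogonal projection onto the finite-rank bundle $\ker D^X$, it has a smooth Schwartz kernel $P(x,x')$ on $X_v\times X_v$ depending smoothly on $v$; therefore the kernel $P\exp(-u^2\mB_2)P(z,z')$ on $Z_b\times Z_b$ factors through $P(x,\cdot)$, the kernel of $\exp(-u^2\mB_2)$ in the $Y$-variables, and $P(\cdot,x')$, and differentiating and inserting the bounds just obtained gives (\ref{e04142}). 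The main obstacle is the first step: establishing the higher-order resolvent estimates \emph{uniformly in $T$}, since the coefficients of $\mB_T$ degenerate like $T$ (the term $uTD^X$ in (\ref{e04008}), and the $T^{-1}$-corrections in Proposition \ref{e04006}). This is exactly what the weighted norms $|\cdot|_{T,\mu}$ are designed to absorb, and the requisite commutator and G\aa rding estimates are of the type developed in \cite[Chapter 11]{MR1188532} and \cite[Theorem 5.5]{MR1942300}; alternatively, as indicated in the text, one may simply quote the proof of \cite[Theorem 5.22]{MR1765553}, the role of the hypothesis $\ker D^Y=0$ being precisely to furnish the spectral gap (\ref{e04040}).
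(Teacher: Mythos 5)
Your overall strategy is the paper's: Cauchy integral over $\Delta'$, higher-order resolvent estimates uniform in $T$, Sobolev embedding on the compact fiber, and duality to control both variables (Section 4.4, (\ref{e04134})--(\ref{e04140})). What you leave underspecified is precisely the point the paper flags as delicate, and the naive version you propose would fail: raw covariant derivatives do not commute with $P$, so ``one power of $T$ on each vertical derivative that hits $P^\bot s$'' is not well-defined beyond first order, and more seriously, commuting $\,^0\nabla^{\mS_Z\otimes E}_{e_i}$ or $\,^0\nabla^{\mS_Z\otimes E}_{f^H_{p,1}}$ with the leading block $T^2P^{\bot}A_1P^{\bot}$ of (\ref{e04041}) produces $[\,^0\nabla, P^{\bot}]$ terms that sit \emph{off}-diagonal with a $T^2$ coefficient, which destroys the block structure underlying the G\aa rding inequality (\ref{e04031}). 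The paper sidesteps this by iterating not raw derivatives but the block-diagonalized family $\mD=\{P\,^0\nabla^{\mS_Z\otimes E}_{U^H_{p,1}}P + P^\bot\,^0\nabla^{\mS_Z\otimes E}_{U^H_{p,1}}P^\bot,\ P^\bot\,^0\nabla^{\mS_Z\otimes E}_{U_i'}P^\bot\}$ of Definition \ref{e04105}; Lemma \ref{e04107}, via the explicit computations (\ref{e04042}) and (\ref{e04100}), shows that $[Q,\mB_T]$ for $Q\in\mD$ retains the form (\ref{e04041}), and the induced higher norms $\|\cdot\|'_k$ carry no $T$-weight and are equivalent to $\|\cdot\|_k$, which is what makes the Sobolev embedding clean. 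Be aware that the paper explicitly adds a remark after Definition \ref{e04105} correcting the statement of the analogous operator family in \cite[(5.60)]{MR1765553} on exactly this point, so your suggestion to ``simply quote the proof of \cite[Theorem 5.22]{MR1765553}'' would import the very subtlety that needs fixing. Once that is repaired, your remaining steps --- the iterated Cauchy formula, the splitting of the heat operator with a dual argument to control both $z$ and $z'$ derivatives, and the observation that $P$ is a smoothing projection so part (ii) reduces to estimates for $\exp(-u^2\mB_2)$ along $Y$ --- all match the paper.
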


The complete proof of Proposition \ref{e04102} is left to the next subsection.

From Proposition \ref{e04102} i), we obtain Theorem \ref{e03009} ii).

Let $\mathrm{inj}^Z$ be the injectivity radius of ($Z_b, g^{TZ_b}$).
For $(g^{-1}z,z)\in Z_b\times Z_b$, we will identify $B^{T_{g^{-1}z}Z_b}(0,\var)\times B^{T_{z}Z_b}(0,\var)$
 with $B^{Z_b}(g^{-1}z,\var)\times B^{Z_b}(z,\var)$
by the canonical exponential map when $\var < \mathrm{inj}^Z$.

Let $\phi:\mathbb{R}^n\rightarrow [0,1]$
be a smooth function with compact support in $B(0,\mathrm{inj}^Z/2)$, equal 1 near 0 such that
$\int_{\mathbb{R}^n}\phi(W)dv(W)=1$.
 Take $v\in(0,1]$.
By Taylor expansion and Proposition \ref{e04102}, there exists $c>0$, such that
\begin{align}\label{e04145}
\begin{split}
|(\exp(-u^2\mB_T)-P\exp(-u^2\mB_2)P)&(vW,vW')
\\
-(\exp(-u^2\mB_T)-&P\exp(-u^2\mB_2)P)(0,0)|
\leq cv \exp(-C'u^2)
\end{split}
\end{align}
for $|W|, |W'|$ are sufficiently small. Then for $U,U'\in \mathbb{E}_0$,
\begin{align}\label{e04146}
\begin{split}
|\la&(\exp(-u^2\mB_T)-P\exp(-u^2\mB_2)P)(0,0)U,U'\ra_{0}
\\
&-\int_{\mathbb{R}^n\times \mathbb{R}^n}\la(\exp(-u^2\mB_T)-P\exp(-u^2\mB_2)P)(vW,vW')U,U'\ra_{0}
\\
&\quad\times \phi(W)\phi(W')dv(W)dv(W')|
 \leq cv\|U\|_0\|U'\|_0 \exp(-C'u^2).
\end{split}
\end{align}

On the other hand, 
By Theorem \ref{e05040},
\begin{align}\label{e04147}
\begin{split}
&\left|\int_{\mathbb{R}^n\times \mathbb{R}^n}\la(\exp(-u^2\mB_T)-P\exp(-u^2\mB_2)P)(vW,vW')U,U'\ra_{0}\right.
\\
&\quad \quad \quad \times \left.\phi(W)\phi(W')dv(W)dv(W')\right|
\\
\leq & \frac{c}{Tv^{n}}\|U\|_0\|U'\|_0 \exp(-C'u^2).
\end{split}
\end{align}
Take $v=T^{-\frac{1}{n+1}}$. From (\ref{e04146}) and (\ref{e04147}), we get
\begin{align}\label{e04148}
\begin{split}
|(\exp(-u^2\mB_T)-P\exp(-u^2\mB_2)P)(0,0)|
\leq c\,T^{-\frac{1}{n+1}} \exp(-C'u^2).
\end{split}
\end{align}
Therefore, we can get the following theorem.
\begin{thm}\label{e04223}
For $u_0>0$ fixed,
there exist $C, C'>0$, $T_0\geq 1$, $\delta>0$, such that for $u\geq u_0$,
$T\geq T_0$,
\begin{align}\label{e04149}
\left|\psi_S \delta_{u^2}\widetilde{\tr}[g\exp(-u^2\mB_T)]-\psi_S \delta_{u^2}\widetilde{\tr}[g\exp(-u^2\mB_2)]\right|
\leq \frac{C}{T^{\delta}} \exp(-C'u^2).
\end{align}
\end{thm}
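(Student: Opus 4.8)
The plan is to deduce Theorem \ref{e04223} from the pointwise kernel bound (\ref{e04148}) by integrating it along the compact fibre $Z_b$ and keeping track of the finitely many differential-form degrees on $S$.

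First I would rewrite the left-hand side of (\ref{e04149}) as a single equivariant (super or odd) trace on $\mathbb{E}_0$. Since $g$ commutes with the orthogonal projection $P$ and $\mathbb{E}_1$ embeds into $\mathbb{E}_0$ as a $G$-invariant subbundle via (\ref{e02112}), the form $\psi_S\delta_{u^2}\widetilde{\tr}[g\exp(-u^2\mB_2)]$ (computed on $\cE_Y$) equals $\psi_S\delta_{u^2}\widetilde{\tr}_{\mathbb{E}_0}[gP\exp(-u^2\mB_2)P]$; the $\Z_2$-grading conventions of Section \ref{s0101} make the two traces agree under this identification, which is exactly the compatibility already used tacitly in the matrix computations of Section \ref{s0402}. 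Hence the quantity to estimate is $\psi_S\delta_{u^2}\widetilde{\tr}_{\mathbb{E}_0}[g(\exp(-u^2\mB_T)-P\exp(-u^2\mB_2)P)]$.

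Next, writing the equivariant trace as the fibre integral $\widetilde{\tr}_{\mathbb{E}_0}[gA]=\int_{Z_b}\widetilde{\tr}_z[g\cdot A(g^{-1}z,z)]\,dv_Z(z)$, where $g\cdot A(g^{-1}z,z)$ is viewed as an endomorphism of the fibre at $z$, I would plug in $A=\exp(-u^2\mB_T)-P\exp(-u^2\mB_2)P$. By (\ref{e04148}) and the boundedness of the $g$-action on the finite-rank bundle $\Lambda(T^*S)\widehat{\otimes}\mS(TZ,L_Z)\otimes E$, the integrand is bounded by $c\,T^{-1/(n+1)}\exp(-C'u^2)$ uniformly in $z\in Z_b$, for $u\geq u_0$ and $T\geq T_0$, so integrating over $Z_b$ (with $\vol(Z_b)$ bounded on the relevant compact subset of $S$) gives the same bound up to a constant. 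Finally, $\psi_S$ and $\delta_{u^2}$ multiply a form of degree $i$ by a fixed scalar and by $u^{-i}$ respectively, and since $\dim S<\infty$ this costs at most the constant $\max(1,u_0^{-\dim S})$ for $u\geq u_0$; absorbing it (and any residual polynomial factor in $u$) into $\exp(-C'u^2)$ by slightly shrinking $C'$ yields (\ref{e04149}) with $\delta=1/(n+1)$.

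The substantive analytic work — the operator-norm estimate of Theorem \ref{e05040}, the uniform pointwise heat-kernel bounds of Proposition \ref{e04102}, and the Taylor-expansion and averaging argument (\ref{e04145})--(\ref{e04148}) that upgrades operator-norm control to pointwise control along the $g$-shifted diagonal — is already in place; so the only mild obstacle in this last step is checking the trace compatibility between the $\cE_Y$- and $\mathbb{E}_0$-pictures noted above, which follows from the grading bookkeeping of Section \ref{s0101}.
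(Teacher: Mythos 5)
Your proof follows the paper's approach exactly: after establishing (\ref{e04148}), the paper derives Theorem \ref{e04223} by integrating this pointwise kernel bound along the $g$-shifted diagonal of the fibre $Z_b$ (stated only as ``Therefore, we can get the following theorem''), which is precisely what you do. Your explicit attention to the trace compatibility between $\cE_Y$ and $\mathbb{E}_0$ and to the bookkeeping of $\psi_S$ and $\delta_{u^2}$ usefully fills in the details the paper leaves implicit, but the argument is the same.
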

By (\ref{e04012}) and (\ref{e04026}), we can get Theorem \ref{e03009} i) by taking the coefficients of $du$ in (\ref{e04149}).
From the dominated convergence theorem, we get Theorem \ref{e03009} iii) from Theorem \ref{e03009} i) and (\ref{e04149}).

The proof of Theorem \ref{e03009} is complete.

\subsection{Proof of Theorem \ref{e04102}}\label{s0404}

Recall that we assume that $S$ is compact for simplicity in Section \ref{s0302}. There exists a family of $\cC^{\infty}$
sections of $TY$ (resp. $TX$), $U_1,\cdots, U_r$ (resp. $U_1',\cdots, U_{r'}'$), such that for any $y\in V$ (resp. $x\in W$),
$U_1(y),\cdots, U_r(y)$ (resp. $U_1'(x),\cdots, U_{r'}(x)$) span $T_yY$ (resp. $T_xX$).

\begin{defn}\label{e04105}
Let $\mD$ be a family of operators on $\mathbb{E}_0$,
\begin{align}\label{e04106}
\mD=\left\{P\,^0\nabla^{\mS_Z\otimes E}_{U_{p,1}^H}P+\ P^\bot\, ^0\nabla^{\mS_Z\otimes E}_{U_{p,1}^H}P^\bot,\ P^\bot \, ^0\nabla^{\mS_Z\otimes E}_{U_i'}P^\bot\right\}.
\end{align}
\end{defn}

Note that in \cite[(5.60)]{MR1765553}, the corresponding set of operators is stated as
$\{p_T\,^0\nabla^{\Lambda (T^{*(0,1)}Z)\otimes \xi}_{U_{l,1}^H}p_T,$ $
\ p_T^\bot\,^0\nabla^{\Lambda (T^{*(0,1)}Z)\otimes \xi}_{U_{l,1}^H}p_T^\bot,
\ p_T^\bot\,^0\nabla^{\Lambda (T^{*(0,1)}Z)\otimes \xi}_{U_i'}p_T^\bot\}$.
We need to read \cite[(5.60)]{MR1765553} as $\mD_T=\\ \{p_T\,^0\nabla^{\Lambda (T^{*(0,1)}Z)\otimes \xi}_{U_{l,1}^H}p_T+
\ p_T^\bot\,^0\nabla^{\Lambda (T^{*(0,1)}Z)\otimes \xi}_{U_{l,1}^H}p_T^\bot,
\ p_T^\bot\,^0\nabla^{\Lambda (T^{*(0,1)}Z)\otimes \xi}_{U_i'}p_T^\bot\}$.
In this way, the corresponding commutator $[Q_1, [Q_2,\cdots[Q_k, A_T^2],\cdots]]$ has the same structure as $A_T^2$
(see the following proof of Lemma \ref{e04107}).

\begin{lemma}\label{e04107}
For any $k\in \mathbb{N}$ fixed, there exists $C_k>0$, $T_0\geq 1$ such that for $T\geq T_0$, $Q_1,\cdots, Q_k\in\mD$
and $s,s'\in \mathbb{E}_0^2$, we have
\begin{align}\label{e04108}
|\langle [Q_1, [Q_2,\cdots[Q_k, \mB_T],\cdots]]s,s'\ra_{0}|\leq C_k|s|_{T,1}|s'|_{T,1}.
\end{align}
\end{lemma}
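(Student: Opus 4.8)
The plan is to prove the commutator estimate \eqref{e04108} by a double induction: an outer induction on $k$, together with a careful bookkeeping of the ``matrix structure'' of $\mB_T$ relative to the splitting $\mathbb{E}_0=\mathbb{E}_1^0\oplus\mathbb{E}_1^{0,\bot}$. The starting point is the case $k=0$, which is precisely the second inequality in Lemma \ref{e04030} (via \eqref{e04032}, \eqref{e04037}, and the fact that $\mB_T=D_T^{Z,2}+\mathcal{R}_T$ with $\mathcal{R}_T$ of positive exterior degree, hence a bounded perturbation after one extra power of $|s|_{T,1}$, as in \eqref{e04054}). Concretely, one reads off from Proposition \ref{e04006} and \eqref{e04007}, \eqref{e04017} that $\mB_T$ decomposes, after conjugating by $\delta_{u^2}$, into a piece of the form $T^2\,P^\bot D^{X,2}P^\bot + PD^{H,2}P + P^\bot D^{H,2}P^\bot$ (the ``principal'' $2$-order part, cf. \eqref{e04001}) plus $T$ times a first-order fiberwise operator along $X$, plus lower-order terms, all with coefficients controlled in the sense of \eqref{e04104}. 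The norm $|\cdot|_{T,1}$ in Definition \ref{e04019} is designed exactly so that these three blocks are each bounded by $|s|_{T,1}|s'|_{T,1}$, using \eqref{e04002} and \eqref{e04034} for the $P^\bot$ block and elliptic estimates along $X$ and along $Y$ for the others.

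The inductive step is the heart of the argument. Assume \eqref{e04108} holds for $k-1$ and all choices of $Q_i\in\mD$. One must show that for $Q_1,\dots,Q_k\in\mD$, the iterated commutator $[Q_1,[Q_2,\cdots[Q_k,\mB_T]\cdots]]$ again has the same block structure as $\mB_T$, up to constants independent of $T$. The key observation — and this is why Definition \ref{e04105} is written with the ``$P\cdots P + P^\bot\cdots P^\bot$'' convention rather than separate projectors — is that each $Q\in\mD$ is block-diagonal, so conjugation/commutation preserves the upper-triangular-in-$T$ shape: commuting $Q$ past $T^2 P^\bot D^{X,2}P^\bot$ produces $T^2$ times a first-order operator along $X$ sandwiched by $P^\bot$ (using \eqref{e04072}-type curvature identities), commuting past $T\cdot(\text{1st order along }X)$ produces $T$ times a $0$-order operator, and commuting past $PD^{H,2}P$ or the connection terms produces $0$-order operators or $1$-form-valued terms of positive exterior degree. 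At each stage the coefficient is a smooth tensor (or its covariant derivatives) on $W$, uniformly bounded by compactness of $W^g$ and $S$, and the exterior-degree-raising terms are bounded outright because $\Lambda(T^*(\mathbb{R}\times S))$ is finite-dimensional. Assembling these, the iterated commutator is again a sum of a $T^2 P^\bot(\text{0-order along }X)P^\bot$ block, a $T\cdot P^\bot(\text{0-order})P + PD^{(\cdot)}P$-type block, and lower-order pieces, and the same three estimates as in the base case — \eqref{e04002}, \eqref{e04034}, elliptic estimates — give \eqref{e04108} with a new constant $C_k$.

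The main obstacle will be the systematic verification that the commutators never generate a term worse than $T^2$ times something $0$-order squeezed between two $P^\bot$'s — in other words, that the off-diagonal ``$F_T$, $G_T$'' blocks never pick up the dangerous $T^2$ scaling. This is controlled precisely by the block-diagonality of the operators in $\mD$ (so they commute with $P$, $P^\bot$) and by the structural fact \eqref{e04025}--\eqref{e04027} that the off-diagonal part of $\mB_T$ is $T\cdot Q$ with $Q=[D^X,\mC]$ mapping $\mathbb{E}_1^0$ into $\mathbb{E}_1^{0,\bot}$; commuting a block-diagonal $Q_j$ with $TQ$ stays order $T$. One must also keep track of the replacements: in Lemma \ref{e04030} one works with $D_T^{Z,2}$, whereas here one works with $\mB_T=D_T^{Z,2}+\mathcal{R}_T$; since $\mathcal{R}_T$ is a first-order fiberwise operator of positive exterior degree with coefficients bounded uniformly in $T$ (this is read off from \eqref{e04007}, \eqref{e04017}, \eqref{e04015}), its iterated commutators with elements of $\mD$ remain first-order with uniformly bounded coefficients, hence are handled by exactly the same $|s|_{T,1}$-type bound with one factor to spare. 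I would organize the write-up so that the $k=0$ case quotes Lemma \ref{e04030} and \eqref{e04054}, then state the ``structural stability under $\mathrm{ad}(\mD)$'' claim as the inductive hypothesis in a form strong enough to be self-propagating (listing explicitly the allowed block types and their $T$-powers), and finally note that the three elliptic inequalities are applied verbatim at each level.
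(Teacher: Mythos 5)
Your proposal takes essentially the same route as the paper: show that conjugation by block-diagonal $Q\in\mD$ preserves the $T$-graded block structure of $\mB_T$ (the paper encodes this with the operator classes $\Theta_1,\Theta_2,\Theta_3$ of \eqref{e04011} and the decomposition \eqref{e04041}, and verifies in \eqref{e04042}--\eqref{e04100} that $[Q,\mB_T]$ has the same structured form as $\mB_T$), then quotes the Lemma~\ref{e04030}-type elliptic inequalities. You also correctly identified why Definition~\ref{e04105} uses the $P\cdots P + P^\bot\cdots P^\bot$ convention, namely so that every $Q\in\mD$ is block-diagonal and $\mathrm{ad}(Q)$ preserves the matrix blocks.

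There is, however, one assertion to correct. You claim that commuting $Q$ past $T^2P^\bot D^{X,2}P^\bot$ produces a \emph{first-order} operator sandwiched by $P^\bot$, and that the $T$-scaled first-order block drops to zeroth order. This is not so. For instance $[\,^0\nabla^{\mS_Z\otimes E}_{e_j}, D^{X,2}]$ is generically second-order along $X$, since it contains terms of the shape $\,^0\nabla_{[e_j,e_i]}\,^0\nabla_{e_i}$; the identity \eqref{e04072} making $[D^X,D^H]$ first-order is a special Dirac-type cancellation, not a general fact about commutators of differential operators. What the paper actually checks is that $[\,^0\nabla^{\mS_Z\otimes E}_{U_{p,1}^H},A_1]\in\Theta_1$, i.e.\ the differential order is \emph{preserved}, not lowered. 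Your final estimate goes through anyway because the norm $|\cdot|_{T,1}$ in \eqref{e04221} pays one factor of $T$ per $X$-derivative of $P^\bot s$: integrating $T^2\la a_{ij}\,^0\nabla_{e_i}\,^0\nabla_{e_j}P^\bot s,P^\bot s'\ra_0$ by parts distributes one derivative onto each factor and gives $(T\|\,^0\nabla P^\bot s\|_0)\cdot(T\|\,^0\nabla P^\bot s'\|_0)\leq|s|_{T,1}\,|s'|_{T,1}$. So the inductive invariant you need to state is preservation of the $\Theta_i$-class structure (second-, first-, mixed-order) blockwise, as in \eqref{e04041}; if you instead posit ``the $T^2$ block becomes zeroth order,'' the induction cannot even start, since $\mB_T$ itself already has the second-order $T^2$-block.
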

\begin{proof}
Set
$
\mathscr{S}$ be the set of
uniformly
 bounded operators along the fiber $X$ with smooth kernel.
Set
\begin{align}\label{e04011}
\begin{split}
&\Theta_1=\left\{a_{ij}\,^0\nabla^{\mS_Z\otimes E}_{U_i'}\,^0\nabla^{\mS_Z\otimes E}_{U_j'}+b\,:\, a_{ij}\in \cC^{\infty}(W, C(TZ)), b\in \mathscr{S}\right\},
\\
&\Theta_2=\left\{a_{i}\,^0\nabla^{\mS_Z\otimes E}_{U_i'}+b\,:\, a_{i}\in \cC^{\infty}(W, C(TZ)), b\in \mathscr{S}\right\},
\\
&\Theta_3=\left\{b_{pq}\,^0\nabla^{\mS_Z\otimes E}_{U_p}\,^0\nabla^{\mS_Z\otimes E}_{U_q}+
b_{p}\,^0\nabla^{\mS_Z\otimes E}_{U_p}+a_{i}\,^0\nabla^{\mS_Z\otimes E}_{U_i'}+b\,:\, a_{i}\in \cC^{\infty}(W, C(TZ)), \right.
\\
&\quad\quad\quad\quad\quad\quad\quad\quad\quad\quad\quad\quad\quad\quad\quad\quad\quad\quad\quad\quad\quad\quad\quad
b_{pq}, b_p, b\in \mathscr{S}\}.
\end{split}
\end{align}

By (\ref{e04072}), (\ref{e04007}), (\ref{e04017}), (\ref{e04015}) and (\ref{e04073}), we can split the operator $\mB_T$
such that
\begin{align}\label{e04041}
\mB_T=T^2P^{\bot}A_1P^{\bot}+T(P^{\bot}A_2P^{\bot}+PA_2'P^{\bot}+P^{\bot}A_2'P)+A_3,
\end{align}
where $A_1\in \Theta_1$, $A_2, A_2'\in \Theta_2$, $A_3\in \Theta_3$.

First, we consider the case when $k=1$.

a) The case where $Q=P\,^0\nabla^{\mS_Z\otimes E}_{U_{p,1}^H}P+\ P^\bot\, ^0\nabla^{\mS_Z\otimes E}_{U_{p,1}^H}P^\bot$.

We observe that
if $b\in \mathscr{S}$, so are $\left[\,^0\nabla^{\mS_Z\otimes E}_{U_{p,1}^H}, b\right]$,
$\,^0\nabla^{\mS_Z\otimes E}_{U_i'}\,b$ and $b\,\,^0\nabla^{\mS_Z\otimes E}_{U_i'}$.
Then we have
\begin{align}\label{e04042}
\begin{split}
[Q, P^{\bot}A_1P^{\bot}]&=P^{\bot}\left(\left[\,^0\nabla^{\mS_Z\otimes E}_{U_{p,1}^H}, A_1\right]-\left[\,^0\nabla^{\mS_Z\otimes E}_{U_{p,1}^H},
P\right]A_1-A_1\left[\,^0\nabla^{\mS_Z\otimes E}_{U_{p,1}^H}, P\right]\right)P^{\bot},
\\
[Q, P^{\bot}A_2P^{\bot}]&=P^{\bot}\left(\left[\,^0\nabla^{\mS_Z\otimes E}_{U_{p,1}^H}, A_2\right]-\left[\,^0\nabla^{\mS_Z\otimes E}_{U_{p,1}^H},
P\right]A_2-A_2\left[\,^0\nabla^{\mS_Z\otimes E}_{U_{p,1}^H}, P\right]\right)P^{\bot},
\\
[Q, PA_2'P^{\bot}]&=P\left(\left[\,^0\nabla^{\mS_Z\otimes E}_{U_{p,1}^H}, A_2'\right]+\left[\,^0\nabla^{\mS_Z\otimes E}_{U_{p,1}^H},
P\right]A_2'-A_2'\left[\,^0\nabla^{\mS_Z\otimes E}_{U_{p,1}^H}, P\right]\right)P^{\bot},
\\
[Q, P^{\bot}A_2'P]&=P^{\bot}\left(\left[\,^0\nabla^{\mS_Z\otimes E}_{U_{p,1}^H}, A_2'\right]-\left[\,^0\nabla^{\mS_Z\otimes E}_{U_{p,1}^H},
P\right]A_2'+A_2'\left[\,^0\nabla^{\mS_Z\otimes E}_{U_{p,1}^H}, P\right]\right)P,
\end{split}
\end{align}
and
$\left[\,^0\nabla^{\mS_Z\otimes E}_{U_{p,1}^H}, A_i\right]\in \Theta_i, A_i\left[\,^0\nabla^{\mS_Z\otimes E}_{U_{p,1}^H}, P\right]\in \Theta_i$,
$\left[\,^0\nabla^{\mS_Z\otimes E}_{U_{p,1}^H}, A_2'\right]\in \Theta_2, A_2'\left[\,^0\nabla^{\mS_Z\otimes E}_{U_{p,1}^H}, P\right]\in \Theta_2$
for $i=1,2,3$.
For the element in $\Theta_3$, since the principal symbol of $Q$ is identity, we have
$[Q, A_3]\in \Theta_3$.

So $[Q,\mB_T]$ has the same structure as $\mB_T$ in (\ref{e04041}).
Thus there exists $C>0$, $T_0\geq 1$ such that for $T\geq T_0$, $s,s'\in \mathbb{E}_0^2$, we have
\begin{align}\label{e04248}
|\langle [Q, \mB_T]s,s'\ra_{0}|\leq C|s|_{T,1}|s'|_{T,1}.
\end{align}

b) The case where $Q=P^{\bot}\,^0\nabla_{U_i'}^{\mS_Z\otimes E}P^{\bot}$.

As in (\ref{e04042}), we have
\begin{align}\label{e04100}
\begin{split}
[Q, P^{\bot}A_1P^{\bot}]&=P^{\bot}\left(\left[\,^0\nabla^{\mS_Z\otimes E}_{U_{i}'}, A_1\right]-\left(\,^0\nabla^{\mS_Z\otimes E}_{U_i'}
P\right)A_1+A_1\left(P\,^0\nabla^{\mS_Z\otimes E}_{U_i'}\right)\right)P^{\bot},
\\
[Q, P^{\bot}A_2P^{\bot}]&=P^{\bot}\left(\left[\,^0\nabla^{\mS_Z\otimes E}_{U_i'}, A_2\right]-\left(\,^0\nabla^{\mS_Z\otimes E}_{U_i'}
P\right)A_2+A_2\left(P\,^0\nabla^{\mS_Z\otimes E}_{U_i'}\right)\right)P^{\bot},
\\
[Q, PA_2'P^{\bot}]&=P\left(-\left[\,^0\nabla^{\mS_Z\otimes E}_{U_i'}, A_2'\right]
+\left(P\,^0\nabla^{\mS_Z\otimes E}_{U_i'}\right)A_2'-A_2'\left(P\,^0\nabla^{\mS_Z\otimes E}_{U_i'}\right)\right)P^{\bot},
\\
[Q, P^{\bot}A_2'P]&=P^{\bot}\left(\left[\,^0\nabla^{\mS_Z\otimes E}_{U_i'}, A_2'\right]
+A_2'\left(\,^0\nabla^{\mS_Z\otimes E}_{U_i'}
P\right)-\left(\,^0\nabla^{\mS_Z\otimes E}_{U_i'}
P\right)A_2'\right)P.
\end{split}
\end{align}
Since $[Q, A_3]\in \Theta_3$, we know that $[Q,\mB_T]$ has the same structure as $\mB_T$ in (\ref{e04041}).
Thus there exists $C>0$, $T_0\geq 1$ such that for $T\geq T_0$, $s,s'\in \mathbb{E}_0^2$, we have
\begin{align}\label{e04249}
|\langle [Q, \mB_T]s,s'\ra_{0}|\leq C|s|_{T,1}|s'|_{T,1}.
\end{align}

c) Higher order commutators

The estimate of higher order commutators are obtained inductively from a) and b).

The proof of Lemma \ref{e04107} is complete.
\end{proof}

For $k\in \mathbb{N}$, let $\mD^k$ be the family of operators $Q$ which can be written in the form
\begin{align}\label{e04121}
Q=Q_1\cdots Q_k,\quad Q_i\in \mD.
\end{align}
If $k\in \mathbb{N}$, we define the Hilbert norm $\|\cdot \|_{k}'$ by
\begin{align}\label{e04122}
\|s\|_{k}^{'2}=\sum_{\ell=0}^k\sum_{Q\in \mD^\ell}\|Qs\|^2_0.
\end{align}

Since $[\,^0\nabla_{f_{p,1}^H}^{\mS_Z\otimes E}, P]$, $P \,^0\nabla_{e_i}^{\mS_Z\otimes E}$ and $\,^0\nabla_{e_i}^{\mS_Z\otimes E} P$
are operators along the fiber $X$ with smooth kernels,
the sobolev norm  $\|\cdot\|_{k}'$ is equivalent to the canonical sobolev norm $\|\cdot\|_k$.

Thus, we also denote the Sobolev space with respect to $\|\cdot \|_{k}'$ by $\mathbb{E}_0^k$.

\begin{lemma}\label{e04126}
For any $m\in \mathbb{N}$, there exist $p_m\in\mathbb{N}$, $C_m>0$ and $T_0\geq 1$ such that for $T\geq T_0$,
$\lambda\in \Delta'$, $s\in \mathbb{E}_0^m$,
\begin{align}\label{e04127}
\|(\lambda-\mB_T)^{-1}s\|_{m+1}'\leq C_m(1+|\lambda|)^{p_m}\|s\|_{m}'.
\end{align}
\end{lemma}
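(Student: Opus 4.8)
The plan is to argue by induction on $m$, bootstrapping from the basic resolvent bound of Lemma \ref{e04304} by commuting the operators of $\mD$ across the resolvent $(\lambda-\mB_T)^{-1}$ and absorbing the errors with the uniform iterated–commutator estimate of Lemma \ref{e04107}. Throughout, the essential point is that the natural norms \emph{between} two consecutive resolvents are the $T$-weighted norms $|\cdot|_{T,\pm1}$ of Definition \ref{e04019}, and that every constant produced must be uniform for $T\geq T_0$; since $\|\cdot\|'_k$ is equivalent to the usual Sobolev norm $\|\cdot\|_k$ (as recalled just before the statement), one may freely pass between the two.

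First I would record the two norm comparisons that hold uniformly in $T\geq 1$: from (\ref{e04221}) together with the fact that $\,^0\nabla^{\mS_Z\otimes E}_{e_i}P$ has smooth kernel along $X$, one gets $\|s\|_1\leq C|s|_{T,1}$, and dually from (\ref{e04022}) one gets $|s|_{T,-1}\leq C\|s\|_{-1}\leq C\|s\|_0$. The case $m=0$ is then immediate: for $\lambda\in\Delta'$, Lemma \ref{e04304} gives
\begin{equation*}
\|(\lambda-\mB_T)^{-1}s\|_1\leq C|(\lambda-\mB_T)^{-1}s|_{T,1}\leq C(1+|\lambda|)^{k}|s|_{T,-1}\leq C(1+|\lambda|)^{k}\|s\|_0 .
\end{equation*}

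For the inductive step, assuming the estimate at all levels $<m$, it suffices (since $\|(\lambda-\mB_T)^{-1}s\|_{m+1}^{'2}=\|(\lambda-\mB_T)^{-1}s\|_{m}^{'2}+\sum_{Q\in\mD^{m+1}}\|Q(\lambda-\mB_T)^{-1}s\|_0^2$, the first summand being controlled by the induction hypothesis) to bound $\|Q(\lambda-\mB_T)^{-1}s\|_0$ for $Q=Q_1\cdots Q_{m+1}\in\mD^{m+1}$. Using repeatedly the identities $Q_i(\lambda-\mB_T)^{-1}=(\lambda-\mB_T)^{-1}Q_i+(\lambda-\mB_T)^{-1}[Q_i,\mB_T](\lambda-\mB_T)^{-1}$ and $Q_iC=CQ_i+[Q_i,C]$ (the $Q_i$ being independent of $T$), I would rewrite $Q(\lambda-\mB_T)^{-1}$ as a finite sum of terms $(\lambda-\mB_T)^{-1}C_1(\lambda-\mB_T)^{-1}\cdots C_p(\lambda-\mB_T)^{-1}Q''$ with $0\leq p\leq m+1$, each $C_a$ an iterated commutator $[Q_{j_1},[\cdots,[Q_{j_{d_a}},\mB_T]\cdots]]$ with $d_a\geq1$, $Q''\in\mD^{r''}$ and $r''+\sum_a d_a=m+1$; the expansion terminates because each step moves some factor $Q_i$ one resolvent to the right or absorbs it into a commutator. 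Then I would estimate each term in $L^2$ by chaining, uniformly in $T\geq T_0$: every resolvent $(\lambda-\mB_T)^{-1}$ maps $(\mathbb{E}_0,|\cdot|_{T,-1})$ to $(\mathbb{E}_0,|\cdot|_{T,1})$ with norm $\leq C(1+|\lambda|)^{k}$ by Lemma \ref{e04304}, and every $C_a$ maps $(\mathbb{E}_0,|\cdot|_{T,1})$ to $(\mathbb{E}_0,|\cdot|_{T,-1})$ with a bound independent of $\lambda,T$ by Lemma \ref{e04107}. If $p\geq1$ then $r''\leq m$, so $\|Q''s\|_0\leq\|s\|'_{r''}\leq\|s\|'_m$, hence $|Q''s|_{T,-1}\leq C\|s\|'_m$; running the chain and using $\|\cdot\|_0\leq C|\cdot|_{T,1}$ bounds the term by $C(1+|\lambda|)^{(p+1)k}\|s\|'_m$. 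If $p=0$ the term is $(\lambda-\mB_T)^{-1}Q$, and writing $Qs=Q_1\cdots Q_m(Q_{m+1}s)$ with $Q_1\cdots Q_m$ a fixed differential operator of order $m$ gives $\|Qs\|_{-1}\leq C\|Q_{m+1}s\|_{m-1}\leq C\|s\|'_m$, whence $\|(\lambda-\mB_T)^{-1}Qs\|_0\leq C(1+|\lambda|)^{k}\|s\|'_m$ by Lemma \ref{e04304} again. Summing the finitely many terms and taking $p_m$ large enough (one may take $p_m=(m+2)k$) completes the step.

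The main obstacle is not analytic — the two quantitative inputs, Lemmas \ref{e04304} and \ref{e04107}, are already available — but organisational: setting up the finite commutator expansion carefully and checking, term by term, that each factor lands in the space on which the next factor is bounded, in particular that one must work with the $T$-weighted norms $|\cdot|_{T,\pm1}$ rather than the unweighted ones between consecutive resolvents, and that every constant produced is uniform for $T\geq T_0$ and polynomial in $|\lambda|$.
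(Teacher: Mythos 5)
Your argument is correct and follows essentially the same route as the paper: starting from the resolvent bound of Lemma \ref{e04304} together with the uniform comparison $\|\cdot\|_1'\leq C|\cdot|_{T,1}$, and then expanding $Q_1\cdots Q_{m+1}(\lambda-\mB_T)^{-1}$ into a finite sum of chains of resolvents interleaved with iterated commutators, which the paper (via (\ref{e04129})--(\ref{e04133})) bounds by chaining the mapping properties $(\lambda-\mB_T)^{-1}:|\cdot|_{T,-1}\to|\cdot|_{T,1}$ from Lemma \ref{e04304} and $\cR_i:|\cdot|_{T,1}\to|\cdot|_{T,-1}$ from Lemma \ref{e04107}. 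The only cosmetic difference is that you phrase this explicitly as an induction on $m$ and treat the commutator-free ($p=0$) chain separately; the paper compresses this into one display, but the mechanism is identical.
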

\begin{proof}
Clearly for $T\geq 1$,
\begin{align}\label{e04128}
\|s\|_{1}'\leq C|s|_{T,1}.
\end{align}
When $m=0$, we obtain the lemma from (\ref{e04128}) and Lemma \ref{e04304}.

For the general case,
let $\cR_T$ be the family of operators
\begin{align}\label{e04129}
\cR_T=\{[Q_{i_1}, [Q_{i_2},\cdots[Q_{i_p}, \mB_T],\cdots]]\}£¬
\end{align}
where $Q_{i_1}, \cdots Q_{i_p}\in \mD$. We can express
\begin{align}\label{e04130}
Q_1\cdots Q_{k+1}(\lambda-\mB_T)^{-1}
\end{align}
as a linear combination of operators of the type
\begin{align}\label{e04131}
(\lambda-\mB_T)^{-1}\cR_1(\lambda-\mB_T)^{-1}\cR_2\cdots \cR_{k'}(\lambda-\mB_T)^{-1}Q_{k'+1}\cdots Q_{k+1},\quad k'\leq k,
\end{align}
with $\cR_1,\cdots,\cR_{k'}\in\cR_T$.
By Lemma \ref{e04107},
 we have
\begin{align}\label{e04132}
|\cR_is|_{T,-1}\leq |s|_{T,1}.
\end{align}
From (\ref{e04128}), (\ref{e04132}) and Lemma \ref{e04304}, we have
\begin{align}\label{e04133}
\begin{split}
&\|(\lambda-\mB_T)^{-1}s\|_{k+1}'\leq C\sum\|Q_2\cdots Q_{k+1}(\lambda-\mB_T)^{-1}s\|_1'
\\
\leq &C\sum \|(\lambda-\mB_T)^{-1}\cR_2(\lambda-\mB_T)^{-1}\cR_3\cdots \cR_{k'}(\lambda-\mB_T)^{-1}Q_{k'+1}\cdots Q_{k+1}s\|_1'
\\
\leq &C_k(1+|\lambda|)^{p_k}\sum \|Q_{k'+1}\cdots Q_{k+1}s\|_{0}
\\
\leq &C_k(1+|\lambda|)^{p_k}\|s\|_{k}'.
\end{split}
\end{align}

The proof of Lemma \ref{e04126} is complete.
\end{proof}

Now we can complete the proof of Theorem \ref{e04102}.

From (\ref{e05009}), for any $k\in\mathbb{N}^*$,
\begin{align}\label{e04134}
\exp(-u^2\mB_T)=\frac{1}{2\pi \sqrt{-1}}\int_{\Delta'}\frac{e^{-u^2\lambda}}{(\lambda-\mB_T)}d\lambda=\frac{(-1)^{k-1}(k-1)!}
{2\pi \sqrt{-1}u^{k-1}}
\int_{\Delta'}\frac{e^{-u^2\lambda}}{(\lambda-\mB_T)^k}d\lambda.
\end{align}
 By Lemma \ref{e04126}, there exist $C>0$, $r\in \mathbb{N}^*$, such that for any $m'$-order (resp. $m''$) fiberwise differential
 operator $R$ (resp. $R'$) along $Z$, $m', m''\geq n/2$,
choosing $k\geq m'+m''$,
\begin{align}\label{e04136}
\begin{split}
\|R(\lambda-\mB_T)^{-k}R's\|_{0}\leq C\|(\lambda-\mB_T)^{-k}R's\|_{m'}'\leq C(1+|\lambda|)^r\|s\|_{0}.
\end{split}
\end{align}
From (\ref{e04134}) and (\ref{e04136}), there exist $C, C'>0$, such that
\begin{align}\label{e04137}
\|R\exp(-u^2\mB_T)R's\|_0\leq C\exp(-C'u^2)\|s\|_0. 
\end{align}

Now applying Sobolev embedding theorem, for $R''$ a fiberwise differential operator of order $m'-n/2$ along $Z$,
there exists $C>0$, such that for any $s\in \mathbb{E}_0$,
\begin{align}\label{e04138}
|R''\exp(-u^2\mB_T)R's|_{\cC^0}\leq C\exp(-C'u^2)\|s\|_0,
\end{align}
and
\begin{align}\label{e04139}
(R''\exp(-u^2\mB_T)R's)(z)=\int_Z (R'_{z'}R_z''\exp(-u^2\mB_T)(z,z'))s(z')dv_Z(z'),
\end{align}
here $R'_{z'}$ acts on $(\mS(TZ,L_Z)\times E)^*$ by identifying $(\mS(TZ,L_Z)\times E)^*$ to $\mS(TZ,L_Z)\otimes E$ by $h^{\mS_Z\otimes E}$. Thus, we have
\begin{align}\label{e04140}
\|R'_{\cdot}R_z''\exp(-u^2\mB_T)(z,\cdot)\|_{0}\leq C\exp(-C'u^2).
\end{align}
Applying the Sobolev embedding theorem to the $z'$-variable, from (\ref{e04140}),
we can get (\ref{e04103}).

From (\ref{e04061}), for any $m\in \mathbb{N}$, there exist $p_m\in\mathbb{N}$, $C_m>0$ and $T_0\geq 1$ such that for $T\geq T_0$,
$\lambda\in \Delta'$, $s\in \mathbb{E}_0^m$,
\begin{align}\label{e04127}
\|P(\lambda-\mB_2)^{-1}Ps\|_{m+1}'\leq C_m(1+|\lambda|)^{p_m}\|Ps\|_{m}'.
\end{align}
Following the same process, we get (\ref{e04142}).

\subsection{Proof of Proposition \ref{e03047}}\label{s0405}

Let $N_X$ be the number operator acting on $TZ$ such that for $s\in TZ$,
\begin{align}
N_XP^{TX}s=P^{TX}s,\quad N_XP^{T^HZ}s=0.
\end{align}
Let
\begin{align}
'\nabla^{TZ}_T=T^{-N_X}\nabla_T^{TZ}T^{N_X}.
\end{align}
Let $'R^{TZ}_T$ be the curvature of $'\nabla^{TZ}_T$. By (\ref{e04310}), we have
\begin{align}\label{e04156}
'\nabla^{TZ}_T=\,^0\nabla^{TZ}+\frac{1}{T}(P^{TX}S_1P^{T^HZ}+P^{T^HZ}S_1P^{TX})+\frac{1}{T^2}P^{T^HZ}S_1P^{T^HZ}.
\end{align}
Then by (\ref{e03005}), we have
\begin{align}
\gamma_{\mA}(T)
=\left.\frac{\partial}{\partial b}\right|_{b=0}\widehat{\mathrm{A}}_g\left('R^{TZ}_T+b\frac{\partial '\nabla_T^{TZ}}{\partial T}\right).
\end{align}
From (\ref{e04156}), we have
\begin{align}
\frac{\partial '\nabla_T^{TZ}}{\partial T}=O\left(\frac{1}{T^2}\right) \quad \text{and} \quad 'R_T^{TZ}=O(1).
\end{align}
Then Proposition \ref{e03047} follows from $'\nabla_{\infty}^{TZ}=\,^0\nabla^{TZ}$.

\section{Proof of Theorem \ref{e03018} and Theorem \ref{e03022} i)}\label{s05}

In this Section, we use the notations in Section \ref{s04} and assumptions in Section \ref{s0202}.

Set
\begin{align}\label{e06003}
\mB_T'=B_{3,T}^2+dT\wedge \frac{\partial B_{3,T}}{\partial T}.
\end{align}
By (\ref{e04007}), we have
\begin{multline}\label{e06040}
\frac{\partial B_{3,T}}{\partial T}=D^X-\frac{1}{8T^2}\left(\la [f_{p,1}^H,f_{q,1}^H], e_i\ra c(e_i)c(f_{p,1}^H)c(f_{q,1}^H)\right.
\\
+\left.\la [g_{\alpha,3}^H,g_{\beta,3}^H], e_i\ra c(e_i)g^{\alpha}\wedge g^{\beta}\wedge+4\la S_1(g_{\alpha,3}^H)e_i, f_{p,1}^H\ra
c(e_i)c(f_{p,1}^H)g^{\alpha}\wedge \right).
\end{multline}
By Definition \ref{e03001}, we have
\begin{align}\label{e06041}
\beta_g^T(T,u)=\left\{\psi_S\delta_{u^2}\widetilde{\tr}[g\exp(-u^2\mB_{T}')]\right\}^{dT}.
\end{align}

Recall that $B_2$ is the Bismut superconnection in (\ref{e04003}).
Comparing with (\ref{e04303}), by Lemma \ref{e04013}, we have
\begin{align}\label{e06072}
P\mB_T'P=B_{2}+O\left(\frac{1}{T}\right).
\end{align}

By (\ref{e06072}), if we replace $\mB_T$ to $\mB_T'$ and $\mB_2$ to $B_2$, then everything in Section \ref{s04} works well.
As an analogue of Theorem \ref{e04223}, we can get the following theorem.
\begin{thm}\label{e05041}
For $u_0>0$ fixed, there exist $C, C'>0$, $T_0\geq 1$, $\delta>0$, such that for $u\geq u_0$,
$T\geq T_0$,
\begin{align}\label{e04149}
\left|\psi_S \delta_{u^2}\widetilde{\tr}[g\exp(-u^2\mB_T')]-\psi_S \delta_{u^2}\widetilde{\tr}[g\exp(-u^2 B_2)]\right|
\leq \frac{C}{T^{\delta}} \exp(-C'u^2).
\end{align}
\end{thm}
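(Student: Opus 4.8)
The plan is to run the entire argument of Section~\ref{s04} again, with $\mB_T$ replaced by $\mB_T'$ and $\mB_2$ replaced by $B_2^2$ (which is what $\exp(-u^2B_2)$ abbreviates here, consistently with~(\ref{e04302}); note that $\exp(-u^2B_2^2)$ carries no $dT$). The one thing to verify at the outset is that the $dT$-perturbation $dT\wedge\frac{\partial B_{3,T}}{\partial T}$ of the fiberwise Laplacian $B_{3,T}^2$ has the same structural properties that the $du$-perturbation of $\mB_T$ had: by the explicit formula~(\ref{e06040}), $\frac{\partial B_{3,T}}{\partial T}=D^X+O(1/T^2)$ is a first order fiberwise differential operator along $Z$ with smooth, $T$-uniformly bounded coefficients and nilpotent form-degree factors---in fact better behaved than the $du$-perturbation of $\mB_T$, whose leading term was $TD^X$. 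In particular $\mB_T'$ shares its principal part $B_{3,T}^2$ with $\mB_T$, and $\mB_T'-D_T^{Z,2}$ is a first order fiberwise operator along $Z$ obeying the bounds used in Section~\ref{s04} (see~(\ref{e04054})).

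With this observed, the first block of estimates transfers directly. Lemmas~\ref{e04030}, \ref{e04055}, \ref{e04304} and~\ref{e04075} hold verbatim with $\mB_T'$ in place of $\mB_T$; the Rouch\'e-type expansion of Lemma~\ref{e04004} (the $\Lambda(T^*(\R\times S))$-valued corrections being nilpotent) gives $\mathrm{Sp}(\mB_T')=\mathrm{Sp}(D_T^{Z,2})$ and $\mathrm{Sp}(B_2^2)=\mathrm{Sp}(D^{Y,2})$; and~(\ref{e04040}) then yields $\mathrm{Sp}(\mB_T')\subset[c_1,+\infty)$ for $T\geq T_0$, so that $\exp(-u^2\mB_T')=\frac{1}{2\pi\sqrt{-1}}\int_{\Delta'}e^{-u^2\lambda}(\lambda-\mB_T')^{-1}\,d\lambda$. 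In the matrix picture with respect to $\mathbb{E}_0=\mathbb{E}_1^0\oplus\mathbb{E}_1^{0,\bot}$, the analogues of Lemma~\ref{e04013} and Proposition~\ref{e04023} hold, the $P\cdot P$-block converging to $B_2^2$ modulo $O(1/T)$ as recorded in~(\ref{e06072}).

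Next I would reproduce the heat-kernel comparison. The proofs of Lemmas~\ref{e04081} and~\ref{e04094} go through line by line, with~(\ref{e06072}) in the role of~(\ref{e04303}), giving $\|(\lambda-\mB_T')^{-1}-P(\lambda-B_2^2)^{-1}P\|\leq C(1+|\lambda|)^k/T$ for $\lambda\in\Delta'$, $T\geq T_0$; integrating over $\Delta'$ as in Theorem~\ref{e05040} then gives $\|\exp(-u^2\mB_T')-P\exp(-u^2B_2^2)P\|\leq \frac{C}{T}\exp(-C'u^2)$ for $u\geq u_0$, $T\geq T_0$. The commutator estimate Lemma~\ref{e04107} also holds for $\mB_T'$: the decomposition~(\ref{e04041}) into the operator classes $\Theta_1,\Theta_2,\Theta_3,\mathscr{S}$ is preserved when one adjoins the $dT$-term, and iterated commutators with operators in $\mD$ keep this structure exactly as before, so Lemma~\ref{e04126} and Proposition~\ref{e04102} carry over and supply $T$-uniform bounds of the form $C\exp(-C'u^2)$ on the smooth kernels of $\exp(-u^2\mB_T')$ and $P\exp(-u^2B_2^2)P$ and all their derivatives up to a fixed order. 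Finally the localization and Taylor expansion argument~(\ref{e04145})--(\ref{e04148}), playing the operator-norm bound against the derivative bounds with $v=T^{-1/(n+1)}$, yields $|(\exp(-u^2\mB_T')-P\exp(-u^2B_2^2)P)(0,0)|\leq cT^{-1/(n+1)}\exp(-C'u^2)$ in the geodesic coordinates around the twisted diagonal of $Z$; applying $\psi_S\delta_{u^2}\widetilde{\tr}[g\,\cdot\,]$, whose $g$-twisted localization near $W^g$ is handled exactly as in Section~\ref{s04}, and using that $\exp(-u^2B_2^2)$ has no $dT$-component, gives~(\ref{e04149}) with $\delta=1/(n+1)$, which is Theorem~\ref{e05041}.

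The main obstacle is not conceptual but a matter of bookkeeping: one must make sure that the $dT$-perturbation, together with all of its iterated commutators with operators in $\mD$, remains inside the operator classes $\Theta_i$ and $\mathscr{S}$ on which the uniform-in-$T$ resolvent and heat-kernel estimates of Section~\ref{s04} were built. Formula~(\ref{e06040}) makes this transparent---$\frac{\partial B_{3,T}}{\partial T}$ is first order along $Z$ with $O(1)$ coefficients---so no new analytic input beyond Section~\ref{s04} is needed, which is why the paper states the result simply as an analogue of Theorem~\ref{e04223}.
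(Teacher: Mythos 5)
Your proposal is correct and takes essentially the same approach as the paper, which compresses the argument into a single remark: by $(\ref{e06072})$ one may rerun the entire machinery of Section~\ref{s04} with $\mB_T$ replaced by $\mB_T'$ and $\mB_2$ replaced by $B_2$ (read $B_2^2$), then transfer Theorem~\ref{e04223}. Your careful check that the $dT$-perturbation $dT\wedge\frac{\partial B_{3,T}}{\partial T}=dT\wedge\bigl(D^X+O(T^{-2})\bigr)$ stays inside the operator classes of Section~\ref{s04}, together with the resulting analogue of Proposition~\ref{e04023} giving $E-FH^{-1}G=B_2^2$, is exactly the bookkeeping the paper leaves implicit.
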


Take $s>0$. By replacing $T$ to $sT$ in Theorem \ref{e05041} and taking the coefficient of $ds$, for
$sT\geq T_0$, we have
\begin{align}\label{e05050}
\left|\left\{\psi_S \delta_{u^2}\widetilde{\tr}[g\exp(-u^2\mB_{sT}')]\right\}^{ds}\right|
\leq \frac{C}{(sT)^{\delta}} \exp(-C'u^2).
\end{align}
By (\ref{e06041}), for 
$T\geq T_0$, we have
\begin{multline}\label{e05042}
\beta_g^T(T,u)=\left.\left\{\psi_S\delta_{u^2}\widetilde{\tr}[g\exp(-u^2\mB_{sT}')]\right\}^{d(sT)}\right|_{s=1}
\\
=\left.T^{-1}\cdot\left\{\psi_S\delta_{u^2}\widetilde{\tr}[g\exp(-u^2\mB_{sT}')]\right\}^{ds}\right|_{s=1}.
\end{multline}
From (\ref{e05050}) and (\ref{e05042}), for $u_0>0$ fixed,
there exist $C, C'>0$, $T_0\geq 1$, $\delta>0$, such that
for $u\geq u_0$,
$T\geq T_0$, we have
\begin{align}\label{e05043}
\left|\beta_g^T(T,u)\right|
\leq \frac{C}{T^{1+\delta}} \exp(-C'u^2).
\end{align}

Then we get Theorem \ref{e03018}  and Theorem \ref{e03022} i).

\section{Proof of Theorem \ref{e03022} ii)}\label{s06}

In this section, we use the notations in Section 2.2, 4, 5 and assumptions in Section 2.2.

In the first three subsections, we prove Theorem \ref{e03022} ii)
when $\dim Y$ and $\dim Z$ are all even. In Section 6.4, we discuss the other cases. In Section 6.5, we prove the technical result
Theorem \ref{e107}.

\subsection{The proof is local on $\pi_1^{-1}(V^g)$}\label{s0601}

Recall that $\mB_T'$ is the operator defined in (\ref{e06003}).
As in (\ref{e04301}), we set
\begin{align}\label{e06004}
\mB_{\var, T/\var}'=\var^2 \delta_{\var^2}\mB_{T/\var}'\delta_{\var^2}^{-1}=
B_{3,\var^2,T/\var}^2+\var^{-1}dT\wedge \left.\frac{\partial B_{3,\varepsilon^2,T'}}{\partial T'}\right|_{T'=T\varepsilon^{-1}}.
\end{align}
By Definition \ref{e03001}, we have
\begin{align}\label{e06005}
\var^{-1}\beta_g^T(T/\var,\var)=\left\{\psi_S\widetilde{\tr}[g\exp(-\mB_{\var, T/\var}')]\right\}^{dT}.
\end{align}
Precisely, by (\ref{e04008}), we have
\begin{multline}\label{e06001}
B_{3,\var^2,T/\var}=TD^X+\var D^H+\frac{\var^2}{8T}\la [f_{p,1}^H, f_{q,1}^H],e_i\ra c(e_i)c(f_{p,1}^H)c(f_{q,1}^H)
\\
+\,^0\nabla^{\cE_Z,u}-\frac{c(T_2)}{4\var}
+\frac{\var}{2T}\la S_1(g_{\alpha,3}^H)e_i, f_{p,1}^H \ra c(e_i)c(f_{p,1}^H) g^{\alpha}_3\wedge
\\
+\frac{1}{8T}\la [g_{\alpha,3}^H, g_{\beta,3}^H],e_i\ra c(e_i)g^{\alpha}\wedge g^{\beta}\wedge,
\end{multline}
and
\begin{multline}\label{e06002}
\var^{-1} \left.\frac{\partial B_{3,\varepsilon^2,T'}}{\partial T'}\right|_{T'=T\varepsilon^{-1}}
=D^X-\frac{1}{8T^2}(\la \var^2[f_{p,1}^H, f_{q,1}^H],e_i\ra c(e_i)c(f_{p,1}^H)c(f_{q,1}^H)
\\
+4\var \la S_1(g_{\alpha,3}^H)e_i, f_{p,1}^H \ra c(e_i)c(f_{p,1}^H) g^{\alpha}_3\wedge
+\la [g_{\alpha,3}^H, g_{\beta,3}^H],e_i\ra c(e_i)g^{\alpha}\wedge g^{\beta}\wedge).
\end{multline}

Set $B_1|_{V^g}$ be the Bismut superconnection associated to ($T_1^H(W|_{V^g}), g^{TX}, h^{L_X},$ $\nabla^{L_X}, h^E, \nabla^E$).
For $t>0$, we denote $\delta_t^V$ the operator on $\Lambda^i(T^*V^g)$ by multiplying by $t^{-i/2}$.
As in (\ref{e01041}), set
\begin{align}\label{e06050}
B_{1,T^2}|_{V^g}=T\delta_{T^2}^V\circ B_{1}|_{V^g}\circ (\delta_{T^2}^V)^{-1}.
\end{align}
As in (\ref{e04301}), we set
\begin{align}\label{e05013}
\mB_{T^2}''|_{V^g}=(B_{1,T^2}|_{V^g})^2+dT\wedge \left.\frac{\partial B_{1,T^2}}{\partial T}\right|_{V^g}.
\end{align}
Then by (\ref{e03043}), we have
\begin{align}\label{e06051}
\gamma_1(T)=\left\{\psi_{V^g}\widetilde{\tr}[g\exp(-\mB_{ T^2}''|_{V^g})]\right\}^{dT}.
\end{align}

In the first three subsections we assume that $\dim Y=m$ and $\dim Z=n$ are all even.

Let $d^V$, $d^W$ be the distance functions on $V$, $W$ associated to $g^{TV}$, $g^{TW}$.
Let $\mathrm{Inj}^{V}$, $\mathrm{Inj}^{W}$ be the injective radius of $V$, $W$.
In the sequel, we assume that given $0<\alpha<\alpha_0<\inf\{\mathrm{Inj}^V,\mathrm{Inj}^W\}$
are chosen small enough so that if $y\in V$, $d^V(g^{-1}y,y)\leq \alpha$,
then $d^V(y,V^g)\leq\frac{1}{4}\alpha_0$, and if $z\in W$, $d^W(g^{-1}z,z)\leq \alpha$,
then $d^W(z,W^g)\leq\frac{1}{4}\alpha_0$. 

Let $f$ be a smooth even function defined on $\R$ with values in $[0,1]$,
such that
\begin{align}\label{e06006}
f(t)=
\left\{
  \begin{array}{ll}
    1, & \hbox{$|t|\leq \alpha/2$;} \\
    0, & \hbox{$|t|\geq \alpha$.}
  \end{array}
\right.
\end{align}
For $t\in (0,1]$, $a\in\C$, set
\begin{align}\label{e06007}
\left\{
  \begin{aligned}
    &\mathbf{F}_t(a)=\int_{-\infty}^{+\infty}\cos(\sqrt{2}va)e^{-\frac{v^2}{2}}f(\sqrt{t}v)\frac{dv}{\sqrt{2\pi}},\\
    &\mathbf{G}_t(a)=\int_{-\infty}^{+\infty}\cos(\sqrt{2}va)e^{-\frac{v^2}{2}}(1-f(\sqrt{t}v))\frac{dv}{\sqrt{2\pi}}.
  \end{aligned}
\right.
\end{align}
Clearly,
\begin{align}\label{e06053}
\mathbf{F}_t(a)+\mathbf{G}_t(a)=\exp(-a^2).
\end{align}

The functions $\mathbf{F}_t(a)$ and $\mathbf{G}_t(a)$ are even holomorphic functions and the restrictions of $\mathbf{F}_t(a)$,
$\mathbf{G}_t(a)$ to $\R$ lie in the Schwartz space.
So there exist holomorphic functions $\widetilde{\mathbf{F}}_t(a)$ and $\widetilde{\mathbf{G}}_t(a)$ on $\C$ such that
\begin{align}\label{e06008}
\mathbf{F}_t(a)=\widetilde{\mathbf{F}}_t(a^2),\quad \mathbf{G}_t(a)=\widetilde{\mathbf{G}}_t(a^2).
\end{align}
From (\ref{e06053}), we deduce that
\begin{align}\label{e06009}
\exp(-\mB_{\var, T/\var}')=\widetilde{\mathbf{F}}_{\var^2}(\mB_{\var, T/\var}')+
\widetilde{\mathbf{G}}_{\var^2}(\mB_{\var, T/\var}').
\end{align}

Fix $b\in S$. For $z,z'\in Z_b$, let $\widetilde{\mathbf{F}}_{\var^2}(\mB_{\var, T/\var}')(z,z')$
and $\widetilde{\mathbf{G}}_{\var^2}(\mB_{\var, T/\var}')(z,z')$ be the smooth kernels associated to
$\widetilde{\mathbf{F}}_{\var^2}(\mB_{\var, T/\var}')$ and $\widetilde{\mathbf{G}}_{\var^2}(\mB_{\var, T/\var}')$
with respect to the volume form $dv_Z(z')$.

\begin{lemma}\label{e06010}
For $\delta>0$ fixed , there exist $C_1,C_2>0$, such that for any $z,z'\in Z_b$, $0<\var\leq \delta$, $T\geq 1$,
\begin{align}\label{e06011}
\left|\widetilde{\mathbf{G}}_{\frac{\var^2}{T^2}}(\mB_{\frac{\var}{T},T}')(z,z')\right|\leq C_1\exp\left(-\frac{C_2T^2}{\varepsilon^2}\right).
\end{align}
In particular,
\begin{align}\label{e06012}
\left|\psi_S\tr_s\left[g\widetilde{\mathbf{G}}_{\frac{\var^2}{T^2}}
(\mB_{\frac{\var}{T},T}')\right]\right|\leq C_1\exp\left(-\frac{C_2T^2}{\varepsilon^2}\right).
\end{align}
\end{lemma}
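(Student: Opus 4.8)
The plan is to prove Lemma \ref{e06010} by the standard finite‑propagation‑speed estimate adapted to the rescaled superconnection. The key point is that $\widetilde{\mathbf{G}}_{\var^2/T^2}$ is the ``large eigenvalue'' piece of the heat operator, built out of $(1-f(\sqrt{t}\,v))$ which vanishes for $|v|\le \alpha/(2\sqrt{t})$, so the cosine transform only picks up wave propagation over times $|v|\ge \alpha/(2\sqrt{t})$. I will therefore first write $\mB_{\var/T,T}'$ in the form $\mB_{\var/T,T}' = (D')^2 + (\text{positive-degree terms in }\Lambda(T^*(\R\times S)))$, where $D'$ is the rescaled fiberwise Dirac operator appearing in \eqref{e06001}: at parameter values $u=\var/T$, $T'=T$, one has $B_{3,(\var/T)^2,T}$ built from $TD^X+O(\var/T)$-type terms, so the relevant fiberwise Dirac operator has principal symbol of the metric $g^{TZ}_T$ suitably rescaled. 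Because $\mB_{\var/T,T}'$ differs from the square of this Dirac operator by a nilpotent (positive exterior degree) perturbation, the finite‑order expansion of $\widetilde{\mathbf{G}}_{\var^2/T^2}(\mB_{\var/T,T}')$ in terms of $\widetilde{\mathbf{G}}^{(j)}_{\var^2/T^2}(D'^2)$ reduces everything to the scalar case.

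Next I would invoke the Duhamel/Minakshisundaram expansion or, more directly, the standard estimate (cf. \cite[\S 11]{MR1188532}, \cite{Bismut1997}) that for a generalized Laplacian $\mathcal{L}$ along a compact fiber $Z$ with $\operatorname{Sp}(\mathcal{L})\ge 0$, the kernel of $\widetilde{\mathbf{G}}_t(\mathcal{L}^{1/2})=\int (1-f(\sqrt t v))\cos(\sqrt2 v\sqrt{\mathcal L})e^{-v^2/2}\,dv/\sqrt{2\pi}$ is bounded pointwise by $C_1\exp(-C_2/t)$, uniformly, since $\cos(\sqrt2 v\sqrt{\mathcal L})$ has propagation speed controlled by $|v|$ and the cutoff $(1-f)$ forces $|v|\gtrsim \alpha/\sqrt t$, while the Gaussian weight $e^{-v^2/2}$ then supplies $e^{-\alpha^2/(8t)}$. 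Applying this with $t=\var^2/T^2$ and $\mathcal L = D'^2$ (which is $\ge 0$ as a fiberwise self‑adjoint square) immediately gives \eqref{e06011} with a constant $C_2$ that, for $\var\le\delta$, $T\ge 1$, is uniform; the positive-degree perturbation only produces finitely many derivatives $\widetilde{\mathbf G}^{(j)}_t$, each obeying the same type of bound. Then \eqref{e06012} follows from \eqref{e06011} by integrating the pointwise kernel bound over the diagonal of the compact fiber $Z_b$ (with $S$ compact, or on a compact subset of $S$), and using that $g$ and $\psi_S$ act by bounded operators; the supertrace of $g\widetilde{\mathbf G}$ is $\int_{Z^g}$-type integral of $\widetilde{\mathbf G}(g^{-1}z,z)$, and by the choice of $\alpha$ the factor $f$ already localizes near $W^g$, but that refinement is not needed here — the crude bound \eqref{e06011} on all of $Z_b\times Z_b$ suffices.

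The main obstacle I anticipate is making the reduction from $\mB_{\var/T,T}'$ to its fiberwise‑Dirac‑square part genuinely uniform in both $\var$ and $T$: the off‑diagonal lower‑order terms in \eqref{e06001}–\eqref{e06002} carry prefactors like $\var/T$, $\var^2/T$, $1/T$ and also exterior variables $g^\alpha\wedge$, and one has to check that after the rescaling $\delta_{\var^2}$ hidden in passing to $\mB_{\var/T,T}'$ these remain bounded (in the appropriate $T$-dependent Sobolev norms of Section \ref{s0401}, i.e.\ the norms $|\cdot|_{T,1}$). This is precisely the kind of bookkeeping already carried out in Lemma \ref{e04030} and Lemma \ref{e04055}, so I would cite those estimates to control the perturbation and conclude that the wave operator $\cos(\sqrt2 v \sqrt{\mathcal L})$ for the perturbed operator still has finite propagation speed bounded (uniformly) in terms of $|v|$ and the fiber metric. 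Once that uniformity is in hand, the Gaussian gain $e^{-C\alpha^2 T^2/\var^2}$ is automatic and the lemma is proved.
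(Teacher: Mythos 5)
Your approach is not the paper's, and as written it has a genuine gap. The paper does \emph{not} use finite propagation speed to prove Lemma~\ref{e06010}. Instead it derives $T$-dependent Sobolev elliptic estimates for $\mB_T'$ of the form $\|s\|_m \le T^{2m}\sum_k c_k \|(\mB_T')^k s\|_0$, combines these with the decay estimate from Bismut \cite[(11.18)]{Bismut1997}, namely $\sup_{\lambda\in\Delta}|\lambda|^m\bigl|\widetilde{\mathbf G}_{\var^2/T^2}(\var^2\lambda/T^2)\bigr| \le c_m'\exp(-cT^2/\var^2)$, to bound the operator norm of $R\,\widetilde{\mathbf G}_{\var^2/T^2}(\var^2\mB_T'/T^2)\,R'$ for differential operators $R,R'$, and then uses Sobolev embedding to convert this into the pointwise kernel bound \eqref{e06011}. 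Finally it passes to $\mB'_{\var/T,T}$ via the rescaling $\mB'_{\var/T,T}=(\var^2/T^2)\delta_{\var^2/T^2}\mB_T'\delta_{\var^2/T^2}^{-1}$. No wave equation enters.

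The step in your argument that does not survive scrutiny is the claim that the kernel of $\widetilde{\mathbf G}_t(\mathcal L)$ is bounded pointwise by $C_1\exp(-C_2/t)$ ``since $\cos(\sqrt2\,v\sqrt{\mathcal L})$ has propagation speed controlled by $|v|$ and the cutoff $(1-f)$ forces $|v|\gtrsim\alpha/\sqrt t$, while the Gaussian weight $e^{-v^2/2}$ then supplies $e^{-\alpha^2/(8t)}$.'' Finite propagation speed tells you \emph{where} the Schwartz kernel of $\cos(\sqrt2\,v\sqrt{\mathcal L})$ vanishes (outside the light cone $d(z,z')\le C|v|$); it gives you no pointwise bound on its size inside the cone. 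In fact the wave kernel is a genuine distribution — singular on the light cone — and it is not legitimate to integrate the pointwise ``bound'' $|\cos(\sqrt2 v\sqrt{\mathcal L})(z,z')|$ against $(1-f(\sqrt t\,v))e^{-v^2/2}\,dv$. What makes $\widetilde{\mathbf G}_t$ a smooth kernel with exponential decay is precisely the resolvent/holomorphic functional calculus argument: the rapid decay of $\widetilde{\mathbf G}_t(\lambda)$ on a contour combined with elliptic regularity for the operator. Note also that in the paper finite propagation speed \emph{is} used — but for $\widetilde{\mathbf F}_{\var^2}$ in \eqref{e06054}, to show that the $\widetilde{\mathbf F}$-kernel is supported near the diagonal and hence that the whole problem is local near $\pi_1^{-1}(V^g)$. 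You have essentially swapped the roles of the $\widetilde{\mathbf F}$ and $\widetilde{\mathbf G}$ arguments.

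To repair the proposal, drop the finite-propagation-speed reasoning entirely for $\widetilde{\mathbf G}$, and instead: (i) establish elliptic estimates $\|s\|_{m+2}\le CT^2(\|\mB_T's\|_{m+1}+\|s\|_{m+1})$ and their iterates, using the explicit form \eqref{e04007} of $B_{3,T}$; (ii) invoke the contour estimate $\sup_{\lambda\in\Delta}|\lambda|^m|\widetilde{\mathbf G}_t(t\lambda)|\le c_m'e^{-c/t}$; (iii) deduce operator-norm bounds on $R\,\widetilde{\mathbf G}_{\var^2/T^2}(\frac{\var^2}{T^2}\mB_T')R'$ that decay like $e^{-cT^2/\var^2}$ despite the polynomial $T$-growth from step (i); (iv) apply Sobolev embedding in $z$ and $z'$ separately. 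The estimate \eqref{e06012} then follows by integrating over $Z_b$, as you say.
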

\begin{proof}
By (\ref{e04007}), (\ref{e06003}) and the elliptic estimate, there exists $C>0$ such that for any $T\geq 1$,
\begin{align}\label{e06021}
\|s\|_2\leq C\|\mB_{T}'s\|_0+CT^2\|s\|_0.
\end{align}
Then for a $m$-order fiberwise differential operator $Q$ along $Z$ with scalar principal symbol, by (\ref{e06021}), we have
\begin{align}\label{e65}
\|Qs\|_2\leq C\|\mB_{T}'Qs\|_0+CT^2\|Qs\|_0
\leq C\|Q\mB_{T}'s\|_0+CT^2\|Qs\|_0+C\|[\mB_{T}',Q]s\|_0.
\end{align}
By (\ref{e04007}) and (\ref{e06003}), we have
\begin{align}\label{e65}
\|[\mB_{T}',Q]s\|_0\leq CT^2\|s\|_{m+1}.
\end{align}
Thus we get the estimate
\begin{align}\label{e66}
\|s\|_{m+2}\leq C\|\mB_{T}'s\|_{m}+CT^2\|s\|_{m+1}\leq CT^2(\|\mB_{T}'s\|_{m+1}
+\|s\|_{m+1}).
\end{align}
By induction, there exist $c_k>0$ for $0\leq k\leq m$, such that
\begin{align}\label{e67}
\|s\|_{m}\leq T^{2m}\sum_{k=0}^m c_k\|(\mB_{T}')^ks\|_0.
\end{align}
Let $\mB_{T}'^*$ be the adjoint of $\mB_{T}'$.
Similarly, we have
\begin{align}\label{e455}
\|s\|_{m}\leq T^{2m}\sum_{k=0}^m c_k\|(\mB_{T}'^*)^ks\|_0.
\end{align}
For $m$-order fiberwise differential operator $Q$,
for $m'\in \N$, by (\ref{e67}) and (\ref{e455}), we have
\begin{align}\label{e69}
\begin{split}
&\left|\left\la (\mB_{T}')^{m'}\widetilde{\mathbf{G}}_{\frac{\var^2}{T^2}}\left(\frac{\var^2}{T^2}\mB_{T}'\right)Qs,s'\right\ra\right|
\\
=&
\left|\left\la
s,Q^*\widetilde{\mathbf{G}}_{\frac{\var^2}{T^2}}\left(\frac{\var^2}{T^2}\mB_{T}'^*\right)(\mB_{T}'^*)^{m'}s'\right\ra\right|
\leq \left\|\widetilde{\mathbf{G}}_{\frac{\var^2}{T^2}}\left(\frac{\var^2}{T^2}\mB_{T}'^*\right)(\mB_{T}'^*)^{m'}s'\right\|_m\|s\|_0
\\
\leq & \left(T^m\sum_{k=0}^{m}c_k\left\|(\mB_{T}'^*)^k
\widetilde{\mathbf{G}}_{\frac{\var^2}{T^2}}\left(\frac{\var^2}{T^2}\mB_{T}'^*\right)(\mB_{T}'^*)^{m'}s'\right\|_0\right)\|s\|_0.
\end{split}
\end{align}
By \cite[(11.18)]{Bismut1997}, for $m\in \N$, there exist $c_m'>0$ and $c>0$, such that for any $0< \var\leq \delta$, $T\geq 1$,
\begin{align}\label{e68}
\sup_{\lambda\in \Delta}|\lambda|^m\left|\widetilde{\mathbf{G}}_{\frac{\var^2}{T^2}}\left(\frac{\var^2}{T^2}\lambda\right)\right|
\leq c_m'\exp\left(-\frac{cT^2}{\var^2}\right).
\end{align}
From (\ref{e69}) and (\ref{e68}), there exists $c_{m,m'}>0$, such that
\begin{align}\label{e70}
\left\|(\mB_{T}')^{m'}\widetilde{\mathbf{G}}_{\frac{\var^2}{T^2}}\left(\frac{\var^2}{T^2}\mB_{T}'\right)Q\right\|_0\leq
c_{m,m'}\exp\left(-\frac{cT^2}{2\var^2}\right).
\end{align}

Let $P$ be a fiberwise differential operators along $Z$ of order $m'$. Then by (\ref{e67}) and (\ref{e70}),
there exists $c_{m,m'}'>0$, such that for any $0<\var\leq \delta$, $T\geq 1$,
\begin{align}\label{e71}
\begin{split}
\left\|P\widetilde{\mathbf{G}}_{\frac{\var^2}{T^2}}\left(\frac{\var^2}{T^2}\mB_{T}'\right)Q\right\|_0\leq
 \left\|\widetilde{\mathbf{G}}_{\frac{\var^2}{T^2}}
\left(\frac{\var^2}{T^2}\mB_{T}'\right)Q\right\|_{m'}
\leq c_{m,m'}'\exp\left(-\frac{cT^2}{2\var^2}\right).
\end{split}
\end{align}

Following the same process in
(\ref{e04138})-(\ref{e04140}), there exist $C_1,C_2>0$, such that for any $z,z'\in Z_b$, $0<\var\leq \delta$, $T\geq 1$,
\begin{align}
\left|\widetilde{\mathbf{G}}_{\frac{\var^2}{T^2}}\left(\frac{\var^2}{T^2}\mB_{T}'\right)(z,z')\right|\leq
C_1\exp\left(-\frac{C_2T^2}{\varepsilon^2}\right).
\end{align}
Since $\mB_{\frac{\var}{T},T}'=\frac{\var^2}{T^2}\delta_{\frac{\var^2}{T^2}}\mB_T'\delta_{\frac{\var^2}{T^2}}^{-1}$,
we get the proof of Lemma \ref{e06010}.
\end{proof}

Using Lemma \ref{e06010} with $\var=T$ and $T$ replace by $T/\var$, for $T\geq 1$ fixed, we find
\begin{align}\label{e06033}
\begin{split}
&\left|\widetilde{\mathbf{G}}_{\var^2}(\mB_{\var,T/\var}')(z,z')\right|\leq C_1\exp\left(-\frac{C_2}{\varepsilon^2}\right),
\\
&\left|\psi_S\tr_s\left[g\widetilde{\mathbf{G}}_{\var^2}
(\mB_{\var,T/\var}')\right]\right|\leq C_1\exp\left(-\frac{C_2}{\varepsilon^2}\right).
\end{split}
\end{align}

From (\ref{e06009}) and (\ref{e06033}), by the finite propagation speed
for the solution of the hyperbolic equations for $\cos\left(s\sqrt{\mB_{\var, T/\var}'}\right)$
(cf. \cite[\S 7.8]{MR598467} and \cite[\S 4.4]{MR618463}),
it is clear that for $0<\var\leq 1$, $T\geq 1$, $z, z'\in Z_b$, if $d^V(\pi_1z, \pi_1z')\geq \alpha$, then
\begin{align}\label{e06054}
\widetilde{\mathbf{F}}_{\var^2}(\mB_{\var, T/\var}')(z,z')=0,
\end{align}
and moreover, given $z\in Z_b$, $\widetilde{\mathbf{F}}_{\var^2}(\mB_{\var, T/\var}')(z,\cdot)$ only depends on the restriction of
$\mB_{\var, T/\var}'$ to $\pi_1^{-1}(B^Y(\pi_1z, \alpha))$.

Let $\mU_{\alpha_0}(Y_b^g)$ be the set of $y\in Y_b$ such that $d^Y(y,Y_b^g)<\alpha_0/4$.
We identify $\mU_{\alpha_0}(Y_b^g)$ to $\{(y,U)\,:\, y\in Y_b^g, U\in N_{Y^g/Y}, |U|< \alpha_0/4 \}$
by using geodesic coordinates normal to $Y^g$ in $Y$, where $N_{Y^g/Y}$ is the real normal bundle associated to $g\in G$ in $Y$.
Let $dv_{Y^g}$ and $dv_{N_Y}$ be the corresponding volume forms on $TY^g$ and $N_{Y}$ induced by $g^{TY}$.
Then there exists the function $k_Y$ on $\mU_{\alpha_0}(Y_b^g)$, such that
\begin{align}\label{e06013}
dv_Z(z)=k_Y(y,U)dv_{Y^g}(y)dv_{N_Y}(U)dv_X(x).
\end{align}
Thus, from (\ref{e06054}),
\begin{align}\label{e06014}
\begin{split}
&\tr_s\left[g\widetilde{\mathbf{F}}_{\var^2}(\mB_{\var,T/\var}')\right]
\\
=&\int_{Z} \tr_s
\left[g\widetilde{\mathbf{F}}_{\var^2}(\mB_{\var,T/\var}')(g^{-1}z,z)\right]dv_Z(z)
\\
=&\int_{Y^g}\int_{U\in N, |U|< \alpha_0/4}\int_X\tr_s
\left[g \widetilde{\mathbf{F}}_{\var^2}(\mB_{\var,T/\var}')(g^{-1}(y,U,x),(y,U,x))\right]
\\
&\cdot k_Y(y,U)dv_{Y^g}(y)dv_{N_Y}(U)dv_X(x).
\end{split}
\end{align}
Therefore, from  (\ref{e06005}), (\ref{e06033}) and (\ref{e06014}),
we see that the proof of Theorem \ref{e03022} ii) is local near $\pi_1^{-1}(V^g)$.

\subsection{Rescaling of the variable $U$ and of the Clifford variables}\label{s0502}

Let $S_{3,T}$ be the tensor defined in (\ref{e01017}) associated to ($T_3^HW, g_T^{TZ}$). We can calculate that
\begin{align}\label{e05001}
\begin{split}
&\la S_{3,T}(Te_i)Te_j, g_{\alpha,3}^H\ra=\la S_{3}(e_i)e_j, g_{\alpha,3}^H\ra,
\\
&\la S_{3,T}(Te_i)f_{p,1}^H, g_{\alpha,3}^H\ra=\frac{1}{T}\la S_{3}(e_i)f_{p,1}^H, g_{\alpha,3}^H\ra,
\\
&\la S_{3,T}(Te_i)g_{\alpha,3}^H, g_{\beta,3}^H\ra
=\frac{1}{T}\la S_{3}(e_i)g_{\alpha,3}^H, g_{\beta,3}^H\ra,
\\
&\la S_{3,T}(f_{p,1}^H)f_{q,1}^H, g_{\alpha,3}^H\ra
=\la S_{3}(f_{p,1}^H)f_{q,1}^H, g_{\alpha,3}^H\ra,
\\
&\la S_{3,T}(f_{p,1}^H)g_{\alpha,3}^H, g_{\beta,3}^H\ra
=\la S_{3}(f_{p,1}^H)g_{\alpha,3}^H, g_{\beta,3}^H\ra.
\end{split}
\end{align}

By (\ref{e01043}), (\ref{e02029}), (\ref{e04071}), (\ref{e06004}) and (\ref{e05001}), after a careful calculation, we have
\begin{align}\label{e06015}
\begin{split}
&\mB_{\var,T/\var}'=-\left(T\,^0\nabla^{\mS_Z\otimes E}_{e_i}+\frac{\var}{2}\la S_1(e_i)e_j, f_{p,1}^H\ra c(e_j)c(f_{p,1}^H)\right.
\\
&\quad+\frac{\var^2}{4T}\la S_1(e_i)f_{p,1}^H, f_{q,1}^H\ra c(f_{p,1}^H)c(f_{q,1}^H)
+\frac{1}{2}\la S_3(e_i)e_j, g_{\alpha,3}^H\ra c(e_j)g^{\alpha}\wedge
\\
&\quad +\frac{\var}{2T}\la S_3(e_i)f_{p,1}^H, g_{\alpha,3}^H\ra c(f_{p,1}^H)g^{\alpha}\wedge
\left.+\frac{1}{4T}\la S_3(e_i)g_{\alpha,3}^H, g_{\beta,3}^H\ra g^{\alpha}\wedge g^{\beta}\wedge\right)^2
\\
&\quad +dT\wedge\left(c(e_i)\,^0\nabla^{\mS_Z\otimes E}_{e_i}-\frac{1}{8T^2}\left(\var^2\la [f_{p,1}^H,f_{q,1}^H], e_i\ra
c(e_i)c(f_{p,1}^H)c(f_{q,1}^H)\right.\right.
\\
&\quad \quad\left.\left.+4\var \la S_1(g_{\alpha,3}^H)e_i, f_{p,1}^H \ra c(e_i)c(f_{p,1}^H) g^{\alpha}_3\wedge
+\la [g_{\alpha,3}^H,g_{\beta,3}^H], e_i\ra c(e_i)g^{\alpha}\wedge g^{\beta}\wedge\right)\right)
\\
&-\var^2\left(\,^0\nabla^{\mS_Z\otimes E}_{f_{p,1}^H}+\frac{\var}{2T}\la S_1(f_{p,1}^H)e_i,f_{q,1}^H\ra c(e_i)c(f_{q,1}^H)
+\frac{1}{2T}\la S_3(f_{p,1}^H)e_i, g_{\alpha,3}^H\ra c(e_i)g^{\alpha}\wedge\right.
\\
&\quad \left.+\frac{1}{2\var}\la S_2(f_{p})f_{q}, g_{\alpha,2}^H\ra c(f_{q,1}^H)g^{\alpha}\wedge
+\frac{1}{4\var^2}\la S_2(f_{p}) g_{\alpha,2}^H, g_{\beta,2}^H\ra g^{\alpha}\wedge g^{\beta}\wedge\right)^2
\\
&+\frac{\var^2}{4}K_{T/\var}^Z+\frac{T^2}{2}(R^{L_Z}/2+R^E)(e_i,e_j)c(e_i)c(e_j)+T\var(R^{L_Z}/2+R^E)(e_i,f_{p,1}^H)c(e_i)c(f_{p,1}^H)
\\
&+\frac{\var^2}{2}(R^{L_Z}/2+R^E)(f_{p,1}^H,f_{q,1}^H)c(f_{p,1}^H)c(f_{q,1}^H)
+\frac{1}{2}(R^{L_Z}/2+R^E)(g_{\alpha,3}^H,g_{\beta,3}^H)g^{\alpha}\wedge g^{\beta}\wedge
\\
&+\var(R^{L_Z}/2+R^E)(f_{p,1}^H,g_{\alpha,3}^H) c(f_{p,1}^H) g^{\alpha}\wedge
+T(R^{L_Z}/2+R^E)(e_i,g_{\alpha,3}^H)c(e_i)g_3^{\alpha,H}\wedge.
\end{split}
\end{align}

Set
\begin{align}\label{e06016}
\begin{split}
\nabla_{f_{p,1}^H}'=\,^0\nabla^{\mS_Z\otimes E}_{f_{p,1}^H}-\frac{1}{2}\la S_1(e_i)e_i, f_{p,1}^H\ra
+\frac{1}{2\var}&\la S_2(f_{p})f_{q}, g_{\alpha,2}^H\ra c(f_{q,1}^H)g^{\alpha}\wedge
\\
&+\frac{1}{4\var^2}\la S_2(f_{p}) g_{\alpha,2}^H, g_{\beta,2}^H\ra g^{\alpha}\wedge g^{\beta}\wedge.
\end{split}
\end{align}
Recall that $\cE_{X,y_0}=\cC^{\infty}(X_{y_0},\mS(TX, L_X)\otimes E)$,
which is naturally equipped with a Hermitian product attached to $g^{TX}$ and $h^{\mS_X\otimes E}$ as in (\ref{e01025}).
By (\ref{e01028}), the connection $\nabla'$ preserves the scalar product (\ref{e02006}) on $\cE_X$.

Take $y_0\in V^g$ and $\pi_2(y_0)=b$.
We identify $B^{Y_b}(y_0, \alpha_0)$ with $B(0,\alpha_0)\subset T_{y_0}Y=\R^{m}$ by using normal coordinates.
Take a vector $U\in \R^{m}$. We identify $TY|_U$ to $TY|_{\{0\}}$
by parallel transport along the curve $t \mapsto tU$ with respect to the connection $\nabla^{TY}$.
We lift horizontally the paths $t\in \R_+^*\mapsto tU$ into paths $t\in \R_+^*\mapsto x_t\in Z_b$ with $x_t\in X_{tU}$,
$dx_t/dt\in T^HZ_b$. If $x_0\in X_{y_0}$, we identify $T_{x_t}X$, $\mS(TZ,L_Z)\otimes E_{x_t}$ to $T_{x_0}X$, $\mS(TZ,L_Z)\otimes E_{x_0}$
by parallel transport along the curve $t \mapsto x_t$ with respect to the connection $\nabla^{TX}$, $\nabla'$.
Then we can define the operator $\mB_{\var,T/\var}'$ to a neighborhood of $\{0\}\times X_{y_0}$ in $T_{y_0}Y\times X_{y_0}$.

Let $\rho:T_{y_0}Y\rightarrow [0,1]$ be a smooth function such that
\begin{align}\label{e06017}
\rho(U)=
\left\{
  \begin{array}{ll}
    1, & \hbox{$|U|\leq \alpha_0/4$;} \\
    0, & \hbox{$|U|\geq \alpha_0/2$.}
  \end{array}
\right.
\end{align}
 Let $\Delta^{TY}$ be the ordinary Laplacian operator on $T_{y_0}Y$.

Recall that $\ker D^X|_{B^Y(y_0,\alpha_0/2)}$ is a  smooth vector
subbundle of $\cE_{X,y_0}$ on $B^Y(y_0,\alpha_0/2)$.
If $\alpha_0>0$ is small enough, there is a vector bundle $K\subset \cE_{X,y_0}$ over
$T_{y_0}Y$, which coincides with $\ker D^X$ on $B(0,\alpha_0/2)$, with $\ker D_{y_0}^X$
on $T_{y_0}Y \backslash B(0,\alpha_0)$, such that if $K^{\bot}$ is the orthogonal bundle to $K$ in $\cE_{X,y_0}$,
then
\begin{align}\label{e06018}
K^{\bot}\cap \ker D_{y_0}^X=\{0\}.
\end{align}
For $U\in T_{y_0}Y$, in the following sections, let $P^K_U $ be the orthogonal projection operator from $\cE_{X,y_0}$ to $K_U$.
Set $P_{U}^{K,\bot}=1-P^K_{U}$.

Set
\begin{align}\label{e06019}
L_{\var,T}^1=(1-\rho^2(U))(-\var^2\Delta^{TY}+T^2P_U^{K,\bot}D_{y_0}^{X,2}P_U^{K,\bot})+\rho^2(U)(\mB_{\var,T/\var}').
\end{align}
Comparing with (\ref{e06033}), 
for any $m\in\N$ and $T\geq 1$ fixed, there exist $C_1,C_2>0$, such that for $|U|$, $|U'|<\alpha_0/4$, $0<\var\leq 1$,
\begin{align}\label{e956}
|\widetilde{\mathbf{G}}_{\var^2}(L_{\var,T}^1)((U,x),(U',x'))|\leq C_1\exp\left(-\frac{C_2}{\var^2}\right).
\end{align}

For $(U,x)\in N_{Y^g/Y, y_0}\times X_{y_0}$, $|U|<\alpha_0/4$, $\var>0$, set
\begin{align}\label{e566}
(S_{\var}s)(U, x)=s\left(U/\var,x\right).
\end{align}
Put
\begin{align}\label{e567}
\begin{split}
L_{\var,T}^2:=S_{\var}^{-1}L_{\var,T}^1S_{\var}&=
(1-\rho^2(\var U))(-S_{\var}^{-1}\var^2\Delta^{TY}S_{\var}+T^2P_{\var U}^{K,\bot}D_{y_0}^{X,2}P_{\var U}^{K,\bot})
\\
&+\rho^2(\var U)S_{\var}^{-1}\mB_{\var, T/\var}'S_{\var}.
\end{split}
\end{align}

Let $\dim T_{y_0}Y^g=l'$ and $\dim N_{Y^g/Y,y_0}=2l''$. Then $l'+2l''=m$.
Let $\{f_1,\cdots,f_{l'}\}$ be an orthonormal basis of $T_{y_0}Y^g$ and let $\{f_{l'+1},\cdots,f_{l'+2l''}\}$ be an orthonormal basis
of $N_{Y^g/Y, y_0}$.
For $\alpha\in\C(f^p\wedge\, i_{f_p})_{1\leq p\leq l'}$, let $[\alpha]^{max}\in\C$ be the coefficient of $f^1\wedge\cdots\wedge f^{l'}$
in the expansion of $\alpha$. Let $R_{\var}$ be a rescaling such that
\begin{align}\label{e568}
\begin{split}
  &R_{\var}(c(e_i))=c(e_i),
  \\
    &R_{\var}(c(f_{p,1}^H))=\frac{f_1^{p,H}\wedge}{\var}-\var\, i_{f_{p,1}^H}, \quad {\rm for}\ 1\leq p\leq l',
    \\
    &R_{\var}(c(f_{p,1}^H))=c(f_{p,1}^H), \quad {\rm for}\ l'+1\leq p\leq l'+2l''.
\end{split}
\end{align}
Then $R_{\var}$ is a Clifford algebra homomorphism. Set
\begin{align}\label{e569}
L_{\var,T}^3=R_{\var}(L_{\var,T}^2).
\end{align}

Let $\exp(-L_{\var,T}^i)((U,x), (U',x'))$, $\widetilde{\mathbf{F}}_{\var^2}(L_{\var,T}^i)((U,x), (U',x'))$
($(U,x), (U',x')\in T_{y_0}Y\times X_{y_0}$, $i=1,2,3$) be the smooth kernels of
$\exp(-L_{\var,T}^i)$, $\widetilde{\mathbf{F}}_{\var^2}(L_{\var,T}^i)$ with respect to the volume form $dv_{T_{y_0}Y}(U')dv_{X_{y_0}}(x')$.
Using finite propagation speed as in (\ref{e06054}), we see that if $(U,x)\in N_{Y^g/Y, y_0}\times X_{y_0}$, $|U|<\alpha_0/4$, then
\begin{align}\label{e06022}
\widetilde{\mathbf{F}}_{\var^2}(\mB_{\var, T/\var}')(g^{-1}(y_0,U,x),(y_0,U,x))k_Y(y_0,U)
=\widetilde{\mathbf{F}}_{\var^2}(L_{\var,T}^1)(g^{-1}(U,x), (U,x)).
\end{align}
By (\ref{e06009}), (\ref{e06033}), (\ref{e956}) and (\ref{e06022}), there exist $C_1,C_2>0$, such that for $|U|<\alpha_0/4$,
$x\in X_{y_0}$,
\begin{align}\label{e565}
\begin{split}
|\exp(-\mB_{\var, T/\var}')&(g^{-1}(y_0,U,x),(y_0,U,x))k_Y(y_0,U)
\\
&-\exp(-L_{\var,T}^1)(g^{-1}(U,x),(U,x))|\leq C_1\exp\left(-\frac{C_2}{\var^2}\right).
\end{split}
\end{align}

Since $T_{y_0}Y_b$ is an Euclidean space, on $T_{y_0}Y_b$,
\begin{align}\label{e442}
\mS(TY, L_Y)_{y_0}=\mS(TY^g)\widehat{\otimes}\mS(N_{Y^g/Y})\otimes L_Y^{1/2},
\end{align}
where $\mS(\cdot)$ is the spinor space.
From (\ref{e568}), we know that $L_{\var, T}^3((U,x), (U',x'))$ lies in
\begin{align}\label{e06055}
\pi_2^*\Lambda (T_b^*S)\widehat{\otimes}
(\End(\Lambda (T^*Y^g))\widehat{\otimes}C(N_{Y^g/Y})\otimes \End(L_Y^{1/2}))_{y_0}\widehat{\otimes}\End(\mS(TX, L_X)\otimes E)
\end{align}
and acts on
\begin{align}\label{e06056}
\pi_2^*\Lambda (T_b^*S)\widehat{\otimes}(\Lambda (T^*Y^g)\widehat{\otimes}\mS(N_{Y^g/Y})\otimes L_Y^{1/2})_{y_0}\widehat{\otimes} \mS(TX, L_X)\otimes E.
\end{align}

Recall that $\widetilde{c}_{TY^g}$ is the trace element defined in (\ref{e01086}).
\begin{lemma}\label{e05011}
For $t>0$, $(U,x)\in N_{Y^g/Y, y_0}\times X_{y_0}$ and $g\in G$, we have
\begin{multline}\label{e570}
\int_{Y^g}\int_{U\in N_{Y^g/Y}, \atop |U|\leq \alpha_0/4}\int_X \tr_s[g\exp(-L_{\var,T}^1)(g^{-1}(U,x),(U,x))]
dv_{Y^g}(y)dv_{N_Y}(U)dv_{X}(x)
\\
=\int_{Y^g}\int_{U\in N_{Y^g/Y}, \atop |U|\leq \alpha_0/4\var}\int_X \widetilde{c}_{TY^g}\tr_s\left[g\exp(-L_{\var,T}^3)
\left(g^{-1}\left(U,x\right),\left(U,x\right)\right)\right]^{max}
\\
\cdot dv_{Y^g}(y)dv_{N_Y}(U)dv_{X}(x).
\end{multline}
\end{lemma}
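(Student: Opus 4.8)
The plan is to prove the identity of Lemma~\ref{e05011} by combining two \emph{exact} manipulations — a dilation in the direction transverse to $Y^g$, and the Getzler--type rescaling $R_\varepsilon$ of the Clifford variables along $Y^g$ — with the elementary supertrace formulas (\ref{e01008})--(\ref{e01090}) for Clifford algebras. No limit $\varepsilon\to 0$ is taken at this stage: the assertion is an equality for each fixed $\varepsilon\in(0,1]$ and $T\geq 1$.

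First I would treat the passage $L^1_{\varepsilon,T}\rightsquigarrow L^2_{\varepsilon,T}$. Since $L^2_{\varepsilon,T}=S_\varepsilon^{-1}L^1_{\varepsilon,T}S_\varepsilon$ by (\ref{e567}) and $S_\varepsilon$ is the dilation $s(U,x)\mapsto s(U/\varepsilon,x)$ of (\ref{e566}), conjugation of the heat semigroup gives $\exp(-L^2_{\varepsilon,T})=S_\varepsilon^{-1}\exp(-L^1_{\varepsilon,T})S_\varepsilon$, hence on the level of smooth kernels (computed against $dv_{N_Y}(U')dv_X(x')$) one has $\exp(-L^2_{\varepsilon,T})((U,x),(U',x'))=\varepsilon^{\dim N}\exp(-L^1_{\varepsilon,T})((\varepsilon U,x),(\varepsilon U',x'))$. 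Substituting $U=\varepsilon V$ in the $dv_{N_Y}$--integral, the Jacobian $\varepsilon^{-\dim N}$ cancels the $\varepsilon^{\dim N}$ coming from the kernel, while the cut--off $\{|U|\le\alpha_0/4\}$ dilates to $\{|U|\le\alpha_0/(4\varepsilon)\}$; the factor $\tr_s[g\,\cdot\,]$ and the integrals $dv_{Y^g}$, $dv_X$ are unaffected. (The kernel bounds of \S6.1--\S6.2, in particular (\ref{e06033}) and (\ref{e956}), guarantee that all these kernels are smooth and the integrals absolutely convergent.) This turns the left side of (\ref{e570}) into $\int_{Y^g}\int_{|U|\le\alpha_0/(4\varepsilon)}\int_X \tr_s[g\exp(-L^2_{\varepsilon,T})(g^{-1}(U,x),(U,x))]\,dv_{Y^g}\,dv_{N_Y}\,dv_X$.

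Next comes the Clifford rescaling $L^2_{\varepsilon,T}\rightsquigarrow L^3_{\varepsilon,T}=R_\varepsilon(L^2_{\varepsilon,T})$ of (\ref{e568})--(\ref{e569}). The map $R_\varepsilon$ is a $\Z_2$--graded algebra homomorphism of the Clifford module bundle: it is the identity on the $C(TX)$--, $C(N_{Y^g/Y})$-- and $\Lambda(T^*S)$--factors, and on the $C(TY^g)$--factor it is the symbol--type map $c(f_{p,1}^H)\mapsto \varepsilon^{-1}f_1^{p,H}\wedge-\varepsilon\,i_{f_{p,1}^H}$, which respects the Clifford relations because $\{f_1^{p,H}\wedge,\ i_{f_{p,1}^H}\}=1$. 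Being an algebra homomorphism acting only fibrewise (trivially on the PDE part), $R_\varepsilon$ intertwines the heat kernels: $\exp(-L^3_{\varepsilon,T})(z,z')=R_\varepsilon\bigl(\exp(-L^2_{\varepsilon,T})(z,z')\bigr)$. I then invoke the linear--algebra fact that for the spinor module of $C(TY^g)$ the graded trace $\tr_s$ (or $\tr$ when $\dim Y^g$ is odd, consistently with (\ref{e01086})) picks out precisely the coefficient of the top Clifford monomial $c(f_{1,1}^H)\cdots c(f_{l',1}^H)$, with factor $\widetilde{c}_{TY^g}$, while that same coefficient is recovered from $R_\varepsilon$ of the element as the coefficient $[\,\cdot\,]^{max}$ of $f^1\wedge\cdots\wedge f^{l'}$, the powers of $\varepsilon$ produced by $R_\varepsilon$ on the top monomial being exactly matched by those left over from $S_\varepsilon$, the volume form $dv_{N_Y}$ and the normalizations built into (\ref{e06019}) and (\ref{e569}). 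Using the identification (\ref{e442}) and the matrix picture (\ref{e06055})--(\ref{e06056}) to factor the total supertrace as (graded trace over $\mS(TY^g)$)$\,\times\,$(graded trace over $\mS(N_{Y^g/Y})\otimes\mS(TX,L_X)\otimes E$)$\,\times\,$($\Lambda(T^*S)$--part), and noting that $g$ acts as a scalar on $\mS(TY^g)$ (it fixes $Y^g$ pointwise, cf.\ (\ref{e01151})) so that it commutes with $R_\varepsilon$, one gets $\tr_s[g\exp(-L^2_{\varepsilon,T})(\cdot,\cdot)]=\widetilde{c}_{TY^g}\,\tr_s[g\exp(-L^3_{\varepsilon,T})(\cdot,\cdot)]^{max}$ pointwise in the integrand, which combined with the first step yields (\ref{e570}).

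The main obstacle I expect is the bookkeeping of the powers of $\varepsilon$: one must verify that the $\varepsilon$'s created by $R_\varepsilon$ on the top exterior monomial cancel exactly against those arising from the dilation $S_\varepsilon$, the measure $dv_{N_Y}$ and the normalizing constants, so that neither side of (\ref{e570}) carries a stray $\varepsilon^{\pm k}$; this is the quantitative heart of the Getzler rescaling step. The remaining points — smoothness and uniformity of the kernels of $L^1_{\varepsilon,T}$, $L^2_{\varepsilon,T}$, $L^3_{\varepsilon,T}$, and the legitimacy of the factorization of the supertrace through $\mS(TY^g)$ — are routine given the estimates already established in \S6.1--\S6.2 and the algebraic set--up of \S6.2.
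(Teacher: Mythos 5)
Your overall strategy — conjugate by the dilation $S_\varepsilon$, apply the Clifford rescaling $R_\varepsilon$, and then identify the supertrace with the $[\cdot]^{\max}$-coefficient via (\ref{e01086}) — is exactly the paper's. The final identity is correct. However, the two explicit intermediate claims you wrote down each carry a wrong power of $\varepsilon$, and they happen to compensate, so the argument as written is internally inconsistent with the correct normalizations (and hence would not survive scrutiny at a single step).

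First, $L^{1}_{\varepsilon,T}$, $L^{2}_{\varepsilon,T}$ and $L^{3}_{\varepsilon,T}$ are operators on sections over $T_{y_0}Y\times X_{y_0}$, and their kernels are taken against $dv_{T_{y_0}Y}(U')\,dv_X(x')$, not against $dv_{N_Y}(U')\,dv_X(x')$ — the latter is not a volume form on the underlying $m$-dimensional $Y$-slice. Consequently the dilation relation is
\[
\exp(-L^{2}_{\varepsilon,T})\big((U,x),(U',x')\big)
=\varepsilon^{\dim Y}\,\exp(-L^{1}_{\varepsilon,T})\big((\varepsilon U,x),(\varepsilon U',x')\big),
\]
with exponent $\dim Y=m$, not $\dim N$. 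After you substitute $U\mapsto \varepsilon U$ in the normal-fiber integral $dv_{N_Y}$ the Jacobian produces only $\varepsilon^{\dim_{\mathbb{R}}N}$, and there is a \emph{leftover} factor $\varepsilon^{\dim_{\mathbb{R}}N-\dim Y}=\varepsilon^{-\dim Y^g}$ which does not cancel at this stage.

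Second, the pointwise supertrace identity you assert, namely $\tr_s[g\exp(-L^{2}_{\varepsilon,T})]=\widetilde{c}_{TY^g}\tr_s[g\exp(-L^{3}_{\varepsilon,T})]^{\max}$, omits the $\varepsilon$-power coming from $R_\varepsilon$ acting on the top Clifford monomial: by (\ref{e568}), $R_\varepsilon\big(c(f_{1,1}^H)\cdots c(f_{l',1}^H)\big)$ has $[\cdot]^{\max}$-coefficient $\varepsilon^{-l'}$ with $l'=\dim Y^g$, so the correct relation is
\[
\tr_s\big[g\exp(-L^{2}_{\varepsilon,T})\big]=\varepsilon^{\dim Y^g}\,\widetilde{c}_{TY^g}\,
\tr_s\big[g\exp(-L^{3}_{\varepsilon,T})\big]^{\max},
\]
which is exactly the paper's identity (\ref{e96}). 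This $\varepsilon^{\dim Y^g}$ is what cancels the leftover $\varepsilon^{-\dim Y^g}$ from the first step. Your two stated claims are each off by $\varepsilon^{\pm\dim Y^g}$, and only their product gives the advertised cancellation. You correctly flagged the $\varepsilon$-bookkeeping as the quantitative heart, but you should carry the $\varepsilon^{\dim Y}$ kernel factor and the $\varepsilon^{\dim Y^g}$ Getzler factor explicitly (as the paper does in (\ref{e95})--(\ref{e96})) rather than asserting that each step is $\varepsilon$-free.
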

\begin{proof}
From (\ref{e567}) and the uniqueness of the heat kernel, we have
\begin{align}\label{e93}
\exp(-L_{\var,T}^2)=S_{\var}^{-1}\exp(-L_{\var,T}^1)S_{\var}.
\end{align}

For $U\in T_{y_0}Y$, $x\in X_{y_0}$, ${\rm supp}\,\phi\subset B(0,\alpha_0/2)\times X_{y_0}$, we have
\begin{align}\label{e94}
\begin{split}
&\int_{T_{y_0}Y}\int_X\exp(-L_{\var,T}^2)((U,x),(U',x'))\phi(U',x')dv_{TY}(U')dv_X(x')
\\=&(\exp(-L_{\var,T}^2)\phi)(U,x)
=(S_{\var}^{-1}\exp(-L_{\var,T}^1)S_{\var}\phi)(U,x)=(\exp(-L_{\var,T}^1)S_{\var}\phi)(\var U,x)
\\
=&\int_{T_{y_0}Y}\int_X\exp(-L_{\var,T}^1)((\var U,x),(U',x'))(S_{\var}\phi)(U',x')dv_{TY}(U')dv_X(x')
\\
=&\var^{\dim Y}\cdot\int_{T_{y_0}Y}\int_X\exp(-L_{\var,T}^1)((\var U,x), (\var U',x'))\phi(U',x')dv_{TY}(U')dv_X(x').
\end{split}
\end{align}
Thus,
\begin{align}\label{e95}
\exp(-L_{\var,T}^1)(g^{-1}(U,x),(U,x))=\var^{-\dim Y}\exp(-L_{\var,T}^2)\left(g^{-1}(U/\var,x),(U/\var,x)\right).
\end{align}

By (\ref{e01086}), (\ref{e01154}), (\ref{e06055}), (\ref{e95}) and the definition of $L_{\var,T}^3$, we have
\begin{align}\label{e96}
\begin{split}
&\tr_s\left[g\exp(-L_{\var,T}^3)\left(g^{-1}(U/\var,x),(U/\var,x)\right)\right]^{max}
\\
=&\sum_j\widetilde{c}_{TY^g}^{-1}\,\var^{-\dim
Y^g}\tr_s\left[g\exp(-L_{\var,T}^2)\left(g^{-1}(U/\var,x),(U/\var,x)\right)\right]
\\
=&\widetilde{c}_{TY^g}^{-1}\var^{\dim_{\R} N}\tr_s\left[g\exp(-L_{\var,T}^1)(g^{-1}(U,x),(U,x))\right].
\end{split}
\end{align}

The proof of Lemma \ref{e05011} is complete.
\end{proof}

\subsection{Proof of Theorem \ref{e03022} ii)}

Let $K^X$ be the scalar curvature of the fibers $(TX, g^{TX})$.
Comparing with \cite[(3.15)-(3.17)]{Bismut1985},
for $T\geq 1$, we can compute that
\begin{align}\label{e574}
\lim_{\var\rightarrow 0}\var^2K_{T/\var}^Z=T^2K^X.
\end{align}

Let $\Gamma'$ be the connection form of $\nabla'$, which is defined in (\ref{e06016}). By using \cite[Proposition 3.7]{MR0167985}, we see that
for $U\in TY=\R^{m}$,
\begin{align}\label{e06023}
\Gamma^{'}_{U}=\frac{1}{2}(\nabla')^2(U, \cdot)+O\big(|U|^{2}\big).
\end{align}

\begin{lemma}\label{e06024}
For $U,V\in TY$, the following identity holds.
\begin{multline}
(\nabla')^2(U_1^H,V_1^H)=\frac{1}{4}\la R^{TX}(U_1^H,V_1^H)e_i,e_j\ra c(e_i)c(e_j)+\left(\frac{1}{2}R^{L_Z}+R^E\right)(U_1^H,V_1^H)
\\
+\frac{1}{4}\la R^{TY}(f_{p},f_q)U,V\ra c(f_{p,1}^H)c(f_{q,1}^H)
+\frac{1}{4\var^{2}}\la R^{TY}(g_{\alpha,2}^H, g_{\beta,2}^H)U,V\ra g^{\alpha}\wedge g^{\beta}\wedge
\\
-\frac{1}{2}d(\la S_1(e_i)e_i, \cdot\ra)(U_1^H,V_1^H)
+\frac{1}{2\var}\la R^{TY}(f_p, g_{\alpha,2}^H)U,V\ra c(f_{p,1}^H)g^{\alpha}\wedge .
\end{multline}
\end{lemma}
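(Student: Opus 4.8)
The plan is to write $\nabla'$ as a perturbation of the product-type Clifford connection $\,^0\nabla^{\mS_Z\otimes E}$ and to compute its curvature term by term, matching each piece of the asserted formula. By (\ref{e06016}), along the horizontal subbundle $T^HZ$ one has $\nabla'=\,^0\nabla^{\mS_Z\otimes E}+\omega$, where $\omega$ is the $\Lambda(T^*S)\widehat{\otimes}\End(\mS(TZ,L_Z)\otimes E)$-valued one-form determined by
\[
\omega(f_{p,1}^H)=-\tfrac12\la S_1(e_i)e_i,f_{p,1}^H\ra+\tfrac1{2\var}\la S_2(f_p)f_q,g_{\alpha,2}^H\ra c(f_{q,1}^H)g^\alpha\wedge+\tfrac1{4\var^2}\la S_2(f_p)g_{\alpha,2}^H,g_{\beta,2}^H\ra g^\alpha\wedge g^\beta\wedge .
\]
For $U,V\in TY$ this gives $(\nabla')^2(U_1^H,V_1^H)=R^{\,^0\mS_Z\otimes E}(U_1^H,V_1^H)+(d^{\,^0\nabla}\omega)(U_1^H,V_1^H)+\tfrac12[\omega(U_1^H),\omega(V_1^H)]$, with the signs dictated by the $\Z_2$-grading carried by the form factors $g^\alpha\wedge$ and the odd Clifford elements $c(f_{q,1}^H)$ occurring in $\omega$.

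Next I would compute $R^{\,^0\mS_Z\otimes E}$ on $T^HZ$. By (\ref{e03012}) (together with the convention $\,^0\nabla^{\mS_Z\otimes E}=\,^0\nabla^{\mS_Z}\otimes 1+1\otimes\nabla^E$) the connection $\,^0\nabla^{\mS_Z\otimes E}$ is the tensor product of $\pi_1^*\nabla^{\mS_Y}$, $\nabla^{\mS_X}$ and $\nabla^E$, so its curvature is the sum of the three corresponding curvatures. Applying the Clifford curvature identity (\ref{e01038}) to $TX$ and, after pushing $U_1^H,V_1^H$ forward to $U,V\in TY$, to $TY$ — using the curvature symmetry $\la R(a,b)c,d\ra=\la R(c,d)a,b\ra$ and $\tfrac12 R^{L_Y}+\tfrac12 R^{L_X}=\tfrac12 R^{L_Z}$ from (\ref{e02005}) — one obtains exactly the first three terms of the formula, namely $\tfrac14\la R^{TX}(U_1^H,V_1^H)e_i,e_j\ra c(e_i)c(e_j)$, $\big(\tfrac12 R^{L_Z}+R^E\big)(U_1^H,V_1^H)$ and $\tfrac14\la R^{TY}(f_p,f_q)U,V\ra c(f_{p,1}^H)c(f_{q,1}^H)$; part of the coefficient of $c(f_{p,1}^H)c(f_{q,1}^H)$ may also arise from $[\omega,\omega]$.

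It then remains to evaluate the perturbation contributions. The scalar part $-\tfrac12\la S_1(e_i)e_i,\cdot\ra$ of $\omega$ commutes with everything, so it enters only through $d^{\,^0\nabla}\omega$, yielding $-\tfrac12 d(\la S_1(e_i)e_i,\cdot\ra)(U_1^H,V_1^H)$ via Cartan's formula (the vertical component of $[U_1^H,V_1^H]$ being absorbed by $\nabla'_{[U_1^H,V_1^H]}$, exactly as in the proof of Bismut's Lichnerowicz formula \cite{Bismut1985}). The $\var^{-1}$- and $\var^{-2}$-pieces of $\omega$, built from the tensor $S_2$ of $\pi_2$, are handled precisely as the analogous terms in the square $\big(\nabla^{\mS_Z\otimes E}_{e_i}+\cdots\big)^2$ appearing in (\ref{e01043}): combining $d^{\,^0\nabla}$ of $\la S_2(\cdot)\cdot,\cdot\ra$ with the quadratic brackets $[\omega(U_1^H),\omega(V_1^H)]$ and invoking the structure equations of the submersions (relating $S_2$, and brackets of horizontal lifts, to $R^{TY}$, cf. \cite{Bismut1985} and \cite[Theorem 5.2]{MR1942300}, together with (\ref{e01131})), one recovers $\tfrac1{4\var^2}\la R^{TY}(g_{\alpha,2}^H,g_{\beta,2}^H)U,V\ra g^\alpha\wedge g^\beta\wedge$ and $\tfrac1{2\var}\la R^{TY}(f_p,g_{\alpha,2}^H)U,V\ra c(f_{p,1}^H)g^\alpha\wedge$. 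Assembling all pieces gives the stated identity.

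The main obstacle is the sign and grading bookkeeping for $\omega$ — whose values mix the odd Clifford generators $c(f_{q,1}^H)$ with the odd exterior factors $g^\alpha\wedge$ — together with the precise structure equations for the composite fibration which dictate how the exterior derivatives of the $\la S_i(\cdot)\cdot,\cdot\ra$ tensors reorganize into the curvatures $R^{TY}$; this is the same algebra underlying Bismut's Lichnerowicz formula, now carried out in the two-step setting. Once these identities are in hand the rest is a lengthy but routine matching of coefficients.
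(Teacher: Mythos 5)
Your plan --- write $\nabla'=\,^0\nabla^{\mS_Z\otimes E}+\omega$, compute $(\nabla')^2$ as the background curvature plus $d^{\,^0\nabla}\omega$ plus $\omega\wedge\omega$, and convert the $S_2$-built pieces into $R^{TY}$ via submersion structure equations --- matches the paper's route. The paper's proof is one short step precisely because it writes out the key identity, namely Bismut's fundamental identity \cite[Theorem 4.14]{Bismut1985} (stated in (\ref{e06042})),
\begin{equation*}
\la R^{TY}(U,V)P^{TY}Z, P^{TY}W\ra+\la (S_2P^{TY}S_2)(U,V)Z, W\ra
+\la (\nabla^{TY} S_2)(U,V)Z,W \ra=\la R^{TY}(Z,W)U,V\ra,
\end{equation*}
together with the vanishings $(S_2P^{TY}S_2)(U,V)f_p=0$ and $\la(\nabla^{TY}S_2)(U,V)f_p,f_q\ra=0$ coming from $S_2(TY)\subset T_2^HV$; your outline gestures at ``structure equations of the submersions'' and cites the right sources, but should state this identity explicitly, since it is also the reason $\la R^{TY}(U,V)f_p,f_q\ra = \la R^{TY}(f_p,f_q)U,V\ra$ --- that ``pair symmetry'' is not a free-standing fact for the fiber-bundle connection $\nabla^{TY}$, but follows from the displayed identity when the extra terms vanish.
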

\begin{proof}

By the fundamental identity of \cite[Theorem 4.14]{Bismut1985} (see also \cite[(7.15)]{MR1942300}), for $Z, W \in TV$,
\begin{multline}\label{e06042}
\la R^{TY}(U,V)P^{TY}Z, P^{TY}W\ra+\la (S_2P^{TY}S_2)(U,V)Z, W\ra
\\
+\la (\nabla^{TY} S_2)(U,V)Z,W \ra=\la R^{TY}(Z,W)U,V\ra.
\end{multline}

Since $S_2$ maps $TY$ to $T_2^HV$, we have
\begin{align}\label{e06043}
(S_2P^{TY}S_2)(U,V)f_p=0, \quad \la (\nabla^{TY} S_2)(U,V)f_p,f_q\ra=0.
\end{align}

Then Lemma \ref{e06024} follows from (\ref{e06016}), (\ref{e06042}) and  (\ref{e06043}).
\end{proof}

\begin{lemma}\label{e593}
When $\var\rightarrow 0$, the limit $L_{0,T}^3=\lim_{\var\rightarrow 0}L_{\var,T}^3$ exists and
\begin{align}\label{e594}
L_{0,T}^3|_{V^g}=-\left(\partial_{p}+\frac{1}{4}\la R^{TY}|_{V^g}U,f_{p,1}^H\ra\right)^2+\frac{1}{2}R^{L_Y}|_{V^g}+\mB_{T^2}''|_{V^g}.
\end{align}
\end{lemma}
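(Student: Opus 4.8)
The plan is to prove Lemma \ref{e593} by taking the $\var\to0$ limit term by term in the explicit formula (\ref{e06015}) for $\mB_{\var,T/\var}'$, after conjugating by $S_\var$ and applying the rescaling $R_\var$. First I would record the effect of $S_\var^{-1}(\cdot)S_\var$ on the relevant operators. Since $S_\var$ rescales the normal variable $U$ by $\var^{-1}$, we have $S_\var^{-1}\var^2\Delta^{TY}S_\var\to$ the flat Laplacian $-\partial_p^2$ in the $Y^g$-directions together with the harmonic-oscillator piece coming from the normal bundle, and $S_\var^{-1}\partial_U S_\var=\var\partial_U$; the vector field $\,^0\nabla^{\mS_Z\otimes E}_{f_{p,1}^H}$ splits according to the decomposition $TY=TY^g\oplus N_{Y^g/Y}$ as in (\ref{e442}). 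The Clifford rescaling $R_\var$ in (\ref{e568}) sends $c(f_{p,1}^H)\mapsto \var^{-1}f_1^{p,H}\wedge-\var\, i_{f_{p,1}^H}$ for the $Y^g$-directions, leaving the $X$-Clifford variables and the $N_{Y^g/Y}$-Clifford variables untouched; combined with (\ref{e06023}) — which says the connection form of $\nabla'$ is $\frac12(\nabla')^2(U,\cdot)$ to leading order — and Lemma \ref{e06024}, the square of the first (horizontal) bracket in (\ref{e06015}) becomes, in the limit, the Bismut-type square $-(\partial_p+\tfrac14\la R^{TY}|_{V^g}U,f_{p,1}^H\ra)^2$ in the $f_{p,1}^H$-directions.

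Next I would deal with the purely fiberwise $X$-term. The operators $T\,^0\nabla^{\mS_Z\otimes E}_{e_i}$, the zeroth-order Clifford terms involving $S_1$, $S_3$ scaled by $\var$ or $\var/T$, and the scalar curvature term: using (\ref{e574}), $\var^2K_{T/\var}^Z\to T^2K^X$, and all the $O(\var)$ and $O(\var/T)$ cross-terms in (\ref{e06015}) vanish in the limit, so the $X$-part of $\mB_{\var,T/\var}'$ converges to (after the $T$-rescaling built into (\ref{e06050})) exactly $\mB_{T^2}''|_{V^g}$, the operator defined in (\ref{e05013}) via the rescaled Bismut superconnection $B_{1,T^2}|_{V^g}$ of the fibration $W|_{V^g}\to V^g$. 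This is just Bismut's local index computation for the fibration $\pi_1$ restricted over $V^g$, read off from (\ref{e01043}) applied to $(T_1^H(W|_{V^g}),g^{TX},\nabla^{L_X},\nabla^E)$. The curvature term $\tfrac12 R^{L_Y}|_{V^g}$ appears from the $\tfrac12(\tfrac12 R^{L_Z}+R^E)$ contribution once one restricts to $V^g$ and uses $L_Z=\pi_1^*L_Y\otimes L_X$ together with (\ref{e02005}), (\ref{e06016}): the $R^{L_Y}$-piece survives after rescaling while the mixed and fiber-$X$ curvature pieces are absorbed into $\mB_{T^2}''|_{V^g}$.

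Then I would assemble the pieces: the horizontal square contributes $-(\partial_p+\tfrac14\la R^{TY}|_{V^g}U,f_{p,1}^H\ra)^2$, the fiber-$X$ part contributes $\mB_{T^2}''|_{V^g}$, and the line-bundle curvature contributes $\tfrac12 R^{L_Y}|_{V^g}$, giving (\ref{e594}). Existence of the limit $L_{0,T}^3=\lim_{\var\to0}L_{\var,T}^3$ is a consequence of the fact that after the rescalings $S_\var$ and $R_\var$ every coefficient of $L_{\var,T}^3$ is polynomial in $\var$ with the negative powers of $\var$ cancelling — this is the standard Getzler-rescaling bookkeeping: the $\var^{-2}$ and $\var^{-1}$ terms (from $c(T_2)/\var$, the $\Delta^{TY}$-rescaling, the $S_2$-terms in (\ref{e06016})) conspire with the Clifford rescaling $R_\var$ exactly so that the most singular surviving term is the harmonic-oscillator Hamiltonian in the normal directions, which here is organized into the covariant square via Lemma \ref{e06024}. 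The main obstacle is precisely this last point: one must check carefully that all the potentially divergent $\var^{-1}$ and $\var^{-2}$ terms — coming from $-c(T_2)/(4\var)$, from $\tfrac1{2\var}\la S_2(f_p)f_q,g^H_{\alpha,2}\ra c(f_{q,1}^H)g^\alpha\wedge$ and $\tfrac1{4\var^2}\la S_2(f_p)g^H_{\alpha,2},g^H_{\beta,2}\ra g^\alpha\wedge g^\beta\wedge$ in (\ref{e06016}), and from $S_\var^{-1}\var^2\Delta^{TY}S_\var$ — reorganize, via $R_\var$ and the identity (\ref{e06023}), Lemma \ref{e06024}, into the finite covariant derivative $\partial_p+\tfrac14\la R^{TY}|_{V^g}U,f_{p,1}^H\ra$ with no leftover singular terms; this is the heart of the matter and the only place where a genuine (if routine) computation is unavoidable, and it is deferred to the proof of Theorem \ref{e107}.
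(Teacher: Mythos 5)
Your proposal follows essentially the same route as the paper: take the $\var\to0$ limit of the explicit expression~(\ref{e06015}) after the conjugation by $S_\var$ and the Clifford rescaling $R_\var$, use~(\ref{e06023}) together with Lemma~\ref{e06024} to reorganize the $\var^{-1}$ and $\var^{-2}$ terms coming from $\nabla'$ and $c(T_2)/\var$ into the finite covariant derivative $\partial_p+\tfrac14\la R^{TY}|_{V^g}U,f_{p,1}^H\ra$, let the $O(\var)$ cross-terms and the scalar curvature (via~(\ref{e574})) settle into the fiberwise piece, and identify that piece with $\mB''_{T^2}|_{V^g}$ by comparing with the Lichnerowicz formula~(\ref{e05012}) for $(B_{1,T^2}|_{V^g})^2$.

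One correction of emphasis: you write at the end that the verification that the singular terms cancel ``is deferred to the proof of Theorem~\ref{e107}.'' That is not where it belongs. The cancellation is a pointwise algebraic identity about the coefficients of $L^3_{\var,T}$, and in the paper it is carried out directly inside the proof of the present lemma --- it is exactly the displayed computation of $\lim_{\var\to0}R_{\var^2}\bigl[\var\, S_{\var^2}^{-1}\nabla'_{f_p}|_U S_{\var^2}\bigr]$ followed by the explicit limit expression~(\ref{e591}), which is then matched term by term against~(\ref{e05012}). Theorem~\ref{e107} is a purely analytic statement (uniform Gaussian bounds on the kernels of $\exp(-L^3_{\var,T})$ and convergence of these kernels as $\var\to0$) and it presupposes that the limit operator $L^3_{0,T}$ is already identified; pushing the algebraic bookkeeping into Theorem~\ref{e107} would therefore be circular. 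So the ``genuine computation'' you correctly identify as the heart of the matter is not deferred at all --- it is the content of this lemma's proof.
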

\begin{proof}
By (\ref{e06023}) and Lemma \ref{e06024}, we have

\begin{multline}
\lim_{\var\rightarrow 0}R_{\var^2}[\var S_{\var^2}^{-1}\nabla'_{f_p}|_US_{\var^2}]
=
\partial_p+ \lim_{\var\rightarrow 0}R_{\var^2}[\var^2 (S_{\var}^{-1}(\nabla')^2S_{\var})(U, f_p)]
\\
=\partial_p+\frac{1}{4}\sum_{1\leq q, r\leq l'}\la R^{TY}(f_{q},f_r)U, f_p\ra f^q\wedge f^r\wedge
+\frac{1}{4}\la R^{TY}(g_{\alpha,2}^H, g_{\beta,2}^H)U, f_p\ra g^{\alpha}\wedge g^{\beta}\wedge
\\
+\frac{1}{2}\sum_{1\leq q\leq l'}\la R^{TY}(f_q, g_{\alpha,2}^H)U, f_p\ra f^q\wedge g^{\alpha}\wedge.
\end{multline}

Then by (\ref{e06015}), (\ref{e574}) and the definition of $L_{\var,T}^3$, we have
\begin{align}\label{e591}
\begin{split}
\lim_{\var\rightarrow 0}&L_{\var,T}^3=-\left(T\,^0\nabla^{\mS_Z\otimes E}_{e_i}+\frac{1}{2}
\sum_{1\leq p\leq l'}\la S_1(e_i)e_j, f_{p,1}^H\ra c(e_j)f^p\wedge\right.
\\
&\quad+\frac{1}{4T}\sum_{1\leq p,q\leq l'}\la S_1(e_i)f_{p,1}^H, f_{q,1}^H\ra f^p\wedge f^q\wedge
+\frac{1}{2}\la S_3(e_i)e_j, g_{\alpha,3}^H\ra c(e_j)g^{\alpha}\wedge
\\
&\quad +\frac{1}{2T}\sum_{1\leq p\leq l'}\la S_3(e_i)f_{p,1}^H, g_{\alpha,3}^H\ra f^p\wedge g^{\alpha}\wedge
\left.+\frac{1}{4T}\la S_3(e_i)g_{\alpha,3}^H, g_{\beta,3}^H\ra g^{\alpha}\wedge g^{\beta}\wedge\right)^2
\\
&\quad +dT\wedge\left(D^X-\frac{1}{8T^2}\left(\sum_{1\leq p,q\leq l'}
\la [f_{p,1}^H,f_{q,1}^H], e_i\ra c(e_i)f^p\wedge f^q\wedge\right.\right.
\\
&\quad \quad\left.\left.+4\sum_{1\leq p\leq l'}\la S_1(g_{\alpha,3}^H)e_i, f_{p,1}^H\ra c(e_i)f^p\wedge g^{\alpha}\wedge
+\la [g_{\alpha,3}^H,g_{\beta,3}^H], e_i\ra c(e_i)g^{\alpha}\wedge g^{\beta}\wedge\right)\right)
\\
&-\left(\partial_p+\frac{1}{4}\sum_{1\leq q, r\leq l'}\la R^{TY}(U, f_p)f_{q},f_r\ra f^q\wedge f^r\wedge\right.
\\
&
+\left.\frac{1}{4}\la R^{TY}(U, f_p)g_{\alpha,2}^H, g_{\beta,2}^H\ra g^{\alpha}\wedge g^{\beta}\wedge
+\frac{1}{2}\sum_{1\leq q\leq l'}\la R^{TY}(U, f_p)f_q, g_{\alpha,2}^H\ra f^q\wedge g^{\alpha}\wedge\right)^2
\\
&+\frac{T^2}{4}K^X+\frac{T^2}{2}(R^{L_Z}/2+R^E)(e_i,e_j)c(e_i)c(e_j)+T \sum_{1\leq p\leq l'}(R^{L_Z}/2+R^E)(e_i,f_{p,1}^H)c(e_i) f^p\wedge
\\
&+\frac{1}{2}\sum_{1\leq p,q\leq l'}(R^{L_Z}/2+R^E)(f_{p,1}^H,f_{q,1}^H)f^p\wedge f^q\wedge
+\frac{1}{2}(R^{L_Z}/2+R^E)(g_{\alpha,3}^H,g_{\beta,3}^H)g^{\alpha}\wedge g^{\beta}\wedge
\\
&+\sum_{1\leq p\leq l'} (R^{L_Z}/2+R^E)(f_{p,1}^H,g_{\alpha,3}^H) f^p\wedge g^{\alpha}\wedge
+T(R^{L_Z}/2+R^E)(e_i,g_{\alpha,3}^H)c(e_i)g_3^{\alpha,H}\wedge.
\end{split}
\end{align}
By (\ref{e01043}) and (\ref{e06050}), we have
\begin{multline}\label{e05012}
(B_{1, T^2}|_{V^g})^2=-\left(T\,^0\nabla^{\mS_Z\otimes E}_{e_i}+\frac{1}{2}\sum_{1\leq p\leq l'}\la S_1(e_i)e_j, f_{p,1}^H\ra c(e_j)f^p\wedge\right.
\\
\quad+\frac{1}{4T}\sum_{1\leq p, q\leq l'}\la S_1(e_i)f_{p,1}^H, f_{q,1}^H\ra f^p\wedge f^q\wedge
+\frac{1}{2}\la S_3(e_i)e_j, g_{\alpha,3}^H\ra c(e_j)g^{\alpha}\wedge
\\
\quad +\frac{1}{2T}\sum_{1\leq p\leq l'}\la S_3(e_i)f_{p,1}^H, g_{\alpha,3}^H\ra f^p\wedge g^{\alpha}\wedge
\left.+\frac{1}{4T}\la S_3(e_i)g_{\alpha,3}^H, g_{\beta,3}^H\ra g^{\alpha}\wedge g^{\beta}\wedge\right)^2
\\
+\frac{T^2}{4}K^X+\frac{T^2}{2}(R^{L_Z}/2+R^E)(e_i,e_j)c(e_i)c(e_j)+T \sum_{1\leq p\leq l'}(R^{L_Z}/2+R^E)(e_i,f_{p,1}^H)c(e_i) f^p\wedge
\\
+\frac{1}{2}\sum_{1\leq p,q\leq l'}(R^{L_Z}/2+R^E)(f_{p,1}^H,f_{q,1}^H)f^p\wedge f^q\wedge
+\frac{1}{2}(R^{L_Z}/2+R^E)(g_{\alpha,3}^H,g_{\beta,3}^H)g^{\alpha}\wedge g^{\beta}\wedge
\\
+\sum_{1\leq p\leq l'} (R^{L_Z}/2+R^E)(f_{p,1}^H,g_{\alpha,3}^H) f^p\wedge g^{\alpha}\wedge
+T(R^{L_Z}/2+R^E)(e_i,g_{\alpha,3}^H)c(e_i)g_3^{\alpha,H}\wedge.
\end{multline}

So
\begin{align}
\lim_{\var\rightarrow 0}L_{\var,T}^3=-\left(\partial_{p}+\frac{1}{4}\la R^{TY}|_{V^g}U,f_{p,1}^H\ra\right)^2+
\frac{1}{2}R^{L_Y}|_{V^g}+\mB_{T^2}''|_{V^g}.
\end{align}
The proof of Lemma \ref{e593} is complete.
\end{proof}

\begin{thm}\label{e107}
i) For $T\geq 1$ fixed and $k\in \N$, there exist $c>0, C>0, r\in\N$
such that for any $(U,x), (U',x')\in T_{y_0}Y\times X_{y_0}$, $\var\in (0,1]$,
\begin{multline}\label{e108}
\sup_{|\alpha|,|\alpha'|\leq k}\left|\frac{\partial^{|\alpha|+|\alpha'|}}{\partial U^{\alpha}\partial U'^{\alpha'}}
\exp(-L_{\var,T}^3)((U,x),(U',x'))\right|
\\
\leq c(1+|U|+|U'|)^r\exp(-C|U-U'|^2).
\end{multline}

ii) For $T\geq 1$ fixed,  there exist $c>0, C>0, r\in\N$, $\gamma>0$,
such that for any $(U,x), (U',x')\in T_{y_0}Y\times X_{y_0}$, $\var\in (0,1]$,
\begin{multline}\label{e109}
|(\exp(-L_{\var,T}^3)-\exp(-L_{0,T}^3))((U,x), (U',x'))|
\\
\leq c\var^{\gamma}(1+|U|+|U'|)^r\exp(-C|U-U'|^2).
\end{multline}
\end{thm}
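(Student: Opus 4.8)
The plan is to prove Theorem \ref{e107} by the analytic localization technique of Bismut--Lebeau \cite{MR1188532}, in the form adapted to families of submersions by Ma \cite{MR1765553,MR1942300}; the statement is the exact analogue of \cite[Theorem 5.22]{MR1765553} in the present fibered situation, and the full details are left to Section 6.5. The first step is to view $L_{\var,T}^3$ as a family, parametrized by $\var\in(0,1]$ with $T\geq 1$ fixed, of generalized Laplacians acting on sections over the \emph{non-compact} product $T_{y_0}Y\times X_{y_0}$, and to introduce weighted Sobolev norms $\|\cdot\|_{\var,T,k}$ built out of the vector fields $\partial_{U^j}$, the rescaled Clifford variables $R_\var(c(f_{p,1}^H))$, the multiplications by the coordinates $U^j$ (these are what will produce the polynomial factors $(1+|U|+|U'|)^r$), the fiberwise covariant derivatives $\,^0\nabla^{\mS_Z\otimes E}_{e_i}$ along $X_{y_0}$, and the projections $P^K_U$. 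Using Lemma \ref{e06024}, the explicit expansion of $\mB_{\var,T/\var}'$ in (\ref{e06015}), and the cancellation computation leading to (\ref{e591}) (which shows that, after the rescalings $S_\var$ and $R_\var$, no negative power of $\var$ survives), one establishes a uniform G\aa rding inequality $\mathrm{Re}\,\la L_{\var,T}^3 s,s\ra_0\geq c\|s\|_{\var,T,1}^2-C\|s\|_0^2$ together with $|\la L_{\var,T}^3 s,s'\ra_0|\leq C\|s\|_{\var,T,1}\|s'\|_{\var,T,1}$; because $L_{\var,T}^3$ retains the elliptic part $T^2P^{K,\bot}D_{y_0}^{X,2}P^{K,\bot}$ along $X$ and a harmonic-oscillator part in the $U$-variables, all constants are independent of $\var$.

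From these estimates one deduces, exactly as in Lemmas \ref{e04055}--\ref{e04304}, that for $\lambda$ on a contour $\Gamma$ surrounding $\mathrm{Sp}(L_{\var,T}^3)$ the resolvent $(\lambda-L_{\var,T}^3)^{-1}$ exists and is bounded on $L^2$ and from $\mathbb{E}^{-1}$ to $\mathbb{E}^1$ with norms $\leq C(1+|\lambda|)^k$ uniformly in $\var$. Commuting $L_{\var,T}^3$ with the generating first-order operators listed above and iterating, as in Lemma \ref{e04107} and Lemma \ref{e04126}, yields uniform bounds for $(\lambda-L_{\var,T}^3)^{-1}$ in every weighted Sobolev norm; writing $\exp(-L_{\var,T}^3)=\frac{1}{2\pi\sqrt{-1}}\int_\Gamma e^{-\lambda}(\lambda-L_{\var,T}^3)^{-1}\,d\lambda$ and applying the Sobolev embedding theorem in the $U$- and $x$-variables produces the pointwise bounds on the kernel and its derivatives. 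The Gaussian factor $\exp(-C|U-U'|^2)$ comes from finite propagation speed for $\cos(v\sqrt{L_{\var,T}^3})$, whose propagation speed in the $U$-directions is bounded uniformly in $\var$ (equivalently, one conjugates $L_{\var,T}^3$ by the weights $\exp(\pm\rho\,\la U-U_0,\cdot\ra)$ and checks that the estimates survive for $|\rho|$ small, as in \cite{MR1188532}). This proves part (i).

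For part (ii) one uses the explicit limit operator $L_{0,T}^3$ of Lemma \ref{e593}: from the same expansion one writes $L_{\var,T}^3-L_{0,T}^3=\var^{\gamma}\mR_{\var,T}$, where $\mR_{\var,T}$ is a fiberwise differential operator of order $\leq 2$ whose coefficients, together with their derivatives, are dominated by a polynomial in $|U|$ uniformly in $\var\in(0,1]$. Inserting the resolvent identity $(\lambda-L_{\var,T}^3)^{-1}-(\lambda-L_{0,T}^3)^{-1}=(\lambda-L_{\var,T}^3)^{-1}(L_{\var,T}^3-L_{0,T}^3)(\lambda-L_{0,T}^3)^{-1}$, using the uniform weighted resolvent bounds from the proof of (i) (which apply verbatim to $L_{0,T}^3$ as well, and hence to the harmonic oscillator appearing in it), then multiplying by $e^{-\lambda}$ and integrating over $\Gamma$, one obtains (\ref{e109}) with the same Gaussian localization and a possibly smaller exponent $\gamma>0$.

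The main obstacle is the uniformity in $\var$ on the non-compact fibre $T_{y_0}Y$. The intermediate formula (\ref{e06015}) and the conjugation by $R_\var$ a priori produce terms with negative powers of $\var$ (for instance $\var^{-2}\la S_2(f_p)g_{\alpha,2}^H,g_{\beta,2}^H\ra g^{\alpha}\wedge g^{\beta}\wedge$ and $\var^{-1}c(T_2)/4$), and one must verify that after the rescaling of the normal variable $U$ and of the Clifford variables these recombine, via the curvature identities of Lemma \ref{e06024}, into the bounded operator $L_{0,T}^3+\var^{\gamma}\mR_{\var,T}$ of Lemma \ref{e593}; this is the analogue of the Getzler-rescaling miracle and is the heart of Section 6.5. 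Once this algebraic point is settled, the functional-analytic part is a routine adaptation of the machinery already developed in Sections 4 and 5.
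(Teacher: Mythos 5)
Your proposal follows the same route as the paper's Section 6.5: weighted Sobolev estimates in the style of Bismut--Lebeau and Ma, uniform resolvent bounds, iterated commutators with a family of first-order operators, the contour integral for $\exp(-L_{\var,T}^3)$, Sobolev embedding, and finite propagation speed for the Gaussian decay; for (ii) the resolvent identity with the $O(\var)$-estimate for $L_{\var,T}^3-L_{0,T}^3$. The skeleton is right, so I will only flag where your description is imprecise about the mechanism that actually makes the estimates uniform in $\var$.

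The crux is the degree-dependent weight in the paper's norm (\ref{e444}), $|s|_{\var,0}^2=\int |s|^2\,g_\var(U)^{2(l'-q)}$ on $q$-forms in the $\Lambda(T^*Y^g)$ factor, with $g_\var(U)=1+(1+|U|^2)^{1/2}\rho(\var U/2)$. You describe the weighted norms as ``built out of'' the rescaled Clifford variables $R_\var(c(f^H_{p,1}))=\var^{-1}f^{p,H}_1\wedge-\var\,i_{f^H_{p,1}}$; but these operators are unbounded in $\var$ on the standard $L^2$-space, and the whole point of the $q$-graded weight is to absorb the $\var^{-1}$ on the wedge part by trading a power of $g_\var(U)$ when the form degree rises. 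This is the content of Lemma \ref{e448}, and without it the G{\aa}rding inequality you invoke does not hold with constants independent of $\var$: the ``cancellation'' of negative powers of $\var$ you observe in the limit $L^3_{0,T}$ (equation (\ref{e591})) is a statement at $\var=0$, not a uniform operator bound. Relatedly, because $L_{\var,T}^3$ is not self-adjoint for the weighted inner product, one must run the same estimates for the adjoint with the dual weight $g_\var(U)^{2(q-l')}$ (equation (\ref{e06044})) to obtain two-sided bounds on $\Delta^k\exp(-L^3_{\var,T})\Delta^{k'}$; your outline omits this. Finally, the polynomial factor $(1+|U|+|U'|)^r$ does not come directly from multiplication by $U^j$, but from comparing the weighted norm with the unweighted one on the shells $\{|U|\leq p+\tfrac12\}$ (equations (\ref{e479})--(\ref{e06028})), and the exponent $\gamma$ in (ii) is not the exponent in an operator decomposition $L^3_{\var,T}-L^3_{0,T}=\var^\gamma\mR_{\var,T}$ (the correct operator estimate is $O(\var)$ in the sense of (\ref{e06048})); the loss to $\gamma=(\dim Y+1)^{-1}<1$ happens only when passing from operator-norm estimates to pointwise kernel estimates by the mollification argument (\ref{e04145})--(\ref{e04148}). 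None of these are wrong turns in your outline, but they are precisely the steps that carry the analytic content, so they should be named explicitly rather than subsumed into ``routine adaptation of Sections 4 and 5''.
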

The proof of Theorem \ref{e107} is left to the next subsection.

On the vector space $N_{Y^g/Y, y_0}$, there exists $c>0$, such that for any $U\in N_{Y^g/Y, y_0}$,
\begin{align}\label{e110}
|g^{-1}U-U|\geq c|U|.
\end{align}
Then by (\ref{e565}), Lemma \ref{e05011}, \ref{e593}, Theorem \ref{e107} and the dominated convergence theorem, we have
\begin{align}\label{e111}
\begin{split}
&\lim_{\var\rightarrow 0}\psi_S\tr_s[g\exp(-\mB_{\var, T/\var}')]
\\
=&\int_{Y^g}\int_{N_{Y^g/Y}}\int_X\widetilde{c}_{TY^g}\,\psi_S\tr_s
\left[g\exp(-L_{0,T}^3)(g^{-1}(U,x),(U,x))\right]dv_N(U)dv_X(x).
\end{split}
\end{align}

By Mehler's formula (cf. \cite[(1.33)]{MR1756105}) and (\ref{e01054}),
\begin{align}\label{e112}
\begin{split}
&\int_X\tr_s\left[g\exp(-L_{0,T}^3)(g^{-1}(U,x),(U,x))\right]dv_X(x)
\\
=&(4\pi)^{-\frac{1}{2}\dim Y}\mathrm{det}^{\frac{1}{2}} \left(\frac{R^{TY}/2}{\sinh (R^{TY}/2)}\right)
\exp\left\{-\frac{1}{4}\left\la\frac{R^{TY}/2}{\tanh (R^{TY}/2)}U,U\right\ra\right.
\\
-&\frac{1}{4}\left\la \frac{R^{TY}/2}{\tanh (R^{TY}/2)}g^{-1}U,g^{-1}U\right\ra
\quad\left.+\frac{1}{2}\left\la\frac{R^{TY}/2}{\sinh (R^{TY}/2)}
\exp(R^{TY}/2)U,g^{-1}U\right\ra\right\}
\\
&\cdot \tr_s[g|_{\mS(N)}]\wedge\tr\left[g\exp\left(-\frac{1}{2}R^{L_Y}|_{V^g}\right)\right]\wedge\tr_s[g\exp(\mB_{T^2}''|_{V^g})].
\end{split}
\end{align}

Following the same computations in \cite[(1.33)-(1.38)]{MR1756105},
 by (\ref{e01050}), (\ref{e01051}), (\ref{e01138}) and (\ref{e111}), we have
\begin{multline}\label{e06057}
\lim_{\var\rightarrow 0}\psi_S\tr_s[g\exp(-\mB_{\var, T/\var}')]
\\
=\psi_S\int_{Y^g}\widetilde{c}_{TY^g}(4\pi)^{-\frac{\dim Y^g}{2}}\psi_{V^g}^{-1}\left(\widehat{\mathrm{A}}_g(TY,\nabla^{TY})\wedge \ch_g(L_Y^{1/2},\nabla^{L_Y^{1/2}})
\wedge \psi_{V^g}\tr_s[g\exp(\mB_{T^2}'')|_{V^g}]\right).
\end{multline}

Using (\ref{e01059}), (\ref{e06005}) and (\ref{e06051}), we get Theorem \ref{e03022} ii) when $\dim Z$ and $\dim Y$ are all even.

\subsection{General case}\label{s0604}

When $\dim Y$ is odd and $\dim Z$ is even, by (\ref{e01154}), 
following the same process in this section,
we can get an analogue of (\ref{e06057}):
\begin{multline}\label{e06058}
\lim_{\var\rightarrow 0}\psi_S\tr^{\mathrm{odd}}[g\exp(-\mB_{\var, T/\var}')]
\\
=\psi_S\int_{Y^g}\widetilde{c}_{TY^g}(4\pi)^{-\frac{\dim Y^g}{2}}\psi_{V^g}^{-1}\left(\widehat{\mathrm{A}}_g(TY,\nabla^{TY})\wedge
\ch_g(L_Y^{1/2},\nabla^{L_Y^{1/2}})
\wedge \psi_{V^g}\tr_s[g\exp(\mB_{T^2}'')|_{V^g}]\right).
\end{multline}
Then Theorem \ref{e03022} ii) in this case follows from (\ref{e01059}), (\ref{e06005}), (\ref{e06051}) and (\ref{e06058}).

When $\dim Y$ is even and $\dim Z$ is odd, it is the same as the case above.

When $\dim Y$ and $\dim Z$ are all odd,  by (\ref{e01154}), as in (\ref{e06058}), we have
\begin{multline}\label{e06061}
\lim_{\var\rightarrow 0}\psi_S\tr_s[g\exp(-\mB_{\var, T/\var}')]
=2\sqrt{-1}\psi_S\int_{Y^g}\widetilde{c}_{TY^g}(4\pi)^{-\frac{\dim Y^g}{2}}
\\
\cdot\psi_{V^g}^{-1}\left(\widehat{\mathrm{A}}_g(TY,\nabla^{TY})\wedge
\ch_g(L_Y^{1/2},\nabla^{L_Y^{1/2}})
\wedge \psi_{V^g}\tr[g\exp(\mB_{T^2}''|_{V^g})]\right).
\end{multline}
Since the left hand side of (\ref{e06061}) takes value in even forms and $\dim Y^g$ is odd,
by (\ref{e01090}) and (\ref{e01059}),  we have
\begin{multline}\label{e06062}
\lim_{\var\rightarrow 0}\psi_S\tr_s[g\exp(-\mB_{\var, T/\var}')]
\\
=\int_{Y^g}\widehat{\mathrm{A}}_g(TY,\nabla^{TY})\wedge \ch_g(L_Y^{1/2},\nabla^{L_Y^{1/2}})
\wedge \psi_{V^g}\tr^{\mathrm{odd}}[g\exp(\mB_{T^2}''|_{V^g})].
\end{multline}

The proof of Theorem \ref{e03022} ii) is complete.

\subsection{Proof of Theorem \ref{e107}}

We prove Theorem \ref{e107} by following the process of \cite[Section 11]{MR1188532} and \cite[Section 11]{MR1316553}.

Let $I^0$ be the vector space of square integrable sections of $\pi_2^*\Lambda (T^{*}S)\widehat{\otimes}\Lambda (TY^g)
\widehat{\otimes} \mS(N_{Y^g/Y})\otimes L_Y^{1/2} \widehat{\otimes}\mS(TX,L_X)\otimes E$ over $T_{y_0}Y_b\times X_{y_0}$.
For $0\leq q\leq \dim Y^g$, let $I^0_q$ be the vector space of square integrable sections of
$\pi_2^*\Lambda (T^{*}S)\widehat{\otimes}\Lambda^q (TY^g)
\widehat{\otimes} \mS(N_{Y^g/Y})\otimes L_Y^{1/2} \widehat{\otimes}\mS(TX,L_X)\otimes E$. Then $I^0=\oplus_{q=0}^{l'}I_q^0$.
Similarly, if $p\in \R$, $I^p$ and $I_q^p$ denote the corresponding $p$-th Sobolev spaces.

For $U\in T_{y_0}Y^g$, set
\begin{align}\label{e06025}
g_{\var}(U)=1+(1+|U|^2)^{\frac{1}{2}}\rho\left(\frac{\var U}{2}\right).
\end{align}
If $s\in I_q^0$, set
\begin{align}\label{e444}
|s|_{\var,0}^2=\int_{T_{y_0}Y_b\times X_{y_0}}|s(U,x)|^2g_{\var}(U)^{2(l'-q)}dv_{TY}(U)dv_X(x).
\end{align}
Let $\la\cdot,\cdot\ra_{\var,0}$ be the Hermitian product attached to $|\cdot|_{\var,0}$.

So, for $1\leq p\leq l'$, $s\in I_p$, we can get
\begin{align}\label{e446}
\begin{split}
\left|1_{\var|U|\leq\alpha_0/2}|U|\right.&\left.(f^p\wedge-\var^2i_{f_p})s\right|_{\var,0}^2=
\left|1_{\var|U|\leq\alpha_0/2}|U|f^p\wedge s\right|_{\var,0}^2
+\left|1_{\var|U|\leq\alpha_0/2}|U|\var^2i_{f_p}s\right|_{\var,0}^2
\\
&=\int_{|U|\leq\frac{\alpha_0}{2\var}}|s|^2|U|^2(1+(1+|U|^2)^{\frac{1}{2}})^{2(l-p-1)}dv_{TY}(U)
\\
&\quad+\int_{|U|\leq\frac{\alpha_0}{2\var}}\var^4|s|^2|U|^2(1+(1+|U|^2)^{\frac{1}{2}})^{2(l-p+1)}dv_{TY}(U).
\end{split}
\end{align}
Since there exists $C>0$, such that
\begin{align}\label{e447}
\begin{split}
\frac{|U|}{1+(1+|U|^2)^{\frac{1}{2}}}\leq 1, \quad
\var^4|U|^2(1+(1+|U|^2)^{\frac{1}{2}})^2 \leq C,
\end{split}
\end{align}
we have the following lemma.
\begin{lemma}\label{e448}
The operators
$1_{\var |U|\leq\alpha_0/2}(f^p\wedge-\var^2i_{f_p})$ and
  $1_{\var |U|\leq\alpha_0/2}|U|(f^p\wedge-\var^2i_{f_p})$
are uniformly bounded with respect to the norm $|\cdot|_{\var,0}$.
\end{lemma}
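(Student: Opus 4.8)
The plan is to exploit the pointwise $L^2$-orthogonality of the operators $f^p\wedge$ and $i_{f_p}$ together with the two elementary scalar inequalities already isolated in (\ref{e447}), which were recorded precisely for this purpose.

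First I would reduce to homogeneous sections. Write $s=\sum_{q=0}^{l'}s_q$, where $s_q$ has exterior degree $q$ in the factor $\Lambda(TY^g)$; the remaining tensor factors ($\mS(N_{Y^g/Y})$, $L_Y^{1/2}$, $\mS(TX,L_X)$, $E$ and $\pi_2^*\Lambda(T^{*}S)$) play no role, $f^p\wedge$ and $i_{f_p}$ acting as the identity on them. Since $f^p\wedge$ raises the $\Lambda(TY^g)$-degree by one and $i_{f_p}$ lowers it by one, the sections $f^p\wedge s_q$ and $\var^2 i_{f_p}s_q$ are orthogonal in each fiber, hence in $I^0$; therefore $\big|1_{\var|U|\le\alpha_0/2}\,|U|\,(f^p\wedge-\var^2 i_{f_p})s\big|_{\var,0}^2$ (and likewise the variant without the factor $|U|$) splits as a sum over $q$ of the squared $|\cdot|_{\var,0}$-norms of $1_{\var|U|\le\alpha_0/2}|U|f^p\wedge s_q$ and of $1_{\var|U|\le\alpha_0/2}|U|\var^2 i_{f_p}s_q$ taken separately. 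On the support of $1_{\var|U|\le\alpha_0/2}$ one has $\rho(\var U/2)=1$, so $g_\var(U)=1+(1+|U|^2)^{1/2}$ there, exactly as in (\ref{e446}).

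Next I would bound the two pieces. The form $f^p\wedge s_q$ has degree $q+1$, so its $|\cdot|_{\var,0}^2$ carries the weight $g_\var^{2(l'-q-1)}=g_\var^{-2}g_\var^{2(l'-q)}$; inserting the factor $|U|^2$ and using $|U|/g_\var\le|U|/(1+(1+|U|^2)^{1/2})\le 1$ from the first inequality of (\ref{e447}) bounds this piece by $\int|s_q|^2 g_\var^{2(l'-q)}\le|s_q|_{\var,0}^2$ (for the variant without $|U|$ one simply uses $g_\var\ge 1$). The form $i_{f_p}s_q$ has degree $q-1$, so its $|\cdot|_{\var,0}^2$ carries the weight $g_\var^{2(l'-q+1)}=g_\var^{2}g_\var^{2(l'-q)}$; the factor $\var^4$ coming from $\var^2 i_{f_p}$ squared, multiplied by $|U|^2 g_\var^2$, is $\le C$ by the second inequality of (\ref{e447}), and since $|i_{f_p}s_q|\le|s_q|$ pointwise ($f_p$ being a unit vector) this piece is $\le C\int|s_q|^2 g_\var^{2(l'-q)}\le C|s_q|_{\var,0}^2$; for the variant without $|U|$ one uses instead that on the support $g_\var\le 1+(1+\alpha_0^2/(4\var^2))^{1/2}\le C\var^{-1}$, so $\var^4 g_\var^2\le C\var^2\le C$. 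Summing over $q$ yields the asserted uniform bounds, with constants independent of $\var\in(0,1]$ (and of $y_0$).

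The argument is entirely elementary and carries essentially no obstacle: the only mild care needed is in tracking the degree-dependent weight exponents $l'-q$ and in noticing that the cut-off forces $\rho(\var U/2)=1$, so that $g_\var$ assumes the simple form $1+(1+|U|^2)^{1/2}$ to which the inequalities (\ref{e447}) directly apply. This is a preparatory estimate whose content is already visible in the computation (\ref{e446})--(\ref{e447}).
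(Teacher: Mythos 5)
Your proposal is correct and reproduces the paper's own argument: the computation \eqref{e446} with the two scalar inequalities \eqref{e447} is exactly what you spell out, with the reduction to homogeneous degree $q$ made explicit and the weight $g_\var(U)=1+(1+|U|^2)^{1/2}$ on the cut-off support observed as in the text. The only loose end is that the claimed splitting of $|(f^p\wedge-\var^2 i_{f_p})s|_{\var,0}^2$ over $q$ also needs orthogonality of the same-degree cross terms $f^p\wedge s_q$ and $i_{f_p}s_{q+2}$ (not merely of $f^p\wedge s_q$ and $i_{f_p}s_q$, which have different degrees); this holds because $f^p\wedge s_q$ lies in the span of monomials containing $f^p$ while $i_{f_p}s_{q+2}$ lies in its orthogonal complement, so the conclusion is unaffected.
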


Set $\mD_H=\{\partial_p, \nabla^{\mS_X\otimes E}_{e_i}\}$.
Set
\begin{align}\label{e974}
|s|_{\var,k}^2=\sum_{l=0}^k\sum_{
	Q_i\in \mD_H}
|Q_1\cdots Q_ls|_{\var,0}^2.
\end{align}

\begin{lemma}(cf. \cite[Theorem 11.26]{MR1188532})\label{e06027}
For $T\geq 1$ fixed, there exist $c_1,c_2,c_3,c_4>0$, such that for any $\var\in (0,1]$, $s\in I^1$,
\begin{align}\label{e973}
\begin{split}
&\Re\la L_{\var,T}^3s,s\ra_{\var,0}\geq c_1|s|_{\var,1}^2-c_2|s|_{\var,0}^2,
\\
&|\Im\la L_{\var,T}^3s,s\ra_{\var,0}|\leq c_3|s|_{\var,1}|s|_{\var,0},
\\
&|\la L_{\var,T}^3s,s'\ra_{\var,0}|\leq c_4|s|_{\var,1}|s'|_{\var,1}.
\end{split}
\end{align}
\end{lemma}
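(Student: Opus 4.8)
The plan is to follow the analytic localization technique of Bismut--Lebeau \cite[\S 11]{MR1188532} (see also \cite[\S 11]{MR1316553}): for $T\ge 1$ fixed one regards $L_{\var,T}^3$ as a uniformly elliptic operator in $(U,x)\in T_{y_0}Y_b\times X_{y_0}$, all of whose coefficients are controlled uniformly in $\var\in(0,1]$ once one accounts for the weight $g_{\var}$ of \eqref{e06025}. First I would use the explicit formula \eqref{e06015} for $\mB_{\var,T/\var}'$ together with the rescalings $S_{\var}$ and $R_{\var}$ (see \eqref{e566}, \eqref{e568}, \eqref{e569}) and collect terms by order in the operators $\mD_H$, obtaining
\begin{align}\label{p1}
L_{\var,T}^3=\mathcal{L}^{(2)}_{\var,T}+\mathcal{L}^{(1)}_{\var,T}+\mathcal{L}^{(0)}_{\var,T},
\end{align}
where $\mathcal{L}^{(2)}_{\var,T}$ is second order with leading part $-\sum_p\partial_p^2$ on $T_{y_0}Y_b$ together with the nonnegative fibrewise operator along $X$ produced by the $TD^X$ term in $B_{3,\var^2,T/\var}$ (namely $T^2(1-\rho^2(\var U))P^{K,\bot}_{\var U}D^{X,2}_{y_0}P^{K,\bot}_{\var U}$ plus a $\rho^2(\var U)$-contribution of the same shape), while $\mathcal{L}^{(1)}_{\var,T}$ and $\mathcal{L}^{(0)}_{\var,T}$ are of order $1$ and $0$ in $\mD_H$. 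The decisive bookkeeping step, which I would state as a sub-lemma, is that every coefficient of \eqref{p1} is smooth and, on the support of $\rho(\var U)$, bounded together with all its $U$-derivatives uniformly in $\var\in(0,1]$, \emph{except} for the Getzler variables $f^p\wedge$ with $1\le p\le l'$: in \eqref{e06015} these enter only multiplied by a factor $\var$ (forced by the $-\var^2(\cdots)^2$ and $\var D^H$ structure) or by a coefficient linear in the unrescaled variable $\var U$, so that after $R_{\var}$ and $S_{\var}$ they appear solely through the operators $1_{\var|U|\le\alpha_0/2}(f^p\wedge-\var^2 i_{f_p})$ and $1_{\var|U|\le\alpha_0/2}|U|(f^p\wedge-\var^2 i_{f_p})$, which are $|\cdot|_{\var,0}$-bounded uniformly in $\var$ by Lemma \ref{e448}.

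Granting the sub-lemma, I would prove the G{\aa}rding inequality for $\mathcal{L}^{(2)}_{\var,T}$. Computing $\Re\la\mathcal{L}^{(2)}_{\var,T}s,s\ra_{\var,0}$ by integration by parts in $U$ and along $X_{y_0}$ yields $\sum_p|\partial_p s|^2_{\var,0}$, a term bounded below by $cT^2|(1-\rho^2(\var U))^{1/2}P^{K,\bot}_{\var U}s|^2_{\var,0}$ (using that $\ker D^X$ is a vector bundle, hence $D^{X,2}_{y_0}$ has a spectral gap on $K^{\bot}$, and $T\ge 1$), the elliptic estimate for $D^X$ to recover $\sum_i|\nabla^{\mS_X\otimes E}_{e_i}P^{K,\bot}_{\var U}s|^2_{\var,0}$, and commutator contributions coming from $g_{\var}$. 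Since $g_{\var}$ depends only on $U$ and its first two $U$-derivatives are bounded uniformly in $\var$ (the derivatives of $\rho(\var U/2)$ carry a compensating factor $\var$ and are supported where $|U|\sim\var^{-1}$), those commutators are absorbed into the main part; the $P^K_{\var U}$-derivatives are handled using \eqref{e06018}, the fact that $P^K_{\var U}$ has smooth fibrewise kernel, and norm equivalence on the finite-rank bundle $K$. This gives $\Re\la\mathcal{L}^{(2)}_{\var,T}s,s\ra_{\var,0}\ge c_1'|s|^2_{\var,1}-c_2'|s|^2_{\var,0}$.

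Finally, the first and zeroth order parts are treated as perturbations: by Cauchy--Schwarz, the uniform bounds on the ordinary coefficients, and Lemma \ref{e448} for the $f^p\wedge$-terms, one obtains $|\la\mathcal{L}^{(1)}_{\var,T}s,s\ra_{\var,0}|\le C|s|_{\var,1}|s|_{\var,0}$ and $|\la\mathcal{L}^{(0)}_{\var,T}s,s\ra_{\var,0}|\le C|s|^2_{\var,0}$, and likewise $|\la L^3_{\var,T}s,s'\ra_{\var,0}|\le c_4|s|_{\var,1}|s'|_{\var,1}$, since every summand of $L^3_{\var,T}$ maps $I^1$ into $I^{-1}$ with $\var$-uniform norm. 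Splitting real and imaginary parts and using $2|s|_{\var,1}|s|_{\var,0}\le\delta|s|^2_{\var,1}+\delta^{-1}|s|^2_{\var,0}$ with $\delta$ small absorbs the first-order term into the coercive part of $\mathcal{L}^{(2)}_{\var,T}$ and yields the three inequalities of Lemma \ref{e06027}. The main obstacle is the sub-lemma of the first paragraph: verifying from \eqref{e06015} that the a priori singular factors ($c(T_2)/\var$, the $\var^{-1},\var^{-2}$ terms built from $S_2$, and the $\var^{-1}$ in $R_{\var}(c(f_{p,1}^H))$) really recombine into bounded operators or into the weight-controlled combination $|U|(f^p\wedge-\var^2 i_{f_p})$; this is exactly the point of the choices of $g_{\var}$ and $R_{\var}$, and it also invokes the degree nilpotency of the exterior variables $g^{\alpha}\wedge$ and $f^p\wedge$.
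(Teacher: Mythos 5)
Your proposal is correct and takes essentially the same route as the paper's (very terse) proof: the paper likewise reduces the lemma to the explicit formula for $L_{\var,T}^3$, the uniform boundedness of $1_{\var|U|\le\alpha_0/2}(f^p\wedge-\var^2 i_{f_p})$ and $1_{\var|U|\le\alpha_0/2}|U|(f^p\wedge-\var^2 i_{f_p})$ established in Lemma \ref{e448}, and the uniform bound $|\nabla g_{\var}(U)|\le C$ (which is (\ref{e06071})). Your decomposition by differential order, the G\r{a}rding argument with the spectral gap of $D^{X,2}$ on $K^{\bot}$, and the Cauchy--Schwarz absorption of lower-order terms are exactly the steps the paper compresses into ``From Lemma \ref{e448} and the definition of $L_{\var,T}^3$, we can get Lemma \ref{e06027}.''
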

\begin{proof}
let $\nabla$ denote the gradient in the variable $U$.
Since $\rho$ has compact support, there exists $C>0$, such that
\begin{align}\label{e06071}
\left|\nabla\left(g_{\var}(U)\right)\right|\leq C.
\end{align}
From Lemma \ref{e448} and the definition of $L_{\var,T}^3$, we can get Lemma \ref{e06027}.
\end{proof}

As in (\ref{e04022}), set
\begin{align}\label{e460}
|s|_{\var,-1}:=\sup_{0\neq s'\in I^1}\frac{\la s,s'\ra_{\var,0}}{|s'|_{\var,1}}.
\end{align}

\begin{lemma}\label{e459}
There exist $c, C>0$ such that if
\begin{align}\label{e461}
\lambda\in U=\left\{\lambda\in \C:\Re(\lambda)\leq\frac{\Im(\lambda)^2}{4c^2}-c^2\right\},
\end{align}
the resolvent $(\lambda-L_{\var,T}^3)^{-1}$ exists, and moreover for any $\var\in (0,1]$, $s\in I^1$,
\begin{align}\label{e462}
\begin{split}
&|(\lambda-L_{\var,T}^3)^{-1}s|_{\var,0}\leq C|s|_{\var,0},
\\
&|(\lambda-L_{\var,T}^3)^{-1}s|_{\var,1}\leq C(1+|\lambda|)^2|s|_{\var,-1}.
\end{split}
\end{align}
\end{lemma}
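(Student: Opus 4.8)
The plan is to deduce Lemma~\ref{e459} from the estimates (\ref{e973}) of Lemma~\ref{e06027} by the standard sectorial resolvent argument of \cite[Theorem 11.27]{MR1188532}, which is also the scheme used in the proof of Lemma~\ref{e04055} above. Since the constants $c_1,c_2,c_3,c_4$ in (\ref{e973}) are uniform in $\var\in(0,1]$, every bound below will be uniform in $\var$ as well; this uniformity is precisely why the $\var$-dependent norms $|\cdot|_{\var,0}$, $|\cdot|_{\var,1}$, $|\cdot|_{\var,-1}$ were introduced.

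The first step is an a priori lower bound: there is a choice of $c>0$, depending only on $c_1,c_2,c_3$, such that for every $\lambda=a+\sqrt{-1}\,b\in U$ and every $s\in I^1$,
\begin{align}\label{e459plan1}
|\la(\lambda-L_{\var,T}^3)s,s\ra_{\var,0}|\geq |s|_{\var,0}^2.
\end{align}
One argues by cases. If $a\leq-(c_2+1)$, the first inequality of (\ref{e973}) gives $\Re\la(\lambda-L_{\var,T}^3)s,s\ra_{\var,0}\leq(a+c_2)|s|_{\var,0}^2-c_1|s|_{\var,1}^2\leq-|s|_{\var,0}^2$. If $a\geq-(c_2+1)$, then membership in $U$ forces $|b|$ to be bounded below by a quantity tending to $+\infty$ with $c$; one then splits according to whether $c_3|s|_{\var,1}\leq\tfrac{|b|}{2}|s|_{\var,0}$ --- in which case the second inequality of (\ref{e973}) yields $|\Im\la(\lambda-L_{\var,T}^3)s,s\ra_{\var,0}|\geq\tfrac{|b|}{2}|s|_{\var,0}^2$ --- or $c_3|s|_{\var,1}>\tfrac{|b|}{2}|s|_{\var,0}$, in which case $\Re\la(\lambda-L_{\var,T}^3)s,s\ra_{\var,0}\leq\bigl(a+c_2-\tfrac{c_1}{4c_3^2}b^2\bigr)|s|_{\var,0}^2$, which on $U$ is at most $\bigl(b^2(\tfrac{1}{4c^2}-\tfrac{c_1}{4c_3^2})+c_2-c^2\bigr)|s|_{\var,0}^2\leq-|s|_{\var,0}^2$ as soon as $c^2\geq\max(c_3^2/c_1,\,c_2+1)$ with $c$ large enough. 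With $c$ fixed this way, (\ref{e459plan1}) holds, hence $|(\lambda-L_{\var,T}^3)s|_{\var,0}\geq|s|_{\var,0}$; in particular $\lambda-L_{\var,T}^3$ is injective with closed range, and the first estimate of (\ref{e462}) follows once surjectivity is known.

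For surjectivity and the $|\cdot|_{\var,1}$ bound I would fix a real reference point $\lambda_0\leq-c_2$, so $\lambda_0\in U$. The first inequality of (\ref{e973}) gives the coercivity $\Re\la(L_{\var,T}^3-\lambda_0)s,s\ra_{\var,0}\geq c_1|s|_{\var,1}^2$, whence $|(\lambda_0-L_{\var,T}^3)s|_{\var,-1}\geq c_1|s|_{\var,1}$; combined with the third inequality of (\ref{e973}), which says $L_{\var,T}^3$ is bounded from $I^1$ to $I^{-1}$, a Lax--Milgram argument shows that $\lambda_0-L_{\var,T}^3:I^1\to I^{-1}$ is invertible, with $(\lambda_0-L_{\var,T}^3)^{-1}$ bounded $I^0\to I^0$ and $I^{-1}\to I^1$ uniformly in $\var$. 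Then for $\lambda\in U$ one uses, exactly as in the proof of Lemma~\ref{e04055}, the resolvent identity
\begin{align}\label{e459plan2}
(\lambda-L_{\var,T}^3)^{-1}=(\lambda_0-L_{\var,T}^3)^{-1}+(\lambda-\lambda_0)(\lambda-L_{\var,T}^3)^{-1}(\lambda_0-L_{\var,T}^3)^{-1}
\end{align}
and its companion obtained by exchanging the two resolvents on the right: the first, together with (\ref{e459plan1}), shows $(\lambda-L_{\var,T}^3)^{-1}$ extends to a bounded map $I^{-1}\to I^0$ with norm $\leq C(1+|\lambda|)$, and the companion identity, fed with $|(\lambda_0-L_{\var,T}^3)s|_{\var,-1}\geq c_1|s|_{\var,1}$, upgrades it to a bounded map $I^{-1}\to I^1$ with norm $\leq C(1+|\lambda|)^2$. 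Restricting to $I^0$ gives both estimates of (\ref{e462}).

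The only genuinely non-routine ingredient is the uniformity of all constants in $\var\in(0,1]$, and this has been isolated into Lemma~\ref{e06027}; granting it, the deduction is purely formal. One point to keep in mind is that $L_{\var,T}^3$ is not self-adjoint and its coefficients carry the nilpotent form-valued operators coming from $\pi_2^*\Lambda(T^*S)$ and from the rescaling $R_\var$; but since that nilpotent part has positive exterior degree, $(\lambda-L_{\var,T}^3)^{-1}$ is a finite geometric-type sum built from the resolvent of its principal part and these nilpotent terms, in the manner of the expansions (\ref{e04053}) and (\ref{e31}), so all the bounds above transfer verbatim.
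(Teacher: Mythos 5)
Your argument is correct and follows the paper's proof in all essentials: a uniform lower bound $|\la(\lambda-L_{\var,T}^3)s,s\ra_{\var,0}|\geq|s|_{\var,0}^2$ for $\lambda\in U$, coercivity at a real reference point $\lambda_0$ to obtain invertibility, and the resolvent identity together with its companion (in the manner of (\ref{e04064})--(\ref{e04068})) to pass to the $|\cdot|_{\var,\pm1}$ estimates. Your case analysis is simply a rewording of the paper's computation with the variational quantity $C(\lambda)=\inf_{t\ge1}\max\{c_1t^2-(c_2+a),\,-c_3t+|b|\}$ and the claim $\inf_{\lambda\in U}C(\lambda)>0$. One point deserves flagging: at this step the paper writes ``if $c>0$ is small enough,'' but your requirement that $c$ be \emph{large} is the correct one --- letting $c\to0$ enlarges $U$ so that it contains, for example, $\lambda=100+\sqrt{-1}$ for all small $c$, and at such a $\lambda$ one has $C(\lambda)<0$ whenever $c_2>c_1$; the paper's ``small'' appears to be a slip. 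A minor side remark: once $c^2>c_2$, a point $\lambda_0\le-c_2$ need not lie in $U$, but membership in $U$ is not actually required there; only the coercivity $\Re\la(L_{\var,T}^3-\lambda_0)s,s\ra_{\var,0}\ge c_1|s|_{\var,1}^2$, which holds for every real $\lambda_0\le-c_2$, is used.
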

\begin{proof}
Take $c_2$ in Lemma \ref{e06027}.
If $\lambda\in\R$, $\lambda\leq -2c_2$, for $s\in I^1$, we have
\begin{align}\label{e500}
\Re\la(L_{\var,T}^3-\lambda)s,s\ra_{\var,0}\geq c_1|s|_{\var,0}^2.
\end{align}
So
\begin{align}\label{e501}
|s|_{\var,0}\leq c_1^{-1}|(L_{\var,T}^3-\lambda)s|_{\var,0}.
\end{align}
Since $|s|_{\var,0}\leq c(\var)|s|_0$ for $c(\var)>0$,
\begin{align}\label{e502}
|s|_0\leq |s|_{\var,0}\leq c_1^{-1}|(L_{\var,T}^3-\lambda)s|_{\var,0}\leq c(\var)c_1^{-1}|(L_{\var,T}^3-\lambda)s|_0.
\end{align}
So $(L_{\var,T}^3-\lambda)^{-1}$ exists for $\lambda\in\R$, $\lambda\leq -2c_2$.

Set $\lambda=a+ib\in\C$. Then by Lemma \ref{e06027},
\begin{align}\label{e503}
\begin{split}
|\la(L_{\var,T}^3-\lambda)s,s\ra_{\var,0}|&\geq \max\{\Re\la L_{\var,T}^3s,s\ra_{\var,0}-a|s|_{\var,0}^2, |\Im\la
L_{\var,T}^3s,s\ra_{\var,0}-b|s|_{\var,0}^2|\}
\\
&\geq \max\{c_1|s|_{\var,1}^2-(c_2+a)|s|_{\var,0}^2,-c_3|s|_{\var,1}|s|_{\var,0}+|b||s|_{\var,0}^2\}.
\end{split}
\end{align}

Set
\begin{align}\label{e504}
C(\lambda)=\inf_{t\in\R,t\geq 1}\max\{c_1t^2-(c_2+a),-c_3t+|b|\}.
\end{align}
If $c>0$ is small enough, we can get
\begin{align}\label{e505}
c_0=\inf_{\lambda\in U}C(\lambda)>0.
\end{align}
Since $|s|_{\var,0}\leq |s|_{\var,1}$, if the resolvent $(\lambda-L_{\var,T}^3)^{-1}$ exists, then
\begin{align}\label{e506}
|(\lambda-L_{\var,T}^3)^{-1}s|_{\var,0}\leq c_0^{-1}|s|_{\var,0}.
\end{align}
From (\ref{e506}), if $\lambda'\in U$, $|\lambda'-\lambda|\leq c_0/2$, then the resolvent $(\lambda'-L_{\var,T}^3)^{-1}$ exists.
By (\ref{e502}), we get the first inequality of (\ref{e462}).

For $\lambda_0\in\R$, $\lambda_0\leq -2c_2$ and $s\in I^1$,
by Lemma \ref{e06027}, we have
\begin{align}\label{e507}
|\la (\lambda_0-L_{\var,T}^3)s,s\ra_{\var,0}|\geq c_1|s|_{\var,1}^2.
\end{align}
Following the same process in (\ref{e04064})-(\ref{e04068}), we get the second estimate of (\ref{e462}).

The proof of Lemma \ref{e459} is complete.
\end{proof}

As in Lemma \ref{e04107}, since $[Q, L_{\var,T}^3]$ has the same structure as $L_{\var,T}^3$ for $Q\in \mD_H$,
for any $k\in \mathbb{N}$ fixed, there exists $C_k>0$ such that for $\var\in (0,1]$, $Q_1,\cdots, Q_k\in\mD_H$
and $s,s'\in I^2$, we have
\begin{align}\label{e04222}
|\langle [Q_1, [Q_2,\cdots[Q_k, L_{\var,T}^3],\cdots]]s,s'\ra_{\var,0}|\leq C_k|s|_{\var,1}|s'|_{\var,1}.
\end{align}
Then using the proof of Lemma \ref{e04126}, we can get the Lemma as follows.

\begin{lemma}\label{e464}
For any $\var\in (0,1]$, $\lambda$ satisfies (\ref{e461}) and $m\in\N$, there exist $C_m>0$ and $p_m\in\N$, such that
\begin{align}\label{e466}
|(\lambda-L_{\var,T}^3)^{-1}s|_{\var,m+1}\leq C_m(1+|\lambda|)^{p_m}|s|_{\var,m}.
\end{align}
\end{lemma}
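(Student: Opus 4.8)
\textbf{Proof proposal for Lemma \ref{e464}.}

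The plan is to obtain the estimate \eqref{e466} by induction on $m$, exactly as in the proof of Lemma \ref{e04126}, but now using the rescaled norms $|\cdot|_{\var,k}$ of \eqref{e974} in place of $\|\cdot\|_k'$ and the set of operators $\mD_H$ in place of $\mD$. The base case $m=0$ is precisely the content of Lemma \ref{e459}: the resolvent $(\lambda-L_{\var,T}^3)^{-1}$ exists for $\lambda$ in the region $U$ of \eqref{e461}, maps $I^{-1}$ into $I^1$, and satisfies $|(\lambda-L_{\var,T}^3)^{-1}s|_{\var,1}\leq C(1+|\lambda|)^2|s|_{\var,-1}$, which in particular gives $|(\lambda-L_{\var,T}^3)^{-1}s|_{\var,1}\leq C(1+|\lambda|)^2|s|_{\var,0}$ since $|s|_{\var,-1}\leq |s|_{\var,0}$. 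So we may take $p_0=2$.

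For the inductive step, suppose the estimate holds for all orders $\leq m-1$. Given $Q_1,\cdots,Q_{m+1}\in\mD_H$, we expand the operator $Q_1\cdots Q_{m+1}(\lambda-L_{\var,T}^3)^{-1}$ as a finite linear combination of operators of the form
\begin{align}\label{e464a}
(\lambda-L_{\var,T}^3)^{-1}\cR_1(\lambda-L_{\var,T}^3)^{-1}\cR_2\cdots\cR_{k'}(\lambda-L_{\var,T}^3)^{-1}Q_{k'+1}\cdots Q_{m+1},\quad k'\leq m,
\end{align}
where each $\cR_j$ is an iterated commutator $[Q_{i_1},[Q_{i_2},\cdots[Q_{i_p},L_{\var,T}^3]\cdots]]$ with $Q_{i_\ell}\in\mD_H$; this is the same bookkeeping as in \eqref{e04129}--\eqref{e04131}, using that commuting a factor $Q_i$ past a resolvent produces $(\lambda-L_{\var,T}^3)^{-1}[Q_i,L_{\var,T}^3](\lambda-L_{\var,T}^3)^{-1}$. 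By \eqref{e04222}, each $\cR_j$ is bounded from $I^1$ to $I^{-1}$ uniformly in $\var\in(0,1]$, so $|\cR_j s|_{\var,-1}\leq C|s|_{\var,1}$. Feeding this through the chain in \eqref{e464a}, alternately applying the $m=0$ estimate (to gain one Sobolev order at the cost of a factor $(1+|\lambda|)^2$) and the commutator bound, and handling the final block $Q_{k'+1}\cdots Q_{m+1}s$ by the definition \eqref{e974} of $|s|_{\var,m}$, yields
\begin{align}\label{e464b}
|(\lambda-L_{\var,T}^3)^{-1}s|_{\var,m+1}\leq C_m(1+|\lambda|)^{p_m}|s|_{\var,m},
\end{align}
with $p_m$ accumulated from the finitely many resolvent applications. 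This is \eqref{e466}.

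The only genuine point requiring care — and the main obstacle — is the uniformity in $\var\in(0,1]$ of the commutator estimate \eqref{e04222}, i.e. verifying that $[Q,L_{\var,T}^3]$ really has the same structure as $L_{\var,T}^3$ for $Q\in\mD_H$. This is where the rescaling $R_\var$ of \eqref{e568} and the weight $g_\var$ of \eqref{e06025} interact: the dangerous terms are those involving $|U|$ and the singular Clifford factors $\var^{-1}f^p\wedge$, $\var^{-1}i_{f_p}$, and one must invoke Lemma \ref{e448} (uniform boundedness of $1_{\var|U|\leq\alpha_0/2}(f^p\wedge-\var^2 i_{f_p})$ and $1_{\var|U|\leq\alpha_0/2}|U|(f^p\wedge-\var^2 i_{f_p})$ with respect to $|\cdot|_{\var,0}$) together with the bound \eqref{e06071} on $\nabla g_\var$, precisely as in the proof of Lemma \ref{e06027}. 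Once \eqref{e04222} is in hand uniformly in $\var$, the induction above is completely routine, following \cite[Section 11]{MR1188532}. This completes the proof of Lemma \ref{e464}.
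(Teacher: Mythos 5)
Your proposal is correct and follows essentially the same route as the paper: reduce to the commutator estimate (\ref{e04222}) — which the paper establishes just above the lemma by the same structural argument as Lemma \ref{e04107}, using Lemma \ref{e448} and (\ref{e06071}) for the uniformity in $\var$ — and then run the induction of Lemma \ref{e04126} verbatim with Lemma \ref{e459} supplying the base case $m=0$. The paper is terse here (it merely cites ``the proof of Lemma \ref{e04126}''), but the expansion into resolvent–commutator chains and the identification of the $\var$-uniformity of (\ref{e04222}) as the only delicate point are exactly what the paper intends.
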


Set
\begin{align}\label{e474}
\Gamma=\partial U=\left\{\lambda\in\C: \Re(\lambda)=\frac{\Im(\lambda)^2}{4c^2}-c^2\right\},
\end{align}
and
\begin{align}
\Gamma'=\{\lambda\in\C: |\Im \lambda|\leq c\}.
\end{align}
Then the map $\lambda\mapsto \lambda^2$ sends $\Gamma'$ to $\Gamma$.
Let $\Delta=-\Delta^{TY}+D_{y_0}^{X,2}$.
For $\lambda\in\Gamma$, $k,m\in \N$ and $k\leq m$, from Lemma \ref{e464}, there exist $C_k>0$ and $p_m'>0$ such that
\begin{align}\label{e475}
\begin{split}
&|\Delta^{k}(\lambda-L_{\var,T}^3)^{-m}s|_{\var,0}\leq |(\lambda-L_{\var,T}^3)^{-m}s|_{\var,k}
\\
\leq &C_k(1+|\lambda|)^{p_m'}|(\lambda-L_{\var,T}^3)^{-m+k}s|_{\var,0}
\leq C_k(1+|\lambda|)^{p_m'}|s|_{\var,0}.
\end{split}
\end{align}

Denote by $L_{\var,T}^{3,*}$ the formal adjoint of $L_{\var,T}^{3}$ with respect to the usual Hermitian product in $I^0$.
Then $L_{\var,T}^{3,*}$ has the same structure as $L_{\var,T}^{3}$
except that
we replace the operators $f^{p}\wedge$, $i_{f_{p}}$ by $i_{f_{p}}$
and $f^{p}\wedge$.
For $s\in I_q^0$,
set
\begin{align}\label{e06044}
|s|_{\var,0}'^2=\int_{T_{y_0}Y_b\times X_{y_0}}|s(U,x)|^2g_{\var}(U)^{2(q-l')}dv_{TY}(U)dv_X(x).
\end{align}
From the above analysis associated to $|\cdot|_{\var,0}'$, we obtain (\ref{e475}) for
$L_{\var,T}^{3,*}$ and $|\cdot|_{\var,0}'$.
Taking adjoint with respect to the usual Hermitian product in $I^0$, we have
\begin{align}\label{e477}
|(\lambda-L_{\var,T}^{3})^{-m}\Delta^{k}s|_{\varepsilon,0}
\leq C_k(1+|\lambda|)^{p_m'}|s|_{\varepsilon,0}.
\end{align}
So for $k, k', m\in \N$ and $m\geq k+k'$, there exists $C_{k,k'}>0$, such that
\begin{align}\label{e478}
\begin{split}
|\Delta^{k}\exp(-L_{\var,T}^3)\Delta^{k'}s|_{\varepsilon,0}
=&\left|\frac{(-1)^{m-1}(m-1)!}{2\pi i}\int_{\Gamma}e^{-\lambda}\Delta^{k}(\lambda-L_{\var,T}^{3})^{-m}\Delta^{k'}s\right|_{\varepsilon,0}
\\
\leq &C_{k,k'}\left(\int_{\Gamma}e^{-\lambda}(1+|\lambda|)^{p_m'}d\lambda\right)|s|_{\varepsilon,0}
\\
= &C_{k,k'}\left(\int_{\Gamma'}e^{-\lambda^2}(1+|\lambda^2|)^{p_m'}d\lambda\right)|s|_{\varepsilon,0}\leq C|s|_{\varepsilon,0}.
\end{split}
\end{align}

Take $p\in \N$. Let $J_{p,y_0}^0$ be the set of square integrable sections of
$\Lambda (TV^g)
\widehat{\otimes} \mS(N_{Y^g/Y})\otimes L_Y^{1/2} \widehat{\otimes}\mS(TX,L_X)$ over
\begin{align}\label{e06032}
\left\{(U,x)\in T_{y_0}Y\times X_{y_0}; x\in X_{y_0}, |U|\leq p+\frac{1}{2}\right\}.
\end{align}
We equip $J_{p,y_0}^0$ with the Hermitian product for $s\in J_{p,y_0}^0$,

\begin{align}\label{e479}
|s|_{(p),0}^2=\int_{|U|\leq p+\frac{1}{2}}\int_{X_{y_0}}|s(U,x)|^2 dv_{T_{y_0}Y}dv_X.
\end{align}

Obviously, there exists $C>0$ such that for any $p\in \N$, $s\in J_{p,y_0}^0$,
\begin{align}\label{e06028}
|s|_{(p),0}\leq |s|_{\var,0}\leq C(1+p)^{l'}|s|_{(p),0}.
\end{align}

By (\ref{e478}) and (\ref{e06028}), we find for any $k\leq m$, $k'\leq m'$, there exists $C'>0$
such that for $\var\in (0,1]$, $p\in \N$, $s\in J_{p,y_0}^0$,
\begin{align}\label{e481}
\begin{split}
|\Delta^{k}\exp(-L_{\var,T}^3)\Delta^{k'}s|_{(p),0}\leq
\left|\Delta^{k}\exp(-L_{\var,T}^3)\Delta^{k'}s\right|_{\var,0}
\leq C'(1+p)^{l'}|s|_{(p),0}.
\end{split}
\end{align}
Thus, following the same process in (\ref{e04138}) - (\ref{e04140}), for $k, k'\in \N$ there exists $C>0$, $r>0$
such that for $\var\in (0,1]$, $p\in \N$,
\begin{align}\label{e482}
\sup_{|U|, |U'|\leq p+1/4}|\Delta^{k}_{(U,x)}\Delta^{k'}_{(U',x')}\exp(-L_{\var,T}^3)((U,x), (U',x'))|\leq C(1+p)^r.
\end{align}
So we get the bounds in (\ref{e108}) with $C=0$.

By (\ref{e06007}) and (\ref{e06008}), we write
\begin{multline}\label{e485}
\widetilde{\mathbf{G}}_u(L_{\var,T}^3)((U,x),(U',x'))
\\
=\int_{-\infty}^{+\infty}\cos\left(\sqrt{2}v\sqrt{L_{\var,T}^3}\right)((U,x),(U',x'))e^{-\frac{v^2}{2}}(1-f(\sqrt{u}v))\frac{dv}{\sqrt{2\pi}}.
\end{multline}
\begin{lemma}\label{e898}
There exist $C_1, C_2>0$, $r>0$,
such that for $\var\in (0,1]$, $m, m'\in \N$,
\begin{align}\label{e487}
\sup_{|\beta|\leq m, |\beta'|\leq m'}|\partial^{\beta}_U\partial^{\beta'}_{U'}\widetilde{\mathbf{G}}_u(L_{\var,T}^3)((U',x'),(U,x))|
\leq C_1(1+|U|+|U'|)^r\exp\left(-\frac{C_2}{u}\right).
\end{align}
\end{lemma}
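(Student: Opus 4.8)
The plan is to carry over, essentially verbatim, the chain of estimates that the excerpt uses to prove (\ref{e482}), but with $\widetilde{\mathbf{G}}_u(L_{\var,T}^3)$ in place of $\exp(-L_{\var,T}^3)$; the only genuinely new ingredient is that $\widetilde{\mathbf{G}}_u$, together with all of its $\lambda$-derivatives, decays rapidly on the resolvent contour $\Gamma$ \emph{and} carries a factor $\exp(-C/u)$, and this is what upgrades the conclusion from a polynomial bound to the bound $\exp(-C_2/u)$ claimed in (\ref{e487}). Throughout, $T\geq 1$ is fixed and all constants are allowed to depend on $T$ and on the fixed geometric data; the point is uniformity in $\var\in(0,1]$, which is already built into the weighted norms $|\cdot|_{\var,0}$, $|\cdot|_{(p),0}$ and into Lemmas \ref{e06027}--\ref{e464}.

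The first step is the decay of $\widetilde{\mathbf{G}}_u$ along $\Gamma$. By (\ref{e06006}) the factor $1-f(\sqrt u v)$ in (\ref{e06007}) vanishes for $|v|\leq\alpha/(2\sqrt u)$, where one has $v^2/2\geq v^2/4+\alpha^2/(16u)$; hence for $u\in(0,1]$ and every $N$ the function $e^{-v^2/2}(1-f(\sqrt u v))$ and all of its $v$-derivatives are dominated by $C_N e^{-\alpha^2/(16u)}e^{-v^2/8}$ (the polynomial factors and the powers $u^{j/2}\leq1$ produced by differentiation being absorbed into $e^{-v^2/8}$). Integrating by parts in $v$ against $\left(-\tfrac12\partial_v^2\right)^N\cos(\sqrt2 va)=a^{2N}\cos(\sqrt2 va)$, and running the same computation on the strip $\{|\Im\zeta|\leq c\}$ using $|\cos(\sqrt2 v\zeta)|\leq e^{\sqrt2 c|v|}$ against $e^{-v^2/8}$, one obtains $\sup_{|\Im\zeta|\leq c}(1+|\zeta|)^N|\mathbf{G}_u^{(j)}(\zeta)|\leq C_{N,j}e^{-\alpha^2/(16u)}$ for all $N,j$; since $\widetilde{\mathbf{G}}_u(\lambda)=\mathbf{G}_u(\sqrt\lambda)$ by (\ref{e06008}) and, by (\ref{e461}) and (\ref{e474}), $\Gamma=\partial U$ is the image of that strip under $\zeta\mapsto\zeta^2$ and stays at distance $\geq c^2$ from $0$, this gives $\sup_{\lambda\in\Gamma}(1+|\lambda|)^N|\widetilde{\mathbf{G}}_u^{(j)}(\lambda)|\leq C_{N,j}e^{-\alpha^2/(16u)}$, the analogue of (\ref{e68}) for the present contour. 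Then, exactly as in the passage to (\ref{e478}), I would write $\widetilde{\mathbf{G}}_u(L_{\var,T}^3)=(m-1)!\,\tfrac{1}{2\pi\sqrt{-1}}\int_\Gamma\widetilde\Phi_u(\lambda)\,(\lambda-L_{\var,T}^3)^{-m}d\lambda$ for any $m\geq k+k'$, where $\widetilde\Phi_u$ is an $(m-1)$-fold antiderivative of $\widetilde{\mathbf{G}}_u$ along $\Gamma$; because $\widetilde{\mathbf{G}}_u$ is entire and rapidly decreasing to the right, $\widetilde\Phi_u$ still decays faster than any polynomial on $\Gamma$ and still carries the factor $e^{-\alpha^2/(16u)}$. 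Using Lemma \ref{e459} and the resolvent bounds (\ref{e475}) together with their adjoint counterpart (\ref{e477}) (obtained from Lemma \ref{e464} as in the discussion before (\ref{e478})), $\|\Delta^k(\lambda-L_{\var,T}^3)^{-m}\Delta^{k'}\|_{\var,0}\leq C_m(1+|\lambda|)^{p_m}$ uniformly in $\var$, and integrating over $\Gamma$ yields $|\Delta^k\widetilde{\mathbf{G}}_u(L_{\var,T}^3)\Delta^{k'}s|_{\var,0}\leq C e^{-C'/u}|s|_{\var,0}$ — the analogue of (\ref{e478}) with the extra factor $e^{-C'/u}$.

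The last step is purely the passage from this operator estimate to the pointwise kernel bound, and it is identical to the derivation of (\ref{e482}): restricting to $J_{p,y_0}^0$, comparing $|\cdot|_{\var,0}$ with $|\cdot|_{(p),0}$ by (\ref{e06028}) at the cost of a factor $(1+p)^{l'}$, and then applying the Sobolev embedding theorem in the variables $(U,x)$ and $(U',x')$ with $k,k'$ chosen large enough to control the derivatives $\partial_U^\beta$, $\partial_{U'}^{\beta'}$; this produces $\sup_{|U|,|U'|\leq p+1/4}|\partial_U^\beta\partial_{U'}^{\beta'}\widetilde{\mathbf{G}}_u(L_{\var,T}^3)((U',x'),(U,x))|\leq C(1+p)^r e^{-C'/u}$, and taking $p=\lceil|U|+|U'|\rceil$ gives (\ref{e487}) after adjusting constants. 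I expect the only delicate point to be verifying that the rapid decay of $\widetilde{\mathbf{G}}_u$ (and of the antiderivative $\widetilde\Phi_u$) survives on the full complex parabola $\Gamma$, not merely on $\R$ — this is forced on us because $L_{\var,T}^3$ is $\Lambda(T^*S)$-valued and has no spectral gap, so we must route everything through the resolvent rather than through the real wave equation as in Lemma \ref{e06010}; everything else is bookkeeping already supplied by Lemmas \ref{e06027}--\ref{e464} and by (\ref{e475})--(\ref{e482}).
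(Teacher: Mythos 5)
Your proposal is correct and follows the same route as the paper, whose entire proof is the one-line instruction to rerun (\ref{e478})--(\ref{e482}) with $\widetilde{\mathbf{G}}_u$ in place of $e^{-\lambda}$ and to invoke (\ref{e68}). You simply supply the details the paper leaves implicit: that $\widetilde{\mathbf{G}}_u$ and all its derivatives decay faster than any polynomial on the parabola $\Gamma$ (the paper's (\ref{e68}) is stated for the contour $\Delta$, so carrying it to $\Gamma$ via $\zeta\mapsto\zeta^2$ is a gap worth closing), that this decay carries the factor $e^{-\alpha^2/(16u)}$, and that the $(m-1)$-fold antiderivative of $\widetilde{\mathbf{G}}_u$ along $\Gamma$ still decays rapidly. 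That last point is the only place requiring a word of care: since $\widetilde{\mathbf{G}}_u$ is entire and both ends of the parabola $\Gamma$ recede into the half-plane $\Re\lambda>0$ where $\widetilde{\mathbf{G}}_u$ decays, one has $\int_\Gamma\widetilde{\mathbf{G}}_u\,d\lambda=0$, which is exactly what guarantees that the (suitably normalized) antiderivative tends to zero at both ends and hence inherits the rapid decay and the factor $e^{-c/u}$; your phrase ``entire and rapidly decreasing to the right'' is the right intuition, but this cancellation of the boundary constant deserves to be made explicit, as without it the iterated integrations by parts would produce boundary terms not controlled by the uniform-only resolvent bound of Lemma \ref{e459}. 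With that observation in place your chain of estimates — Lemmas \ref{e459}, \ref{e464}, the bounds (\ref{e475}) and their adjoint version (\ref{e477}), the passage through $J_{p,y_0}^0$ and the weight comparison (\ref{e06028}), and the Sobolev embedding — is exactly the one the paper compresses into its reference, so the proof is sound.
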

\begin{proof}
After replacing $\exp(-L_{\var,T}^3)$ to $\widetilde{\mathbf{G}}_u(L_{\var,T}^3)$ in (\ref{e478})-(\ref{e482})
and using (\ref{e68}), we get Lemma \ref{e898}.
\end{proof}

If $|\sqrt{u}v|\leq \alpha/2$, then $f(\sqrt{u}v)=0$.
Using finite propagation speed of the hyperbolic equation
for the solution of hyperbolic equations for $\cos (s\sqrt{L_{\var,T}^3})$ (cf. \cite[\S 7.8]{MR598467}, \cite[\S 4.4]{MR618463}),
there exists a constant $C_0'>0$, such that
\begin{align}\label{e488}
\widetilde{\mathbf{G}}_{u}(L_{\var,T}^3)((U,x),(U',x'))=\exp(L_{\var,T}^3)((U,x),(U',x')),
\end{align}
if $|U-U'|\geq C_0'/\sqrt{u}$.

Then by (\ref{e488}) and Lemma \ref{e898}, For $m, m'\in \N$, there exists $C_1, C_2>0$, $r>0$,
such that for $\var\in (0,1]$,
\begin{align}\label{e489}
\begin{split}
\sup_{|\beta|\leq m,|\beta'|\leq m'}&\left|\partial^{\beta}_U\partial^{\beta'}_{U'}\exp(-L_{\var,T}^3)((U,x),(U',x'))\right|
\\
&\leq C_1(1+|U|+|U'|)^r\exp\left(-\frac{C_2|U-U'|^2}{C_0'^2}\right).
\end{split}
\end{align}

So we get the bounds in (\ref{e108}).

For $U\in T_{y_0}Y$, set $U=U_pf_p$.
Let $|\cdot |_{0, k}$ be the limit norm of $|\cdot|_{\var, k}$
as $\var\rightarrow 0$ for $k\in \{-1, 0, 1\}$.
Note that all the estimates in this subsection work for $\var=0$.
For $k\in \{-1, 0, 1\}$ and $k'\in\N $, set
\begin{align*}
I^{k, k'}_{0}=\big\{s\in I^k:\
U^{\alpha}s\in I^k\
\textup{for}\ |\alpha|\leq k'\big\}.
\end{align*}
For $s\in I^{k, k'}_{0}$, set
\begin{align}\label{wlu5.140}
\big|s\big|_{0, (k, k')}^{2}
=\sum_{|\alpha|\leqslant k'}\big|U^{\alpha}s\big|^{2}_{0, k}.
\end{align}

\begin{lemma}\label{e06045}
There exist $C>0$, $k, k'\in \N $ such that
for $s\in I$,
\begin{align}\label{e06046}
\Big|\big[\big(\lambda-L^{3}_{\var,T}\big)^{-1}
-\big(\lambda-L^{3}_{0,T}\big)^{-1}\big]s\Big|_{\var, 0}
\leqslant C \var\big(1+|\lambda|\big)^{k}\big|s\big|_{0, (0, k')}.
\end{align}
\end{lemma}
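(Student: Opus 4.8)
The plan is to deduce (\ref{e06046}) from the second resolvent identity and the uniform a priori estimates already proved for $L^{3}_{\var,T}$, using that $L^{3}_{\var,T}-L^{3}_{0,T}$ is, up to an overall factor $\var$, a differential operator of the same type whose coefficients grow at most polynomially in $U$. Set $\mathcal{R}_{\var,T}:=L^{3}_{\var,T}-L^{3}_{0,T}$. For $\lambda$ in the region (\ref{e461}), for which both resolvents exist by Lemma \ref{e459} (whose proof works verbatim at $\var=0$ as well), we have
\[
(\lambda-L^{3}_{\var,T})^{-1}-(\lambda-L^{3}_{0,T})^{-1}=(\lambda-L^{3}_{\var,T})^{-1}\,\mathcal{R}_{\var,T}\,(\lambda-L^{3}_{0,T})^{-1}.
\]
Thus it is enough to prove, for a fixed integer $N$: (i) $(\lambda-L^{3}_{0,T})^{-1}$ maps $I^{0,N}_{0}$ into $I^{1,N}_{0}$ with norm $O((1+|\lambda|)^{p})$; (ii) $|\mathcal{R}_{\var,T}s'|_{0,-1}\leq C\var\,|s'|_{0,(1,N)}$ for $s'\in I^{1,N}_{0}$; (iii) $(\lambda-L^{3}_{\var,T})^{-1}$ maps $I^{-1}$ into $I^{0}$ with $|(\lambda-L^{3}_{\var,T})^{-1}r|_{\var,0}\leq C(1+|\lambda|)^{2}|r|_{\var,-1}$ uniformly in $\var\in[0,1]$. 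Item (iii) is Lemma \ref{e459}; one then takes $k'=N$ (enlarged by a fixed amount in step (i) if necessary) and uses $g_{\var}\geq 1$ and $g_{\var}(U)\leq 1+(1+|U|^{2})^{1/2}$ to compare $|\cdot|_{\var,0}$ with the limiting weighted norms up to a fixed polynomial in $U$.

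The substance is (ii), namely the structure of $\mathcal{R}_{\var,T}$, which I would read off from (\ref{e06015}), (\ref{e567}), (\ref{e569}), (\ref{e591}), (\ref{e594}) and Lemma \ref{e593}. After the substitution $U\mapsto U/\var$ and the Clifford rescaling $R_{\var}$, the operator $L^{3}_{\var,T}$ is a finite sum of terms, each of the form $\var^{j}$ ($j\geq 0$) times a fiberwise differential operator of order $\leq 2$ in $X$ and order $\leq 2$ in $U$, whose coefficients are built from $\rho^{2}(\var U)$, $1-\rho^{2}(\var U)$, the Clifford factors $f^{p}\wedge-\var^{2}i_{f_{p}}$, and smooth tensors on a neighbourhood of $W^{g}$ (the curvatures $R^{TY},R^{L_{Z}},R^{E}$, the torsions $S_{i},T_{i}$, the scalar curvatures $K^{X}$, $K^{Z}_{T/\var}$) evaluated at $\var U$, possibly multiplied by monomials $U^{\beta}$ coming from the Taylor expansion (\ref{e06023}) and Lemma \ref{e06024} of the connection form of $\nabla'$. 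Subtracting $L^{3}_{0,T}$, which is exactly the limit of the $j=0$, $U=0$ contribution, every surviving term of $\mathcal{R}_{\var,T}$ carries an explicit smallness factor: a power $\var^{j}$ with $j\geq 1$; or a factor $1-\rho^{2}(\var U)$, which vanishes for $\var|U|\leq\alpha_{0}/4$ and hence satisfies $|1-\rho^{2}(\var U)|\leq C\var|U|$; or a Taylor remainder $a(\var U,x)-a(0,x)$ bounded by $C\var|U|$ on the relevant support; and likewise for $\var^{2}K^{Z}_{T/\var}-T^{2}K^{X}$ by (\ref{e574}). In every case the coefficient is bounded by $C\var(1+|U|)^{N}$ for a fixed $N$, and, controlling the Clifford factors by Lemma \ref{e448}, this gives the bilinear bound $|\langle\mathcal{R}_{\var,T}s',s''\rangle_{\var,0}|\leq C\var\,|s'|_{0,(1,N)}\,|s''|_{\var,1}$, i.e. (ii).

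For (i), I would propagate the polynomial weights $U^{\alpha}$, $|\alpha|\leq N$, through $(\lambda-L^{3}_{0,T})^{-1}$ exactly as in the proof of Lemma \ref{e04126}: by the explicit form (\ref{e594}), each iterated commutator $[U^{\alpha_{1}},[\,\cdots,[U^{\alpha_{k}},L^{3}_{0,T}]\cdots]]$ is a differential operator of order $\leq 1$ in $U$ with coefficients polynomial in $U$, and it vanishes once $k$ exceeds $|\alpha|$; expanding $U^{\alpha}(\lambda-L^{3}_{0,T})^{-1}$ into a finite sum of such commutators sandwiched between resolvents and invoking the Sobolev bounds of Lemma \ref{e464} gives the weighted estimate, with the adjoint handled as in (\ref{e477}). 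Combining (i)--(iii) yields $|[(\lambda-L^{3}_{\var,T})^{-1}-(\lambda-L^{3}_{0,T})^{-1}]s|_{\var,0}\leq C\var(1+|\lambda|)^{2+p}|s|_{0,(0,k')}$.

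The main obstacle I anticipate is the bookkeeping in steps (i) and (ii): one must check that no uncontrolled power of $U$, and in particular no negative power of $\var$, is produced on the region $\var|U|\gtrsim\alpha_{0}$ where the cut-off $\rho$ is active. This is exactly what the vanishing of $1-\rho^{2}(\var U)$ and of the Taylor remainders, combined with the precise choice of the weight $g_{\var}$ in (\ref{e06025}), is designed to control, in the spirit of \cite[Section 11]{MR1188532} and \cite[Section 11]{MR1316553}; making all the weighted estimates fit together uniformly in $\var\in[0,1]$ is the delicate point.
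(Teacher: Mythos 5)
Your proposal follows essentially the same route as the paper: the resolvent identity (\ref{e06047}), a bilinear bound $|\langle(L^{3}_{\var,T}-L^{3}_{0,T})s',s''\rangle_{\var,0}|\leq C\var|s'|_{0,(1,4)}|s''|_{\var,1}$ obtained from the Taylor expansion (\ref{e06023}) (the paper's (\ref{e06048})--(\ref{e06049})), a commutator argument with the weights $U^{\alpha}$ for the weighted resolvent estimate (\ref{e06059}), and Lemma \ref{e459} for the uniform estimate on $(\lambda-L^{3}_{\var,T})^{-1}$. The only slip is notational: in step (ii) you write $|\cdot|_{0,-1}$, but to chain with step (iii) you need $|\cdot|_{\var,-1}$, which is precisely what your stated bilinear bound (with $|s''|_{\var,1}$ in the denominator) actually yields.
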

\begin{proof} Clearly,
\begin{align}\label{e06047}
\big(\lambda-L^{3}_{\var,T}\big)^{-1}-\big(\lambda-L^{3}_{0,T}\big)^{-1}
=\big(\lambda-L^{3}_{\var,T})^{-1}
\big(L^{3}_{\var,T}-L^{3}_{0,T}\big)\big(\lambda-L^{3}_{0,T}\big)^{-1}.
\end{align}
Since $|\cdot|_{\var, 0}\leqslant |\cdot|_{0, 0}$, then by (\ref{e06023}),
\begin{align}\label{e06048}
\Big|\big\langle(L^{3}_{\var,T}-L^{3}_{0,T})s, s'\big\rangle_{\var, 0}\Big|
\leqslant C\var\big|s\big|_{0, (1, 4)}\big|s'\big|_{\var, 1},
\end{align}
which implies that
\begin{align}\label{e06049}
\big|(L^{3}_{\var,T}-L^{3}_{0,T})s\big|_{\var, -1}
\leqslant C\var\big|s\big|_{0, (1, 4)}.
\end{align}
On the other hand, we have
\begin{align}\label{e06052}
\Big|\la\big[U_{i_{1}}, [\cdots [U_{i_{p}},
\ L^{3}_{0,T}]\cdots] s,s'\ra\big]\Big|_{0,0}\leqslant C_{p}|s|_{0,1}|s'|_{0,1}.
\end{align}
From (\ref{e06052}) and the argument as in the proof of Theorem \ref{e04126}, we obtain
\begin{align}\label{e06059}
\big|(\lambda-L^{3}_{0,T})^{-1}s\big|_{0, (1, k)}
\leqslant C\big(1+|\lambda|\big)^{k}\big|s\big|_{0, (0, k)}.
\end{align}
This completes the proof of Lemma \ref{e06045}.
\end{proof}

By (\ref{e06028}) and Lemma \ref{e06045}, there exists $r\in \N$ for $s\in J_{p,y_0}^0$,
\begin{align}\label{e493}
|((\lambda-L_{\var,T}^3)^{-1}-(\lambda-L_{0,T}^3)^{-1})s|_{(p),0}\leq c\var(1+|\lambda|)^{2}(1+p)^{r}|s|_{(p),0}.
\end{align}
So  there exists $C>0$, $r\in \N$,
such that for $\var\in (0,1]$, $p\in \N$,
\begin{align}\label{e494}
|(\exp(-L_{\var,T}^3)-\exp(-L_{0,T}^3))s|_{(p),0}\leq C\var(1+p)^{r}|s|_{(p),0}.
\end{align}

By the same process in (\ref{e04145})-(\ref{e04148}),
there exist $c>0, C>0, r\in\N$,
such that for any $(U,x), (U',x')\in T_{y_0}Y\times X_{y_0}$, $\var\in (0,1]$,
\begin{multline}
|(\exp(-L_{\var,T}^3)-\exp(-L_{0,T}^3))((U,x), (U',x'))|
\\
\leq c\var^{(\dim Y+1)^{-1}}(1+|U|+|U'|)^r\exp(-C|U-U'|^2).
\end{multline}

Then the proof of Theorem \ref{e107} is complete.

\section{Proof of Theorem \ref{e03022} iii)}

In this Section, we use the notations and assumptions in Section 2.2 and 6.

\subsection{Localization of the problem near $\pi_1^{-1}(V^g)$}

We replace $T$ by $u$ and $T/\var$ by $T'$.

By Lemma \ref{e06010}, there exist $C_1,C_2>0$, such that for any $z,z'\in Z_b$ and $u\in (0,1]$, $T'\geq 1$,
\begin{align}\label{e06030}
\left|\widetilde{\mathbf{G}}_{u^2/T'^2}\left(\frac{u^2}{T'^2}\mB_{T'}'\right)(z,z')\right|\leq C_1\exp\left(-\frac{C_2T'^2}{u^2}\right),
\end{align}
and
\begin{align}\label{e06031}
\left|\psi_S\widetilde{\tr}\left[g\widetilde{\mathbf{G}}_{{u^2/T'^2}}
\left(\mB_{u/T',T'}'\right)\right]\right|\leq C_1\exp\left(-\frac{C_2T'^2}{u^2}\right).
\end{align}

We trivialize the bundle $\pi_3^*\Lambda (T^{*}S)\widehat{\otimes}\mS(TZ, L_Z)$ as in Section \ref{s0502}.
By (\ref{e06019}),
we can get
\begin{align}\label{e684}
L_{u/T',u}^1=u^2\delta_{u^2}L_{1/T',1}^1\delta_{u^2}^{-1}.
\end{align}

Comparing with (\ref{e565}),
there exists $C>0$, such that for $|U|<\alpha_0/4$,
\begin{align}\label{e683}
\begin{split}
&\left|\exp\left(-u^2\mB_{1/T'}'^2\right)(g^{-1}(U,x),(U,x))k_Y(y_0,U)\right.
\\
&\quad\quad \left.-\exp(-u^2L_{1/T',1}^1)(g^{-1}(U,x),(U,x))\right|\leq C\exp\left(-\frac{C_2T'^2}{u^2}\right).
\end{split}
\end{align}
Then we can replace the fiber $Z$ by $T_{y_0}Y\times X_{y_0}$ for $y_0\in V^g$.

\subsection{Proof of Theorem \ref{e03022} iii)}

We will use the notation of Section \ref{s0502} with $\var$ replaced by $1/T'$, and $T$ by $1$.
By Lemma \ref{e593}, we see that as $T'\rightarrow +\infty$
\begin{align}\label{e608}
L_{1/T',1}^3\rightarrow L_{0,1}^3.
\end{align}

Let $\exp(-u^2L_{\var,T}^i)((U,x),(U',x'))\ ((U,x), (U'x')\in T_{y_0}Y\times X_{y_0})\ (i=1,2,3)$
be the smooth kernel associated to the operator $\exp(-u^2L_{\var,T}^i)$ with respect to
$dv_{T_{y_0}Y}(U')dv_{X_{y_0}}(x')$. 
Then by (\ref{e570}),
\begin{multline}\label{e610}
\psi_S\int_{Y^g}\int_{U\in N,\atop |U|\leq \alpha_0/4}\int_X\delta_{u^2}\widetilde{\tr}\left[g\exp\left(-u^2L_{1/T',1}^1\right)
\left(g^{-1}(U,x),\left(U,x\right)\right)\right]
\\
\cdot dv_{Y^g}dv_N(U)dv_X(x)
\\
=\psi_S\int_{Y^g}\int_{U\in N,\atop |U|\leq T'\alpha_0/4}\int_X\tilde{c}_{TY^g}\delta_{u^2}\widetilde{\tr}\left[g\exp(-u^2L_{1/T',1}^3)
\left(g^{-1}(U,x),\left(U,x\right)\right)\right]^{max}
\\
\cdot dv_{Y^g}dv_N(U)dv_X(x).
\end{multline}
By (\ref{e610}) and the argument of Section \ref{s0502}, to calculate the asymptotic of the left hand side of (\ref{e610})
as $u\rightarrow 0$ uniformly in $T\geq 1$, we have to find the asymptotic as $u\rightarrow 0$ of
\begin{align}\label{e611}
\psi_S\int_{U\in N}\int_X\tilde{c}_{TY^g}\delta_{u^2}\widetilde{\tr}\left[g\exp(-u^2L_{1/T,1}^3)
\left(g^{-1}(U,x),\left(U,x\right)\right)\right]^{max}dv_N(U)dv_X(x).
\end{align}

The following lemma is a modification of Lemma \ref{e107}.
\begin{lemma}\label{e613}
There exist $C_1,C_2>0, p,r\in\N$ such that for any $(U,x),(U',x')\in T_{y_0}Y\times X_{y_0}$,
$\var\in [0,1]$, $u\in (0,1]$,
\begin{align}\label{e614}
\begin{split}
|u^p\exp(-u^2&L_{\var,1}^3)((U,x),(U',x'))|
\\
&\leq C_1(1+|U|+|U'|)^r\cdot \exp\left(-C_2\frac{|U-U'|^2+d^X(x,x')^2}{u^2}\right).
\end{split}
\end{align}
\end{lemma}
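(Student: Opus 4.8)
The plan is to prove Lemma \ref{e613} by the same heat-kernel estimate machinery used for Theorem \ref{e107}, but now tracking the dependence on the rescaling parameter $u \in (0,1]$ as well as on $\var$. First I would recall from Section \ref{s0502} that the operators $L_{\var,1}^3$ form a family of generalized Laplacians acting along $T_{y_0}Y \times X_{y_0}$, with the Sobolev spaces $I^k$, the weighted norms $|\cdot|_{\var,k}$ in (\ref{e974}) and the commutator estimates (\ref{e04222}) all at our disposal; these are uniform in $\var \in [0,1]$, including $\var = 0$ by Lemma \ref{e593}. The key extra input is the rescaling identity $\exp(-u^2 L_{\var,1}^3) = $ (up to the conjugation by $\delta_{u^2}$, cf. (\ref{e684})) a heat operator whose spectral contour can be taken as $\Gamma$ in (\ref{e474}); writing
\begin{align}
\exp(-u^2 L_{\var,1}^3) = \frac{(-1)^{m-1}(m-1)!}{2\pi\sqrt{-1}\, u^{2(m-1)}} \int_{\Gamma} e^{-u^2\lambda}(\lambda - L_{\var,1}^3)^{-m} d\lambda
\end{align}
and using the resolvent bounds of Lemma \ref{e464} (which hold uniformly in $\var \in [0,1]$), one gains a factor $u^{-p}$ for some $p \in \N$ when estimating Sobolev norms of the kernel, exactly as in (\ref{e478}). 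This produces the $u^p$ on the left of (\ref{e614}).

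Next I would insert the Gaussian off-diagonal decay. As in (\ref{e485})--(\ref{e489}), one uses the Fourier decomposition $\widetilde{\mathbf{G}}_{u^2}$ together with finite propagation speed for $\cos(s\sqrt{L_{\var,1}^3})$ (cf. \cite[\S 7.8]{MR598467}, \cite[\S 4.4]{MR618463}): the cosine kernel is supported in $|U - U'| + d^X(x,x') \leq s$, so choosing the cutoff scale proportional to $u$ forces the difference between $\exp(-u^2 L^3_{\var,1})$ and $\widetilde{\mathbf{G}}_{u^2}(L^3_{\var,1})$ to vanish once $|U-U'|^2 + d^X(x,x')^2 \geq C_0 u^2$, while on that region the $\widetilde{\mathbf{G}}$-term is estimated by Lemma \ref{e898} (adapted to carry the $u^p$ weight) and is exponentially small in $u^{-2}$. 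Combining the near-diagonal polynomial bound coming from the resolvent estimate with this off-diagonal vanishing yields the stated Gaussian factor $\exp\!\big(-C_2 (|U-U'|^2 + d^X(x,x')^2)/u^2\big)$, and the polynomial weight $(1+|U|+|U'|)^r$ comes, as in (\ref{e482}), from comparing the weighted norm $|\cdot|_{\var,0}$ with the local norms $|\cdot|_{(p),0}$ of (\ref{e479}) over the balls $|U| \leq p + 1/2$.

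The main obstacle I expect is keeping all the constants genuinely uniform in the two parameters $\var \in [0,1]$ and $u \in (0,1]$ simultaneously, and in particular handling the limit case $\var = 0$ cleanly. For $\var = 0$ the operator $L^3_{0,1}$ is the harmonic-oscillator-type operator of (\ref{e594}), and one must verify that the coercivity and commutator estimates (\ref{e973}), (\ref{e04222}) survive in the limit — this is exactly the content of the $\var = 0$ remarks made just before Lemma \ref{e06045}, so I would invoke those. The second delicate point is that the factor $u^{-2(m-1)}$ from the contour integral must be beaten by the $m$-fold gain in Sobolev regularity from Lemma \ref{e464}; one has to choose $m$ large (depending on $\dim Y$ and $\dim X$) so that after applying the Sobolev embedding theorem in both the $(U,x)$ and $(U',x')$ variables, as in (\ref{e04138})--(\ref{e04140}), only a fixed power $u^p$ remains. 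Once the bookkeeping is organized this way, Lemma \ref{e613} follows by the same three-step pattern (resolvent bound $\Rightarrow$ weighted $L^2$ kernel bound $\Rightarrow$ pointwise bound via Sobolev) already carried out for Theorem \ref{e107}, with the $u$-powers threaded through at each stage.
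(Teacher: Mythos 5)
Your proposal is correct and follows essentially the same route as the paper: re-use (\ref{e478}) via the contour-integral representation with the change of variables $\lambda\mapsto u^{2}\lambda$ to absorb the rescaling into a fixed factor $u^{-p}$, pass from the weighted $L^{2}$ bound to a pointwise kernel bound via the localized norms and Sobolev embedding as in (\ref{e06032})--(\ref{e482}), and finally inject the Gaussian off-diagonal decay by the $\widetilde{\mathbf{F}}/\widetilde{\mathbf{G}}$ splitting and finite propagation speed as in (\ref{e485})--(\ref{e489}), noting uniformity down to $\var=0$. One small wording caveat: the factor $u^{-2(m-1)}$ from the contour is not ``beaten'' by the Sobolev gain --- increasing $m$ actually worsens that power --- rather it is simply accepted as the fixed $u^{-p}$ that the $u^{p}$ weight in (\ref{e614}) is designed to absorb, which is what your parenthetical remark in fact says.
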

\begin{proof}
By (\ref{e478}),
\begin{align}\label{e691}
\begin{split}
&|\Delta^{k}\exp(-u^2L_{\var,1}^3)\Delta^{k'}s|_{\var,0}
\leq C\left(\int_{\Gamma}e^{-u^2\lambda}(1+|\lambda|)^{p_m}d\lambda\right)|s|_{\var,0}
\\
\leq &Cu^{-2p_m-2}\left(\int_{u^2\Gamma}e^{-\lambda}(1+|\lambda|)^{p_m'}d\lambda\right)|s|_{\var,0}\leq Cu^{-2p_m-2}|s|_{\var,0}.
\end{split}
\end{align}
So, there exists $p\in \N$, such that
\begin{align}\label{e692}
|u^p\Delta^{k}\exp(-u^2L_{\var,1}^3)\Delta^{k'}s|_{\var,0}\leq C|s|_{\var,0}.
\end{align}
Following the process in (\ref{e06032})-(\ref{e482}), we have
\begin{align}\label{e694}
|u^p\exp(-u^2L_{\var,1}^3)((U,x),(U',x'))|\leq C(1+|U|+|U'|)^r.
\end{align}

Following the process in (\ref{e485})-(\ref{e489}),
We get Lemma \ref{e613}.
\end{proof}

Let $N_{X^g/X}$ be the normal bundle to $X^g$ in $X$. We identify $N_{X^g/X}$ to the orthogonal bundle to $TX^g$ in $TX$.
Let $g^{N_{X}}$ be the metric on $N_{X^g/X}$ induced by $g^{TX}$. Let $dv_{N_X}$ be the Riemannian volume form on $(N_{X^g/X},g^{N_X})$.

For $U\in T_{y_0}Y$, $x\in X^g$, $V\in N_{X^g/X}$, $|U|, |V|\leq \alpha_0/4$, let
$k_X(U,x,V)$ be defined by
\begin{align}\label{e616}
dv_X(U,x,V)=k_X(U,x,V)dv_{N_{X^g/X}}(V)dv_{X^g}(x).
\end{align}
Set $n'=\dim Z^g$.
By standard results on heat kernel (cf. \cite[Theorem 6.11]{MR2273508}),
there exist smooth functions $a_{T,-n'}'(x),\cdots,$$ a_{T,0}'(x)\, (x\in W^g)$
such that as $u\rightarrow 0$, for $x\in X_{y_0}^g$,
\begin{align}\label{e617}
\begin{split}
\int_{\scriptstyle V\in N_{X}, U\in N_{Y}, \atop \scriptstyle |U|,|V|\leq \alpha_0/4}
&\delta_{u^2}\widetilde{\tr}\left[g\exp(-u^2L_{1/T',1}^3)
\left(g^{-1}(U,x,V),\left(U,x,V\right)\right)\right]^{max}
\\
&\cdot k_X(U,x,V)dv_{N_X}dv_{N_Y}
=\sum_{j=-n'}^{0} a_{T',j}'(x)u^{j}+O(u),
\end{split}
\end{align}
where the $a_{T',j}'(x)$ only depend on the operator $L_{1/T',1}^3$ and its higher derivatives on $x$.
By (\ref{e608}), $a_{T',j}'(x)$ is continuous on $T'\in [1,+\infty]$.

By (\ref{e06014}), (\ref{e608})-(\ref{e614}) and (\ref{e617}), there exist $a_{T',j}$ depending continuously on $T'\in [1,+\infty]$
such that for any $u\in (0,1]$, $T'\in [1,+\infty]$,
\begin{align}\label{e618}
\left|\psi_S\widetilde{\tr}\left[g\exp\left(-\mB_{u/T',T'}'\right)\right]
-\sum_{j=-n'}^{0} a_{T',j}u^{j}\right|\leq Cu.
\end{align}

Since $\var= u/T'$, (\ref{e618}) is reformulated by
\begin{align}\label{e06034}
\left|\psi_S\widetilde{\tr}\left[g\exp\left(-\mB_{\var,T'}'\right)\right]
-\sum_{j=-n'}^{0} a_{T',j}(\var T')^{j}\right|\leq C\var T'.
\end{align}
Following the process in (\ref{e05050})-(\ref{e05043}), we have
\begin{align}\label{e06063}
\left|\left\{\psi_S\widetilde{\tr}\left[g\exp\left(-\mB_{\var,T'}'\right)\right]\right\}^{dT'}
-\sum_{j=-n'}^{0} [a_{T',j}]^{dT'}(\var T')^{j}\right|\leq C\var .
\end{align}

For $T'\geq 1$ fixed,
by Theorem \ref{e01061} and (\ref{e03008}), we have
\begin{align}\label{e07003}
\begin{split}
\lim_{\var\rightarrow 0}\left\{\psi_S\widetilde{\tr}\left[g\exp\left(-\mB_{\var,T'}'\right)\right]\right\}^{dT'}
=-\int_{Z^g}\gamma_{\mA}(T')\wedge \ch_g(L_Z^{1/2}, \nabla^{L_Z^{1/2}})\wedge \ch_g(E, \nabla^E).
\end{split}
\end{align}
From (\ref{e06034}) and (\ref{e07003}),
\begin{align}\label{e620}
\begin{split}
[a_{T',j}]^{dT'}=0 \quad \text{if}\quad j<-1, \quad [a_{T',0}]^{dT'}=-\int_{Z^g}\gamma_{\mA}(T')\wedge  \ch_g(L_Z^{1/2}, \nabla^{L_Z^{1/2}})\wedge \ch_g(E, \nabla^E).
\end{split}
\end{align}

Since $T'=\var T$,
\begin{align}\label{e06035}
[a_{T',j}]^{dT}=\var^{-1}[a_{T',j}]^{dT'}.
\end{align}
From  (\ref{e620}) and (\ref{e06035}), comparing the coefficients of $dT$ in (\ref{e06034}),
we have
\begin{align}\label{e06036}
\left|\left\{\psi_S\widetilde{\tr}\left[g\exp\left(-\mB_{\var,T/\var}'\right)\right]\right\}^{dT}
+\var^{-1} \int_{Z^g}\gamma_{\mA}(T/\var)\wedge \ch_g(L_Z^{1/2}, \nabla^{L_Z^{1/2}})\wedge \ch_g(E, \nabla^E)\right|\leq C.
\end{align}

By (\ref{e06005}) and (\ref{e06036}), we get Theorem \ref{e03022} iii).

\section{Proof of Theorem \ref{e03022} iv)}

In this section, we prove Theorem \ref{e03022} iv) by following the process of \cite[Section IX]{MR1305280} and \cite[Section 9]{MR1800127}.
In Section 8.1, as in Section \ref{s0601}, we reduce the problem to a local problem near $\pi_1^{-1}(V^g)$.
In Section 8.2, we study the matrix structure of $L_{\var, T}^3$ as in Section 4.2. In Section 8.3, we prove Theorem \ref{e03022} iv).

We use the same notation as in Section 4, 6 and the assumptions in Section 2.2.

\subsection{Finite propagation speed and localization}

\begin{prop}\label{e621}
There exist $C>0$, $C'>0$, $\delta>0$, $T_0\geq 1$, such that for $0<\var\leq 1$, $T\geq T_0$,
\begin{align}\label{e622}
\left|\left\{\psi_S\widetilde{\tr}\left[g\widetilde{\textbf{G}}_{\var^2}
(\mB_{\var, T/\var}')\right]\right\}^{dT}\right|\leq \frac{C}{T^{1+\delta}}.
\end{align}
\end{prop}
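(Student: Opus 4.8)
\textbf{Plan of proof of Proposition \ref{e621}.} The estimate concerns the ``large time, small rescaling parameter'' contribution coming from $\mB_{\var,T/\var}'$, where by (\ref{e06004}) the metric on the fibre $X$ is contracted by $T^{2}/\var^{2}$ while the metric on $Y$ is contracted by $\var^{-2}$. The key point is that this operator carries a factor $\mB_{T/\var}'$ which, by the analysis of Section \ref{s05} and in particular the uniform estimates leading to (\ref{e05043}), behaves like an operator whose spectral gap is $O((T/\var)^{2})\cdot \var^{2}=O(T^{2})$; so exponentially small estimates in $T^{2}/\var^{2}$ are available for the ``$\mathbf{G}$'' part. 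Concretely, the plan is to unwind the definitions: from $\mB_{\var,T/\var}'=\tfrac{\var^{2}}{T^{2}}\,\delta_{\var^{2}/T^{2}}\,\mB_{T/\var}'\,\delta_{\var^{2}/T^{2}}^{-1}$ and (\ref{e06008}) we have $\widetilde{\textbf{G}}_{\var^{2}}(\mB_{\var,T/\var}')=\widetilde{\textbf{G}}_{\var^{2}}\bigl(\tfrac{\var^{2}}{T^{2}}\mB_{T/\var}'\bigr)$ conjugated by $\delta_{\var^{2}/T^{2}}$; this is exactly the left-hand side of Lemma \ref{e06010} with $\var$ there replaced by $\var$ and $T$ there replaced by $T/\var$. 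Hence Lemma \ref{e06010} gives, for $0<\var\le 1$, $T\ge 1$,
\begin{align}\label{e623a}
\left|\psi_S\widetilde{\tr}\left[g\widetilde{\textbf{G}}_{\var^2}(\mB_{\var,T/\var}')\right]\right|\le C_1\exp\left(-\frac{C_2T^{2}}{\var^{2}}\right)\le C_1\exp(-C_2T^{2}),
\end{align}
the last inequality because $\var\le 1$.

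The estimate (\ref{e623a}) is for the full trace, not yet for its $dT$-component, and the right-hand side is $\exp(-C_2T^{2})$ rather than the polynomially-decaying $T^{-1-\delta}$ demanded in (\ref{e622}); both gaps are closed by the rescaling trick already used in Section \ref{s05} to pass from Theorem \ref{e05041} to (\ref{e05043}). Namely, I would replace $T$ by $sT$ in (\ref{e623a}) (equivalently apply Lemma \ref{e06010} with $T/\var$ replaced by $sT/\var$), obtaining for $sT\ge 1$
\begin{align}\label{e623b}
\left|\left\{\psi_S\widetilde{\tr}\left[g\widetilde{\textbf{G}}_{\var^2}(\mB_{\var,sT/\var}')\right]\right\}^{ds}\right|\le C_1\exp(-C_2 s^{2}T^{2}),
\end{align}
and then extract the $dT$-component by the chain rule exactly as in (\ref{e05042}):
\begin{align}\label{e623c}
\left\{\psi_S\widetilde{\tr}\left[g\widetilde{\textbf{G}}_{\var^2}(\mB_{\var,T/\var}')\right]\right\}^{dT}=\left.T^{-1}\left\{\psi_S\widetilde{\tr}\left[g\widetilde{\textbf{G}}_{\var^2}(\mB_{\var,sT/\var}')\right]\right\}^{ds}\right|_{s=1}.
\end{align}
Combining (\ref{e623b}) with $s=1$ and (\ref{e623c}) yields $\bigl|\{\cdots\}^{dT}\bigr|\le C_1 T^{-1}\exp(-C_2T^{2})$, which is stronger than (\ref{e622}) for any $\delta$ once $T\ge T_0$, since $\exp(-C_2 T^{2})\le C\,T^{-\delta}$ for $T\ge 1$.

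I should check that the $ds$-component in (\ref{e623b}) is indeed controlled by Lemma \ref{e06010}: the $dT$ (or $ds$) one-form enters $\mB_{T/\var}'$ only through the bounded zeroth-order term $dT\wedge \partial B_{3,T'}/\partial T'$ of (\ref{e06040}), so the finite-propagation-speed and elliptic-estimate arguments in the proof of Lemma \ref{e06010} apply verbatim to each differential-form component, including the top $ds$-component; this is precisely the mechanism already invoked in (\ref{e05050}). The only genuine subtlety, and the step I expect to require the most care, is making the implicit constants uniform: the constants $C_1,C_2$ in Lemma \ref{e06010} must be independent of $\var\in(0,1]$ and $T\ge 1$ (they are, since the Sobolev-norm estimates (\ref{e67})--(\ref{e68}) and the bound (\ref{e06021}) are uniform in those parameters after the rescaling $\mB_{\var/T,T}'=\tfrac{\var^2}{T^2}\delta_{\var^2/T^2}\mB_T'\delta_{\var^2/T^2}^{-1}$), and one must verify that after the substitution $T\rightsquigarrow sT$ the range $sT\ge 1$ covers all $s$ in a neighbourhood of $1$ when $T\ge T_0\ge 1$. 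Once these uniformities are in place, Proposition \ref{e621} follows, and together with Lemma \ref{e06010}-type control of the ``$\mathbf{F}$'' part (which is the content of the earlier subsections of Section 6) this completes the decomposition $\exp(-\mB_{\var,T/\var}')=\widetilde{\textbf{F}}_{\var^2}(\mB_{\var,T/\var}')+\widetilde{\textbf{G}}_{\var^2}(\mB_{\var,T/\var}')$ needed for Theorem \ref{e03022} iv).
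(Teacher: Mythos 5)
There is a genuine gap: your claimed estimate in your (e623a), namely
\begin{align*}
\left|\psi_S\widetilde{\tr}\left[g\widetilde{\textbf{G}}_{\var^2}(\mB_{\var,T/\var}')\right]\right|\le C_1\exp\left(-\frac{C_2T^{2}}{\var^{2}}\right),
\end{align*}
is not what Lemma \ref{e06010} gives after the substitution, and the error is fatal to your argument. In Lemma \ref{e06010} the parameters are $(\tilde\var,\tilde T)$ with the bound $\exp(-C_2\tilde T^2/\tilde\var^2)$; to obtain $\widetilde{\textbf{G}}_{\var^2}(\mB_{\var,T/\var}')$ one must take $\tilde\var=T$ and $\tilde T=T/\var$, so the exponent becomes $-C_2(T/\var)^2/T^2=-C_2/\var^2$, \emph{not} $-C_2T^2/\var^2$. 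This is exactly what the paper records in (\ref{e06033}), and it is why the paper explicitly adds the caveat ``for $T\geq 1$ fixed'': tracing the proof of Lemma \ref{e06010} through (\ref{e67})--(\ref{e71}) with $\tilde T=T/\var$, the elliptic estimate (\ref{e67}) contributes a factor $(T/\var)^{2m}$, and after absorption into the Schwartz decay $\exp(-C_2/\var^2)$ of (\ref{e68}) one is left with constants that still grow polynomially in $T$. Thus the $\widetilde{\textbf{G}}$ part is exponentially small in $1/\var$ but does \emph{not} decay as $T\to\infty$ (indeed, quite the opposite: the available bound grows in $T$). Your subsequent rescaling step (e623c), which is correct, then only converts a bound that does not decay in $T$ into one with an extra factor $T^{-1}$ --- which gives at best $C(T)\,T^{-1}$, not $C\,T^{-1-\delta}$.

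This is precisely why the paper does not estimate the $\widetilde{\textbf{G}}$-term alone but rather its \emph{difference} from the $T$-independent operator $\widetilde{\textbf{G}}_{\var^2}(\var^2 B_2)$: the paper invokes the resolvent/matrix machinery of Section \ref{s04} (with $\mB_T$ replaced by $\mB_{T/\var}'$ and $\mB_2$ by $B_2$, as already observed in Section \ref{s05}) to obtain the adiabatic estimate
\begin{align*}
\left|\psi_S\widetilde{\tr}\left[g\widetilde{\textbf{G}}_{\var^2}(\var^2\mB_{T/\var}')\right]-\psi_S\widetilde{\tr}\left[g\widetilde{\textbf{G}}_{\var^2}(\var^2 B_{2})\right]\right|\leq \frac{C}{T^{\delta}},
\end{align*}
uniformly in $\var\in(0,1]$, $T\geq T_0$. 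Since the $B_2$-term carries no $dT$-component, passing to the $dT$-part and then applying the $T\rightsquigarrow sT$ rescaling of (\ref{e05050})--(\ref{e05043}) gives the $T^{-1-\delta}$ bound. The decay in $T$ comes entirely from the adiabatic-limit comparison with $B_2$, not from the Schwartz decay of $\widetilde{\textbf{G}}$, and your proposal misses this mechanism. (As a minor side remark, the identity you quote, $\mB_{\var,T/\var}'=\tfrac{\var^{2}}{T^{2}}\,\delta_{\var^{2}/T^{2}}\,\mB_{T/\var}'\,\delta_{\var^{2}/T^{2}}^{-1}$, is also incorrect: by (\ref{e06004}) and (\ref{e04301}) one has $\mB_{\var,T/\var}'=\var^2\,\delta_{\var^2}\,\mB_{T/\var}'\,\delta_{\var^2}^{-1}$.)
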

\begin{proof}

As we noted in Section \ref{s05}, if we replace $\mB_T$ by $\mB_{T/\var}'$ and $\mB_2$ to $B_2$, everything
in Section \ref{s04} works well. So there exist $C>0$, $\delta>0$, $T_0\geq 1$, such that for $0<\var\leq 1$, $T\geq T_0$,
\begin{align}
\left|\psi_S\widetilde{\tr}\left[g\widetilde{\textbf{G}}_{\var^2}
(\var^2\mB_{T/\var}')\right]-\psi_S\widetilde{\tr}\left[g\widetilde{\textbf{G}}_{\var^2}
(\var^2B_{2})\right]\right|\leq \frac{C}{T^{\delta}}.
\end{align}
Since the second term above does not involve $dT$ part, by (\ref{e06004}) and
following the argument in (\ref{e04149})-(\ref{e05043}), we get Proposition \ref{e621}.
\end{proof}

By Proposition \ref{e621}, to establish Theorem \ref{e03022} iv), we only need to prove
the following result.

\begin{thm}\label{e628}
There exist $C>0$, $C'>0$, $\delta>0$, and $T_0\geq 1$
such that for $0<\var\leq 1$, $T\geq T_0$
\begin{align}\label{e629}
\left|\left\{\psi_S\widetilde{\tr}\left[g\widetilde{\textbf{F}}_{\var^2}
(\mB_{\var,T/\var}')\right]\right\}^{dT}\right|\leq \frac{C}{T^{1+\delta}}.
\end{align}
\end{thm}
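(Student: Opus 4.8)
The plan is to show that the $dT$-component of $\psi_S\widetilde{\tr}[g\widetilde{\mathbf{F}}_{\var^2}(\mB_{\var,T/\var}')]$ decays like $T^{-1-\delta}$ by exploiting the rescaling already set up in Section \ref{s06}, together with the matrix decomposition of the rescaled operator $L_{\var,T}^3$ relative to $\ker D^X$. First I would recall from (\ref{e06022}) and (\ref{e565}) that, up to an error $O(\exp(-C/\var^2))$, the kernel $\widetilde{\mathbf{F}}_{\var^2}(\mB_{\var,T/\var}')(g^{-1}(y_0,U,x),(y_0,U,x))$ equals $\widetilde{\mathbf{F}}_{\var^2}(L_{\var,T}^1)(g^{-1}(U,x),(U,x))$, and then apply the scaling $S_\var$ and $R_\var$ as in Lemma \ref{e05011} to pass to $L_{\var,T}^3$. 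The key difference with Section \ref{s06} is that here we must keep track of the $T$-dependence (rather than sending $T\to+\infty$ after $\var\to 0$), so we cannot simply use $\lim_{\var\to 0}L_{\var,T}^3=L_{0,T}^3$; instead we need uniform-in-$T$ estimates with an explicit negative power of $T$.

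The heart of the argument will be to decompose $L_{\var,T}^3$ with respect to the splitting $\cE_X=\ker D^X\oplus(\ker D^X)^{\bot}$ exactly as $\mB_T$ was decomposed into the matrix $\left(\begin{smallmatrix}E_T&F_T\\ G_T&H_T\end{smallmatrix}\right)$ in Proposition \ref{e04023}. Writing $L_{\var,T}^3$ in block form relative to $P^K_U$, the off-diagonal and $P^{K,\bot}$-blocks carry powers of $T$ (the leading term on the $(\bot,\bot)$-block is $T^2 P_U^{K,\bot}D_{y_0}^{X,2}P_U^{K,\bot}$), so the $\ker D^X$-localized part of the heat-type operator $\widetilde{\mathbf{F}}_{\var^2}(L_{\var,T}^3)$ converges to the corresponding object for the operator $\mB_{T^2}''|_{V^g}$ on the base fibration $W|_{V^g}\to V^g$, and the remainder is $O(1/T)$ by the resolvent comparison technique of Section \ref{s0403} (Lemmas \ref{e04081}, \ref{e04094}), adapted to the Gaussian-weighted Sobolev norms $|\cdot|_{\var,k}$ of Section \ref{s0604}. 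Taking the supertrace (or odd trace) kills the part that would contribute to $dT$ at leading order: indeed $\psi_{V^g}\tr_s[g\exp(\mB_{T^2}''|_{V^g})]$ has $dT$-component $\gamma_1(T)$, but one must then also use the finite-propagation-speed argument in the $U$-variable together with Mehler's formula (as in (\ref{e112})) to see that, after integration over $N_{Y^g/Y}$, the $dT$-component of the full expression is $\int_{Y^g}\widehat{\mathrm{A}}_g(TY,\nabla^{TY})\wedge\ch_g(L_Y^{1/2},\nabla^{L_Y^{1/2}})\wedge\gamma_1(T)$ up to an error $O(T^{-1-\delta})$; the extra decay in $T$ comes precisely from the $O(1/T)$ gap in the matrix-structure comparison, combined with the fact that $\gamma_1$ itself is integrable in $T$ with a power decay (which follows from the asymptotics $\gamma_1(T)=O(T^{-1/2})$ as $T\to 0$ and the large-$T$ decay built into Definition \ref{e01083}).

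The main obstacle I anticipate is making the resolvent estimates for $L_{\var,T}^3$ uniform simultaneously in $\var\in(0,1]$ and in $T\geq T_0$, with the correct explicit power of $1/T$. In Section \ref{s06} the limit $\var\to 0$ was taken with $T$ fixed, so the constants there depend on $T$; here I need to redo the commutator estimates (Lemma \ref{e06027}, \ref{e04222}) and the resolvent bounds (Lemma \ref{e459}, \ref{e464}) keeping $T$ as a large parameter, essentially interpolating between the ``$T\to\infty$'' analysis of Section \ref{s04} and the ``$\var\to 0$'' analysis of Section \ref{s06}. Concretely, one introduces the hybrid norm combining the Gaussian weight $g_\var(U)$ in the $U$-direction with the $T$-weighted norm $|\cdot|_{T,1}$ in the fiber $X$-direction, shows that $L_{\var,T}^3$ and its block $H$-component satisfy Gårding-type inequalities with constants independent of $(\var,T)$, and then runs the Duhamel/contour-integral comparison of $\exp(-L_{\var,T}^3)$ with the ``block-diagonal'' model $P^K\exp(-\mB_{T^2}''|_{V^g})P^K$ (plus the $U$-Laplacian part) to extract the $O(1/T)$ bound. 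Once this uniform comparison is in place, integrating in $U$ against the Gaussian (finite propagation speed confines $U$ to a bounded region after rescaling, but the kernel bounds of Lemma \ref{e613}-type already give the needed polynomial-times-Gaussian control), taking the trace, and picking out the $dT$-coefficient yields (\ref{e629}); the general-parity cases follow as in Section \ref{s0604} using (\ref{e01154}).

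\textbf{Remark.} The estimate (\ref{e629}) together with Proposition \ref{e621} and the decomposition (\ref{e06053}) gives at once the bound $\var^{-1}|\beta_g^T(T\var^{-1},\var)|\leq C/T^{1+\delta}$ of Theorem \ref{e03022} iv), via (\ref{e06005}); this closes the last of the four intermediate estimates needed in Section \ref{s0303} for the proof of Theorem \ref{e02084}.
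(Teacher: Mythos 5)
Your proposal correctly identifies the key structural ingredients the paper uses: localization near $\pi_1^{-1}(V^g)$ via finite propagation speed; the rescaling $S_\var, R_\var$ and the reduction to $L_{\var,T}^3$; the matrix decomposition of $L_{\var,T}^3$ relative to $\ker D^X$ (mirroring Proposition~\ref{e04023}); and the introduction of a hybrid norm combining the Gaussian weight $g_\var(U)$ with the $T$-weighted Sobolev norm of Section~4 so that a G\r{a}rding inequality holds uniformly in $(\var,T)$. These points all match the paper's actual proof (Section~8.2, Lemma~\ref{e790}).

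However, your mechanism for extracting the extra factor of $T^{-1}$ has a genuine gap. You propose comparing $\exp(-L_{\var,T}^3)$ with the $T$-\emph{dependent} block-diagonal model built from $\mB_{T^2}''|_{V^g}$, claiming the ``$O(1/T)$ gap in the matrix-structure comparison'' plus the decay of $\gamma_1(T)$ gives $O(T^{-1-\delta})$. This does not close for two reasons. First, the operator-norm gap $O(1/T)$ does \emph{not} translate into an $O(1/T)$ bound on the heat kernel at a point: as in (\ref{e04145})--(\ref{e04148}), one only gets $O(T^{-\delta})$ with $\delta=1/(n+1)$, because of the loss in passing from an $L^2$ operator bound to a pointwise kernel bound. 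Second, even granting a uniform $O(T^{-\delta})$ estimate on the trace difference, adding to it a main term $\gamma_1(T)=O(T^{-3/2})$ only produces $O(T^{-\delta})$ overall for the $dT$-component, which is strictly weaker than the required $T^{-1-\delta}$. What the paper actually does, and what your proposal omits, is to compare $\widetilde{\textbf{F}}_{\var^2}(L_{\var,T}^3)$ with $\widetilde{\textbf{F}}_{\var^2}(\Xi_\var)$ where $\Xi_\var=E_\var-F_\var H_\var^{-1}G_\var$ is $T$-\emph{independent} (Theorem~\ref{e08006}), obtaining a trace comparison of order $T^{-\delta}$ with no $dT$ part in the model. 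One then applies the scaling trick of Section~5 (replace $T$ by $sT$, pick out the $ds$-coefficient at $s=1$, use $\{\cdot\}^{dT}=T^{-1}\{\cdot\}^{ds}|_{s=1}$): this rescaling is precisely what promotes the $T^{-\delta}$ bound into the $T^{-1-\delta}$ bound on $\beta_g^T$. Without this step your argument only reaches $T^{-\delta}$.
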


By the finite propagation speed as in (\ref{e06054}), if $x\in W$, $\widetilde{\textbf{F}}_{\var^2}
(\mB_{\var,T/\var}')(x,\cdot)$ only depends on the restriction of $\mB_{\var, T/\var}'$
to $\pi^{-1}(B^{Y}(\pi_1x,\alpha))$.

Now we can use the same argument as discussed in (\ref{e06054})-(\ref{e06014}) to know
 the proof of Theorem \ref{e628} is local near $\pi_1^{-1}(V^g)$.

\subsection{The matrix structure of the operator $L_{\var,T}^3$ as $T\rightarrow +\infty$}

We use the same trivialization and notations as in Section \ref{s0601}.

By (\ref{e570}),
\begin{align}\label{e630}
\begin{split}
&\int_{Y^g}\int_{U\in N_Y, \atop |U|\leq \alpha_0/4}\widetilde{\tr}[g
\widetilde{\textbf{F}}_{\var^2}(L_{\var,T}^1)(g^{-1}(U,x),(U,x))]dv_{N_Y}dv_{Y^g}
\\
=&\int_{Y^g}\int_{U\in N_Y, \atop |U|\leq \alpha_0/4\var}\tilde{c}_{TY^g}\widetilde{\tr}\left[g\widetilde{\textbf{F}}_{\var^2}(L_{\var,T}^3)
\left(g^{-1}\left(U,x\right),\left(U,x\right)\right)\right]dv_{N_Y}.
\end{split}
\end{align}

Recall that the vector bundle $K$ was defined in the argument before (\ref{e06018}) and  the operator $S_{\var}$ was defined in (\ref{e566}). Let
$\mathbb{F}_{\var}^0$ be the vector space of square integrable sections of
$\Lambda(T^*V^g)\widehat{\otimes}\mS(N_{Y^g/Y})\widehat{\otimes}S_{\var}^{-1*}K\otimes L_Y^{1/2}$
over $T_{y_0}Y$. Then $\mathbb{F}_{\var}^0$ is a Hilbert subspace of $I^0$.
Let $\mathbb{F}_{\var}^{0,\bot}$ be its orthogonal complement in $I^0$.
Let $p_{\var}$ be the orthogonal projection operator from
$I^0$ on $\mathbb{F}^0_{\var}$. Set $p_{\var}^{\bot}=1-p_{\var}$. Then if $s\in I^0$,
\begin{align}\label{e632}
p_{\var}s(U)=P_{\var U}^Ks(U,\cdot)\quad U\in T_{y_0}Y.
\end{align}
Put
\begin{align}\label{e633}
\begin{split}
&E_{\var,T}=p_{\var}L_{\var,T}^3p_{\var},\ \ F_{\var,T}=p_{\var}L_{\var,T}^3p_{\var}^{\bot},\ \
\\
&G_{\var,T}=p_{\var}^{\bot}L_{\var,T}^3p_{\var},\ \ H_{\var,T}=p_{\var}^{\bot}L_{\var,T}^3p_{\var}^{\bot}.
\end{split}
\end{align}
Then we write $L_{\var, T}^3$ in matrix form with respect to the splitting
$I^0=\mathbb{F}_{\var}^0\oplus\mathbb{F}_{\var}^{0,\bot}$,
\begin{align}\label{e635}
\begin{split}
L_{\var,T}^3=\left(
  \begin{array}{cc}
    E_{\var,T} & F_{\var,T} \\
    G_{\var,T} & H_{\var,T} \\
  \end{array}
\right).
 \end{split}
\end{align}

The following lemma is an analogue of Proposition \ref{e04023}.
\begin{lemma}\label{e698}
There exist operators $E_\var$, $F_{\var}$, $G_{\var}$, $H_\var$
such that as $T\rightarrow \infty$,
\begin{align}\label{e699}
\begin{split}
&E_{\var,T}=E_{\var}+O(1/T),\quad \quad F_{\var,T}=TF_{\var}+O(1),\\
&G_{\var,T}=TG_{\var}+O(1),\ \ \quad \quad H_{\var,T}=T^2H_{\var}+O(T).
\end{split}
\end{align}

Set
\begin{align}\label{e701}
\begin{split}
Q_{\var}:=\rho^2(\var U)R_{\var}S_{\var}^{-1}\left[D^X, \var D^H+\,^0\nabla^{\cE_Z, u}\right]S_{\var}.
\end{split}
\end{align}
Then $Q_{\var}$ maps $\mathbb{F}_{\var}^0$ into $\mathbb{F}_{\var}^{0,\bot}$. Moreover,
\begin{align}\label{e702}
\begin{split}
&F_{\var}=p_{\var}Q_{\var}p_{\var}^{\bot},
\\
&G_{\var}=p_{\var}^{\bot}Q_{\var}p_{\var},
\\
&H_{\var}=p_{\var}^{\bot}(\rho^2(\var|U|)D_{\var U}^{X,2}+(1-\rho^2(\var U))D_{y_0}^{X,2})p_{\var}^{\bot}.
\end{split}
\end{align}
\end{lemma}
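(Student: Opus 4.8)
\textbf{Proof plan for Lemma \ref{e698}.} The structure mirrors exactly that of Proposition \ref{e04023}, but now in the rescaled, $U$-dependent setting on $T_{y_0}Y\times X_{y_0}$ rather than on $Z$, with the projection $p_\var$ onto $\mathbb{F}_\var^0$ playing the role of $P$. The plan is to start from the explicit formula (\ref{e06015}) for $\mB_{\var,T/\var}'$, apply the cut-off $\rho^2(\var U)$ and the linear-part substitution that produces $L_{\var,T}^1$ in (\ref{e06019}), then conjugate by $S_\var$ (giving $L_{\var,T}^2$ in (\ref{e567})) and by the Clifford rescaling $R_\var$ (giving $L_{\var,T}^3$ in (\ref{e569})). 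First I would isolate the powers of $T$ appearing in $L_{\var,T}^3$: from (\ref{e06015}) the term $T^2 D^{X,2}$ dominates, the term linear in $T$ comes from the cross-commutator $T[D^X,\var D^H+{}^0\nabla^{\cE_Z,u}]$ together with the curvature terms of the form $T(R^{L_Z}/2+R^E)(e_i,\cdot)c(e_i)\,\cdot$, and the remaining pieces are $O(1)$ in $T$ (uniformly in $\var\in(0,1]$, in the sense of (\ref{e04104})). This gives the block expansion (\ref{e699}): $E_{\var,T}=E_\var+O(1/T)$, $F_{\var,T}=TF_\var+O(1)$, $G_{\var,T}=TG_\var+O(1)$, $H_{\var,T}=T^2H_\var+O(T)$, where the leading blocks are read off as the $p_\var$, $p_\var^\bot$ sandwiches of the corresponding homogeneous-in-$T$ pieces.

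Next I would identify the leading blocks precisely. For $H_\var$: the $T^2$-part of $L_{\var,T}^3$ is $\rho^2(\var U)D_{\var U}^{X,2}+(1-\rho^2(\var U))D_{y_0}^{X,2}$ (the $U$-dependence of the fibre Dirac operator survives the rescaling because $S_\var$ scales $U$), so $H_\var=p_\var^\bot(\rho^2(\var U)D_{\var U}^{X,2}+(1-\rho^2(\var U))D_{y_0}^{X,2})p_\var^\bot$ as claimed in (\ref{e702}); here one uses that $D_{\var U}^{X,2}$ annihilates $K_{\var U}$ on the support of $\rho$ and $D_{y_0}^{X,2}$ annihilates $\ker D_{y_0}^X$ off the support, so $H_\var$ is invertible on $\mathbb{F}_\var^{0,\bot}$ by (\ref{e06018}). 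For $F_\var$, $G_\var$: the $T^1$-coefficient of $L_{\var,T}^3$, when restricted across the splitting, comes down to the commutator $[D^X,\var D^H+{}^0\nabla^{\cE_Z,u}]$ conjugated by $R_\var S_\var^{-1}(\cdot)S_\var$ and cut off by $\rho^2(\var U)$, i.e. the operator $Q_\var$ of (\ref{e701}); the curvature cross-terms $T(R^{L_Z}/2+R^E)(e_i,\cdot)c(e_i)$ lie in the off-diagonal blocks as well and are absorbed into $Q_\var$ after checking they map $K$ to $K^\bot$. The key algebraic point — that $Q_\var(\mathbb{F}_\var^0)\subset\mathbb{F}_\var^{0,\bot}$ — follows, as in Proposition \ref{e04023}, from differentiating the identity $P^K D^X P^K=0$ (equivalently $D^X$ maps $K$ to $K^\bot$ fibrewise), which forces $[D^X,A]$ to have vanishing $p_\var\,(\cdot)\,p_\var$ block for any $0$-order $A$; I would spell this out using (\ref{e04072})–(\ref{e04073}) adapted to the $\nabla'$ connection of (\ref{e06016}). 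Finally $E_\var$ is the $p_\var(\cdot)p_\var$ block of the $T$-independent part of $L_{\var,T}^3$, which one could record explicitly but is not needed for the statement.

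The estimates (\ref{e699}) themselves require, beyond the formal reading-off of leading terms, uniform control of the $O(\cdot)$ remainders in $\var\in(0,1]$. This is where Lemma \ref{e448} enters: the dangerous terms are those containing $f^p\wedge$ with a $1/\var$ or $1/\var^2$ from (\ref{e568}) and (\ref{e06016}), and Lemma \ref{e448} says that $1_{\var|U|\le\alpha_0/2}(f^p\wedge-\var^2 i_{f_p})$ and $1_{\var|U|\le\alpha_0/2}|U|(f^p\wedge-\var^2 i_{f_p})$ are uniformly bounded with respect to $|\cdot|_{\var,0}$; combined with (\ref{e06071}) controlling $\nabla g_\var$, this makes every coefficient in the expansion of $L_{\var,T}^3$ uniformly bounded (with at most polynomial growth in $|U|$, already accounted for in the norms $|\cdot|_{\var,k}$). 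So the remainder bounds are uniform in $\var$, and the $O(1/T)$, $O(1)$, $O(T)$ in (\ref{e699}) are to be understood in the uniform sense of (\ref{e04104}).

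I expect the main obstacle to be not any single computation but the bookkeeping of the Clifford rescaling: under $R_\var$ the variables $c(f_{p,1}^H)$ for $p\le l'$ become $\var^{-1}f_1^{p,H}\wedge-\var\, i_{f_{p,1}^H}$, so every term of (\ref{e06015}) involving such Clifford generators must be tracked to see which power of $\var$ it carries and whether, after multiplication by the cut-offs and by the $U$-growth factors, it stays uniformly bounded — this is precisely the content of the discussion around (\ref{e568})–(\ref{e569}) and Lemma \ref{e448}, and it is the delicate part. Once that is in hand, the block decomposition and the identification (\ref{e702}) follow by the same reasoning as Proposition \ref{e04023}, with $D_{y_0}^{X,2}$ replaced by its $U$-dependent cut-off version. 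I would therefore organize the proof as: (1) expand $L_{\var,T}^3$ in powers of $T$ using (\ref{e06015}); (2) invoke Lemma \ref{e448} and (\ref{e06071}) for uniform-in-$\var$ boundedness of all coefficients; (3) read off $E_\var,F_\var,G_\var,H_\var$; (4) verify $Q_\var:\mathbb{F}_\var^0\to\mathbb{F}_\var^{0,\bot}$ by differentiating $P^K D^X P^K=0$; (5) conclude (\ref{e699}) and (\ref{e702}).
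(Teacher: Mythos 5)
Your plan follows the same route as the paper: expand $L_{\var,T}^3$ in powers of $T$, read off the $T^2$, $T^1$, $T^0$ blocks with respect to $p_\var$, $p_\var^\bot$, and argue the mapping property of $Q_\var$. Two small corrections worth making. First, the paper extracts the $T$-expansion not from the Lichnerowicz form (\ref{e06015}) but from the superconnection form (\ref{e06001})/(\ref{e06004}), under which the $T^1$-coefficient is immediately the commutator $[D^X,\var D^H+\,^0\nabla^{\cE_Z,u}]$; the curvature cross-terms you list from (\ref{e06015}) are already contained in that commutator via (\ref{e04072})--(\ref{e04073}), so they must not be counted as additional contributions to $Q_\var$. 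Second, the mapping property $Q_\var:\mathbb{F}_\var^0\to\mathbb{F}_\var^{0,\bot}$ does not come from differentiating $P^K D^X P^K=0$: on $\supp\rho^2(\var U)$ one has $K_{\var U}=\ker D_{\var U}^X$, so for $s\in\mathbb{F}_\var^0$ the term $[D^X,A]s$ collapses to $D^X(As)$, which lies in $\ran(D^X)\subset K^\bot$; no derivative of the projection is involved. Your attention to the $\var$-uniform bounds via Lemma \ref{e448} and (\ref{e06071}) is a reasonable addition, as the paper leaves that point implicit in this lemma.
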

\begin{proof}
From (\ref{e06004}), (\ref{e06001}), (\ref{e567}) and (\ref{e569}), we find the coefficient of $T^2$ in the expansion
of $L_{\var,T}^3$ is given by
\begin{align}\label{e704}
\begin{split}
H_{\var}
=(1-\rho^2(\var|U|))P_{\var U}^{K,\bot}D_{y_0}^{X,2}P_{\var U}^{K,\bot}+\rho^2(\var |U|)D_{\var U}^{X,2}.
\end{split}
\end{align}
When $\rho(\var|U|)\neq 0$, $K_{\var U}=\ker D_{\var U}^{X,2}$. So
\begin{align}\label{e705}
\begin{split}
H_{\var}=P_{\var U}^{K,\bot}\left((1-\rho^2(\var|U|))D_{y_0}^{X,2}+\rho^2(\var |U|)D_{\var U}^{X,2}\right)P_{\var U}^{K,\bot}.
\end{split}
\end{align}
Using (\ref{e632}), we see that (\ref{e705}) fits with the last formula in (\ref{e702}).

By (\ref{e06004}), (\ref{e06001}), (\ref{e567}) and (\ref{e569}),
we find that the coefficient of $T$ in the expansion of $L_{\var, T}^3$ is the operator $Q_{\var}$.

Using (\ref{e701}), it is clear that $Q_{\var}$
maps
$\mathbb{F}_{\var}^0$ into $\mathbb{F}_{\var}^{0,\bot}$. Also (\ref{e699}) and the remaining equations in (\ref{e702}) follow.

The proof of Theorem \ref{e698} is complete.
\end{proof}

Clearly, for $U\in T_{y_0}Y$, $H_{\var U}$, the operator $H_{\var}$ at $U$, is an elliptic operator acting along $X_{y_0}$.
\begin{prop}\label{e707}
For any $\var>0$,
\begin{align}\label{e708}
\ker H_{\var U}=\Lambda(T^*V^g)\widehat{\otimes}\mS(N_{Y^g/Y})\widehat{\otimes}K_{\var U}\otimes L_Y^{1/2}.
\end{align}
\end{prop}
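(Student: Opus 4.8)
The plan is to read off $H_{\var U}$ from the proof of Lemma~\ref{e698}: by the last identity in (\ref{e702}) (equivalently (\ref{e704})--(\ref{e705})) it is the elliptic operator acting along $X_{y_0}$ given by
\[
H_{\var U}=p^{\bot}_{\var}\big(\rho^2(\var|U|)\,D^{X,2}_{\var U}+(1-\rho^2(\var|U|))\,D^{X,2}_{y_0}\big)p^{\bot}_{\var},
\]
where $p_{\var}=P^{K}_{\var U}$ and $p^{\bot}_{\var}=P^{K,\bot}_{\var U}$ act on the $\mS(TX,L_X)\otimes E$--factor of the fibre. Since $D^{X,2}_{\var U}$ and $D^{X,2}_{y_0}$ are non-negative fibrewise self-adjoint operators on $\cE_{X,y_0}$ and the weights $\rho^2(\var|U|)$, $1-\rho^2(\var|U|)$ are non-negative, $H_{\var U}$ is non-negative, so it will be enough to compute $\{s:\langle H_{\var U}s,s\rangle=0\}$ with respect to the $L^2$ inner product along $X_{y_0}$.

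First I would record the easy inclusion: $H_{\var U}$ carries $p^{\bot}_{\var}$ on both sides, hence annihilates $\Lambda(T^*V^g)\widehat{\otimes}\mS(N_{Y^g/Y})\widehat{\otimes}K_{\var U}\otimes L_Y^{1/2}$, so this subspace lies in $\ker H_{\var U}$. For the reverse inclusion I would decompose $s=p_{\var}s+p^{\bot}_{\var}s$; from $H_{\var U}(p_{\var}s)=0$ and $H_{\var U}(p^{\bot}_{\var}s)\in\ran(p^{\bot}_{\var})$ one sees that $H_{\var U}s=0$ forces $H_{\var U}(p^{\bot}_{\var}s)=0$, and pairing with $p^{\bot}_{\var}s$ (using self-adjointness of the projection and of $D^X_{\var U}$, $D^X_{y_0}$) yields
\[
\rho^2(\var|U|)\,\big\|D^X_{\var U}\,p^{\bot}_{\var}s\big\|^2+(1-\rho^2(\var|U|))\,\big\|D^X_{y_0}\,p^{\bot}_{\var}s\big\|^2=0 .
\]
The whole claim then reduces to showing that this identity forces $p^{\bot}_{\var}s=0$.

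The one thing to be careful about is running through the regimes of $U$ dictated by the construction of $K$ before (\ref{e06018}); no genuine obstacle is expected here. If $\rho^2(\var|U|)>0$, then necessarily $\var|U|<\alpha_0/2$, so $K_{\var U}=\ker D^X_{\var U}$; the displayed identity then gives $D^X_{\var U}\,p^{\bot}_{\var}s=0$, i.e. $p^{\bot}_{\var}s\in\ker D^X_{\var U}=K_{\var U}$, whence $p^{\bot}_{\var}s\in K_{\var U}\cap K^{\bot}_{\var U}=\{0\}$. If $\rho^2(\var|U|)=0$ (which in particular covers $\var|U|\geq\alpha_0/2$), then $1-\rho^2(\var|U|)=1$ and the identity gives $D^X_{y_0}\,p^{\bot}_{\var}s=0$, i.e. $p^{\bot}_{\var}s\in K^{\bot}_{\var U}\cap\ker D^X_{y_0}$, which is $\{0\}$ by (\ref{e06018}). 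Thus $p^{\bot}_{\var}s=0$ in all cases, $s$ lies in the $K$--part, and we obtain $\ker H_{\var U}=\Lambda(T^*V^g)\widehat{\otimes}\mS(N_{Y^g/Y})\widehat{\otimes}K_{\var U}\otimes L_Y^{1/2}$, which is Proposition~\ref{e707}. The argument is purely the bookkeeping of the cutoff $\rho$ together with the defining property (\ref{e06018}) of $K$.
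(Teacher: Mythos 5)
Your proof is correct and follows essentially the same route as the paper: both exploit non-negativity and self-adjointness of $H_{\varepsilon U}$ to split $H_{\varepsilon U}s=0$ into the two vanishing statements for $\rho^2(\varepsilon|U|)D^{X,2}_{\varepsilon U}$ and $(1-\rho^2(\varepsilon|U|))D^{X,2}_{y_0}$, and then argue case by case on $\rho$ using the definition of $K$ and property (\ref{e06018}). Your write-up is a little more explicit than the paper's (making the reduction to $p_{\varepsilon}^{\bot}s$ and the identification $K_{\varepsilon U}=\ker D^{X}_{\varepsilon U}$ for $\rho\neq 0$ explicit), but there is no difference of substance.
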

\begin{proof}
By (\ref{e702}), if $s\in \Lambda(T^*V^g)\widehat{\otimes}\mS(N_{Y^g/Y})\widehat{\otimes}K_{\var U}\otimes L_Y^{1/2}$, then
\begin{align}\label{e709}
H_{\var}s=0.
\end{align}

The operator $H_{\var U}$ is self-adjoint and nonnegative. Therefore if $H_{\var}s=0$, then
\begin{align}\label{e710}
\begin{split}
&P_{\var U}^{K,\bot}\rho^2(\var|U|)D_{\var U}^{X,2}P_{\var U}^{K,\bot}s=0,
\\
&P_{\var U}^{K,\bot}(1-\rho^2(\var U))D_{y_0}^{X,2})P_{\var U}^{K,\bot}s=0.
\end{split}
\end{align}

If $\rho^2(\var|U|)\neq 0$, we deduce from the first identity in (\ref{e710}) that
$P_{\var U}^{K,\bot}s=0$, i.e. $s\in \Lambda(T^*V^g)\widehat{\otimes}\mS(N_{Y^g/Y})\widehat{\otimes}K_{\var U}\otimes L_Y^{1/2}$.
If $\rho^2(\var|U|)=0$, by the second identity in (\ref{e710}), $P_{\var U}^{K,\bot}s\in \ker D_{y_0}^X$. Using (\ref{e06018}),
we deduce that $P_{\var U}^{K,\bot}s=0$, i.e., $s\in \Lambda(T^*V^g)\widehat{\otimes}\mS(N_{Y^g/Y})\widehat{\otimes}K_{\var U}\otimes L_Y^{1/2}$.

The proof of proposition \ref{e707} is complete.
\end{proof}

\subsection{Proof of Theorem \ref{e628}}
For $s\in I$,
put
\begin{align}\label{e789}
|s|_{\var,T,1}^{2}:=|P_{\var U}^Ks|_{\var,0}^{2}+T^2|P_{\var U}^{K,\bot} s|_{\var,0}^{2}
+\sum_p|\nabla_{f_p}s|_{\var,0}^{2}+T^2\sum_i|^0\nabla^{\mS_Z\otimes E}_{e_i}P_{\var U}^{K,\bot} s|_{\var,0}^{2}.
\end{align}

\begin{lemma}\label{e790}
There exist $c_1,c_2,c_3, c_4>0$, $T_0\geq 1$, such that for any $s,s'\in I$ with compact support, $\var\in (0,1]$, $T\geq T_0$, we have
\begin{align}\label{e791}
\begin{split}
\Re\la L_{\var,T}^3s,s\ra_{\var,0}^{}&\geq c_1|s|_{\var,T,1}^{2}-c_2|s|_{\var,0}^{2},
\\
|\Im\la L_{\var,T}^3s,s\ra_{\var,0}^{}|&\leq c_3|s|_{\var,T,1}^{}|s|_{\var,0}^{},
\\
|\la L_{\var,T}^3s,s'\ra_{\var,0}^{}|&\leq c_4|s|_{\var,T,1}^{}|s'|_{\var,T,1}^{}.
\end{split}
\end{align}
\end{lemma}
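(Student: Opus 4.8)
The plan is to establish the three estimates in \eqref{e791} by the same scheme used for Lemma \ref{e04030}, but now carried out on the noncompact total space $T_{y_0}Y\times X_{y_0}$ with the weighted norm $|\cdot|_{\varepsilon,0}$ and the $T$-dependent norm $|\cdot|_{\varepsilon,T,1}$ in \eqref{e789}. First I would use the matrix decomposition of $L_{\varepsilon,T}^3$ from Lemma \ref{e698}: write
\begin{align}
L_{\varepsilon,T}^3=T^2 H_{\varepsilon}+T\,\text{(off-diagonal first order terms)}+\text{(zero-$T$ part)}+O(T/\!\cdot),
\end{align}
where the coefficient of $T^2$ is $H_{\varepsilon}$ from \eqref{e704}-\eqref{e705}, the coefficient of $T$ is $Q_\varepsilon$ mapping $\mathbb{F}_{\varepsilon}^0$ to $\mathbb{F}_{\varepsilon}^{0,\bot}$ (and its adjoint), and the remaining part is a family of operators uniformly elliptic along $T_{y_0}Y\times X_{y_0}$ together with the harmonic-oscillator contribution in the $U$-variable coming from $L_{0,T}^3$ in Lemma \ref{e593}. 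The key algebraic input is Proposition \ref{e707}: $\ker H_{\varepsilon U}=\Lambda(T^*V^g)\widehat\otimes\mathcal{S}(N_{Y^g/Y})\widehat\otimes K_{\varepsilon U}\otimes L_Y^{1/2}$, so $H_\varepsilon$ is bounded below by a positive constant on the range of $p_\varepsilon^\bot$, uniformly in $\varepsilon$ and $U$; this is exactly \eqref{e06018} unwound, together with the elliptic estimate \eqref{e04002} for $D^{X,2}$ on $K^\bot$.

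The execution would then proceed in three steps. Step one: bound $\langle L_{\varepsilon,T}^3 p_\varepsilon^\bot s,p_\varepsilon^\bot s\rangle_{\varepsilon,0}$ from below, using $T^2\langle H_\varepsilon p_\varepsilon^\bot s,p_\varepsilon^\bot s\rangle\geq C T^2|p_\varepsilon^\bot s|_{\varepsilon,0}^2$ and the elliptic lower bound to absorb the $T$-linear cross terms (the analogue of \eqref{e04039} and \eqref{e04037}), producing $\geq C_1|p_\varepsilon^\bot s|_{\varepsilon,T,1}^2-C_2'|s|_{\varepsilon,0}^2$ for $T\geq T_0$. Step two: bound $\langle L_{\varepsilon,T}^3 p_\varepsilon s,p_\varepsilon s\rangle_{\varepsilon,0}$ from below; here the dominant piece is $E_\varepsilon=p_\varepsilon L_{\varepsilon,T}^3p_\varepsilon$, which after taking $\varepsilon\to 0$ is essentially the harmonic oscillator $-(\partial_p+\frac14\langle R^{TY}U,f_{p,1}^H\rangle)^2$ plus a bounded curvature term plus $D^{Y,2}$-type operator along the fiber; the key point, as in \eqref{e06071}, is that $|\nabla(g_\varepsilon(U))|\leq C$ so the weight does not spoil the integration by parts, and Lemma \ref{e448} gives uniform bounds on $1_{\varepsilon|U|\leq\alpha_0/2}|U|(f^p\wedge-\varepsilon^2 i_{f_p})$. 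This yields $\geq c_1|p_\varepsilon s|_{\varepsilon,1}^2-c_2|p_\varepsilon s|_{\varepsilon,0}^2$. Step three: estimate the mixed term $\langle L_{\varepsilon,T}^3 p_\varepsilon^\bot s,p_\varepsilon s\rangle_{\varepsilon,0}$ by $C|s|_{\varepsilon,T,1}|s|_{\varepsilon,0}$, exactly as in \eqref{e04045}-\eqref{e04047}, since $[Q_\varepsilon,p_\varepsilon]$ and the higher commutators are operators with smooth kernels along $X$. Combining the three steps with the identity $|s|_{\varepsilon,T,1}^2\sim |p_\varepsilon s|_{\varepsilon,1}^2+|p_\varepsilon^\bot s|_{\varepsilon,T,1}^2$ (up to a controllable $|s|_{\varepsilon,0}^2$, as in \eqref{e04043}) gives the first inequality; the imaginary part bound follows from isolating the $dT\wedge(\cdots)$ and the $g^\alpha\wedge$-terms, which raise the $\Lambda(T^*S)$-degree and hence contribute only to $\Im$, bounded by $c_3|s|_{\varepsilon,T,1}|s|_{\varepsilon,0}$; the third inequality is immediate from the explicit form \eqref{e06015} of $L_{\varepsilon,T}^3$ together with the cross-term control.

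The main obstacle I expect is the uniformity in $\varepsilon$ of the lower bound for $E_\varepsilon$ near $\varepsilon=0$, where the Clifford rescaling $R_\varepsilon$ in \eqref{e568} sends $c(f_{p,1}^H)$ to $\varepsilon^{-1}f_1^{p,H}\wedge-\varepsilon i_{f_{p,1}^H}$ and the potential term $\frac14\langle R^{TY}U,f_{p,1}^H\rangle$ grows linearly in $|U|$; one has to show that the resulting operator is still bounded below with a constant independent of $\varepsilon$, which is precisely why the weight $g_\varepsilon(U)=1+(1+|U|^2)^{1/2}\rho(\varepsilon U/2)$ is designed the way it is, and why Lemma \ref{e448} controls the dangerous factors $1_{\varepsilon|U|\leq\alpha_0/2}|U|(f^p\wedge-\varepsilon^2 i_{f_p})$ uniformly. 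A secondary technical point is keeping track of which terms in \eqref{e06015} are $O(1)$, $O(T)$, $O(T^2)$, $O(1/T)$ as $T\to\infty$ after the rescalings, so that the absorption of cross terms into the $T^2|H_\varepsilon(\cdot)|^2$ and elliptic terms goes through with a threshold $T_0\geq 1$ independent of $\varepsilon$; this is bookkeeping, but it is where sign and magnitude errors would creep in, so I would do it carefully following \cite[Section IX]{MR1305280} and \cite[Section 9]{MR1800127}.
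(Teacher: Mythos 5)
Your proposal is correct and uses the same key ingredients as the paper's proof: the lower bound for $H_\varepsilon$ on the range of $p_\varepsilon^\bot$ coming from $K$ being a vector bundle together with fiberwise elliptic estimates along $X$, the weight control \eqref{e06071}, Lemma \ref{e448} for the Clifford-rescaled operators, and absorption of the $T$-linear cross term $TQ_\varepsilon$ into the coercive second-order part $T^2 H_\varepsilon+\Delta^{TY}$. The only organizational difference is that you carry out the estimate via an explicit $p_\varepsilon/p_\varepsilon^\bot$ block split in the spirit of Lemma \ref{e04030}, whereas the paper's proof works directly with the quadratic form of $T^2 H_\varepsilon+\Delta^{TY}$ plus the $TQ_\varepsilon$ cross term -- this amounts to the same thing since $H_\varepsilon=p_\varepsilon^\bot(\cdots)p_\varepsilon^\bot$ already carries the projection.
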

\begin{proof}
By (\ref{e06004}), (\ref{e06001}), (\ref{e567}) and (\ref{e569}), the 2-order term of the differential operator $L_{\var, T}^3$ is a
fiberwise elliptic operator
\begin{align}\label{e987}
T^2H_{\var}+\Delta^{TY}.
\end{align}
From (\ref{e701}), since $K$ is a vector bundle over $T_{y_0}Y\times S$, for $s\in I$ with compact support, there exists $C_1>0$, such that
\begin{align}\label{e989}
\la H_{\var}P_{\var U}^{K,\bot}s, P_{\var U}^{K,\bot}s \ra_{\var,0}^{}\geq C_1 |P_{\var U}^{K,\bot}s|_{\var,0}^{2}.
\end{align}
Since $H_{\var}$ is a fiberwise selfadjoint elliptic operator along the fibers $X$,
from the elliptic estimates, there exist $C_2, C_3>0$, such that
\begin{align}\label{e990}
\la H_{\var}P_{\var U}^{K,\bot} s,P_{\var U}^{K,\bot} s\ra_{\var,0}^{}\geq C_2\sum_{i}|\,^0\nabla^{\mS_Z\otimes E}_{e_i} P_{\var U}^{K,\bot}
 s|_{\var,0}^{2}
-C_3 |P_{\var U}^{K,\bot}s|_{\var,0}^{2}.
\end{align}
From (\ref{e989}) and (\ref{e990}), there exists $C_4>0$, such that
\begin{align}\label{e06026}
\la H_{\var}P_{\var U}^{K,\bot} s,P_{\var U}^{K,\bot} s\ra_{\var,0}^{}\geq C_4\left(\sum_{i}|\,^0\nabla^{\mS_Z\otimes E}_{e_i} P_{\var U}^{K,\bot}
 s|_{\var,0}^{2}
+ |P_{\var U}^{K,\bot}s|_{\var,0}^{2}\right).
\end{align}

By (\ref{e06071}),
there exist $C_5, C_6>0$, such that
\begin{align}\label{e991}
\la \Delta^{TY} s, s\ra_{\var,0}^{}\geq C_5\sum_p|\nabla_{f_p}s|_{\var,0}^{2}-C_6|s|_{\var,0}^2.
\end{align}
Then
there exist $C_1', C_2'>0$, such that
\begin{align}\label{e992}
\begin{split}
\la (T^2H_{\var}+\Delta^{TY})s, s\ra_{\var,0}&\geq C_1'|s|_{\var,T,1}^2-C_2'|s|^2_{\var,0}.
\end{split}
\end{align}

By Lemma \ref{e448} and (\ref{e701}), there exist $C>0$, such that
\begin{align}\label{e798}
|\la TQ_{\var} s,s\ra_{\var,0}|\leq C|s|_{\var,T,1}|s|_{\var,0}.
\end{align}

Then  Lemma \ref{e790} follows from (\ref{e06071}), (\ref{e992})  and  (\ref{e798}).
\end{proof}

Set $\mD_{\var}=\{P_{\var U}^K\partial_p P_{\var U}^K+P_{\var U}^{K,\bot}\partial_p P_{\var U}^{K,\bot},
\ P_{\var U}^{K,\bot}\,\nabla_{e_i}^{\mS_X\otimes E} P_{\var U}^{K,\bot}\}$.

Let $\Xi_{\var}$ be the operator from $\mathbb{F}_{\var}$ to itself,
\begin{align}\label{e08003}
\Xi_{\var}=E_{\var}-F_{\var}H_{\var}^{-1}G_{\var}.
\end{align}

Following the same argument in (\ref{e04099})-(\ref{e04127}), we can get an analogue of Theorem \ref{e04223}.
\begin{thm}\label{e08006}
There exist $C>0$, $\delta>0$, and $T_0\geq 1$
such that for $0<\var\leq 1$, $T\geq T_0$,
\begin{align}\label{e08007}
\left|\psi_S\widetilde{\tr}\left[g\widetilde{\textbf{F}}_{\var^2}
(L_{\var,T}^3)\right]-\psi_S\widetilde{\tr}\left[g\widetilde{\textbf{F}}_{\var^2}
(\Xi_{\var})\right]\right|\leq \frac{C}{T^{\delta}}.
\end{align}
\end{thm}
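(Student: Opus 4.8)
The plan is to repeat, for the local model operator $L_{\var,T}^3$ near $\pi_1^{-1}(V^g)$, the functional‑analytic scheme of Sections \ref{s0403}--\ref{s0404}, with $L_{\var,T}^3$ in the role of $\mB_T$, the Schur complement $\Xi_\var$ of (\ref{e08003}) in the role of $\mB_2$, the projection $p_\var$ in the role of $P$, the weighted norm $|\cdot|_{\var,0}$ in the role of $\|\cdot\|_0$, and the $T$‑rescaled norm $|\cdot|_{\var,T,1}$ of (\ref{e789}) in the role of $|\cdot|_{T,1}$. First I would write $\widetilde{\mathbf{F}}_{\var^2}(L_{\var,T}^3)$ through the holomorphic functional calculus along the parabolic contour $\Gamma$ of (\ref{e474}) (using the map $\lambda\mapsto\lambda^2$ to pull $\Gamma'$ onto $\Gamma$), and iterate the resolvent as in (\ref{e04134}) until the integrand is trace class; here one uses that $\widetilde{\mathbf{F}}_{\var^2}$ is entire and that it together with all its derivatives is bounded along $\Gamma$ uniformly in $\var\in(0,1]$ (the relevant function‑theoretic estimates are as in \cite[\S 11]{MR1316553}, \cite[\S 9]{MR1800127}). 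The essential input is a uniform resolvent estimate: there exist $C>0$, $k\in\N$, $T_0\geq 1$ such that for $\var\in(0,1]$, $T\geq T_0$ and $\lambda\in\Gamma$ the resolvent $(\lambda-L_{\var,T}^3)^{-1}$ exists, extends continuously, and satisfies $|(\lambda-L_{\var,T}^3)^{-1}s|_{\var,0}\leq C(1+|\lambda|)^k|s|_{\var,0}$ together with the analogous bound with respect to the norm $|\cdot|_{\var,T,1}$. This follows from the estimates of Lemma \ref{e790} exactly as Lemma \ref{e04304} follows from Lemmas \ref{e04030} and \ref{e04055}, the leading part being the fiberwise elliptic operator $T^2H_\var+\Delta^{TY}$ of (\ref{e987}) and the lower‑order remainder $L_{\var,T}^3-(T^2H_\var+\Delta^{TY})$ being controlled by the expansions of Lemma \ref{e698}.

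Next I would exploit the block decomposition (\ref{e635}). With the $\lambda$‑dependent Schur complement $\mE_{\var,T}(\lambda)=\lambda-E_{\var,T}-F_{\var,T}(\lambda-H_{\var,T})^{-1}G_{\var,T}$ one has $p_\var(\lambda-L_{\var,T}^3)^{-1}p_\var=\mE_{\var,T}(\lambda)^{-1}$; Proposition \ref{e707}, together with the hypothesis $\ker D^Y=0$, gives the uniform invertibility of $H_\var$ on $\mathbb{F}_\var^{0,\bot}$ and hence the analogue of Lemma \ref{e04075} for $(\lambda-H_{\var,T})^{-1}$. Using the asymptotics $E_{\var,T}=E_\var+O(1/T)$, $F_{\var,T}=TF_\var+O(1)$, $G_{\var,T}=TG_\var+O(1)$, $H_{\var,T}=T^2H_\var+O(T)$ of (\ref{e699}) and the identity $\Xi_\var=E_\var-F_\var H_\var^{-1}G_\var$, I would prove, following Lemma \ref{e04081}, the comparison $|(\mE_{\var,T}(\lambda)^{-1}-p_\var(\lambda-\Xi_\var)^{-1}p_\var)s|_{\var,0}\leq \tfrac{C}{T}(1+|\lambda|)^k|s|_{\var,0}$, and then, controlling the off‑diagonal blocks $p_\var(\lambda-L_{\var,T}^3)^{-1}p_\var^\bot$, $p_\var^\bot(\lambda-L_{\var,T}^3)^{-1}p_\var$ and $p_\var^\bot(\lambda-L_{\var,T}^3)^{-1}p_\var^\bot$ as in Lemma \ref{e04094}, obtain $\|(\lambda-L_{\var,T}^3)^{-1}-p_\var(\lambda-\Xi_\var)^{-1}p_\var\|\leq \tfrac{C}{T}(1+|\lambda|)^k$, uniformly in $\var\in(0,1]$, $T\geq T_0$, $\lambda\in\Gamma$. (The invertibility of $\lambda-\Xi_\var$ for $\lambda\in\Gamma$ is again a consequence of the estimates of Lemma \ref{e790}.)

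Integrating this comparison against $\widetilde{\mathbf{F}}_{\var^2}(\lambda)$ along $\Gamma$, after the resolvent iteration, yields the operator bound $\|\widetilde{\mathbf{F}}_{\var^2}(L_{\var,T}^3)-p_\var\widetilde{\mathbf{F}}_{\var^2}(\Xi_\var)p_\var\|\leq C/T$. To pass from this $L^2$ estimate to a pointwise bound on the smooth kernels over $T_{y_0}Y\times X_{y_0}$, I would combine the finite propagation speed for $\cos(s\sqrt{L_{\var,T}^3})$ — which, as in (\ref{e06054}), makes the $\widetilde{\mathbf{F}}_{\var^2}$‑kernels compactly supported in $|U-U'|$ and thereby legitimizes working in the local trivialization — with the weighted Sobolev embedding argument of Section \ref{s0404}, using a rescaling of the space variable by a small negative power of $T$ exactly as in (\ref{e04145})--(\ref{e04148}); this costs a power of $T$ and produces a pointwise bound of order $T^{-\delta}$ for some $\delta\in(0,1)$. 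Taking $\widetilde{\tr}$, integrating the diagonal kernel over $Y_b^g$, over the fibre of $N_{Y^g/Y}$ — where $|g^{-1}U-U|\geq c|U|$ makes the $U$‑integral converge — and over $X_{y_0}$, as in Lemma \ref{e05011}, and applying $\psi_S$, then gives (\ref{e08007}).

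The main obstacle is the simultaneous uniformity of all these estimates in the two parameters $\var\in(0,1]$ and $T\geq T_0$. Concretely, one must verify that the $T$‑weighted norm $|\cdot|_{\var,T,1}$ is compatible with the block decomposition (\ref{e635}) — so that the leading term $T^2H_\var$ of $H_{\var,T}$ genuinely dominates on $\mathbb{F}_\var^{0,\bot}$ while $p_\var Q_\var p_\var^\bot$ and $p_\var^\bot Q_\var p_\var$ remain of relative size $T$ — and with the $\var$‑weighted norm $|\cdot|_{\var,0}$ underlying the contour‑integral functional calculus; these norms encode the two adiabatic limits (of $\pi_2$, governed by $\var$, and of $\pi_1$, governed by $T$), which here must be controlled at once rather than one after the other.
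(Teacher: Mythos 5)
Your proposal follows essentially the same route as the paper, which at this point simply says ``Following the same argument in (\ref{e04099})--(\ref{e04127}), we can get an analogue of Theorem \ref{e04223}'': you correctly identify the dictionary $\mB_T\to L^3_{\var,T}$, $\mB_2\to\Xi_\var$, $P\to p_\var$, $\|\cdot\|_0\to|\cdot|_{\var,0}$, $|\cdot|_{T,1}\to|\cdot|_{\var,T,1}$, and rerun the resolvent/Schur-complement/kernel machinery of Sections \ref{s0403}--\ref{s0404} with the required simultaneous uniformity in $\var$ and $T$. One small misattribution: the uniform invertibility of $H_\var$ on $\mathbb{F}^{0,\bot}_\var$ comes from Proposition \ref{e707}, nonnegativity and ellipticity of $H_\var$, and the transversality condition (\ref{e06018}), not from $\ker D^Y=0$, which is used only in Section \ref{s04} (to give $\mB_2$ and $D^Z_T$ a spectral gap).
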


Since there is no $dT$ part in the second term above,
as in (\ref{e04149})-(\ref{e05043}), we get Theorem \ref{e628}.

\vspace{3mm}\textbf{Acknowledgements}\ \
The author would like to thank Prof. Xiaonan Ma for many discussions.
He would like to thank the Mathematical Institute of
K\"oln University, especially Prof. George Marinescu for financial support and hospitality.
The author was supported by the DFG funded Project MA 2469/2-2
and by the Key Profile Area "Quantum Matter and Materials" in the framework of
the Institutional Strategy of the University of Cologne within the German Excellence Initiative.

\end{document}